\newcommand{\subtitle}[1]{%
  \posttitle{%
    \par\end{center}
    \begin{center}\Large#1\end{center}
    \vskip0.5em}%
}
\newtheorem{theorem}{Theorem}
\newtheorem{lemma}[theorem]{Lemma}
\newtheorem{proposition}[theorem]{Proposition}
\newtheorem{corollary}[theorem]{Corollary}
\theoremstyle{definition}
\newtheorem{definition}{Definition}
\newtheorem{example}{Example}
\newtheorem{assumption}{Assumption}
\newtheorem{problem}{Problem}
\DeclareMathOperator{\sgn}{sgn}
\theoremstyle{remark}
\newtheorem*{remark}{Remark}
\title{Lebesgue Measure Preserving Thompson's Monoid}
\author{William Li\\ Delbarton School
\\ \texttt{liwilliam2021@gmail.com} 
}
\date{}
\begin{document}
\maketitle

\begin{abstract}

This paper defines Lebesgue measure preserving Thompson's monoid, denoted by $\mathbb{G}$, which is modeled on Thompson's group $\mathbb{F}$ except that the elements of $\mathbb{G}$ are non-invertible. Moreover, it is required that the elements of $\mathbb{G}$ preserve Lebesgue measure. Monoid $\mathbb{G}$ exhibits very different properties from Thompson's group $\mathbb{F}$. The paper studies a number of algebraic (group-theoretic) and dynamical properties of $\mathbb{G}$ including approximation, mixing, periodicity, entropy, decomposition, generators, and topological conjugacy. 
\end{abstract}

\section{Introduction}

In this paper we define Lebesgue measure preserving Thompson's monoid and study its algebraic and dynamical properties. This study is at an intersection of two subjects of research. 

The first subject is concerned with Lebesgue measure preserving interval maps of $[0,1]$ onto itself, which studies dynamical properties such as transitivity, mixing, periodic points and metric entropy and finds important applications in the abstract formulation of dynamical systems, chaos theory and ergodic theory \cite{ruette2015chaos, notesonergodictheory}. The author in \cite{ruette2015chaos} motivates the study of interval maps by stating that the ``most interesting'' part of some higher-dimensional systems can be of lower dimensions, which allows, in some cases, to boil down to systems in dimension one. In particular, a recent paper \cite{2019arXiv190607558B} studies a special form of interval maps, namely, piecewise affine maps.

The second subject is concerned with Thompson's group $\mathbb{F}$ \cite{1996CFP, introductiontoTF}, which is the group of piecewise affine maps from $[0,1]$ onto itself whose derivatives are integer powers of $2$ and points at which the derivatives are discontinuous are dyadic numbers. As the derivatives are always positive, the orientation of maps is preserved. Thompson group $\mathbb{F}$ has a collection of unusual algebraic properties that make it appealing in many different and diverse areas of mathematics such as group theory, combinatorics \cite{Cleary2002} and cryptography \cite{10.1007/11496137_11}.

Except for the identity map, any Thompson's group $\mathbb{F}$ map does not preserve Lebesgue measure and any Lebesgue measure preserving interval map does not preserve orientation and thus not belong to Thompson's group $\mathbb{F}$. Thus these two subjects do not naturally intersect. We intend to build on Thompson's group $\mathbb{F}$ by making important changes to preserve Lebesgue measure. More precisely, we define Lebesgue measure preserving Thompson's monoid, denoted by $\mathbb{G}$. Monoid $\mathbb{G}$ is similar to $\mathbb{F}$ except that the derivatives of piecewise affine maps can be negative. As a result, the maps in $\mathbb{G}$ are non-invertible except for some trivial maps and exhibit very different properties from those in $\mathbb{F}$. 

To the best of our knowledge, Lebesgue measure preserving Thompson's monoid has not been proposed or studied in the literature. Unless explicitly mentioned, all the results presented and proved in this paper are original. 

The goal of this paper is to study unique properties of $\mathbb{G}$. The main results of this paper are summarized as follows. 
\begin{itemize}
\item
We show that any continuous measure preserving map can be approximated by a map in $\mathbb{G}$ with any required precision. Moreover, we show that the approximating map in $\mathbb{G}$ can be locally eventually onto (LEO) and achieve any target value of entropy that is at least $2$.
\item
We show that for any element of $\mathbb{G}$, topological mixing (TM) is equivalent to LEO and every dyadic point is preperiodic. Thus, any map in $\mathbb{G}$ is Markov. We show that for maps in a subset of $\mathbb{G}$ there exist periodic points with period of $3$, an essential feature of chaotic maps. We characterize periods of periodic points of other maps in $\mathbb{G}$.
\item 
We show that unlike $\mathbb{F}$, $\mathbb{G}$ is not finitely generated. We define equivalence classes for maps in $\mathbb{G}$ and construct a monoid by sets of equivalence classes such that the monoid is finitely generated and any map in $\mathbb{G}$ is an element of an equivalence class in the monoid.
\item
We derive sufficient conditions for a continuous map to be topologically conjugate to a measure preserving piecewise affine continuous map and in particular a map in $\mathbb{G}$.
\end{itemize}

The main results of this paper improve several results of \cite{2019arXiv190607558B}. For example, we  show that $\mathbb{G}$ that is both LEO and Markov is dense in the set of continuous measure preserving maps. Because $\mathbb{G}$ is a subset of piecewise affine continuous measure preserving maps, this result is stronger than \cite[Proposition.\ 7]{2019arXiv190607558B}, which shows that piecewise affine continuous measure preserving maps that are both LEO and Markov is dense in the set of continuous measure preserving maps. 

At an intersection of these two subjects of research, the paper demonstrates an interesting interplay between algebraic (group-theoretic) and dynamical settings. For example, in general, LEO implies TM and the converse does not hold; however, we show that for any element of $\mathbb{G}$, TM is equivalent to LEO and any map in $\mathbb{G}$ is Markov. As another example, we show that the algebraic structure of $\mathbb{G}$ leads to a simple characterization of periods of periodic points of maps in $\mathbb{G}$ and allows the use of Markov partition to study measure preserving topological conjugate maps.

The remainder of the paper is organized as follows. Section~\ref{sec:basic} reviews the basic properties of measure preserving interval maps and Thompson's group $\mathbb{F}$ and defines measure preserving Thompson's monoid $\mathbb{G}$. Section~\ref{sec:approximation} shows that a map in $\mathbb{G}$ can approximate any continuous measure preserving map with any required precision. While locally eventually onto (LEO) implies topological mixing (TM) for any maps, Section~\ref{sec:mixing} shows that TM is equivalent to LEO for any element of $\mathbb{G}$ and that a map in $\mathbb{G}$ that is LEO can approximate any continuous measure preserving map with any required precision. Section~\ref{sec:Markov} shows a salient feature of $\mathbb{G}$ that every dyadic point is preperiodic. As a result, any map in $\mathbb{G}$ is Markov. Section~\ref{sec:Markov} furthermore characterizes the periods of periodic points of maps in $\mathbb{G}$. Section~\ref{sec:entropy} investigates the entropy properties of $\mathbb{G}$ and shows that any entropy greater than or equal to $2$ can be achieved by $\mathbb{G}$. Section~\ref{sec:generators} shows that any map in $\mathbb{G}$ can be expressed as a composition of a finite number of basic maps in $\mathbb{G}$ and the generators in $\mathbb{F}$. Section~\ref{sec:equivalence} shows that unlike $\mathbb{F}$, $\mathbb{G}$ is not finitely generated. Section~\ref{sec:equivalence} defines the notions of equivalence classes and sets of equivalence classes, constructs a monoid of sets of equivalence classes and shows that the monoid has a finite number of generators and that any map in $\mathbb{G}$ is an element of an equivalence class in the monoid. Section~\ref{sec:equivalence} furthermore introduces a metric to characterize equivalence classes. Section~\ref{sec:conjugacy} studies topological conjugacy under the measure preservation constraint and uses Markov partition to characterize continuous maps that are conjugate to measure preserving maps. Finally, Section~\ref{sec:future} proposes a few areas for future study.

\section{Basic Definitions and Properties}\label{sec:basic}

\subsection{Notations}

Consider continuous interval maps from $[0,1]$ to $[0,1]$. Let $h_1$ and $h_2$ be two maps. Denote by $h_1\circ h_2$ the composition of $h_1$ and $h_2$ where $h_1\circ h_2(x) = h_1(h_2(x))$. The composition of more than two maps can be recursively defined with this definition. For any $y\in[0,1]$, define $h^{-1}(y)=\{ x\in[0,1]: h(x)=y\}$. 

Interval map $h$ defines a topological dynamical system whose evolution is given by successive iterations of the map. For any positive integer $n$, $h^n = \underbrace{h \circ h \circ \cdots \circ h}_{\text{$n$ times}}$. By convention, $h^0$ is the identity map. Point $x$ is \emph{preperiodic} if positive integers $n>m$ exist such that $h^n(x)=h^m(x)$. If $m=0$, then $x$ is \emph{periodic}.

Define trivial maps $g_{0,+}(x)=x$ and $g_{0,-}(x)=1-x$ for $x\in[0,1]$.

Let $A$ be a point in the plane of $[0,1]\times[0,1]$. Denote by $A_x$ and $A_y$ the $x$- and $y$-coordinates of point $A$, respectively. If $A$ is on the graph of map $h$, $A_y=h(A_x)$. 

Let $\mathcal{I}$ be a interval in $[0,1]$. Let $\mathcal{I}^{\circ}$ represent the interior of $\mathcal{I}$. The left and right endpoints of $\mathcal{I}$ are denoted by $\mathcal{I}^0, \mathcal{I}^1$, respectively.
If $\mathcal{I}$ is closed, then $\mathcal{I}=[\mathcal{I}^0, \mathcal{I}^1]$. Let $|\mathcal{I}|$ represent the measure of the interval: $|\mathcal{I}|=\mathcal{I}^1-\mathcal{I}^0$. For two distinct intervals $\mathcal{I}_1$ and $\mathcal{I}_2$, $\mathcal{I}_1< \mathcal{I}_2$ if $x_1\le x_2$, $\forall x_1 \in \mathcal{I}_1, x_2 \in \mathcal{I}_2$.

Let $\mathcal{I}, \mathcal{J}$ be two closed intervals of $[0,1]$ and $f_1, f_2$ be two maps. Let $f_1(\mathcal{I})\simeq f_2(\mathcal{J})$ if $f_2$ can be linearly transformed from $f_1$. That is, if $x_1=\mathcal{I}^{0}+\alpha (\mathcal{I}^{1}-\mathcal{I}^{0})$, and $x_2=\mathcal{J}^{0}+\alpha (\mathcal{J}^{1}-\mathcal{J}^{0})$ for some $\alpha\in[0,1]$, then $f_1(x_1)=f_2(x_2)$. When $f_2$ is a trivial map, $f_1(\mathcal{I})\simeq\mathcal{J}$ if $f_1$ is an affine map.

A set of distinct closed intervals $\{\mathcal{I}_1, \ldots, \mathcal{I}_n\}$ is a partition of $[0,1]$ if $\mathcal{I}^{\circ}_i \cap \mathcal{I}^{\circ}_j=\emptyset$ for any $i\neq j$ and $\bigcup_{i=1}^n \mathcal{I}_i=[0,1]$. If $\mathcal{I}_1< \cdots< \mathcal{I}_n$, then $\{|\mathcal{I}_i|\}$ completely determines $\{\mathcal{I}_i\}$. A subset of $\{\mathcal{I}_i\}$ may be a single point, i.e., $\mathcal{I}_j^0=\mathcal{I}_j^1$ where some $j$.

Denote by $\langle a,b\rangle$ interval $[a,b]$ if $a\le b$ and interval $[b,a]$ if $b<a$.

\subsection{$\lambda$-Preserving Interval Maps}\label{sec:basic-1}

Denote by $\lambda$ the Lebesgue measure on $[0,1]$ and $\mathcal{B}$ all Borel sets on $[0,1]$. 
\begin{definition}[\textsl{$\lambda$-Preserving Interval Maps}]
Continuous interval map $h$ is $\lambda$-preserving if $\forall A \in \mathcal{B}, \lambda(A) = \lambda(h^{-1}(A))$.
\label{definition:lambda-preserving}
\end{definition}
\begin{remark}
Definition~\ref{definition:lambda-preserving} does not imply $\lambda(A) = \lambda(h(A))$ for $\lambda$-preserving $h$. In fact, one can easily show that if $h$ is $\lambda$-preserving, $\lambda(A) \le \lambda(h(A))$ for any $A\in \mathcal{B}$. Except for the trivial maps of $g_{0,+}$ and $g_{0,-}$, $h$ is not invertible and $\exists A \in \mathcal{B}$ such that $\lambda(A) < \lambda(h(A))$. 
\end{remark}
For simplicity, $\lambda(A)$ is also written as $|A|$.

Let $C(\lambda)$ be the set of all continuous $\lambda$-preserving interval maps. For each map $h\in C(\lambda)$, the set of periodic points is dense on $[0,1]$ because of the Poincar\'e Recurrence Theorem and the fact that the closures of recurrent points and periodic points coincide \cite{2019arXiv190607558B}.

Let $PA(\lambda)$ be the subset of $C(\lambda)$ consisting of all piecewise affine maps.

\subsection{Thompson's Group $\mathbb{F}$}\label{sec:basic-2}

Thompson's group $\mathbb{F}$ has a few different representations such as group presentations, rectangle diagrams and piecewise linear homeomorphisms. The following focuses on the representation of piecewise linear homeomorphisms because it is closely related to $\lambda$-preserving Thompson's monoid to be introduced in the next section.

\begin{definition}[\textsl{Thompson's Group $\mathbb{F}$}]
A homeomorphism $f$ from $[0,1]$ onto $[0,1]$ is an element of Thompson's group $\mathbb{F}$ if 
\begin{itemize}
\item
$f$ is piecewise affine;
\item
$f$ is differentiable except at finitely many points;
\item
The $x$-coordinate of each of these points of non-differentiability is a dyadic number, i.e., a rational number whose denominator is an integer power of $2$;
\item
On the intervals where $f$ is differentiable, the derivatives are integer powers of $2$.
\end{itemize} 
\label{definition:ThompsonGroupF}
\end{definition}
In the remainder of this paper, $f$ is referred to an element in Thompson's group $\mathbb{F}$.
\begin{remark}
It is easily to see that $f(0)=0, f(1)=1$ and $f$ is strictly increasing on $[0,1]$ and is thus invertible. Except for the trivial map of $f=g_{0,+}$, $f$ is not $\lambda$-preserving.
\end{remark}
\begin{example}
Define the following two maps in $\mathbb{F}$.
\begin{equation}
f_A(x)=\left\{
\begin{array}{ll}
\frac{x}{2}, & 0\le x\le \frac{1}{2},\\
x-\frac{1}{4}, & \frac{1}{2}\le x \le \frac{3}{4},\\
2x-1, & \frac{3}{4} \le x \le 1,
\end{array}
\right.
f_B(x)=\left\{
\begin{array}{ll}
x, & 0\le x\le \frac{1}{2},\\
\frac{x}{2}+\frac{1}{4}, & \frac{1}{2}\le x \le \frac{3}{4},\\
x-\frac{1}{8}, & \frac{3}{4} \le x \le \frac{7}{8},\\
2x-1, & \frac{7}{8} \le x \le 1.
\end{array}
\right.
\label{eq:fAfB}
\end{equation}
\end{example}
The significance of $f_A$ and $f_B$ is that Thompson's group $\mathbb{F}$ is generated by the two maps. That is, any $f\in \mathbb{F}$ can be represented by a composition of possibly multiple $f_A$ and $f_B$ in certain order \cite{1996CFP}.

\subsection{$\lambda$-Preserving Thompson's Monoid $\mathbb{G}$}\label{sec:basic-3}

\begin{definition}[\textsl{$\lambda$-Preserving Thompson's Monoid $\mathbb{G}$}]
Continuous interval map $g$ from $[0,1]$ onto $[0,1]$ is an element of $\lambda$-preserving Thompson's monoid $\mathbb{G}$ if  
\begin{itemize}
\item
$g$ is $\lambda$-preserving;
\item
$g$ is piecewise affine;
\item
$g$ is differentiable except at finitely many points;
\item
The $x$-coordinate of each of these points of non-differentiability is a dyadic number;
\item
On an interval where $g$ is differentiable, the derivative is positive or negative and the absolute value of the derivative is an integer power of $2$.
\end{itemize} 
\label{definition:ThompsonMonoidG}
\end{definition}

\begin{remark}
The difference between $\mathbb{G}$ and $\mathbb{F}$ is that the derivatives can be negative in the maps of $\mathbb{G}$, which makes it possible for them to be $\lambda$-preserving. 
\end{remark}

\begin{remark}
It is easy to see that if $g_1, g_2, g_3\in\mathbb{G}$, then $g_1\circ g_2\in\mathbb{G}$ and $(g_1\circ g_2)\circ g_3 =g_1\circ (g_2\circ g_3)$. Trivial map $g_{0,+}$ is the identity element of $\mathbb{G}$. However, an inverse may not always exist for any given $g\in\mathbb{G}$. This is the reason that the set of maps satisfying these conditions is a monoid. 
\end{remark}

\begin{remark}
$\mathbb{G} \subset PA(\lambda)$. From Section~\ref{sec:basic-1}, the set of periodic points  of $g\in \mathbb{G}$ is dense on $[0,1]$.
\end{remark}

In the remainder of this paper, $g$ is referred to an element in $\lambda$-preserving Thompson's monoid $\mathbb{G}$. When $g$ is an affine segment on an interval, for simplicity, refer the derivative of $g$ on the interval to as the slope of the affine segment.

\begin{definition}[\textsl{Breakpoints}] 
Let $g\in \mathbb{G}$. A breakpoint of $g$ is either an endpoint at $x=0$ or $x=1$ or a point at which the derivative of $g$ is discontinuous. A breakpoint that is not an endpoint is referred to as interior breakpoint. An interior breakpoint is further categorized into type I and type II. At a type I breakpoint, the left and right derivatives are of the same sign. At a type II breakpoint, the left and right derivatives are of the opposite signs. 
\label{def:breakpoints}
\end{definition}

Point $(x,y)$ is said to be dyadic if both $x$ and $y$ are dyadic. 
\begin{lemma}
For any point $(x,y)$ of $g\in\mathbb{G}$, $y$ is dyadic if and only if $x$ is dyadic.
\label{lemma:alldyadic}
\end{lemma}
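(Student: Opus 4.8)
The plan is to anchor the argument at one point of the graph whose two coordinates are both dyadic, and then to propagate dyadicity along the affine pieces, exploiting that every breakpoint has dyadic $x$-coordinate and every slope is $\pm 2^{k}$. I would first record the structure guaranteed by Definition~\ref{definition:ThompsonMonoidG}: writing the breakpoints as $0=x_0<x_1<\cdots<x_n=1$, each $x_i$ is dyadic, and on each $[x_{i-1},x_i]$ the map $g$ is affine with slope $s_i=\pm 2^{k_i}$ for some integer $k_i$; in particular, since $s_i\neq 0$, no affine piece is constant. I will use repeatedly that the dyadic numbers are closed under addition and subtraction and under multiplication by any integer power of $2$ (including negative powers).

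The crux, and the step I expect to be the main obstacle, is producing an anchor point with dyadic $y$-coordinate; here I would use that $g$ is \emph{onto} $[0,1]$. Being continuous on the compact interval $[0,1]$ with range $[0,1]$, $g$ attains its global minimum value $0$. Because each affine piece is strictly monotone (its slope is nonzero), this minimum cannot be attained in the interior of a piece, so it is attained at a breakpoint, say $g(x_j)=0$. The point $(x_j,0)$ is then on the graph with both coordinates dyadic. Onto-ness is essential at exactly this point: without it the intercepts of the affine pieces could be non-dyadic, and the conclusion would fail.

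I would then propagate to every breakpoint. Telescoping along the pieces lying between $x_j$ and $x_i$ gives $g(x_i)-g(x_j)$ as a finite signed sum of terms $s_\ell\,(x_\ell-x_{\ell-1})$, each a product of a power of $2$ and a dyadic length, hence dyadic; since $g(x_j)=0$, every $g(x_i)$ is dyadic. Thus all breakpoints of $g$ have dyadic $y$-coordinate, which in turn makes every affine piece have a dyadic intercept.

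Finally, let $(x,y)$ be an arbitrary point of the graph. It lies on some piece $[x_{i-1},x_i]$, so $y=g(x_{i-1})+s_i\,(x-x_{i-1})$ with $g(x_{i-1})$ and $x_{i-1}$ dyadic and $s_i=\pm 2^{k_i}$. By the closure properties above, $y$ is dyadic if and only if $x-x_{i-1}$ is dyadic, which holds if and only if $x$ is dyadic. This establishes the stated equivalence. The only nonroutine ingredient is the anchoring via the attained minimum; the remaining steps are elementary power-of-$2$ arithmetic.
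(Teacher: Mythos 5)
Your proof is correct and follows essentially the same route as the paper: anchor at an extreme value guaranteed by surjectivity (you use the minimum $0$, the paper uses a preimage of $1$), telescope the piecewise-affine displacements $\pm 2^{k_\ell}(x_\ell-x_{\ell-1})$ to make every breakpoint's $y$-coordinate dyadic, then transfer dyadicity along a single affine piece using the invertibility of $\pm 2^{k}$ on the dyadics. Your explicit observation that the extremum must occur at a breakpoint (since no affine piece is constant) is a detail the paper leaves implicit, but the argument is the same.
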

\begin{proof}
Let $(x_0, y_0)$ be a breakpoint of $g$. By definition, $x_0$ is dyadic. If $y_0=1$, then $(x_0,y_0)$ is already dyadic. Otherwise, let $c\in g^{-1}(1)$. Suppose $c>x_0$. (The case of $c<x_0$ can be proven analogously.) Let $x_0<x_1<\cdots <x_n=c$ be the set of breakpoints between $x_0$ and $c$ and the slope of the affine segment on $[x_{i-1}, x_i]$ be $(-1)^{p_i} 2^{k_i}$, with $p_i$ equal to $0$ or $1$ and $k_i$ an integer, for $i=1, \ldots, n$. Thus, $g(x_n)-g(x_0)=1-y_0=\sum_{i=1}^n (-1)^{p_i} 2^{k_i} (x_i-x_{i-1})$, which is a dyadic number. Hence, $(x_0,y_0)$ is dyadic. Therefore, any breakpoint of $g$ is dyadic. The lemma follows immediately because both endpints of an affine segment are dyadic and the derivative is in the form of $\pm2^{k}$ with integer $k$.
\end{proof}
\begin{lemma}
For $y\in[0,1]$, suppose that $g^{-1}(y)=\{x_1, \ldots, x_n\}$ and none of $x_1, \ldots, x_n$ are breakpoints. Map $g$ is $\lambda$-preserving if and only if
\begin{equation}
\sum_{i=1}^n \frac{1}{|g'(x_i)|}=\sum_{i=1}^n 2^{-k_i}=1,
\label{eq:sum2^-k=1}
\end{equation}
where $k_i$ is integer and $|g'(x_i)| = 2^{k_i}$ is the absolute value of the slope of the affine segment on which $x_i$ resides. 
\label{lemma:basicderivativeinverse}
\end{lemma}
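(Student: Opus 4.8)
The plan is to reduce the $\lambda$-preserving condition to the single pointwise identity $\sum_i 2^{-k_i}=1$ through a local change of variables on each affine segment, and to package both directions of the equivalence through the ``coarea'' identity
\[
\lambda\bigl(g^{-1}(A)\bigr)=\int_A S(t)\,dt,\qquad S(t):=\sum_{x\in g^{-1}(t)}\frac{1}{|g'(x)|},
\]
valid for every Borel set $A\subseteq[0,1]$. First I would fix notation: since $g$ has only finitely many breakpoints, $[0,1]$ splits into finitely many closed intervals $P_1,\dots,P_m$ on each of which $g$ is affine with slope $\pm 2^{k(P)}$, and each restriction $g|_{P}$ is a bijection of $P$ onto the image interval $J_P=g(P)$. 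The hypothesis that no $x_i$ is a breakpoint guarantees each $x_i$ lies interior to one such segment, so $g'(x_i)=\pm 2^{k_i}$ is well defined and $g$ is locally a bijection near $x_i$.

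To establish the key identity, on a single piece $P$ the map $g|_P$ is affine with $|g'|=2^{k(P)}$, so for any Borel $A$ one has $\lambda\bigl((g|_P)^{-1}(A)\bigr)=2^{-k(P)}\lambda(A\cap J_P)=2^{-k(P)}\int_A \mathbbm{1}_{J_P}(t)\,dt$. The pieces $P_1,\dots,P_m$ cover $[0,1]$ and overlap only at the finitely many breakpoints, so summing over $P$ and exchanging sum with integral gives $\lambda(g^{-1}(A))=\int_A\bigl(\sum_P 2^{-k(P)}\mathbbm{1}_{J_P}(t)\bigr)\,dt$. The integrand equals $S(t)$ for every $t$ that is not the image of a breakpoint, a finite and hence $\lambda$-null set of values, which yields the displayed identity.

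Granting the identity, necessity is immediate and genuinely pointwise: choosing $\varepsilon>0$ small enough that $g^{-1}((y-\varepsilon,y+\varepsilon))$ is the disjoint union of $n$ subintervals, one interior to each affine segment through an $x_i$ (possible because there are finitely many breakpoints and each $x_i$ is interior to its segment), the identity gives $\lambda(g^{-1}((y-\varepsilon,y+\varepsilon)))=2\varepsilon\sum_i 2^{-k_i}$, and equating this to $2\varepsilon$ by $\lambda$-preservation forces $\sum_i 2^{-k_i}=1$. For sufficiency I would argue through $S$: between consecutive images of breakpoints the collection of pieces covering a given $t$ does not change, so $S$ is constant on each of the finitely many open intervals these critical values cut out, and its value there is exactly $\sum_i 2^{-k_i}$ for the corresponding $y$; if this equals $1$ on every such interval then $S\equiv 1$ almost everywhere, whence $\lambda(g^{-1}(A))=\int_A 1\,dt=\lambda(A)$ for all Borel $A$.

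The main obstacle, and the point I would be most careful about, is the quantifier over $y$ in the sufficiency direction. A single value $y$ with $\sum_i 2^{-k_i}=1$ does not by itself force $\lambda$-preservation, since $S$ can equal $1$ on one subinterval while differing from $1$ on others; note that the average of $S$ is always $1$, because $\int_0^1 S=\sum_P 2^{-k(P)}|J_P|=\sum_P|P|=1$, but this does not pin $S$ down pointwise. The equation must therefore be read as holding for every admissible $y$, equivalently for all but the finitely many critical values, and under that reading the piecewise constancy of $S$ closes the argument. Secondary care is needed for the endpoints $y\in\{0,1\}$, where one uses one-sided intervals, and in confirming that breakpoints form a $\lambda$-null set in both the domain and the range so that they may be discarded throughout.
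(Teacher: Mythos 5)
Your proposal is correct, and it takes a genuinely different route from the paper's. The paper argues purely locally at the given $y$: it sets $\mathcal{Y}=[y-\delta,y+\delta]$, notes that for small $\delta$ the preimage $g^{-1}(\mathcal{Y})$ splits into $n$ disjoint intervals $\mathcal{I}_i$ with $g(\mathcal{I}_i)=\mathcal{Y}$ and $\lambda(\mathcal{I}_i)=|g'(x_i)|^{-1}\lambda(\mathcal{Y})$, equates $\lambda(\mathcal{Y})=\sum_{i=1}^n\lambda(\mathcal{I}_i)$ by $\lambda$-preservation --- which is essentially your necessity step --- and then asserts that the equivalence ``follows immediately,'' leaving the converse implicit. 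You instead build the global identity $\lambda\bigl(g^{-1}(A)\bigr)=\int_A S(t)\,dt$ by decomposing $g$ into its finitely many affine pieces, and this buys you an honest proof of the ``if'' direction: you correctly observe that $S$ is constant on the finitely many intervals cut out by images of breakpoints, so the hypothesis must be read as holding for \emph{every} admissible $y$ (a single $y$ with $\sum_i 2^{-k_i}=1$ is not enough, precisely because $\int_0^1 S=1$ holds automatically), whence $S=1$ off a null set and $\lambda(g^{-1}(A))=\lambda(A)$ for all Borel $A$. This quantifier subtlety is exactly what the paper's one-line sufficiency glosses over, and your flagging of it (together with the one-sided treatment of $y\in\{0,1\}$ and the discarding of the null set of critical values) is the main added value; the cost is a slightly heavier setup, while the paper's local computation is shorter but, as written, only fully establishes the ``only if'' half. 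On necessity the two arguments coincide, both resting on the same elementary fact that an affine piece of slope $\pm 2^{k}$ contracts measure by $2^{-k}$ under preimages.
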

\begin{proof}
Let $\mathcal{Y}=[y-\delta, y+\delta]$ for $\delta>0$. For a sufficiently small $\delta$, $g^{-1}(\mathcal{Y})=\bigcup_{i=1}^n I_i$, where intervals $\mathcal{I}_i$ are disjoint, $x_i \in \mathcal{I}_i$, and $g(\mathcal{I}_i)=\mathcal{Y}$ for $i=1, \ldots, n$. $\lambda(\mathcal{Y})=\lambda\left(g(\mathcal{I}_i)\right)=|g'(x_i)|\lambda(\mathcal{I}_i)$ as $\delta \to 0$. By $\lambda$-preservation and because $\mathcal{I}_i$ are disjoint, $\lambda(\mathcal{Y})=\lambda\left(g^{-1}(\mathcal{Y})\right)=\sum_{i=1}^n \lambda(\mathcal{I}_i)$. (\ref{eq:sum2^-k=1}) follows immediately. 
\end{proof}

To satisfy (\ref{eq:sum2^-k=1}), $k_i$ must be non-negative for any $i$. In contrast, for $f$, a derivative can be a negative integer power of $2$. Moreover, if $n>1$, $g'(x_i)$ has alternating signs: $g'(x_i)g'(x_{i+1})<0$ for $i=1, \ldots, n-1$. Unlike $f$, $g$ is not orientation-preserving except for the trivial maps.

\begin{definition}[\textsl{Legs and Affine Legs}]
Let interval $\mathcal{Y} \subset [0,1]$. If except for a finite number, $\forall y \in \mathcal{Y}$, set $g^{-1}(y)$ has $m$ elements, then $g^{-1}(\mathcal{Y})$ is said to have $m$ legs. When $g^{-1}(\mathcal{Y})$ has $m$ legs, $m$ intervals $\mathcal{I}_1,\ldots, \mathcal{I}_m$ with mutually disjoint interiors exist such that $g^{-1}(\mathcal{Y})=\bigcup_{i=1}^m \mathcal{I}_i$, and $g$ is monotone on every $\mathcal{I}_i$ and $\mathcal{Y}=g(\mathcal{I}_i)$ for any $i$. The graph of $g$ on $\mathcal{I}_i$ is referred to as the $i$-th leg. If $g$ is affine on every $\mathcal{I}_i$, then $g^{-1}(\mathcal{Y})$ is said to have $m$ affine legs.  
\end{definition}

\begin{definition}[\textsl{Window Perturbation}]
When $g^{-1}(\mathcal{Y})$ has $m$ affine legs for interval $\mathcal{Y}$, if $\bigcup_{i=1}^m \mathcal{I}_i$ is an interval $\mathcal{I}$, $g$ is said to be an $m$-fold window perturbation on $\mathcal{I}$.
\end{definition}

Figure~\ref{fig:WeeklyDiscussion_20200217Page-11} illustrates the definitions of legs, affine legs and window perturbation.
\begin{figure}
 \centering
  \includegraphics[width=10cm]{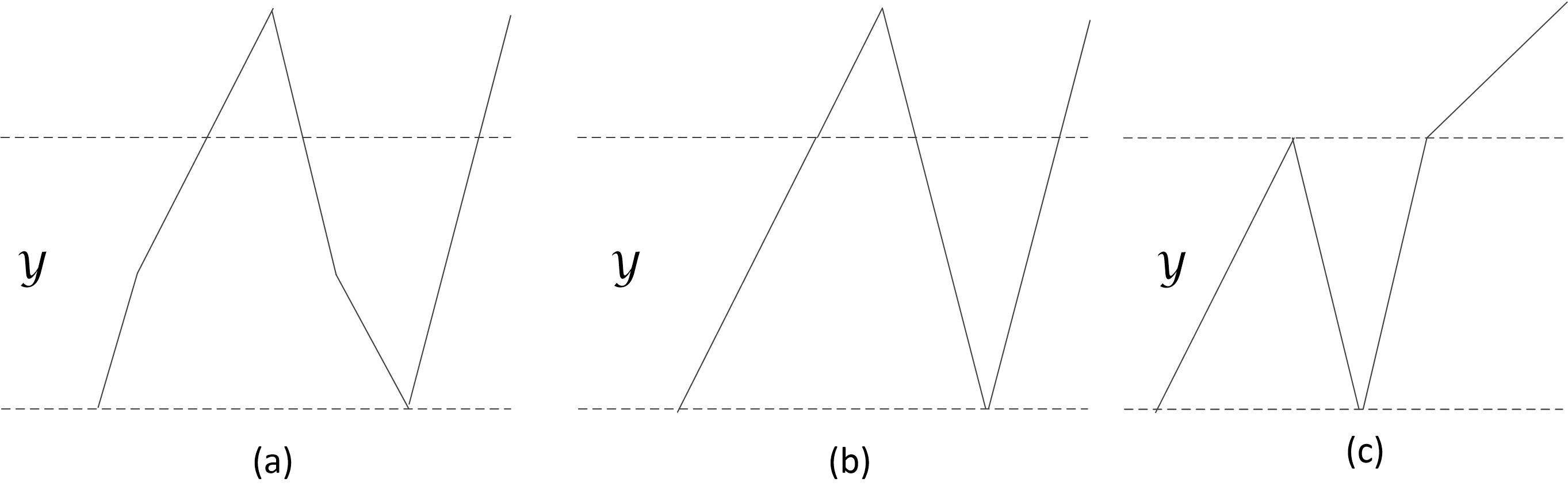}
  \caption{Illustration of the definitions of legs (a), affine legs (b) and window perturbation (c). $m=3$ in the figure.}
  \label{fig:WeeklyDiscussion_20200217Page-11}
\end{figure} 

\section{Approximation \label{sec:approximation}}

Define metric $\rho$ by $\rho(h_1,h_2)=\sup_{x\in[0,1]}|h_1(x)-h_2(x)|$ for any two continuous interval maps $h_1$ and $h_2$. This section will show that $\mathbb{G}$ has rich approximation capability in the sense that any $\lambda$-preserving continuous map $h$ can be approximated by a map $g\in\mathbb{G}$ within any $\epsilon>0$ neighborhood, i.e., $\rho(h,g)<\epsilon$. 

\begin{lemma}
Let $(x_1,y_1)$ and $(x_2,y_2)$ be two dyadic points where $x_1<x_2$ and $y_1<y_2$. Suppose that $y_2-y_1\ge x_2-x_1$. If $\frac{y_2-y_1}{x_2-x_1}\neq{2^k}$ for any integer $k$, then a dyadic point $(x_3, y_3)$ exists with $x_1<x_3<x_2$, $y_1<y_3<y_2$ such that the slopes between $(x_1,y_1), (x_3,y_3)$ and between $(x_2,y_2), (x_3,y_3)$ are both in the form of $2^{k}$ for non-negative integer $k$.
\label{lemma:x1x2x3}
\end{lemma}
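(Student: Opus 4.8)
The plan is to give a direct construction of the interior point, exploiting the freedom to place a single break between the two given dyadic points. Write $\Delta x = x_2 - x_1$ and $\Delta y = y_2 - y_1$; both are positive dyadic numbers (differences of dyadics are dyadic), and the overall slope is $s = \Delta y / \Delta x$. Since $s \geq 1$ by the hypothesis $y_2 - y_1 \geq x_2 - x_1$, and $s$ is assumed not to be a power of $2$, we have $s > 1$, so there is a unique non-negative integer $k$ with $2^k < s < 2^{k+1}$; the inequalities are strict precisely because $s \neq 2^j$ for every integer $j$.

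First I would fix the two target slopes to be the consecutive powers of $2$ that bracket $s$, say slope $2^{k+1}$ on the first segment $[x_1,x_3]$ and slope $2^{k}$ on the second segment $[x_3,x_2]$. This reduces the problem to solving the single linear equation $\Delta y = 2^{k+1}(x_3 - x_1) + 2^{k}(x_2 - x_3)$ for $x_3$, which simplifies to $\Delta y = 2^{k}\bigl((x_3 - x_1) + \Delta x\bigr)$ and hence gives $x_3 - x_1 = 2^{-k}\Delta y - \Delta x$, with the matching increment $y_3 - y_1 = 2^{k+1}(x_3 - x_1)$.

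Then I would verify the three requirements in turn. For the location, the strict bracketing $2^k < s < 2^{k+1}$ is exactly what forces $0 < x_3 - x_1 < \Delta x$, hence $x_1 < x_3 < x_2$, and since both segment slopes are positive this simultaneously yields $y_1 < y_3 < y_2$ (concretely, $\Delta y - (y_3-y_1) = 2^{k}(\Delta x - (x_3-x_1)) > 0$). For dyadicity, $x_3 - x_1 = 2^{-k}\Delta y - \Delta x$ is dyadic because scaling a dyadic by a power of $2$ and then subtracting a dyadic both preserve dyadicity; as $x_1$ is dyadic, so is $x_3$, and likewise $y_3 = y_1 + 2^{k+1}(x_3 - x_1)$ is dyadic. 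For the slopes, by construction the slope on $[x_1,x_3]$ is $2^{k+1}$ and the slope on $[x_3,x_2]$ is $2^{k}$, both of the required form $2^{j}$ with $j$ a non-negative integer.

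I expect no serious obstacle; the only point requiring care is to invoke the hypothesis $\Delta y \geq \Delta x$ at the right moment. It is this inequality that guarantees $s \geq 1$ and hence $k \geq 0$, so that both bracketing exponents are non-negative and the two slopes are genuinely of the form $2^{j}$ with $j \geq 0$ rather than involving negative powers of $2$. The only degenerate case to dispatch is $s = 1$, which is excluded because $1 = 2^0$ is itself a power of $2$ and so cannot occur under the stated hypothesis, leaving $s > 1$ as used above.
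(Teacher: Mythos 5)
Your proposal is correct and is essentially the paper's own construction: choosing $k_2=\left\lfloor\log_2\frac{y_2-y_1}{x_2-x_1}\right\rfloor$ and $k_1=k_2+1$ in the paper's formula for $x_3$ yields exactly your $x_3-x_1=2^{-k}\Delta y-\Delta x$ with slopes $2^{k+1}$ and $2^k$. You simply spell out the verifications (strict betweenness from the strict bracketing $2^k<s<2^{k+1}$, dyadicity, and $k\ge 0$ from $\Delta y\ge\Delta x$) that the paper leaves as ``easy to verify,'' and you give only one of the paper's two stated solutions, which suffices for existence.
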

\begin{proof}
Let 
\[
\frac{y_3-y_1}{x_3-x_1}=2^{k_1}, \frac{y_3-y_2}{x_3-x_2}=2^{k_2}.
\]
Then 
\[
x_3 =x_1+ \frac{2^{-k_2}(y_2-y_1)-(x_2-x_1)}{2^{k_1-k_2}-1}.
\]
Two integer solutions are given by
\[
\left\{
\begin{array}{ccc}
k_2&=&\left\lfloor{\log_2 \frac{y_2-y_1}{x_2-x_1}}\right\rfloor\\
k_1&=&k_2+1,
\end{array}
\right.
\mbox{ and }
\left\{
\begin{array}{ccc}
k_2&=&\left\lceil{\log_2 \frac{y_2-y_1}{x_2-x_1}}\right\rceil\\
k_1&=&k_2-1,
\end{array}
\right.
\]
It is easy to verify that in either solution, $(x_3,y_3)$ is dyadic and $x_1<x_3<x_2$ and $y_1<y_3<y_2$.
\end{proof}
Point $(x_3,y_3)$ in Lemma~\ref{lemma:x1x2x3} is referred to as a \emph{partition point} between points $(x_1,y_1)$ and $(x_2,y_2)$.

\begin{proposition}
For any increasing continuous map $a: [0,1]\rightarrow[0,1]$ and any $\epsilon>0$, map $f\in\mathbb{F}$ exists such that $\rho(a,f)<\epsilon$.
\end{proposition}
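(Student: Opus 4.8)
The plan is to approximate $a$ first by a piecewise affine increasing homeomorphism through dyadic points, and then to repair the slopes so that each affine piece has slope a power of $2$, the repair being carried out by Lemma~\ref{lemma:x1x2x3}. Throughout I read $a$ as an increasing homeomorphism of $[0,1]$ (equivalently, an increasing continuous surjection), so that $a(0)=0$ and $a(1)=1$ and $a$ is strictly increasing; this normalization is necessary, since every $f\in\mathbb{F}$ fixes $0$ and $1$, so $\rho(a,f)<\epsilon$ already forces $a(0)<\epsilon$ and $a(1)>1-\epsilon$.

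First I would use uniform continuity of $a$ on $[0,1]$ to choose a dyadic partition $0=t_0<t_1<\cdots<t_N=1$ of the domain so fine that $a$ oscillates by less than $\epsilon/2$ on each $[t_{i-1},t_i]$. Next, for each interior node I would pick a dyadic value $y_i$ with $|y_i-a(t_i)|$ arbitrarily small, and set $y_0=0$, $y_N=1$; choosing the $y_i$ close enough to the strictly increasing values $a(t_i)$ keeps them strictly increasing and keeps every point $(t_i,y_i)$ within $\epsilon/2$ of the graph of $a$. Let $\ell$ be the piecewise affine increasing homeomorphism interpolating the dyadic points $(t_0,y_0),\ldots,(t_N,y_N)$. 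Because $\ell$ and $a$ agree to within the oscillation on each subinterval and the nodes lie within $\epsilon/2$ of the graph, a routine estimate gives $\rho(a,\ell)<\epsilon$. At this stage $\ell$ is already piecewise affine with dyadic breakpoints, but its slope on $[t_{i-1},t_i]$, namely $(y_i-y_{i-1})/(t_i-t_{i-1})$, is only a ratio of dyadic numbers and need not be a power of $2$.

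The repair step is where Lemma~\ref{lemma:x1x2x3} enters. On any segment whose slope is already a power of $2$ I would do nothing. On a segment with slope $>1$ that is not a power of $2$, the hypotheses of Lemma~\ref{lemma:x1x2x3} hold for the endpoints $(t_{i-1},y_{i-1})$ and $(t_i,y_i)$, so I can insert a dyadic partition point and split the segment into two affine pieces, each of slope $2^{k}$ with $k\ge 0$. On a segment with slope $<1$ I would apply the mirror-image of the lemma (reflect the endpoints across the diagonal $y=x$, where the slope becomes its reciprocal $>1$, apply the lemma, and reflect the resulting dyadic partition point back), producing two pieces of slope $2^{-k}$ with $k\ge 0$. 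In every case the inserted point lies inside the rectangle $[t_{i-1},t_i]\times[y_{i-1},y_i]$, so the refined function still interpolates points within $\epsilon$ of the graph and its uniform distance to $a$ is unchanged, while it remains strictly increasing. The resulting map $f$ is piecewise affine with finitely many dyadic breakpoints and all slopes integer powers of $2$, hence $f\in\mathbb{F}$ with $\rho(a,f)<\epsilon$.

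The main obstacle I anticipate is precisely the slope-repair step and the need to cover \emph{every} segment: Lemma~\ref{lemma:x1x2x3} is stated only for slope $\ge 1$, so the argument genuinely requires the symmetric counterpart for shallow segments, and one must check both that a single inserted point suffices (it does, because a segment's slope lies between two consecutive powers of $2$, which the lemma's two admissible exponent choices straddle) and that these insertions never enlarge $\rho(a,f)$. The remaining bookkeeping---fineness of the partition, dyadic approximation of the nodes, and preservation of strict monotonicity together with the endpoint normalization $f(0)=0$, $f(1)=1$---is routine.
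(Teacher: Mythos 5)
Your proof is correct and takes essentially the same route as the paper's: choose dyadic nodes close to the graph of $a$, interpolate affinely, and use Lemma~\ref{lemma:x1x2x3} to split any segment whose slope is not an integer power of $2$, noting the inserted partition point stays inside the node rectangle so the sup-norm estimate survives. You are in fact slightly more careful than the paper on two points it leaves implicit: that $a$ must be read as onto (since every $f\in\mathbb{F}$ fixes $0$ and $1$), and that segments of slope less than $1$ require the reflected form of the lemma, which is stated only for slope at least $1$.
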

\begin{proof}
Because the set of dyadic points is dense and map $a$ is increasing, a set of dyadic points $(x_i,y_i)$, for $i=0, 1, \ldots, n$, exist such that $x_0=0, x_n=1$, $x_i<x_j$ and $y_i<y_j$ if $i<j$, and $a(x_i)-a(x_{i-1})<\frac{\epsilon}{3}$ and $|y_i-a(x_i)|<\frac{\epsilon}{3}$ for all $i$. Connect point $(x_{i-1},y_{i-1})$ and point $(x_{i},y_{i})$ directly if the slope between them is in the form of $2^k$ for integer $k$ or otherwise via a partition point between them defined in Lemma~\ref{lemma:x1x2x3}. The resultant map is $f\in\mathbb{F}$. For $x\in[x_{i-1},x_i]$, 
\begin{eqnarray*}
|a(x)-f(x)|&\le&\max(a(x_i), y_{i})-\min(a(x_{i-1}), y_{i-1})\\
&\le&|a(x_i)-y_{i}|+|a(x_{i-1})-y_{i-1}|+|a(x_i)-a(x_{i-1})|\\
&<&\frac{\epsilon}{3}+\frac{\epsilon}{3}+\frac{\epsilon}{3}=\epsilon.
\end{eqnarray*}
Hence, $\rho(a,f)<\epsilon$.
\end{proof}

\begin{theorem}
$\mathbb{G}$ is dense in $C(\lambda)$. That is, for any $b\in C(\lambda)$ and $\epsilon>0$, map $g\in\mathbb{G}$ exists such that $\rho(b,g)<\epsilon$.
\label{theorem:approximationproperty}
\end{theorem}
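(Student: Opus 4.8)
The plan is to build $g$ strip by strip in the range. First I would reduce to a convenient piecewise-monotone approximant: using uniform continuity of $b$ together with the density of piecewise affine $\lambda$-preserving maps in $C(\lambda)$ (so that only finitely many monotone laps occur; this can be obtained directly by interpolation-and-correction or taken as known), I would replace $b$ by some $b_1\in PA(\lambda)$ with $\rho(b,b_1)<\epsilon/2$ whose breakpoints are dyadic and whose local extrema lie on a fine dyadic grid $\{j/M\}$, $M=2^m$. Inserting a type I breakpoint at every crossing of a grid level, I may assume each affine lap of $b_1$ spans exactly one elementary strip $R_j=[(j-1)/M,\,j/M]$, so that every lap has height exactly $1/M$. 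Crucially, by the remark following Lemma~\ref{lemma:basicderivativeinverse}, $\lambda$-preservation forces every slope to satisfy $|s|\ge 1$; hence every lap has width $\le 1/M$, and the graph of $b_1$ is confined to the thin strips it visits.

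Next I would convert each lap into legs of dyadic slope while repairing measure preservation exactly. Fix a strip $R_j$ and let $s_1,\dots,s_{n_j}$ be the slopes of the laps of $b_1$ crossing it. By Lemma~\ref{lemma:basicderivativeinverse} applied at an interior level of $R_j$ (where no extrema occur), $\sum_i 1/|s_i|=1$. I would round each reciprocal $1/|s_i|$ to a nearby dyadic number $d_i$, choosing the last one as $d_{n_j}=1-\sum_{i<n_j}d_i$; since $1$ minus a sum of dyadics is again dyadic, $d_{n_j}$ is automatically dyadic and, for fine rounding, lies in $(0,1)$. Writing each $d_i=\sum_\ell 2^{-k_{i\ell}}$ in binary, I would replace the $i$-th lap on its domain interval by a window perturbation whose legs have slopes $\pm 2^{k_{i\ell}}$ and reciprocal-slope sum $d_i$, oriented so that its first and last legs match the monotonicity of $b_1$ there and connect across the strip boundaries. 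Because $\sum_i d_i=1$ identically on $R_j$, the resulting map $g$ satisfies $\sum 2^{-k}=1$ at every non-exceptional level, so by Lemma~\ref{lemma:basicderivativeinverse} it is exactly $\lambda$-preserving; its breakpoints are dyadic (leg widths are dyadic multiples of $1/M$) and its slopes are $\pm 2^{k}$, whence $g\in\mathbb{G}$.

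Finally I would verify closeness. Every leg of $g$ has height $1/M$ and, since all slopes are $\ge 1$, width $\le 1/M$; thus within each strip the zigzag stays inside $R_j$, contributing at most $1/M$ of vertical error. The only additional error is the horizontal drift from replacing the true lap widths $1/|s_i|$ by the rounded $d_i$; because the legs of one lap occupy a single contiguous domain interval and the rounding error can be made arbitrarily small by enlarging $m$, this drift is controlled and, through the modulus of continuity of $b$, contributes less than $\epsilon/2$ in sup-norm. Combining, $\rho(b_1,g)<\epsilon/2$ and hence $\rho(b,g)<\epsilon$.

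I expect the main obstacle to be the simultaneous demands of exact measure preservation and dyadic slopes: rounding the reciprocals $1/|s_i|$ independently would destroy the identity $\sum 2^{-k}=1$, while window perturbation alone merely scales slopes and cannot alter a level's reciprocal sum. The resolution is the residual-absorption step, which forces the per-strip sum to equal $1$ exactly and shifts the entire rounding error into a single dyadic quantity that is then realized as several power-of-two legs via its binary expansion. The fact, from Lemma~\ref{lemma:basicderivativeinverse}, that measure preservation already pins all slopes to magnitude $\ge 1$ is what keeps every leg short and makes the horizontal drift, rather than the vertical error, the only quantity needing control.
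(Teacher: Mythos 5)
Your central mechanism is sound, and part of it is genuinely slicker than the paper's: given a piecewise affine $\lambda$-preserving approximant whose laps each span exactly one dyadic strip, your per-strip rounding with residual absorption ($d_{n_j}=1-\sum_{i<n_j}d_i$, so that $\sum_i d_i=1$ holds exactly and Lemma~\ref{lemma:basicderivativeinverse} restores exact measure preservation level by level), together with rebuilding every breakpoint from scratch as a sum of dyadic leg widths, eliminates any need for the original breakpoints to have dyadic $x$-coordinates. That bypasses the paper's Step 2 (where non-dyadic $x$-coordinates are repaired by the balancing condition $\sum_i \sgn_{C_i} C_{i,x}$ being dyadic) and replaces it with a cumulative-drift estimate via the Lipschitz constant, which works since there are finitely many laps and the rounding is arbitrarily fine. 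One small repair: a lap-replacing window perturbation must have an odd number of legs to enter at the bottom of the strip and exit at the top, while the binary expansion of $d_i$ may have an even number of terms; split one term $2^{-k}=2^{-k-1}+2^{-k-1}$.

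The genuine gap is your opening reduction. You assume $b_1\in PA(\lambda)$ can be chosen with all local extrema lying exactly on the grid $\{j/M\}$, citing ``interpolation-and-correction or taken as known.'' The known density of $PA(\lambda)$ in $C(\lambda)$ gives no control on extremum heights, and no local adjustment can supply it: moving a local maximum from a non-dyadic height $y^*$ to a nearby grid level changes the number of preimages of every level between the two heights, so it destroys exact $\lambda$-preservation unless a compensating modification is made elsewhere in the graph. This is exactly what the paper's Step 1 accomplishes (the companion point $Z$ and the compensating segments $ZB$, $BA$ in Figure~\ref{fig:WeeklyDiscussion_20200217Page-7}), and it is the most delicate part of the paper's proof; your proposal contains no substitute for it. The assumption is not dispensable in your scheme either: an extremum at non-grid height $y^*$ interior to a strip makes the lap set, and hence the identity $\sum_i 1/|s_i|=1$, change within that strip, so your per-strip rounding cannot be set up, while subdividing the strip at the non-dyadic level $y^*$ produces leg heights, hence leg widths and breakpoints, that are no longer dyadic, so the resulting map leaves $\mathbb{G}$. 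Until you supply a measure-preserving mechanism that pushes all extrema onto dyadic levels, the proof is incomplete at precisely the step the paper works hardest on.
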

\begin{proof}
Given $b\in C(\lambda)$ and $\epsilon>0$, it has been shown in \cite{10.2307/44153684} that $h\in PA(\lambda)$ exists such that $\rho(b,h)<\epsilon/2$. In this proof, let $g=h$ initially and then perturb $g$ in the following three steps such that $\rho(g_{\text{old}}, g_{\text{new}})<\epsilon/6$ in each step, where $g_{\text{old}}$ and $g_{\text{new}}$ represent map $g$ before and after each step of perturbation, respectively, and eventually make $g$ an element of $\mathbb{G}$. 

Let $\{(x_0,y_0), (x_1, y_1),\ldots, (x_n, y_n)\}$ be the set of breakpoints of $g$, where $x_0=0, x_n=1$. Points $(x_0,y_0)$ and $(x_n,y_n)$ are the two endpoints. Not all $x_i, y_i$ are dyadic. The first step is to eliminate $(x_i,y_i)$ if $y_i$ is not dyadic. The second step is to eliminate $(x_i,y_i)$ if $x_i$ is not dyadic. The third step is to eliminate segments whose slopes are not in the form of $\pm2^k$ with integer $k$. Measure is preserved in each step.

\begin{figure}
 \centering
  \includegraphics[width=16cm]{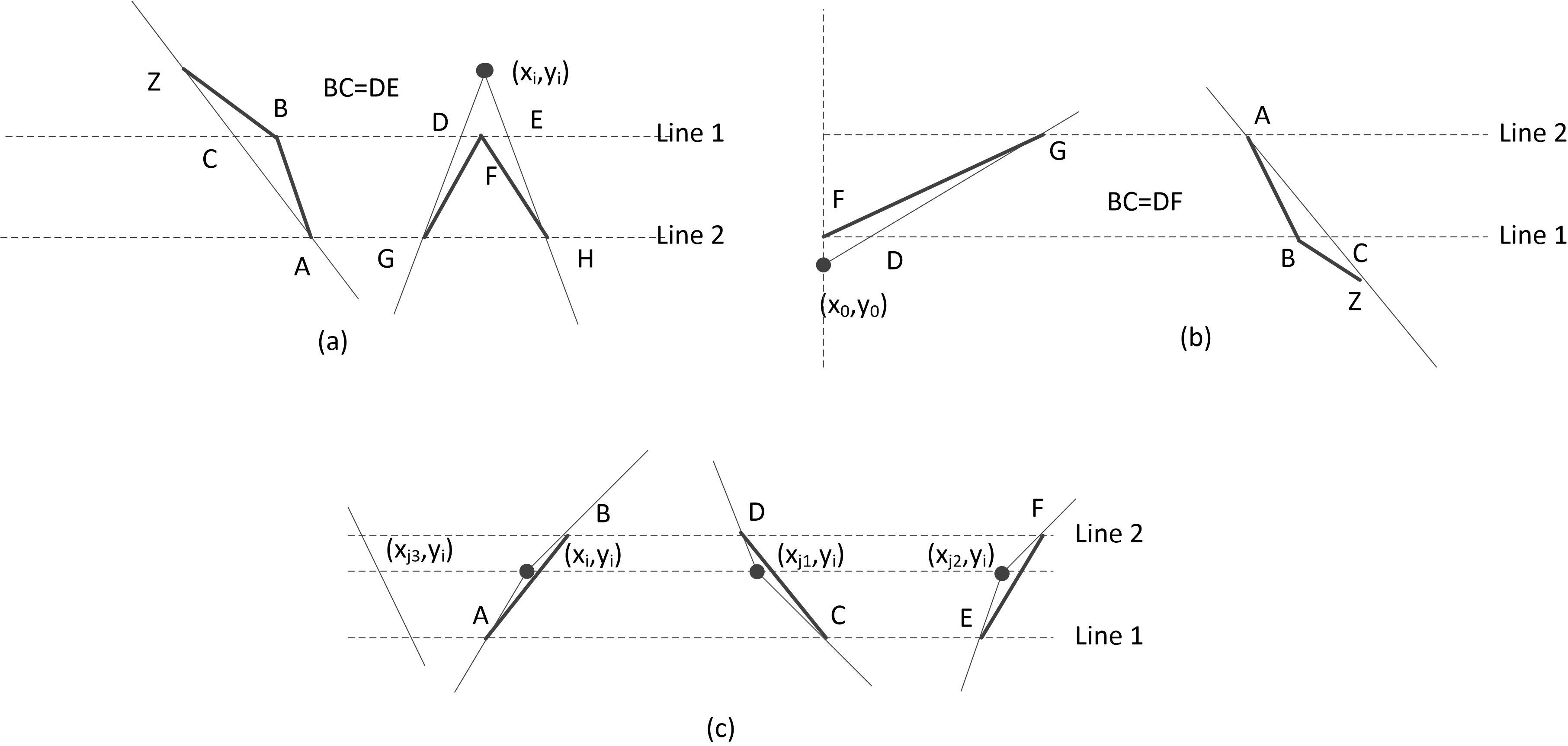}
  \caption{Step $1$ of the proof of Theorem~\ref{theorem:approximationproperty}: eliminate any breakpoint whose $y$-coordinate is not dyadic. In the figure, the thin and thick segments represent the graph of $g$ before and after the replacement respectively. In (a), type II breakpoint $(x_i,y_i)$ is replaced by point $F$ while segment $ZA$ is replaced by $ZB$ and $BA$ to preserve $\lambda$. In (b), endpoint $(x_0,y_0)$ is replaced by point $F$ while segment $ZA$ is replaced by $ZB$ and $BA$. In (c), type I breakpoints $(x_i,y_i), (x_{j_1}, y_i), (x_{j_2}, y_i)$ are eliminated by directly connecting $AB, CD, EF$.
}
  \label{fig:WeeklyDiscussion_20200217Page-7}
\end{figure} 

\emph{Step $1$}. First, suppose that point $(x_i,y_i)$ is a type II breakpoint and $y_i$ is not dyadic. Figure~\ref{fig:WeeklyDiscussion_20200217Page-7}(a) shows how $(x_i,y_i)$ is eliminated. Because $g$ is continuous and onto $[0,1]$, there exists another point $Z$ on $g$ such that $Z_y=y_i$ and the left side and right side derivatives of $Z$ are of the same sign. Point $Z$ is not necessarily a breakpoint itself. Without loss of generality, suppose that the right side derivative of $(x_i,y_i)$ is negative. Let line $2$ be a horizontal line $y=\hat{y}_i$ where $\hat{y}_i$ is dyadic, $0<\hat{y}_i<y_i$ and $y_i-\hat{y}_i<\frac{\epsilon}{12}$. Choose $\hat{y}_i$ sufficiently close to $y_i$ such that no breakpoint exists whose $y$-coordinate falls in $(\hat{y}_i, y_i)$. Let line $1$ be a horizontal line $y=\tilde{y}_i$ where $\tilde{y}_i$ is dyadic, $\hat{y}_i<\tilde{y}_i<y_i$. Let $D$ and $E$ be the two points where the left and right side affine segments of $g$ connecting $(x_i,y_i)$ intersect line $1$, and $G$ and $H$ be the two points where they intersect line $2$. Let $C$ and $A$ be the two points where $g$ connecting $Z$ intersects lines $1$ and $2$ respectively. Choose $\tilde{y}_i$ sufficiently close to $y_i$ such that $|C_x-Z_x|+E_x-D_x<|A_x-Z_x|$. Such a $\tilde{y}_i$ exists because $C_x-Z_x\to0$ and $E_x-D_x \to 0$ as $\tilde{y}_i \to y_i$. Let $F$ be any point on line $1$ with $D_x<F_x<E_x$. Let point $B$ be on line $1$ where $BC=DE$ and $|Z_x-B_x|>|Z_x-C_x|$. Replace $(x_i,y_i)$ with $F$ by connecting $G, F$ and $H, F$. Replace the portion of $g$ between $Z$ and $A$ with segments $ZB$ and $BA$. Because $BC=DE$, measure is preserved between the horizontal line $y=y_i$ and line $1$ and between line $1$ and line $2$. Therefore, a type II breakpoint $(x_i,y_i)$ is eliminated while six new breakpoints are added: $A, B, F, G, H$ all have dyadic $y$-coordinate and $Z$ is a type I breakpoint with non-dyadic $y$-coordinate. $\rho(g_{\text{old}}, g_{\text{new}})<\frac{\epsilon}{12}$. Repeat the preceding procedure, one can eliminate all type II breakpoints with non-dyadic $y$-coordinates.

Endpoints $(x_0,y_0)$ and $(x_n,y_n)$ where $y_0$ or $y_n$ is not dyadic can be eliminated analogously as shown Figure~\ref{fig:WeeklyDiscussion_20200217Page-7}(b).

After the preceding procedure, the only remaining breakpoints that have non-dyadic $y$-coordinates are of type I. If $(x_i,y_i)$ is one such breakpoint, then there exists at least another breakpoint $(x_j,y_i)$ where $x_j\neq x_i$. Figure~\ref{fig:WeeklyDiscussion_20200217Page-7}(c) illustrates an example where two such breakpoints $(x_{j_1},y_i)$ and $(x_{j_2},y_i)$ exist. It is possible that points with $y$-coordinate equal to $y_i$ exist and are not a breakpoint, such as $(x_{j_3},y_i)$ in the figure. Let line $1$ and line $2$ be horizontal lines $y=\hat{y}_i$ and $y=\tilde{y}_i$, respectively, where $\hat{y}_i$ and $\tilde{y}_i$ are both dyadic, $\hat{y}_i<y_i<\tilde{y}_i$ and $\tilde{y}_i-\hat{y}_i<\frac{\epsilon}{12}$. Let $\hat{y}_i$ and $\tilde{y}_i$ be sufficiently close to $y_i$ that no breakpoint exists whose $y$-coordinate is not equal to $y_i$ and falls in $(\hat{y}_i,\tilde{y}_i)$. Let $A, C, E$ be the points where $g$ intersects line $1$ and $B, D, F$ be the points where $g$ intersects line $2$. Replace the portion $g$ between $A$ and $B$ with segment $AB$, between $C$ and $D$ with segment $CD$, and between $E$ and $F$ with segment $EF$. Measure is preserved between line $1$ and line $2$ after the replacement, because measure is preserved before the replacement between line $1$ and the horizontal line $y=y_i$ and between line $2$ and the horizontal line $y=y_i$. Therefore, the three type I breakpoints $(x_i,y_i)$, $(x_{j_1},y_i)$, $(x_{j_2},y_i)$ are eliminated while six new breakpoints are added: $A, B, C, D, E, F$ all have dyadic $y$-coordinates. $\rho(g_{\text{old}}, g_{\text{new}})<\frac{\epsilon}{12}$. Repeat the preceding procedure, one can eliminate all type I breakpoints with non-dyadic $y$-coordinates.

In step $1$, $g(x)$ for any $x$ is perturbed at most twice. For example, in Figure~\ref{fig:WeeklyDiscussion_20200217Page-7}(a) $x$ close to and greater than $Z_x$ is perturbed once and will be perturbed again when type I breakpoint $Z$ is to be eliminated as in Figure~\ref{fig:WeeklyDiscussion_20200217Page-7}(c). On the other hand, point $(x_i,y_i)$ is perturbed only once shown in Figure~\ref{fig:WeeklyDiscussion_20200217Page-7}(a). Hence, at the end of step $1$, $\rho(g,h)<\frac{\epsilon}{12}+\frac{\epsilon}{12}=\frac{\epsilon}{6}$.

\emph{Step $2$}. Eliminate $(x_i,y_i)$ if $x_i$ is not dyadic. Because $g$ is piecewise affine on $[0,1]$, $g$ satisfies a Lipschitz condition, i.e., $|g(x_1)-g(x_2)|<K |x_1-x_2|$ for some fixed number $K$ and any $x_1, x_2\in[0,1]$. Consider a set of horizontal equally-spaced dyadic lines, $y=i\cdot 2^{-M}$ for $i=0, 1, \ldots, 2^M$, where the spacing between any adjacent dyadic lines is equal to $\Delta y=2^{-M}$. Let $2^{-M}<\frac{\epsilon}{6}$ and is sufficiently small that all the breakpoints of $g$ are on the lines. The lines are referred to as lines $1$, $2$, $3$ and so on, as shown in Figure~\ref{fig:WeeklyDiscussion_20200217Page-5}. 

\begin{figure}
 \centering
  \includegraphics[width=16cm]{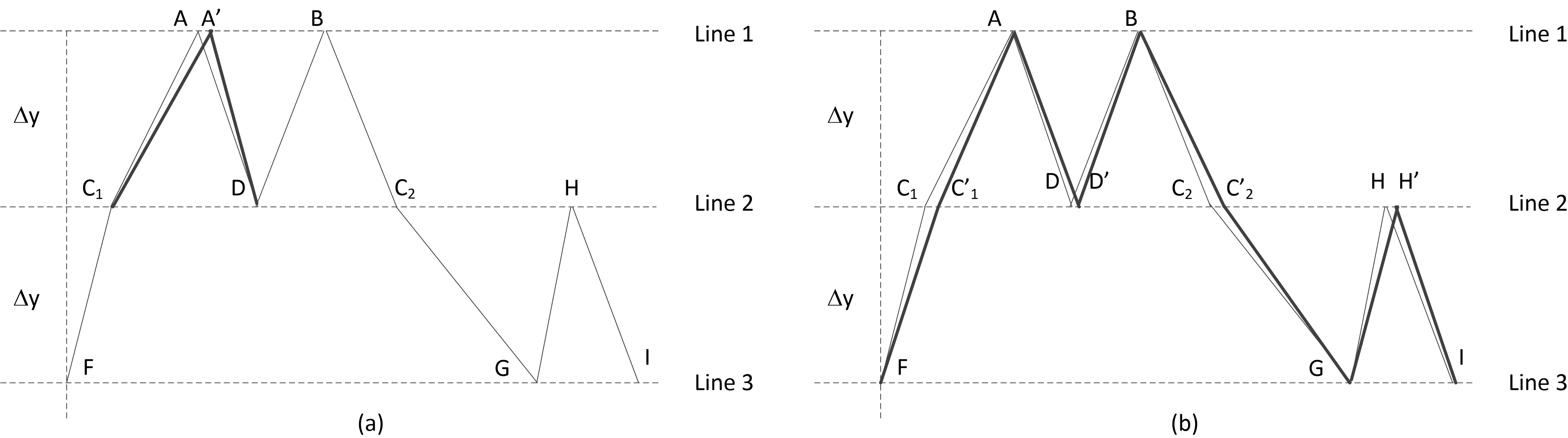}
  \caption{Step $2$ of the proof of Theorem~\ref{theorem:approximationproperty}: eliminate any breakpoint whose $x$-coordinate is not dyadic. In the figure, the thin and thick segments represent the graph of $g$ before and after the replacement respectively. In (a), point $A$, which is not dyadic, is replaced by dyadic point $A'$. In (b), $C_1, D, C_2, H$ are not dyadic and are replaced by dyadic points $C'_1, D', C'_2, H'$ respectively.}
  \label{fig:WeeklyDiscussion_20200217Page-5}
\end{figure} 

Because $g$ is onto $[0,1]$, one or multiple points of $g$ exist on line $1$. Figure~\ref{fig:WeeklyDiscussion_20200217Page-5}(a) shows two such points $A, B$. If $A_x$ is not dyadic, then replace it with sufficiently close dyadic number $A'_x$, i.e., with $|A_x-A'_x|<\frac{\epsilon}{6K}$. Because $A_x$ is not dyadic, $A$ cannot be an endpoint and thus must be a type II breakpoint and connect to two points of the graph of $g$ on line $2$, one to the left, $C_1$, and one to the right, $D$. Replace the original affine segments $C_1A, DA$ with $C_1A', DA'$. Because $D_x-A'_x+A'_x-C_{1,x}= D_x-A_x+A_x-C_{1,x}$, measure is preserved between line $1$ and line $2$ after the replacement. Therefore, all the points on line $1$ are now dyadic.

Now consider the set of points of the graph of $g$ on line $2$ that are not dyadic. If a point is a type II breakpoint, e.g., $D$ and $H$ shown in Figure~\ref{fig:WeeklyDiscussion_20200217Page-5}(b), it can be replaced like $A$ on line $1$. Otherwise, it must connect to one point of $g$ on line $1$ and another point of $g$ on line $3$, one to the left and one to the right. Let $C_1, C_2, \ldots$ be these points. Figure~\ref{fig:WeeklyDiscussion_20200217Page-5}(b) shows $C_1, C_2$ on line $2$. Point $C_1$ connects to the right to $A$ on line $1$ and to the left to $F$ on line $3$. Point $C_2$ connects to the left to $B$ on line $1$ and to the right to $G$ on line $3$. Let $\sgn_{C_i}=1$ (or respectively, $\sgn_{C_i}=-1$) if $C_i$ connects to the left (or respectively, right) on line $1$. In Figure~\ref{fig:WeeklyDiscussion_20200217Page-5}(b), $\sgn_{C_1}=-1$ and $\sgn_{C_2}=1$. If any $C_{i,x}$ are not dyadic, then replace them with sufficiently close dyadic numbers $C'_{i,x}$, i.e., with $|C_{i,x}-C'_{i,x}|<\frac{\epsilon}{6K}$, such that $\sgn_{C'_i}=\sgn_{C_i}$ and $\sum_i \sgn_{C'_i} C'_{i,x}=\sum_i \sgn_{C_i} C_{i,x}$. Such dyadic numbers $C'_{i,x}$ exist because of $\lambda$-preservation and because $\Delta y = 2^{-M}$ and all points on line $1$ are dyadic, and therefore $\sum_i \sgn_{C_i} C_{i,x}$ must be dyadic even though individual $C_{i,x}$ are not dyadic. Measure is thus preserved between line $1$ and line $2$ and between line $2$ and line $3$ after the replacement. Therefore, all the points on line $2$ are now dyadic.

Repeat the same procedure for all the $2^M+1$ lines. Therefore all the breakpoints are now dyadic. In step $2$, a breakpoint is perturbed horizontally at most by $\frac{\epsilon}{6K}$. Hence, at the end of step $2$, $\rho(g,h)<\frac{\epsilon}{6}+K\cdot \frac{\epsilon}{6K}=\frac{\epsilon}{3}$.

\emph{Step $3$}. The final step of the proof is to connect these breakpoints between adjacent horizontal dyadic lines with piecewise affine segments whose slopes are in the form of $\pm 2^k$ for integer $k$. Let $\mathcal{Y}$ be the interval between two adjacent horizontal dyadic lines. Because no breakpoint exists on $\mathcal{Y}^{\circ}$, $g^{-1}(\mathcal{Y})$ can be written as $\bigcup_i \mathcal{I}_i$, where $\{\mathcal{I}_i\}$ have mutually disjoint interiors, $g(\mathcal{I}_i)=\mathcal{Y}$ for all $i$, and $g$ is affine on all $\mathcal{I}_i$. From step $2$, the endpoints of any $\mathcal{I}_i$ are dyadic. Let $|\mathcal{I}_i|=l_i\cdot 2^{-N_i}$ for integers $l_i$ and $N_i$ where $l_i$ is an odd number. Because of $\lambda$-preservation, $|\mathcal{Y}|=\sum_i |\mathcal{I}_i|$, i.e., $2^{-M}=\sum_i l_i\cdot 2^{-N_i}$. Thus, $N_i\ge M$.

Replace the affine segment of $g$ on $\mathcal{I}_i$ with a window perturbation, as illustrated in Figure~\ref{fig:WeeklyDiscussion_20200217Page-9}. The window perturbation is $l_i$-fold where each leg has slope of $\pm 2^{N_i-M}$ and covers the same interval as the original affine segment. Therefore measure is preserved after the replacement.

\begin{figure}
 \centering
  \includegraphics[width=8cm]{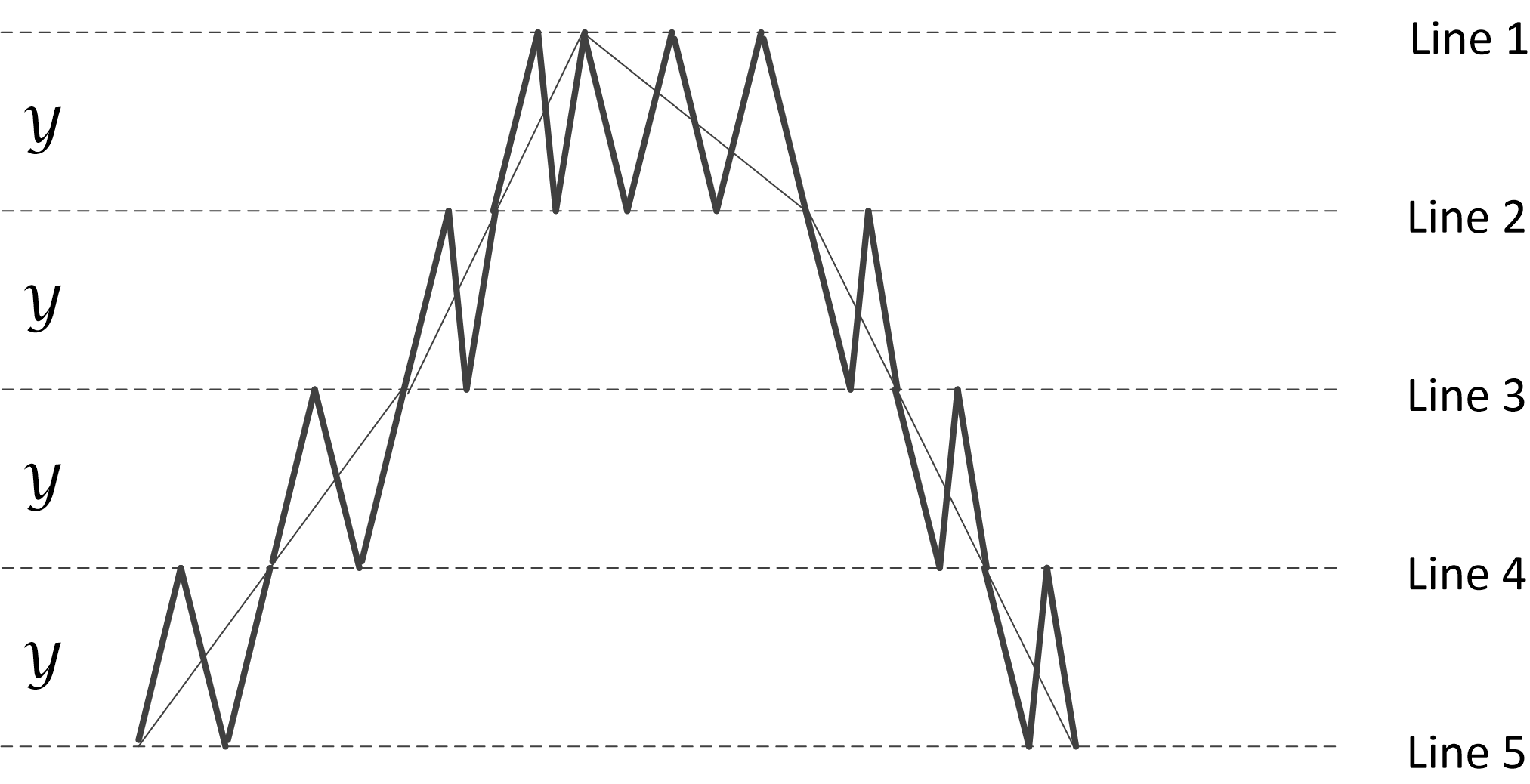}
  \caption{Step $3$ of the proof of Theorem~\ref{theorem:approximationproperty}: connect breakpoints on adjacent horizontal dyadic lines. In the figure, the thin and thick segments represent the graph of $g$ before and after the replacement respectively. The slopes of the thin segments are not in the form of  $\pm 2^k$ for integer $k$. The slopes of the thick segments are in the form of  $\pm 2^k$ for integer $k$. }
  \label{fig:WeeklyDiscussion_20200217Page-9}
\end{figure} 

In step $3$, the maximum vertical perturbation cannot exceed the spacing between any adjacent dyadic lines, which is $2^{-M}<\frac{\epsilon}{6}$.

Hence, at the end of step $3$, the resultant $g$ is an element of $\mathbb{G}$ and $\rho(h,g)<\frac{\epsilon}{3}+\frac{\epsilon}{6}=\frac{\epsilon}{2}$. $\rho(b,g)\le\rho(b,h)+\rho(h,g)<\epsilon$. This completes the proof.
\end{proof}

\section{Mixing \label{sec:mixing}}

Topological transitivity and mixing properties of dynamical systems are widely studied in the literature. There are several different versions of such properties, including topological transitivity (TT), strong transitivity (ST), exact transitivity (ET), weak mixing (WM), topological mixing (TM), and locally eventually onto (LEO). It is shown in \cite{2016Akin} that many of these versions, such as TT, ET, WM, and TM, are implied by LEO but not the other way around. Thus, LEO is the strongest version among them. This section focuses on LEO and TM.

\begin{definition}[\textsl{Topological Mixing (TM)}]
An interval map $h$ is TM if for all nonempty open sets $U, V$ in $[0,1]$, there exists an integer $N\ge0$ such that $\forall n\ge N$, $f^n(U)\cap V \neq \emptyset$.
\end{definition}
\begin{definition}[\textsl{Locally Eventually Onto (LEO)}]
An interval map $h$ is LEO if for every nonempty open set $U$ in $[0,1]$ there is an integer $N$ such that $h^N(U)=[0,1]$.
\end{definition}
\begin{remark}
It can be shown (\cite[Proposition.\ 2.8]{ruette2015chaos}) that $h$ is TM if and only if $\forall \epsilon>0$ and open $U \subset [0,1]$, there is an integer $N$ such that $h^n(U)\supset [\epsilon, 1-\epsilon]$ for any $n\ge N$. However, it is not necessary that $h^n(U)=[0,1]$. Clearly, LEO implies TM. The difference between LEO and TM lies at two endpoints.
\end{remark}
\begin{remark}
If $h$ is LEO or TM, then its trajectory is sensitive to initial conditions in the sense that two arbitrarily close initial conditions $x_1$ and $x_2$ lead to divergent trajectories as $h^n(x_1)$ and $h^n(x_2)$ eventually spread over the entire interval $(0,1)$ for $n\ge 0$.
\end{remark}
\begin{lemma}[\textsl{Barge and Martin, 1985, \cite{10.2307/2044896}}]
If a continuous interval map $h$ has a dense set of periodic points, then a collection of intervals $\{\mathcal{J}_1, \mathcal{J}_2, \ldots\}$ of $[0,1]$ exist with mutually disjoint interiors such that for each $i$, $h^2(\mathcal{J}_i)=\mathcal{J}_i$, $h(\mathcal{J}_i)= \mathcal{J}_j$ for some $j\ge 1$, $h^{-1}(h(\mathcal{J}_i))=\mathcal{J}_i$, and $h^2(x)=x$ on $[0,1]\setminus \bigcup_{i\ge1}\mathcal{J}_i^{\circ}$. If $|\{\mathcal{J}_1, \mathcal{J}_2, \ldots\}|>1$, then there can only be two cases. In the first case,  $h(\mathcal{J}_i)=\mathcal{J}_i$ for all $i$ and $h(x)=x$ on $[0,1]\setminus \bigcup_{i\ge1}\mathcal{J}_i^{\circ}$. In the second case, $h(\mathcal{J}_i)>h(\mathcal{J}_j), \forall \mathcal{J}_i<\mathcal{J}_j$, $h(\mathcal{J}_i)=\mathcal{J}_i$ for at most one $i$ and $h(x)=1-x$ on $[0,1]\setminus \bigcup_{i\ge1}\mathcal{J}_i^{\circ}$.
\label{lemma:Jcollection1}
\end{lemma}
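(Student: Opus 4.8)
The plan is to reduce everything to the second iterate $g = h^2$ and its fixed-point set $F = \{x \in [0,1] : h^2(x) = x\}$, which is closed since $h$ is continuous and nonempty since $h$ has a fixed point by the intermediate value theorem. The condition $h^2(x) = x$ on $[0,1] \setminus \bigcup_i \mathcal{J}_i^\circ$ will be secured by arranging that $[0,1] \setminus \bigcup_i \mathcal{J}_i^\circ \subseteq F$, so the task becomes covering the ``active region'' $F^c$ by a disjoint family of closed intervals on which $g$ acts onto itself and which $h$ permutes. To build these blocks, I would start from a component $(a,b)$ of the open set $F^c$: its endpoints satisfy $g(a)=a$ and $g(b)=b$, and $g-\mathrm{id}$ has constant sign on $(a,b)$, so $g([a,b])$ is an interval containing $a$ and $b$, whence $g([a,b]) \supseteq [a,b]$ and the forward images $g^n([a,b])$ increase. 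Setting $\mathcal{J} = \overline{\bigcup_{n\ge 0} g^n([a,b])}$ gives a closed interval with $g(\mathcal{J}) = \overline{g\left(\bigcup_n g^n([a,b])\right)} = \mathcal{J}$, i.e.\ a $g$-invariant block. Declaring two components equivalent when the blocks they generate overlap in more than an endpoint, and merging each class into a single interval, yields a family $\{\mathcal{J}_i\}$ with pairwise disjoint interiors, each with $g(\mathcal{J}_i)=\mathcal{J}_i$, whose union of interiors contains $F^c$; so the complement of $\bigcup_i \mathcal{J}_i^\circ$ lies in $F$.

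Next I would extract the permutation structure from the commutation $g \circ h = h \circ g$, which is immediate since $g = h^2$. For each block, $h(\mathcal{J}_i)$ is a closed interval with $g(h(\mathcal{J}_i)) = h(g(\mathcal{J}_i)) = h(\mathcal{J}_i)$, so it is again a canonical invariant block and must coincide with some $\mathcal{J}_j$; this gives $h(\mathcal{J}_i) = \mathcal{J}_j$ and then $h^2(\mathcal{J}_i) = h(\mathcal{J}_j) = \mathcal{J}_i$. The induced map on indices is an involution, and its injectivity yields saturation $h^{-1}(h(\mathcal{J}_i)) = \mathcal{J}_i$: if some $x \notin \mathcal{J}_i$ had $h(x)$ interior to $\mathcal{J}_j = h(\mathcal{J}_i)$, then $h^2(x)$ would lie in $\mathcal{J}_i$, which is impossible both when $x \in F$ (as then $h^2(x)=x \notin \mathcal{J}_i$) and when $x$ is interior to another block $\mathcal{J}_k$ (as then $\mathcal{J}_k$ maps onto $\mathcal{J}_j$, forcing $\mathcal{J}_k = \mathcal{J}_i$).

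For the dichotomy when more than one block exists, I would study the action of $h$ on the linearly ordered index set of blocks. An order automorphism of a linear order that is an involution is either the identity or the unique order-reversing involution. In the first case every block is $h$-invariant, and since $h$ restricts to an increasing involution on each complementary gap of $F$ it must be the identity there, giving $h(x)=x$ off $\bigcup_i \mathcal{J}_i^\circ$. In the second case $h$ reverses the order of the blocks, fixes at most the central one, and restricts to a decreasing involution on the complement, which on $[0,1]$ forces $h(x)=1-x$ there.

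I expect the main obstacle to be the construction step. Every component of $F^c$ contains a periodic point (by density), and such a point has $g$-period at least two, so $g$ is strictly monotone past it and its orbit must escape the component; thus no individual component is invariant and genuine merging is unavoidable. The delicate part is proving that the merging terminates in a family with pairwise disjoint interiors whose complement is contained in $F$, and that $h$ maps this canonical family bijectively onto itself rather than to some coarser or finer invariant intervals — this bijectivity is precisely what underlies the clean permutation-and-dichotomy structure.
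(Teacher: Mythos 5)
A preliminary remark: the paper offers no proof of this statement — it is quoted from Barge and Martin — so your outline has to stand on its own. Its skeleton (the fixed set $F$ of $g=h^2$, invariant blocks generated by components of $F^{c}$, the commutation $g\circ h=h\circ g$, an order-involution dichotomy) is indeed the classical route, but the pivotal steps fail as written. The central gap is the claim that $h(\mathcal{J}_i)$, being a $g$-invariant closed interval, ``must coincide with some $\mathcal{J}_j$'': $g$-invariance does not characterize your canonical blocks, since $[0,1]$ itself is always $g$-invariant, as is any closed interval of $g$-fixed points, and nothing in your construction singles the $\mathcal{J}_i$ out among invariant intervals. You concede in your last paragraph that the bijectivity of the induced map on blocks is ``the delicate part'' — but that bijectivity \emph{is} the theorem; the permutation, the saturation, and the dichotomy all hang on it. A second gap is that density of periodic points is never actually invoked in your construction, and it cannot be dispensed with. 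Your assertion that a component $(a,b)$ of $F^{c}$ has $g(a)=a$ and $g(b)=b$ fails for components meeting $\{0,1\}$: for the tent map, $h^2(1)=0$, the relevant component is $(4/5,1]$, and $g([4/5,1])=[0,4/5]\not\supseteq[4/5,1]$, so the forward images do not increase. The repair does use density (any closed interval $K$ contains a periodic point $p$ of $h$-period $m$, whence $p\in g^{m}(K)$ and $K\subseteq\overline{\bigcup_{n\ge1}g^{n}(K)}$), but then $\bigcup_n g^n(K)$ need not be connected and the interval/invariance argument must be redone; without density the step is false outright, e.g.\ for $h(x)=x/2$ one has $F=\{0\}$ and your recipe returns $[0,1]$, which is not $g$-invariant.

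The saturation and dichotomy arguments also break. Your verification of $h^{-1}(h(\mathcal{J}_i))=\mathcal{J}_i$ treats only the case where $h(x)$ is interior to $\mathcal{J}_j$, and leakage through endpoints is exactly what your interior-overlap merging rule cannot detect. Concretely, let $h$ be piecewise linear through the nodes $(0,0)$, $(\frac16,\frac12)$, $(\frac13,0)$, $(\frac12,\frac12)$, $(\frac23,1)$, $(\frac56,\frac12)$, $(1,1)$: each half $[0,\frac12]$ and $[\frac12,1]$ is invariant and carries three full expanding branches, so periodic points are dense and your construction outputs $\{[0,\frac12],[\frac12,1]\}$ (the halves are invariant, so no generated block crosses $\frac12$, and their interiors are disjoint, so no merging occurs). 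Yet $h(\frac56)=\frac12\in h([0,\frac12])$ while $\frac56\notin[0,\frac12]$, so the saturation clause fails for your family; the valid collection must be coarsened — here to $\{[0,1]\}$ — for a reason invisible to the overlap criterion. Finally, ``a decreasing involution on $[0,1]$ forces $h(x)=1-x$'' is false: $\phi(x)=(1-x^{3})^{1/3}$ is a continuous decreasing involution of $[0,1]$ different from $1-x$ (its fixed point is $2^{-1/3}\neq\frac12$). The rigidity of the second case has to come from somewhere else — in this paper's setting it would follow from $\lambda$-preservation, but the lemma as you are proving it is stated for arbitrary continuous $h$, so this step needs an argument tying the involution to the block structure rather than to the involution property alone.
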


\begin{figure}
 \centering
  \includegraphics[width=16cm]{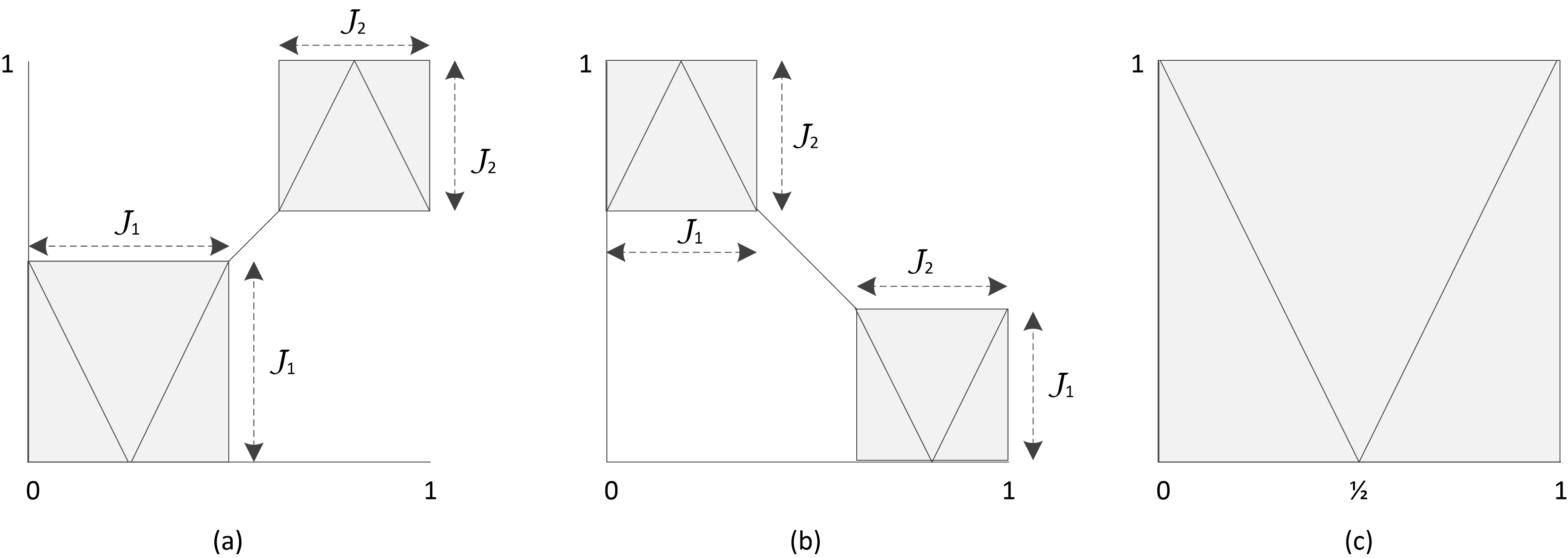}
  \caption{Examples of $g\in\mathbb{G}$. Map $g$ is not TM or LEO in (a) and (b) and is TM and LEO in (c). In (a) $g(\mathcal{J}_1)=\mathcal{J}_1$ and $g(\mathcal{J}_2)=\mathcal{J}_2$. In (b) $g(\mathcal{J}_1)=\mathcal{J}_2$ and $g(\mathcal{J}_2)=\mathcal{J}_1$.}
  \label{fig:WeeklyDiscussion_20200420Page-5}
\end{figure} 

Recall from Section~\ref{sec:basic-3} that for each map $g\in\mathbb{G}$, the set of periodic points is dense on $[0,1]$. A collection of $\{\mathcal{J}_1, \mathcal{J}_2, \ldots\}$ of $g$ exist to have the properties stated in Lemma~\ref{lemma:Jcollection1}.

The two cases of Lemma~\ref{lemma:Jcollection1} are illustrated in Figure~\ref{fig:WeeklyDiscussion_20200420Page-5}(a) and (b). Set $\{\mathcal{J}_1, \mathcal{J}_2, \ldots\}$ is not unique when $[0,1]\setminus \bigcup_{i\ge1}\mathcal{J}_i^{\circ}\neq\emptyset$. For example, in Figure~\ref{fig:WeeklyDiscussion_20200420Page-5}(a) $g$ is an affine segment with slope $1$ on interval $[\mathcal{J}^1_1, \mathcal{J}^0_2]$. Let $\mathcal{J}'_1=[\mathcal{J}^0_1, \mathcal{J}^1_1+\delta_1]$ and $\mathcal{J}'_2=[\mathcal{J}^0_2-\delta_2, \mathcal{J}^1_2]$ with $\delta_1, \delta_2>0$ and $\delta_1+\delta_2\le\mathcal{J}^0_2-\mathcal{J}^1_1$. In Figure~\ref{fig:WeeklyDiscussion_20200420Page-5}(b) $g$ is an affine segment with slope $-1$ on interval $[\mathcal{J}^1_1, \mathcal{J}^0_2]$. Let $\mathcal{J}'_1=[\mathcal{J}^0_1, \mathcal{J}^1_1+\delta]$ and $\mathcal{J}'_2=[\mathcal{J}^0_2-\delta, \mathcal{J}^1_2]$ with $0<\delta\le\frac{\mathcal{J}^0_2-\mathcal{J}^1_1}{2}$. In either case, $\{\mathcal{J}'_1,\mathcal{J}'_2\}$ has the same properties as $\{\mathcal{J}_1,\mathcal{J}_2\}$ as far as Lemma~\ref{lemma:Jcollection1} is concerned. Intervals $\mathcal{J}'_1,\mathcal{J}'_2$ can shrink in length to become $\mathcal{J}_1,\mathcal{J}_2$. More precisely, a collection of intervals $\{\mathcal{J}_1, \mathcal{J}_2, \ldots\}$ in Lemma~\ref{lemma:Jcollection1} is said to be of minimum length if there does not exist a distinct collection $\{\mathcal{J}'_1, \mathcal{J}'_2, \ldots\}$, also satisfying Lemma~\ref{lemma:Jcollection1}, such that $\mathcal{J}'_i \subseteq \mathcal{J}_i$ for $i=1, 2, \ldots$.

\begin{lemma}
Let $g\in\mathbb{G}$. Suppose that $\{\mathcal{J}_1, \mathcal{J}_2, \ldots\}$ is a collection of $g$ in Lemma~\ref{lemma:Jcollection1}. If $\{\mathcal{J}_1, \mathcal{J}_2, \ldots\}$ is of minimum length, then the endpoints of each $\mathcal{J}_i$ are dyadic. 
\label{lemma:dyadicJenpoints}
\end{lemma}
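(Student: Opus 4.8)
The plan is to reduce the statement to showing that every interior endpoint $p$ of a minimum-length interval $\mathcal{J}_i$ (i.e. with $0<p<1$) is a breakpoint of $g$; once this is established, $p$ is dyadic by Lemma~\ref{lemma:alldyadic}, while the endpoints $0,1$ are trivially dyadic. Throughout I would work with the dichotomy of Lemma~\ref{lemma:Jcollection1}: on the complement $[0,1]\setminus\bigcup_i\mathcal{J}_i^{\circ}$ the map $g$ coincides with the diagonal $g(x)=x$ (case~1) or the anti-diagonal $g(x)=1-x$ (case~2), while each $\mathcal{J}_i$ carries the genuinely nontrivial dynamics. The guiding principle is that a minimum-length collection contains no \emph{inert} piece, i.e. no one-sided neighborhood inside some $\mathcal{J}_i$ on which $g$ again coincides with the diagonal or the anti-diagonal; such a piece could be excised to shrink $\mathcal{J}_i$, contradicting minimality.

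First I would treat an endpoint $p$ that borders the complement on one side, say $p=\mathcal{J}_i^1$ with complement immediately to its right. From the complement side $g$ has slope $\pm 1$, so by continuity $g(p)=p$ in case~1 and $g(p)=1-p$ in case~2. I claim the slope of $g$ just inside $\mathcal{J}_i$ cannot also equal $\pm 1$: if it did, then by continuity $g$ would coincide with the diagonal (resp. anti-diagonal) on a left-neighborhood $[p-\epsilon,p]$. On such a segment the derivative has absolute value $2^0$, so by Lemma~\ref{lemma:basicderivativeinverse} every value attained there has a \emph{unique} preimage, which forces the segment to be inert; I would then excise it, replacing $\mathcal{J}_i$ by $[\mathcal{J}_i^0,p-\epsilon]$ and checking that the properties of Lemma~\ref{lemma:Jcollection1} persist, contradicting minimum length. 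Hence the two one-sided slopes at $p$ differ, $p$ is a breakpoint, and $p$ is dyadic.

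The hard part will be the endpoints $p$ that are \emph{shared} by two adjacent intervals $\mathcal{J}_i<\mathcal{J}_{i'}$, where $p$ lies in the complement but has nontrivial dynamics on both sides, so a priori $g$ could be smooth (no breakpoint) through $p$. To rule this out I would combine $\lambda$-preservation with the condition $g^{-1}(g(\mathcal{J}_i))=\mathcal{J}_i$, which holds in both cases. Since $p\in\mathcal{J}_i\cap\mathcal{J}_{i'}$, the inclusions $g^{-1}(g(p))\subseteq g^{-1}(g(\mathcal{J}_i))=\mathcal{J}_i$ and $g^{-1}(g(p))\subseteq g^{-1}(g(\mathcal{J}_{i'}))=\mathcal{J}_{i'}$ squeeze $g^{-1}(g(p))$ into $\mathcal{J}_i\cap\mathcal{J}_{i'}=\{p\}$, so $g^{-1}(g(p))=\{p\}$. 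If $p$ were not a breakpoint, Lemma~\ref{lemma:basicderivativeinverse} would then force $|g'(p)|=2^0=1$. Slope $+1$ returns $p$ to the inert/diagonal situation already excluded, and slope $-1$ is incompatible with the separate invariance of $\mathcal{J}_i$ under the relevant orientation-preserving map (it would send a one-sided neighborhood of $p$ across to the other interval). Thus no smooth shared endpoint exists, and $p$ is again a breakpoint.

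Finally I would note that case~2 is handled either by repeating the local computations with $1-x$ in place of $x$, or by passing to $g^2\in\mathbb{G}$, which fixes the complement pointwise and satisfies $g^2(\mathcal{J}_i)=\mathcal{J}_i$, so that $\{\mathcal{J}_i\}$ realizes the case~1 pattern for $g^2$. The main technical obstacle I anticipate is the excision step: after removing an inert tail one must verify that the shrunken collection still satisfies \emph{all} of $g^2(\mathcal{J}_i')=\mathcal{J}_i'$, $g(\mathcal{J}_i')=\mathcal{J}_j'$, $g^{-1}(g(\mathcal{J}_i'))=\mathcal{J}_i'$, and the complement identity simultaneously for every interval, and it is the bookkeeping of these four conditions under the modification that will require the most care.
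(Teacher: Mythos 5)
Your proposal runs on the same engine as the paper's proof: if nothing breaks at the relevant height, then $\lambda$-preservation via Lemma~\ref{lemma:basicderivativeinverse} forces a single leg of slope $\pm 1$ near the endpoint, the interval can be shrunk, and minimality is contradicted; dyadicity then comes from Lemma~\ref{lemma:alldyadic}. Within that framework you do something genuinely different: you aim at the stronger conclusion that each interior endpoint $p$ is itself a breakpoint (a dyadic $x$-coordinate), whereas the paper only establishes that the endpoint \emph{value} is the $y$-coordinate of a type~II breakpoint located anywhere on the graph, and then transfers dyadicity through the relation $g(\mathcal{J}^0_i)=\mathcal{J}^1_j$. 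Your squeeze $g^{-1}(g(p))\subseteq g^{-1}(g(\mathcal{J}_i))\cap g^{-1}(g(\mathcal{J}_{i'}))=\mathcal{J}_i\cap\mathcal{J}_{i'}=\{p\}$ at shared endpoints is a clean device that the paper does not use, and it also rules out type~I corners at $p$, since a one-sided slope of magnitude other than $1$ would violate (\ref{eq:sum2^-k=1}) for the unique preimage.

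The one step that is wrong as written is your disposal of slope $-1$ at a shared endpoint. In case~2 the collection is order-reversed, $g(\mathcal{J}_i)=\mathcal{J}_j$ with $j\neq i$, and a slope $-1$ passage through $p$ sends $(p-\epsilon,p)\subset\mathcal{J}_i$ into $\mathcal{J}_j$ — exactly what the invariance structure demands, so there is no contradiction; your ``incompatible with the separate invariance'' argument is valid only in case~1, where $g(\mathcal{J}_{i'})=\mathcal{J}_{i'}$. The repair is to iterate your own squeeze at the image point: $g^{-1}(p)=\{g(p)\}$ by the identical argument applied to the adjacent pair $\mathcal{J}_{j'},\mathcal{J}_j$, the interval assignments force the two one-sided slopes at $g(p)$ to be $-1$ as well, hence $g^2=\mathrm{id}$ on a neighborhood of $p$, and one then excises at \emph{both} $p$ and $g(p)$ simultaneously — which is precisely the paper's move (``$\mathcal{J}_i$ can shrink to $[\mathcal{J}^0_i+\delta,\mathcal{J}^1_i]$ and $\mathcal{J}_j$ can shrink to $[\mathcal{J}^0_j,\mathcal{J}^1_j-\delta]$''). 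The same two-point excision is needed even at a case-2 endpoint bordering the complement: shrinking $\mathcal{J}_i$ alone destroys $g(\mathcal{J}'_i)=\mathcal{J}_j$, so the matching end of $\mathcal{J}_j$ must be shrunk in the same step. Relatedly, your $g^2$ fallback needs one more line of care: minimality is defined with respect to the conditions of Lemma~\ref{lemma:Jcollection1} for $g$, so a shrink that is legitimate for the $g^2$-system must be converted into the two-point $g$-shrink before it contradicts anything; and if what you ultimately produce is only a breakpoint of $g^2$ at $p$, dyadicity still follows because $g^2\in\mathbb{G}$, or by applying Lemma~\ref{lemma:alldyadic} to the graph point $(p,g(p))$. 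With these repairs, and choosing the excision depth smaller than the distance from the endpoint's height to the nearest breakpoint height (so that the freed pieces are single legs and the four conditions of Lemma~\ref{lemma:Jcollection1} survive), your plan does go through.
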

\begin{proof}
First suppose that $|\{\mathcal{J}_1, \mathcal{J}_2, \ldots\}|=1$. That is, $\{\mathcal{J}_1, \mathcal{J}_2, \ldots\}=\{\mathcal{J}_1\}$. If $\mathcal{J}_1=[0,1]$, then the proof is already done. If $\mathcal{J}^0_1>0$ and $\mathcal{J}^1_1=1$, then by Lemma~\ref{lemma:Jcollection1} $g(x)=x$ on $[0,\mathcal{J}^0_1]$. A type II breakpoint $(x_0,y_0)$ must exist such that $y_0=\mathcal{J}^0_1$, for otherwise $\delta>0$ exists such that the set $g^{-1}(y)$ consists of only one element for $y\in[\mathcal{J}^0_1, \mathcal{J}^0_1+\delta]$ and because of $\lambda$-preservation, $g(x)=x$ on $[\mathcal{J}^0_1, \mathcal{J}^0_1+\delta]$. Thus, 
$\mathcal{J}_1$ can shrink to $[\mathcal{J}^0_1+\delta, \mathcal{J}^1_1]$. Therefore, $\mathcal{J}^0_1$ is dyadic by Lemma~\ref{lemma:alldyadic}. Similarly, if $\mathcal{J}^0_1=0$ and $\mathcal{J}^1_1<1$, then $\mathcal{J}^1_1$ can be shown to be dyadic. Now suppose $\mathcal{J}^0_1>0$ and $\mathcal{J}^1_1<1$. By Lemma~\ref{lemma:Jcollection1} there can only be two cases. In the first case, $g(x)=x$ on $[0,1]\setminus\mathcal{J}_1^{\circ}$. Type II breakpoints $(x_0,y_0)$ and $(x_1,y_1)$ must exist such that $y_0=\mathcal{J}^0_1$ and $y_1=\mathcal{J}^1_1$. Therefore, $\mathcal{J}^0_1$ and $\mathcal{J}^1_1$ are both dyadic by Lemma~\ref{lemma:alldyadic}. In the second case, $g(x)=1-x$ on $[0,1]\setminus\mathcal{J}_1^{\circ}$. A type II breakpoint $(x_0,y_0)$ must exist such that $y_0=\mathcal{J}^0_1$ or $y_0=\mathcal{J}^1_1$, for otherwise $\delta>0$ exists such that the set $g^{-1}(y)$ consists of only one element for $y\in[\mathcal{J}^0_1, \mathcal{J}^0_1+\delta]$ and $y\in[\mathcal{J}^1_1-\delta, \mathcal{J}^1_1]$ and thus $\mathcal{J}_1$ can shrink to $[\mathcal{J}^0_1+\delta, \mathcal{J}^1_1-\delta]$. Therefore, at least one of $\mathcal{J}^0_1$ and $\mathcal{J}^1_1$ are dyadic. By Lemma~\ref{lemma:alldyadic}, both of them are dyadic because $g(\mathcal{J}^0_1)=\mathcal{J}^1_1$ and $g(\mathcal{J}^1_1)=\mathcal{J}^0_1$.

Next suppose that $|\{\mathcal{J}_1, \mathcal{J}_2, \ldots\}|>1$. Consider the two cases of Lemma~\ref{lemma:Jcollection1}. In the first case, $g(x)=x$ on $[0,1]\setminus\bigcup_{i\ge1}\mathcal{J}_i^{\circ}$. Type II breakpoints $(x_0,y_0)$ and $(x_1,y_1)$ must exist such that $y_0=\mathcal{J}^0_i$ and $y_1=\mathcal{J}^1_i$ for each $i$, for otherwise $\mathcal{J}_i$ can shrink similar to what is shown above. Numbers $\mathcal{J}^0_i$ and $\mathcal{J}^1_i$ are both dyadic. In the second case, $g(x)=1-x$ on $[0,1]\setminus\bigcup_{i\ge1}\mathcal{J}_i^{\circ}$. For each $i$, $h(\mathcal{J}_i)=\mathcal{J}_j$. If $j=i$, type II breakpoint $(x_0,y_0)$ must exist such that $y_0=\mathcal{J}^0_i$ or $y_0=\mathcal{J}^1_i$, for otherwise $\mathcal{J}_i$ can shrink similar to what is shown above. Numbers $\mathcal{J}^0_i$ and $\mathcal{J}^1_i$ are both dyadic. Now consider $j\neq i$. Without loss of generality, suppose that $\mathcal{J}_j < \mathcal{J}_i$. Thus, $g(\mathcal{J}^0_i)=\mathcal{J}^1_j$. A type II breakpoints $(x_0,y_0)$ must exist such that $y_0=\mathcal{J}^0_i$ or $y_0=\mathcal{J}^1_j$, for otherwise $\delta>0$ exists such that the set $g^{-1}(y)$ consists of only one element for $y\in[\mathcal{J}^0_i, \mathcal{J}^0_i+\delta]$ and $y\in[\mathcal{J}^1_j-\delta, \mathcal{J}^1_j]$ and thus $\mathcal{J}_i$ can shrink to $[\mathcal{J}^0_i+\delta, \mathcal{J}^1_i]$ and $\mathcal{J}_j$ can shrink to $[\mathcal{J}^0_j, \mathcal{J}^1_j-\delta]$. Thus, at least one of $\mathcal{J}^0_i$ and $\mathcal{J}^1_j$ are dyadic. By Lemma~\ref{lemma:alldyadic}, both of them are dyadic because $g(\mathcal{J}^0_i)=\mathcal{J}^1_j$. Analogously, both $\mathcal{J}^1_i$ and $\mathcal{J}^0_j$ can be shown to be dyadic.
\end{proof}

\begin{lemma}[\textsl{Bobok and Troubetzkoy, 2019, \cite[Lemma.\ 5]{2019arXiv190607558B}}]
In Lemma~\ref{lemma:Jcollection1}, $h$ is TM if and only if the collection of intervals satisfy $\{\mathcal{J}_1, \mathcal{J}_2, \ldots\}=\{[0,1]\}$, and $h$ is LEO if and only if in addition both of the sets $h^{-2}(0)\cap (0,1)$ and $h^{-2}(1)\cap (0,1)$ are non-empty.
\label{lemma:Jcollection2}
\end{lemma}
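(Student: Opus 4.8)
The statement bundles two equivalences, one for TM and one for LEO, and I would prove them in that order, since the LEO criterion is built on top of the TM criterion. Throughout I use the characterisation of TM recalled in the remark after the definition (namely, $h$ is TM iff for every open $U$ and every $\epsilon>0$ there is an $N$ with $h^n(U)\supset[\epsilon,1-\epsilon]$ for all $n\ge N$) together with the structural properties of the collection $\{\mathcal J_1,\mathcal J_2,\ldots\}$ supplied by Lemma~\ref{lemma:Jcollection1}: in particular that $h^2(\mathcal J_i)=\mathcal J_i$ for every $i$, that $h$ permutes the $\mathcal J_i$ with $h(\mathcal J_i)=\mathcal J_j$, and that $h^2$ acts as the identity off $\bigcup_i \mathcal J_i^\circ$.

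For the TM equivalence, the forward implication (``TM $\Rightarrow$ $\{\mathcal J_i\}=\{[0,1]\}$'') I would prove by contraposition, and it is the routine half. If the collection is not $\{[0,1]\}$ there are two possibilities. Either it contains a single interval $\mathcal J_1\subsetneq[0,1]$, in which case $h^2$ is the identity on the nonempty interior of the complement $[0,1]\setminus\mathcal J_1$; taking $U$ a small open interval there gives $h^{2n}(U)=U$, a fixed proper subinterval, so $h^n(U)$ never contains $[\epsilon,1-\epsilon]$ and $h$ is not TM. Or the collection has at least two intervals, and then using $h^2(\mathcal J_i)=\mathcal J_i$ and the permutation structure the forward orbit of an open $U\subset\mathcal J_i^\circ$ is confined to $\mathcal J_i\cup\mathcal J_j$ with $\mathcal J_j=h(\mathcal J_i)$; choosing $V$ in the interior of a third interval (when $|\{\mathcal J_i\}|\ge 3$) or in $\mathcal J_i^\circ$ and looking at odd iterates (when the two intervals are swapped) produces infinitely many $n$ with $h^n(U)\cap V=\emptyset$, contradicting TM. In each case I invoke the two sub-cases of Lemma~\ref{lemma:Jcollection1} to fix the signs and the permutation.

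The reverse implication (``$\{\mathcal J_i\}=\{[0,1]\}$ $\Rightarrow$ TM'') is the heart of the matter and where I expect the real work. One must convert the purely combinatorial statement ``the Barge--Martin collection is trivial'' into genuine topological mixing. The plan is again contrapositive: assuming $h$ is not TM, I would use the failure of mixing together with the density of periodic points to manufacture a proper invariant subinterval (or a proper pair of swapped subintervals) enjoying all the properties listed in Lemma~\ref{lemma:Jcollection1}, thereby producing a collection that strictly refines $\{[0,1]\}$ and contradicting triviality. Concretely, from a pair $U,V$ witnessing non-mixing one extracts, via the $\omega$-limit and nonwandering structure of interval maps, a maximal closed subinterval on which $h^2$ is transitive and off which it is trivial; this is exactly the object Lemma~\ref{lemma:Jcollection1} returns, so it cannot be all of $[0,1]$. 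This step is the main obstacle: it leans on the interval-map principle that a map with dense periodic points is, on each Barge--Martin piece, transitive, and that transitivity of an interval map upgrades to mixing up to the endpoints. I would isolate this as the single place where the structure theorem behind Lemma~\ref{lemma:Jcollection1} is used in full force.

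Finally, for the LEO equivalence I would work with TM already established, so that $\{\mathcal J_i\}=\{[0,1]\}$ and $h^n(U)\supset[\epsilon,1-\epsilon]$ for large $n$; by the remark the only gap between TM and LEO is covering the two endpoints $0$ and $1$. For the ``if'' direction, fix interior points $a\in h^{-2}(0)\cap(0,1)$ and $b\in h^{-2}(1)\cap(0,1)$; given any open interval $U$, choose $\epsilon$ so small that $a,b\in[\epsilon,1-\epsilon]$ and $N$ with $h^n(U)\supset[\epsilon,1-\epsilon]$ for $n\ge N$. Then $h^{n}(U)$ is an interval containing $a$ and $b$, so $h^{n+2}(U)$ is an interval containing $h^2(a)=0$ and $h^2(b)=1$, hence equals $[0,1]$, giving LEO. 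For the ``only if'' direction, LEO implies TM (hence $\{\mathcal J_i\}=\{[0,1]\}$), and I would show the preimage sets are nonempty by contradiction: if, say, $h^{-2}(0)\cap(0,1)=\emptyset$, then the endpoint value $0$ is attained under $h^2$ only from $\{0,1\}$, and tracing back the last two iterates of any relation $h^N(U)=[0,1]$ for an open interval $U\subset(0,1)$ forces the orbit to pass through an endpoint, which one shows is incompatible with an interval image equal to all of $[0,1]$. The delicate point here is the endpoint bookkeeping---ensuring that the witnessing preimage of $0$ (resp.\ $1$) genuinely lies in $(0,1)$ rather than at $0$ or $1$---and I would handle it by choosing $U$ around an interior periodic point and exploiting minimality of the hitting time $N$.
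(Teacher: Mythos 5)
First, a point of comparison that matters here: the paper does not prove this statement at all --- it is imported verbatim from Bobok and Troubetzkoy \cite{2019arXiv190607558B}, so there is no in-paper proof to measure your argument against. The closest in-paper relative is Theorem~\ref{theorem:GLEOMT}, whose endpoint case analysis (reduce to $h^{-1}(0)\subseteq\{0,1\}$, then treat the cases $\{0\}$, $\{1\}$, $\{0,1\}$ separately) is essentially the bookkeeping your LEO ``only if'' paragraph gestures at; carried out that way it does close. Your TM forward direction (nontrivial collection $\Rightarrow$ not TM, via $h^2=\mathrm{id}$ off the union or confinement of orbits to $\mathcal{J}_i\cup h(\mathcal{J}_i)$) and your LEO ``if'' direction (pick $a\in h^{-2}(0)\cap(0,1)$, $b\in h^{-2}(1)\cap(0,1)$, push $h^n(U)\supset[\epsilon,1-\epsilon]\ni a,b$ forward two more steps) are correct and complete as sketched.

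The genuine gap is in the direction $\{\mathcal{J}_1,\mathcal{J}_2,\ldots\}=\{[0,1]\}\Rightarrow$ TM. With only the four properties listed in Lemma~\ref{lemma:Jcollection1}, the implication is not derivable: for any onto map $h$ the single interval $[0,1]$ satisfies $h^2([0,1])=[0,1]$, $h([0,1])=[0,1]$, $h^{-1}(h([0,1]))=[0,1]$, and the off-union condition vacuously --- so the identity map, which has dense periodic points and is certainly not TM, admits $\{[0,1]\}$ as a valid collection. The statement is only true for the canonical Barge--Martin collection, whose defining extra property --- that $h^2$ restricted to each $\mathcal{J}_i$ is topologically transitive (indeed mixing) --- is omitted from the paper's restatement of Lemma~\ref{lemma:Jcollection1}. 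This same non-uniqueness, which the paper itself discusses and patches with the ``minimum length'' notion, undercuts your contrapositive plan: manufacturing \emph{some} nontrivial collection from a failure of mixing does not contradict the hypothesis, because $\{[0,1]\}$ qualifies as a collection regardless. The repair is to invoke the full Barge--Martin theorem directly: if the canonical collection is $\{[0,1]\}$, then $h^2$ is mixing on $[0,1]$, and a short direct argument transfers this to $h$ (for onto continuous $h$, given $\epsilon'>0$ choose $\epsilon>0$ with $h([\epsilon,1-\epsilon])\supseteq[\epsilon',1-\epsilon']$, so the even-iterate inclusions pass to odd iterates) --- no contrapositive needed. You correctly flag this as the one place where the structure theorem is used ``in full force,'' but as written the proposal neither supplies that missing input nor can its contrapositive scheme, restricted to the four listed properties, close the argument.
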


Figure~\ref{fig:WeeklyDiscussion_20200420Page-5} provides three examples of $g\in\mathbb{G}$, two of which are not TM or LEO and the third one is LEO. In (a) and (b), $\mathcal{J}_1\cup\mathcal{J}_2 \subset [0,1]$ with $\mathcal{J}_1^{\circ} \cap \mathcal{J}_2^{\circ}=\emptyset$ and $g^2(\mathcal{J}_i)=(\mathcal{J}_i)$ for $i=1,2$. As a result, if $U \subset \mathcal{J}_i$, then given $n$,  $g^n(U)\subset \mathcal{J}_j$ for either $j=1$ or $j=2$ but it is impossible that $g^n(U)=[0,1]$. Therefore, $g$ in (a) and (b) is not TM or LEO. In (c), such a partition of separate $\mathcal{J}_1$ and $\mathcal{J}_2$ does not exist and $\{\mathcal{J}_1, \mathcal{J}_2, \ldots\}=\{[0,1]\}$. Thus, $g$ in (c) is TM by Lemma~\ref{lemma:Jcollection2}. In addition, because $\frac{1}{2}\in g^{-2}(0)$ and $\frac{1}{2}\in g^{-2}(1)$, $g$ in (c) is LEO.

As remarked, in general, LEO implies TM and the converse does not hold. However, the two are equivalent for $g\in\mathbb{G}$ as stated in the following theorem.

\begin{theorem}
If $g\in \mathbb{G}$ is TM, then $g$ is LEO.
\label{theorem:GLEOMT}
\end{theorem}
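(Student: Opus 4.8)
The plan is to invoke the Barge--Martin/Bobok--Troubetzkoy machinery of Lemma~\ref{lemma:Jcollection1} and Lemma~\ref{lemma:Jcollection2} to reduce the claim to a purely local statement about preimages of the endpoints, and then to settle that statement using $\lambda$-preservation together with the intermediate value theorem. Since $g$ is TM, Lemma~\ref{lemma:Jcollection2} gives $\{\mathcal{J}_1,\mathcal{J}_2,\ldots\}=\{[0,1]\}$, so by the same lemma it remains only to show that both $g^{-2}(0)\cap(0,1)$ and $g^{-2}(1)\cap(0,1)$ are nonempty. These two assertions are interchanged by conjugation with the involution $g_{0,-}(x)=1-x$, which preserves $\mathbb{G}$ and preserves TM and sends the condition $g^{-2}(1)\cap(0,1)\neq\emptyset$ for $g$ to the condition $g^{-2}(0)\cap(0,1)\neq\emptyset$ for $g_{0,-}\circ g\circ g_{0,-}$; hence it suffices to prove $g^{-2}(0)\cap(0,1)\neq\emptyset$.

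I would argue by contradiction, assuming $g^{-2}(0)\subseteq\{0,1\}$. Writing $A=g^{-1}(0)$ (a nonempty finite set, since $g$ is onto), the assumption says that every preimage of every point of $A$ is an endpoint; as each $a\in A$ is a value attained by $g$, this forces $g^{-1}(a)\subseteq\{0,1\}$, and hence $a\in\{g(0),g(1)\}$, for every $a\in A$. The proof then splits according to whether some zero of $g$ lies at an endpoint. If $g(0)=0$ (the case $g(1)=0$ being symmetric), then $A\subseteq g^{-2}(0)\subseteq\{0,1\}$, and I would treat the two resulting possibilities separately: if $A=\{0\}$, then $\lambda$-preservation near the value $0$ (Lemma~\ref{lemma:basicderivativeinverse}) forces the right-hand slope at $0$ to be $+1$, so $g$ is the identity on its first affine piece $[0,\delta]$, making $[0,\delta]$ forward invariant and contradicting TM; if $A=\{0,1\}$, then $g^{-2}(0)\supseteq g^{-1}(1)$ forces $g^{-1}(1)\subseteq\{0,1\}$, but $g(0)=g(1)=0$ then gives $g^{-1}(1)=\emptyset$, contradicting surjectivity.

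The remaining, and genuinely delicate, case is when all zeros of $g$ are interior, $A\subseteq(0,1)$. Here I expect the main work: for each interior $a\in A$ with $g^{-1}(a)\subseteq\{0,1\}$, a measure-bookkeeping argument at the value $a$ (the endpoints contribute only one-sided preimage intervals, yet both sides of $a$ must receive preimage measure in order to match $\lambda$) forces $g^{-1}(a)=\{0,1\}$ with both endpoint slopes of absolute value $1$ covering opposite sides, so in particular $g(0)=g(1)=a$; since $g(0)$ is a single value, this also forces $|A|=1$, say $A=\{z\}$ with $g(0)=g(1)=z$ and $g^{-1}(z)=\{0,1\}$. I would then finish with the intermediate value theorem: $g(z)=0<z$, and because $g(0)=g(1)=z<1$ the maximum value $1$ must be attained in the interior, so $g$ takes values both below and above $z$ on $(0,1)$ and must therefore equal $z$ at some point of $(0,1)$, contradicting $g^{-1}(z)=\{0,1\}$. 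The crux of the whole argument is this interior case, where the only available leverage is $\lambda$-preservation forcing the rigid endpoint configuration $g(0)=g(1)=z$; once that rigidity is in hand, the continuity contradiction is immediate, and applying the same conclusion to $g_{0,-}\circ g\circ g_{0,-}$ yields $g^{-2}(1)\cap(0,1)\neq\emptyset$, so $g$ is LEO by Lemma~\ref{lemma:Jcollection2}.
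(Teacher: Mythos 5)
Your overall skeleton matches the paper's: reduce via Lemma~\ref{lemma:Jcollection2} to showing $g^{-2}(0)\cap(0,1)\neq\emptyset$ and $g^{-2}(1)\cap(0,1)\neq\emptyset$, handle the second by conjugating with $g_{0,-}$, and argue the first by contradiction with a case analysis on $g^{-1}(0)$. Your subcases $g^{-1}(0)=\{0\}$ and $g^{-1}(0)=\{0,1\}$ coincide with the paper's. But note that the case you call the crux --- all zeros interior --- is vacuous for a one-line reason the paper uses: if $a\in(0,1)$ had $g^{-1}(a)\subseteq\{0,1\}$, then $g-a$ would have constant sign on the connected set $(0,1)$, so the infinitely many values on the other side of $a$ would have to be attained at the two points $0,1$ alone, contradicting continuity and surjectivity. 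No measure bookkeeping is needed; your longer route through $\lambda$-preservation and the intermediate value theorem is valid but establishes a contradiction in a case that cannot arise, and your instinct that ``the only available leverage is $\lambda$-preservation'' there is mistaken.

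The genuine gap is the phrase ``the case $g(1)=0$ being symmetric.'' It is not. First, the containment $A\subseteq g^{-2}(0)$ that anchors your $g(0)=0$ case uses $g(g(a))=g(0)=0$; when only $g(1)=0$ holds (and $g(0)\neq 0$), one gets $g^2(a)=g(0)\neq0$, so the containment fails. Second, there is no TM-preserving symmetry that fixes the target value $0$ while swapping the domain endpoints: your involution sends the hypothesis $g(1)=0$ to $\tilde g(0)=1$ for $\tilde g=g_{0,-}\circ g\circ g_{0,-}$, not to $\tilde g(0)=0$, while $g\circ g_{0,-}$ is not conjugate to $g$, so TM need not transfer. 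Third, the literal mirror of your $A=\{0\}$ argument is false: if $g^{-1}(0)=\{1\}$, $\lambda$-preservation forces $g(x)=1-x$ on $[1-\delta,1]$, but that interval is \emph{not} forward invariant ($g$ maps it into $[0,\delta]$), so no contradiction with TM follows yet. The paper closes this case with an extra step you omit: from the standing assumption, $g^{-1}(1)\subseteq g^{-2}(0)\subseteq\{0,1\}$ and $g(1)=0\neq1$ force $g^{-1}(1)=\{0\}$, hence $g(0)=1$ with slope $-1$ near $0$ as well; then $(0,\delta)$ and $(1-\delta,1)$ form a period-$2$ pair of intervals whose orbit never meets $(\delta,1-\delta)$, contradicting TM. The repair is short and uses only tools you already deploy, but as written this step of your proposal would fail.
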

\begin{proof}
We will prove the theorem by contradiction. Assume that $g^{-2}(0)\cap (0,1)=\emptyset$. First we will show by contradiction that $g^{-1}(0)\cap (0,1)=\emptyset$. Assume that $c\in g^{-1}(0)$ and $0<c<1$. Because $g^{-1}(c)\subseteq g^{-2}(0)$, $g^{-1}(c)\cap (0,1)=\emptyset$, which is impossible because $g$ is continuous and onto $[0,1]$. Therefore, $g^{-1}(0)\subseteq\{0,1\}$. 

If $g^{-1}(0)=\{0\}$, then $g(x)$ is an affine segment with slope $1$ on $[0,\delta]$ for some sufficiently small $\delta>0$. Thus $g(x)$ is not TM because $g^n\left((0,\delta)\right)=(0,\delta)$ for any $n$ and does not mix with $(\delta,1)$. Contradiction with the hypothesis.

If $g^{-1}(0)=\{1\}$, then $g(x)$ is an affine segment with slope $-1$ on $[1-\delta_1,1]$ for some sufficiently small $\delta_1>0$. Moreover, for $g^{-2}(0)\cap (0,1)=\emptyset$, it follows that $g^{-1}(1)=\{0\}$. The graph of $g(x)$ is an affine segment with slope $-1$ on $[0,\delta_2]$ for some sufficiently small $\delta_2>0$. Let $\delta=\min(\delta_1,\delta_2)$. $g^n\left((0,\delta)\right)=(0,\delta)$ for even $n$ and $g^n\left((0,\delta)\right)=(1-\delta,1)$ for odd $n$. Thus $g(x)$ is not TM because $g^n\left((0,\delta)\right)$ for any $n$ and does not mix with $(\delta,1-\delta)$. Contradiction with the hypothesis.

If $g^{-1}(0)=\{0,1\}$, then $g^{-1}(1) \subset (0,1)$. Therefore, $g^{-2}(0)\cap(0,1)\neq \emptyset$. Contradiction with the assumption.

Hence, $g^{-2}(0)\cap (0,1)\neq\emptyset$. We can analogously show that $g^{-2}(1)\cap (0,1)\neq\emptyset$. By Lemma~\ref{lemma:Jcollection2}, $g$ is LEO.
\end{proof}

\begin{theorem}
Denote by $\mathbb{G}_{\text{LEO}}$ the subset of $\mathbb{G}$ whose elements are LEO.
$\mathbb{G}_{\text{LEO}}$ is dense in $C(\lambda)$.
\label{theorem:GLEOdense}
\end{theorem}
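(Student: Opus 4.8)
The plan is to bootstrap from the density of $\mathbb{G}$ already established in Theorem~\ref{theorem:approximationproperty}. The key reduction is that, by Theorem~\ref{theorem:GLEOMT}, an element of $\mathbb{G}$ that is TM is automatically LEO; hence it suffices to prove the following local statement: for every $g\in\mathbb{G}$ and every $\eta>0$ there is a \emph{topologically mixing} $\tilde g\in\mathbb{G}$ with $\rho(g,\tilde g)<\eta$. Granting this, given $b\in C(\lambda)$ and $\epsilon>0$ I would first pick $g\in\mathbb{G}$ with $\rho(b,g)<\epsilon/2$ by Theorem~\ref{theorem:approximationproperty}, then apply the local statement with $\eta=\epsilon/2$ to obtain $\tilde g\in\mathbb{G}_{\text{LEO}}$ with $\rho(b,\tilde g)\le\rho(b,g)+\rho(g,\tilde g)<\epsilon$.

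To prove the local statement I would detect the failure of mixing through Lemma~\ref{lemma:Jcollection2}: $g$ is TM exactly when its minimum-length collection $\{\mathcal{J}_1,\mathcal{J}_2,\ldots\}$ from Lemma~\ref{lemma:Jcollection1} equals $\{[0,1]\}$. If this already holds we are done, so assume the ordered pieces $\mathcal{J}_1<\mathcal{J}_2<\cdots<\mathcal{J}_K$ form a nontrivial collection; by Lemma~\ref{lemma:dyadicJenpoints} their endpoints are dyadic, and since each endpoint is pinned by a type II breakpoint of the piecewise affine map $g$, the collection is finite. The aim of the perturbation is to coarsen this collection all the way down to $\{[0,1]\}$ in a single modification that keeps the map in $\mathbb{G}$ and within $\eta$ of $g$.

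Concretely, I would install measure-preserving \emph{connecting gadgets} near every dyadic point separating consecutive pieces. Each gadget lives inside a thin dyadic horizontal strip straddling the boundary and nudges the image of a small subinterval of one piece across the boundary into its neighbor, in both directions; measure is restored by a compensating window perturbation, so that all breakpoints stay dyadic, all slopes stay of the form $\pm2^{k}$, and $\tilde g\in\mathbb{G}$. In the first case of Lemma~\ref{lemma:Jcollection1}, where $g$ is the identity between the pieces, an entire fixed interval may separate two pieces; there I would first subdivide that interval into dyadic blocks of length below $\eta$, drop a rescaled tent map (slopes $\pm2$, onto the block, hence TM on it and within $\eta$ of the diagonal) into each block, and then link adjacent blocks by the same crossing device. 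Because the supports of all these gadgets are pairwise disjoint, their effects on $\rho$ do not accumulate: the sup-norm distance from $g$ is the maximum of the individual amplitudes, each of which is below $\eta$.

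It then remains to verify, again through Lemma~\ref{lemma:Jcollection2}, that the collection of $\tilde g$ is $\{[0,1]\}$. Here I would use that the restriction of $g$ to each minimal piece is itself TM (a consequence of minimality), so that adjoining an orbit crossing a boundary in both directions forces mixing across it; chaining through the finitely many boundaries leaves a single indecomposable element, and the endpoint conditions $g^{-2}(0)\cap(0,1)\neq\emptyset$ and $g^{-2}(1)\cap(0,1)\neq\emptyset$ of Lemma~\ref{lemma:Jcollection2} then follow automatically by Theorem~\ref{theorem:GLEOMT}. The hard part will be precisely this verification: certifying that each gadget genuinely merges two elements of the collection rather than creating a slightly larger but still proper invariant subinterval. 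This needs the transitivity of $g$ on each minimal piece together with the fact that every slope has absolute value at least $1$, so that forward images of any subinterval spread across the new boundary; the bookkeeping is most delicate in the second case of Lemma~\ref{lemma:Jcollection1}, where $g$ reverses orientation and permutes the pieces, so the crossing gadgets must be arranged to respect that permutation. Once the two-sided connectivity of the resulting transition structure is in hand, the conclusion is immediate.
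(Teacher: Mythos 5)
Your high-level route is the paper's: take the $g\in\mathbb{G}$ furnished by Theorem~\ref{theorem:approximationproperty}, use Lemma~\ref{lemma:Jcollection1} together with Lemma~\ref{lemma:dyadicJenpoints} to locate the (finitely many, dyadic) boundaries of the invariant collection, perturb in two stages --- fill the slope-$\pm1$ complement with small onto-block perturbations, then install measure-preserving crossing gadgets at each boundary --- and conclude TM from Lemma~\ref{lemma:Jcollection2} and LEO from Theorem~\ref{theorem:GLEOMT}. That is exactly the paper's Steps 1 and 2. But two of your steps, as written, do not go through. First, the tent insertion is discontinuous: on an interior block $[a,b]$ of the complement where $g(x)=x$, any replacement must still satisfy $\tilde g(a)=a$ and $\tilde g(b)=b$, while a two-legged tent with slopes $\pm2$ mapping onto $[a,b]$ returns to height $a$ at $x=b$; more generally any \emph{even} number of monotone legs fails to reconnect the diagonal (and likewise the antidiagonal in the reflection case). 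This is why the paper uses an \emph{odd}, $3$-fold window perturbation with slopes $2,-2^{2},2^{2}$ (resp.\ $-2,2^{2},-2^{2}$). As stated, your Step-1 map is not continuous and hence not in $\mathbb{G}$; the fix is immediate but necessary.

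Second, your verification leans on the claim that ``the restriction of $g$ to each minimal piece is itself TM (a consequence of minimality),'' and that does not follow from the paper's definitions. Minimum length, as defined after Lemma~\ref{lemma:Jcollection1}, only forbids shrinking each $\mathcal{J}_i$ inside itself; it does not forbid a piece from carrying a strictly finer invariant structure. For instance $g$ could map $\mathcal{J}_i$ onto itself while preserving two sub-blocks of $\mathcal{J}_i$ whose union is $\mathcal{J}_i$, so that $\{\mathcal{J}_i\}$ is unshrinkable yet $g|_{\mathcal{J}_i}$ is not TM; in that situation your boundary gadgets merge the \emph{listed} pieces but a proper invariant subinterval survives strictly inside one of them, and the final collection is not $\{[0,1]\}$. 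To close this you must either invoke the full Barge--Martin statement with mixing of $h^2$ on each piece (which the paper's quotation of Lemma~\ref{lemma:Jcollection1} does not assert) or work with the canonical collection implicit in Lemma~\ref{lemma:Jcollection2} and show your gadgets destroy every boundary of \emph{any} admissible collection, not just the one you started from. You correctly flag this verification as the hard part, but the justification you offer for it is the one point at which the argument, as written, is wrong rather than merely unfinished.
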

\begin{proof}

The idea is to further perturb $g$ obtained in the proof of Theorem~\ref{theorem:approximationproperty} to meet the conditions required in Lemma~\ref{lemma:Jcollection2}, thereby making $g$ LEO. Specifically, map $g$ obtained in the proof of Theorem~\ref{theorem:approximationproperty} is an element of $\mathbb{G}$, and thus the set of periodic points of $g$ is dense on $[0,1]$ from Section~\ref{sec:basic-3}. A collection of intervals $\{\mathcal{J}_1, \mathcal{J}_2, \ldots\}$ of $[0,1]$ exist to have the properties stated in Lemma~\ref{lemma:Jcollection1}. 

\begin{figure}
 \centering
  \includegraphics[width=12cm]{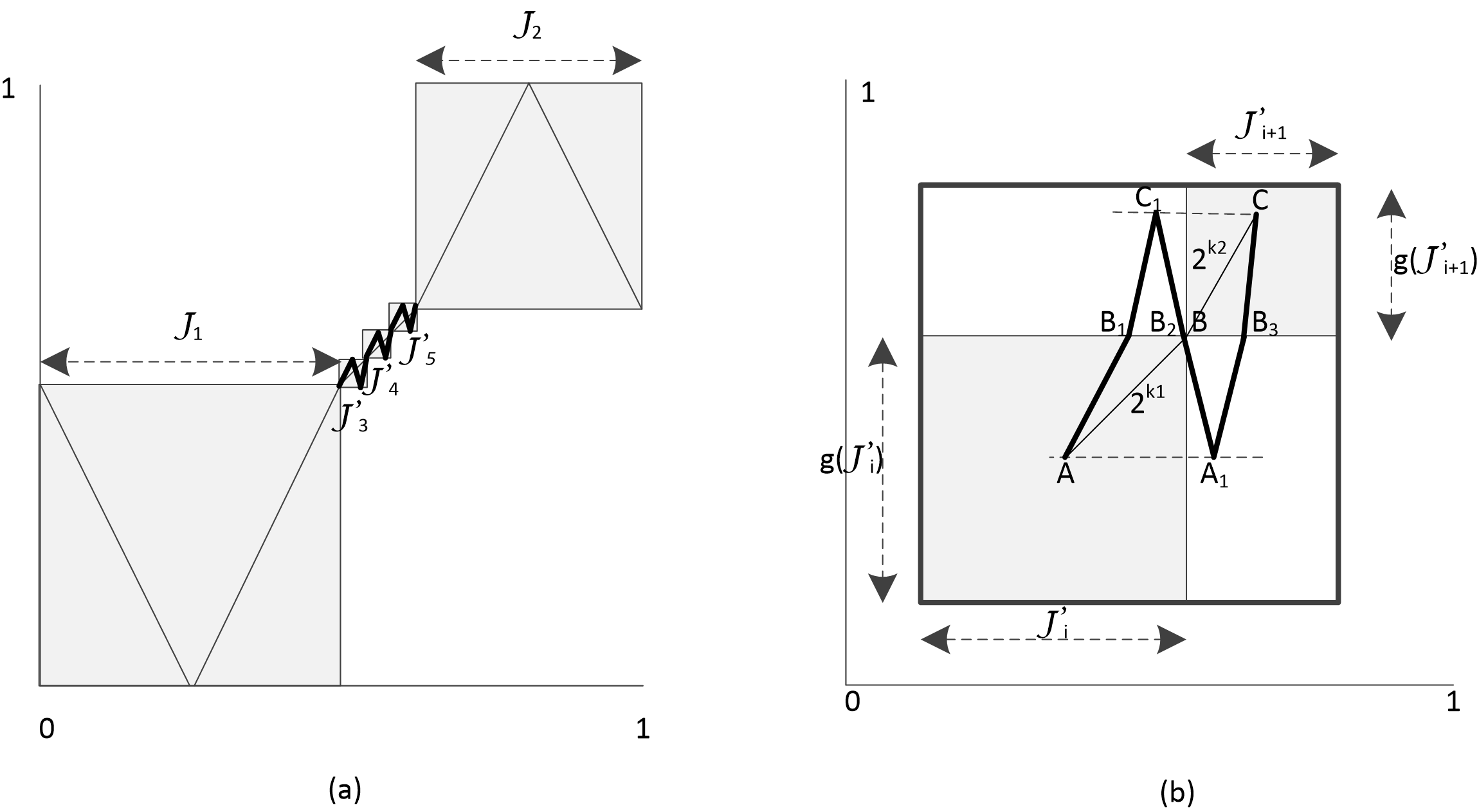}
  \caption{Perturbation in the proof of Theorem~\ref{theorem:GLEOdense}. (a) Add new $\mathcal{J}'_3, \mathcal{J}'_4, \mathcal{J}'_5$ such that $[0,1]=\bigcup_{i\ge1} \mathcal{J}'_i$. (b) Merge $\mathcal{J}_i$ and $\mathcal{J}_{i+1}$ into one interval by replacing the thin segments $AB, BC$ with thick segments $AB_1, B_1C_1, C_1B_2, B_2A_1, A_1B_1, B_1C$.}
  \label{fig:WeeklyDiscussion_20200420Page-4}
\end{figure} 

\emph{Step $1$}. Add new intervals to the set $\{\mathcal{J}_1, \mathcal{J}_2, \ldots\}$ such that the new set $\{\mathcal{J}'_1, \mathcal{J}'_2, \ldots\}$ covers $[0,1]$ in $g_{\text{new}}$ as illustrated in Figure~\ref{fig:WeeklyDiscussion_20200420Page-4}(a). By Lemma~\ref{lemma:Jcollection1}, if $x\in[0,1]\setminus \bigcup_{i\ge1}\mathcal{J}_i^{\circ}$, then the derivative of $g$ is either $1$ or $-1$. Divide  $[0,1]\setminus \bigcup_{i\ge1}\mathcal{J}_i^{\circ}$ into a number of intervals with mutually disjoint interiors and each has an maximum length smaller than $\frac{\epsilon}{2}$ and dyadic endpoints. Such a division exists because the endpoints of $\{\mathcal{J}_i\}$ are all dyadic by Lemma~\ref{lemma:dyadicJenpoints}. If the affine segment on one of the intervals has the slope $1$, then replace it with a $3$-fold window perturbation of slopes $2^{1}, -2^{2}, 2^{2}$ on the three legs respectively; otherwise, the affine segment has the slope $-1$ and replace it with a $3$-fold window perturbation of slopes $-2^{1}, 2^{2}, -2^{2}$ on the three legs respectively. 

After step $1$, $g_{\text{new}}\in\mathbb{G}$ and $\rho(g_{\text{old}}, g_{\text{new}})<\frac{\epsilon}{2}$. Combine the original $\mathcal{J}_1, \mathcal{J}_2, \ldots$ and the newly added intervals to become $\{\mathcal{J}'_1, \mathcal{J}'_2, \ldots\}$. $\bigcup_{i\ge1}\mathcal{J}'_i=[0,1]$.

\emph{Step $2$}. Merge adjacent disjoint intervals. Consider adjacent disjoint intervals $\mathcal{J}'_i$ and $\mathcal{J}'_{i+1}$. Let $B$ be on the graph of $g$ at the boundary point between $\mathcal{J}'_i$ and $\mathcal{J}'_{i+1}$. The graph of $g$ is an affine segment in a sufficiently small left and right neighborhood of $B$ and the left and the right derivatives are of the same sign by Lemma~\ref{lemma:Jcollection1}. Suppose that the derivatives are both positive as shown in Figure~\ref{fig:WeeklyDiscussion_20200420Page-4}(b). (The case of the derivatives being both negative can be proven analogously.) Let the left derivative be $2^{k_1}$ and the right derivative be $2^{k_2}$. Let $AB$ be the affine segment of $g$ in $\mathcal{J}'_i$ and $BC$ be the affine segment of $g$ in $\mathcal{J}'_{i+1}$ where $B_y-A_y=C_y-B_y=2^{-M}$ for a large positive integer $M$ such that $2^{-M}<\min\left(\frac{\epsilon}{4},\frac{|\mathcal{J}'_{i}|}{2},\frac{|\mathcal{J}'_{i+1}|}{2}\right)$ and no breakpoint exists on $(A_x, B_x)$ or $(B_x, C_x)$. $B_x-A_x = (B_y-A_y)\cdot 2^{-k_1}=2^{-M-k_1}, C_x-B_x = (C_y-B_y)\cdot 2^{-k_1}=2^{-M-k_2}$. Thus $A$ and $C$ are both dyadic, because $B$ is dyadic.

Replace segments $AB$ and $BC$ by the following six affine segments to merge $\mathcal{J}'_{i}$ and $\mathcal{J}'_{i+1}$: $AB_1$, $B_1C_1$, $C_1B_2$, $B_2A_1$, $A_1B_3$ and $B_3C$. The connecting points $A_1, C_1, B_1, B_2, B_3$ are defined as follows: $C_{1,y}=C_y, A_{1,y}=A_y, B_{1,y}=B_{2,y}=B_{3,y}=B_{y}$ and $B_{1,x}-A_x=2^{-M-k_1-1}, C_{1,x}-B_{1,x}=2^{-M-k_2-1}, B_{2,x}-C_{1,x}=2^{-M-k_2-2}, A_{1,x}-B_{2,x}=2^{-M-k_1-2}, B_{3,x}-A_{1,x}=2^{-M-k_1-2}, C_{x}-B_{3,x}=2^{-M-k_2-2}$. It is easy to verify that $g$ is still $\lambda$-preserving, the absolute values of the slopes of the affine segments are $2^{k_1+1}, 2^{k_1+2}, 2^{k_2+1}, 2^{k_2+2}$, and the newly added breakpoints $A, A_1, B_1, B_2, B_3, C, C_1$ are all dyadic. Therefore, $g_{\text{new}}\in\mathbb{G}$.

Repeat the preceding procedure for all $i$. The choice of $M$ ensures that the perturbation done for all $i$ does not overlap and $\rho(g_{\text{old}}, g_{\text{new}})<\frac{\epsilon}{2}$ in step $2$. After step $2$, $\{\mathcal{J}'_1, \mathcal{J}'_2, \ldots\}$ are all merged into $\{[0,1]\}$.

Hence, after the preceding two steps of perturbation, $\rho(g_{\text{old}}, g_{\text{new}})<\epsilon$. By Lemma~\ref{lemma:Jcollection2}, $g$ is TM, and by Theorem~\ref{theorem:GLEOMT}, $g$ is LEO.
\end{proof}

\begin{figure}
 \centering
  \includegraphics[width=8cm]{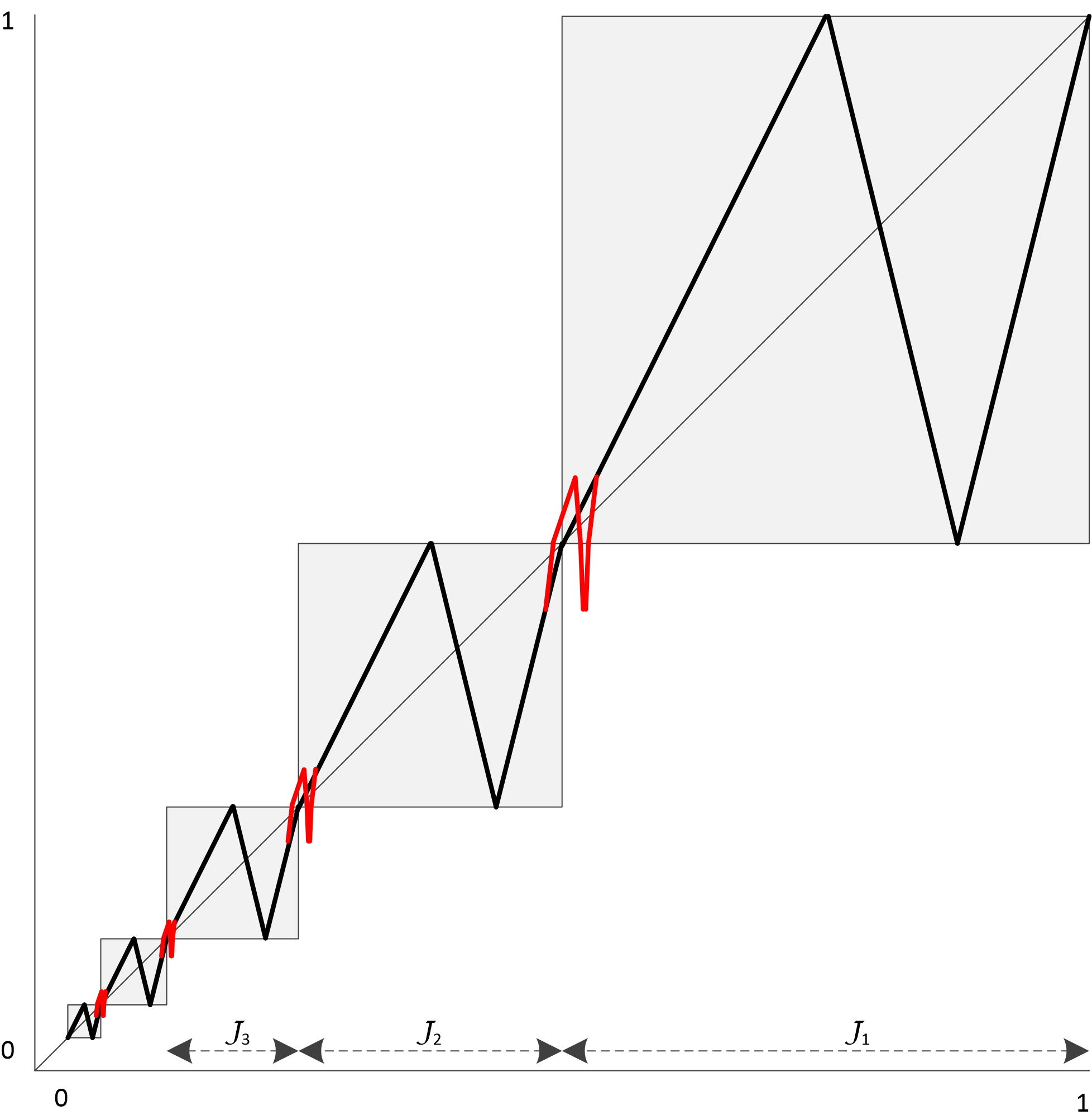}
  \caption{An example of $h$ that is TM but is not LEO. First, replace the thin segment $h(x)=x$ with the black thick segments, which are the window perturbations on intervals $\mathcal{J}_1, \mathcal{J}_2, \ldots$. Then, replace the black thick segments with the red thick segments at the boundaries between  $\mathcal{J}_i$ and $\mathcal{J}_{i+1}$ for $i=1, 2, \ldots$ to be TM.}
  \label{fig:WeeklyDiscussion_20200420Page-7}
\end{figure} 

Figure~\ref{fig:WeeklyDiscussion_20200420Page-7} shows an example of $h$ that is TM but is not LEO. Specifically, partition $[0,1]$ into countably infinitely many subintervals $\{\mathcal{J}_i\}$, where $\mathcal{J}_i=[2^{-i},2^{-i+1}]$ for $i=1, 2, \ldots$. Start with $h(x)=x$. Next replace $h(x)$ on $\mathcal{J}_i$ with a $3$-fold window perturbation, shown as the thick black segments. Then merge $\mathcal{J}_i$ and $\mathcal{J}_{i+1}$ as in step $2$ of the proof of Theorem~\ref{theorem:GLEOdense} illustrated in Figure~\ref{fig:WeeklyDiscussion_20200420Page-4}(b). The merge is shown as the thick red segments in Figure~\ref{fig:WeeklyDiscussion_20200420Page-7}. Let $h(0)=0$. $h^{-1}(0)=\{0\}$. The endpoint $x=0$ is not accessible and therefore $h$ is not LEO. Recall that endpoint $x=0$ is accessible if there exists $x\in(0,1)$ such that $h^n(x)=0$ for some $n>0$. Note that this map $h$ is not an element of $\mathbb{G}$, because there are infinitely many points at which $h$ is not differentiable, although $h$ meets all the other conditions of $\mathbb{G}$. Therefore this counterexample does not contradict Theorem~\ref{theorem:GLEOMT}.

\section{Periodicity \label{sec:Markov}}

Theorem~\ref{theorem:allperiodicpoints} states a salient feature of $\mathbb{G}$.
\begin{theorem}
Let $g\in \mathbb{G}$ and $c$ be a dyadic number. Then point $(c,g(c))$ is preperiodic under the diagonal action $(x,y)\rightarrow (g(x), g(y))$.
\label{theorem:allperiodicpoints}
\end{theorem}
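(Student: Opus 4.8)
The plan is to show that the forward orbit of $c$ under $g$ is confined to a finite set of dyadic rationals, from which preperiodicity is immediate. First I would observe that applying the diagonal action $k$ times to $(c,g(c))$ produces $\bigl(g^k(c),\, g^{k+1}(c)\bigr)$, so the orbit of $(c,g(c))$ is the sequence $\{(g^k(c), g^{k+1}(c))\}_{k\ge 0}$; this is preperiodic as soon as the single sequence $\{g^k(c)\}_{k\ge 0}$ takes only finitely many values. By Lemma~\ref{lemma:alldyadic}, $g$ sends dyadic points to dyadic points, so every $g^k(c)$ is dyadic, and it remains only to bound the sizes of the denominators.

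The crucial structural input is that every slope of $g$ has absolute value at least $1$: by Lemma~\ref{lemma:basicderivativeinverse} and the discussion following it, the exponents $k_i$ in the slopes $\pm 2^{k_i}$ are non-negative. I would quantify the denominator using the $2$-adic valuation $v_2$. Writing a dyadic number $z$ with denominator $2^{e(z)}$ (so $e(z)=\max(0,-v_2(z))$), the subadditivity $e(u+w)\le \max(e(u), e(w))$ together with $e(\pm 2^{k} z) = \max(0,\, -k - v_2(z)) \le e(z)$ for $k\ge 0$ is exactly what is needed. On the affine piece $g(x)=b + \sigma (x-a)$ through a breakpoint $(a,b)$ with slope $\sigma=\pm 2^{k}$, $k\ge 0$, one then obtains
\begin{equation*}
e\bigl(g(x)\bigr) \;\le\; \max\bigl(e(b),\, e(\sigma a),\, e(\sigma x)\bigr) \;\le\; \max\bigl(D,\, e(x)\bigr),
\end{equation*}
where $D$ is the largest denominator exponent occurring among the finitely many breakpoint values $b$ and the quantities $\sigma a$, a constant depending only on $g$.

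With this one-step estimate in hand, a routine induction gives $e\bigl(g^k(c)\bigr) \le \max(D, e(c))=:Q$ for all $k\ge 0$, so every iterate lies in the finite set $\{\, j\,2^{-Q} : 0\le j\le 2^Q\,\}$. Consequently $\{g^k(c)\}$ must repeat a value, i.e. $g^n(c)=g^m(c)$ for some $n>m\ge 0$, which forces $g^{n+1}(c)=g^{m+1}(c)$ and hence $\bigl(g^n(c),g^{n+1}(c)\bigr)=\bigl(g^m(c),g^{m+1}(c)\bigr)$, establishing preperiodicity of $(c,g(c))$ under the diagonal action.

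I expect the main obstacle to be the careful bookkeeping of the denominator bound: one must verify that the additive breakpoint contributions $b$ and $\sigma a$ genuinely have denominator exponents bounded by a constant independent of $k$, and, most importantly, that the multiplicative factor $\sigma=\pm 2^{k}$ with $k\ge 0$ can only shrink, never enlarge, the denominator coming from $x$. This non-negativity of the slope exponents, which is precisely a consequence of the measure-preservation constraint $\sum_i 2^{-k_i}=1$, is the feature that distinguishes $\mathbb{G}$ from $\mathbb{F}$ and makes the orbit bounded; without it the denominators could grow without bound and the conclusion would fail.
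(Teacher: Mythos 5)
Your proposal is correct and takes essentially the same route as the paper: both arguments exploit the non-negativity of the slope exponents (forced by $\lambda$-preservation) to show that the orbit $\{g^k(c)\}_{k\ge 0}$ is confined to dyadic rationals of uniformly bounded denominator, and then conclude by pigeonhole. The only difference is bookkeeping --- the paper fixes a single common denominator $2^M$ for all breakpoints and for $c$, $g(c)$ at the outset and checks it is preserved by summing vertical displacements, whereas you prove a one-step non-increase estimate for the denominator exponent via the $2$-adic valuation; the underlying mechanism is identical.
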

\begin{proof}
Let $0=x_0<\cdots<x_n=1$ be all the breakpoints of $g$. Let $x_i=\frac{l_i}{2^M}$ for $i=0, 2, \ldots, n$, $c=\frac{p_{0}}{2^M}$ and $g(c)=\frac{p_{1}}{2^M}$ for integers $M$ and $l_i, p_{0}, p_{1}$. Integers $l_i, p_{0}, p_{1}$ are not necessarily odd. 

Let $\{x_{b_1}, x_{b_2}, \ldots, x_{b_m}\}$ be the subset of breakpoints between $c$ and $g(c)$, inclusive, where $m\ge0$. Between $x=c$ and $x=g(c)$ there are $m+1$ affine segments, each with a horizontal length in the form of $\frac{l}{2^M}$ and a slope in the form of $\pm 2^{k}$ for some integers $l,k$. The vertical displacement of any affine segment is in the form of $\pm \frac{l\cdot2^{k}}{2^M}$. The sum of the vertical displacements of these $m+1$ affine segments, equal to $g(g(c))-g(c)$, is in the form of $\frac{l}{2^M}$ for some integer $l$. Given that $g(c)=\frac{p_{1}}{2^M}$, it follows that $g(g(c))=\frac{p_{2}}{2^M}$ for some integer $p_{2}$. 

Repeating the preceding argument, it follows that $g^i(c)=\frac{p_{i}}{2^M}$ for integer $p_{i}$ for all $i=0, 1, 2, \ldots$, with $0\le p_i\le 2^M$. Because $M$ is a finite number, the total number of distinct $\frac{p_{i}}{2^M}$ in $[0,1]$ is finite. Hence, $g^{i_1}(c)=g^{i_2}(c)$ for some $i_1 \neq i_2$, and $(c,g(c))$ is a preperiodic point.
\end{proof}

\begin{definition}[\textsl{Markov Map}]
A piecewise affine interval map is a Markov map if all breakpoints are preperiodic.
\end{definition}
By definition, any breakpoint of $g\in\mathbb{G}$ is dyadic and thus preperiodic by Theorem~\ref{theorem:allperiodicpoints}. The following corollary follows immediately.
\begin{corollary}
Any $g\in \mathbb{G}$ is a Markov map.
\label{corollary:markovmap}
\end{corollary}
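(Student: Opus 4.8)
The plan is to reduce the corollary directly to Theorem~\ref{theorem:allperiodicpoints}, since the definition of a Markov map only requires that every breakpoint be preperiodic. First I would recall that, by Definition~\ref{def:breakpoints} together with Definition~\ref{definition:ThompsonMonoidG}, every breakpoint of $g$ has a dyadic $x$-coordinate: an interior breakpoint is a point of non-differentiability, whose $x$-coordinate is required to be dyadic, and the two endpoints at $x=0$ and $x=1$ are trivially dyadic. Thus the $x$-coordinate of each breakpoint is a dyadic number of the kind to which Theorem~\ref{theorem:allperiodicpoints} applies.

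Next I would invoke Theorem~\ref{theorem:allperiodicpoints} with $c$ taken to be the $x$-coordinate of a given breakpoint. The theorem asserts that $(c,g(c))$ is preperiodic under the diagonal action $(x,y)\mapsto(g(x),g(y))$. Writing the $k$-th iterate of this action as $\left(g^k(c),\,g^{k+1}(c)\right)$, preperiodicity of the pair yields integers $n>m$ with $g^n(c)=g^m(c)$, which is precisely the statement that the point $c$ is preperiodic under $g$ in the sense of the notation section. I would also note that the proof of Theorem~\ref{theorem:allperiodicpoints} establishes this even more transparently: it shows that the forward orbit $\{g^i(c)\}$ lies in the finite set $\{\,p/2^M : 0\le p\le 2^M\,\}$, so some value must repeat and $c$ is preperiodic.

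Finally, since $g$ is piecewise affine by Definition~\ref{definition:ThompsonMonoidG} and every one of its breakpoints has just been shown to be preperiodic, $g$ satisfies the definition of a Markov map verbatim, completing the proof of Corollary~\ref{corollary:markovmap}. There is essentially no obstacle here, as all the substantive work has already been carried out in Theorem~\ref{theorem:allperiodicpoints}; the only point deserving a word of care is the bookkeeping that converts preperiodicity of the pair $(c,g(c))$ under the diagonal action into preperiodicity of the single point $c$ under $g$, which amounts to reading off the first coordinate of the orbit.
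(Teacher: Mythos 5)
Your proposal is correct and follows exactly the paper's argument: every breakpoint of $g$ is dyadic by Definition~\ref{definition:ThompsonMonoidG}, hence preperiodic by Theorem~\ref{theorem:allperiodicpoints}, so $g$ is Markov by definition. Your extra care in translating preperiodicity of the pair $(c,g(c))$ under the diagonal action into preperiodicity of $c$ under $g$ is a sound (and slightly more explicit) rendering of the step the paper treats as immediate.
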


By Theorem~\ref{theorem:GLEOdense} and Corollary~\ref{corollary:markovmap}, $\mathbb{G}$ that is both LEO and Markov is dense in $C(\lambda)$. Because $\mathbb{G}$ is a subset of $PA(\lambda)$, this result is stronger than \cite[Proposition.\ 7]{2019arXiv190607558B}, which shows that $PA(\lambda)$ that are both LEO and Markov is dense in $C(\lambda)$. Corollary~\ref{corollary:markovmap} provides an essential basis of the study of topological conjugacy in Section~\ref{sec:conjugacy}.

\begin{definition}[\textsl{Period of a Point}]
Suppose that $x$ is a periodic point. The period of $x$ is the least positive integer $p$ such that $h^p(x)=x$. 
\end{definition}
\begin{definition}[\textsl{Chaotic Function}]
A map $h$ is called chaotic if there exists a point $x$ of period $k$ for any positive integer $k$. 
\end{definition}
Li-Yorke theorem \cite{10.2307/2318254} states that if a periodic point $x$ of period $3$ exists, then $h$ is chaotic. Periodic points of period $3$ are thus of particular importance. The remainder of this section is to characterize the periods of periodic points of $g\in\mathbb{G}$. 

\begin{theorem}
Consider a continuous map $h$ from $[0,1]$ onto itself. If intervals $\mathcal{I}_0, \mathcal{I}_1, \mathcal{I}_2 \subseteq[0,1]$ exist such that $\mathcal{I}_1\subset \mathcal{I}_0, \mathcal{I}_2\subset \mathcal{I}_0$, $\mathcal{I}^{\circ}_1\cap \mathcal{I}^{\circ}_2=\emptyset$, and $h(\mathcal{I}_1)=h(\mathcal{I}_2)=\mathcal{I}_0$, then a periodic point $x_0\in\mathcal{I}_0$ of period $3$ exists.
\label{theorem:period3}
\end{theorem}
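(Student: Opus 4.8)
The plan is to realize the hypothesis as a \emph{two-interval horseshoe} and to extract a period-$3$ orbit from it by prescribing a symbolic itinerary. The engine is the elementary covering lemma: if $J,K\subseteq[0,1]$ are compact intervals and $h(J)\supseteq K$, then by the intermediate value theorem there is a compact subinterval $J'\subseteq J$ with $h(J')=K$. Chaining this backwards along a prescribed cyclic sequence of target intervals yields, for any covering loop $J_0\to J_1\to\cdots\to J_{n-1}\to J_0$ (where $A\to B$ means $h(A)\supseteq B$ and indices are read cyclically), a point $x$ with $h^i(x)\in J_i$ for all $i$ and $h^n(x)=x$. I would first record and prove this lemma by the nested-pullback construction: choose $K_{n-1}\subseteq J_{n-1}$ with $h(K_{n-1})=J_0$, then $K_{n-2}\subseteq J_{n-2}$ with $h(K_{n-2})=K_{n-1}$, and continue down to $K_0\subseteq J_0$ with $h(K_0)=K_1$. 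Then $h^n(K_0)=J_0\supseteq K_0$, so $h^n$ has a fixed point $x\in K_0$, and by construction $h^i(x)\in K_i\subseteq J_i$.

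Next I would apply this to the length-$3$ loop $\mathcal{I}_1\to\mathcal{I}_2\to\mathcal{I}_2\to\mathcal{I}_1$. Every covering relation it requires holds for free: since $h(\mathcal{I}_1)=h(\mathcal{I}_2)=\mathcal{I}_0$ and $\mathcal{I}_1,\mathcal{I}_2\subseteq\mathcal{I}_0$, we get $h(\mathcal{I}_1)\supseteq\mathcal{I}_2$, $h(\mathcal{I}_2)\supseteq\mathcal{I}_2$, and $h(\mathcal{I}_2)\supseteq\mathcal{I}_1$. The lemma then produces a point $x_0\in\mathcal{I}_1\subseteq\mathcal{I}_0$ with $h(x_0)\in\mathcal{I}_2$, $h^2(x_0)\in\mathcal{I}_2$, and $h^3(x_0)=x_0$. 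In particular the least period of $x_0$ divides $3$, so it is either $1$ or $3$.

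It then remains to exclude least period $1$. If $x_0$ were a fixed point, then $x_0=h(x_0)$, while $x_0\in\mathcal{I}_1$ and $h(x_0)\in\mathcal{I}_2$, forcing $x_0\in\mathcal{I}_1\cap\mathcal{I}_2$. Since $\mathcal{I}^{\circ}_1\cap\mathcal{I}^{\circ}_2=\emptyset$, the intersection $\mathcal{I}_1\cap\mathcal{I}_2$ is either empty or a single shared endpoint; hence whenever $\mathcal{I}_1$ and $\mathcal{I}_2$ are strictly separated the argument is complete and $x_0$ has period exactly $3$.

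I expect the main obstacle to be the degenerate abutting case, in which $\mathcal{I}_1$ and $\mathcal{I}_2$ meet at a single common endpoint $p$ with $h(p)=p$, since then the orbit built above could collapse onto $p$ and the itinerary would fail to certify period $3$. The proposed remedy is to run the covering lemma with \emph{minimal} pullback intervals, so that $h$ carries the endpoints of each $K_i$ onto the endpoints of its target; this makes $h^3$ map $K_0\subseteq\mathcal{I}_1$ \emph{onto} $\mathcal{I}_1$ while sending $\partial K_0$ to $\{\mathcal{I}^0_1,\mathcal{I}^1_1\}$. Applying the intermediate value theorem to $h^3(x)-x$ on $K_0$ then locates a fixed point of $h^3$, and a short separate argument---choosing the pullback $K_0$ on the side of $\mathcal{I}_1$ away from $p$, and using that $h(\mathcal{I}_1)=\mathcal{I}_0$ is a strictly larger interval than $\{p\}$ so $h^3$ cannot coincide with the diagonal on $K_0$---forces a diagonal crossing at some $x_0\neq p$. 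That crossing is a genuine period-$3$ point $x_0\in\mathcal{I}_0$, completing the proof.
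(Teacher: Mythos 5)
Your proposal is, at its core, the same argument as the paper's: the paper pulls $\mathcal{I}_1$ back through $\mathcal{I}_2$ twice, obtaining $\mathcal{I}_3,\mathcal{I}_4\subset\mathcal{I}_2$ with $h(\mathcal{I}_4)=\mathcal{I}_3$, $h(\mathcal{I}_3)=\mathcal{I}_1$, hence $h^3(\mathcal{I}_4)=\mathcal{I}_0\supseteq\mathcal{I}_4$, and then applies the intermediate value theorem to $h^3$. That is exactly your covering-loop lemma run on the itinerary $\mathcal{I}_2\to\mathcal{I}_2\to\mathcal{I}_1$, i.e., your loop $\mathcal{I}_1\to\mathcal{I}_2\to\mathcal{I}_2\to\mathcal{I}_1$ up to cyclic rotation, so the nested-pullback construction and the fixed point of $h^3$ are identical in both proofs.

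Where you diverge is the abutting case $\mathcal{I}_1\cap\mathcal{I}_2=\{p\}$, and there you are in fact more careful than the paper: the paper asserts $\mathcal{I}_3\cap\mathcal{I}_1=\emptyset$ from $\mathcal{I}_3\subset\mathcal{I}_2$ and $\mathcal{I}_1^{\circ}\cap\mathcal{I}_2^{\circ}=\emptyset$, but interior-disjointness only yields $\mathcal{I}_3\cap\mathcal{I}_1\subseteq\{p\}$, so the paper's distinctness step silently skips precisely the collapse you flag (possible only when $h(p)=p$). Your repair is right in spirit but rests on an unproved assertion, namely that the pullback $K_0\subseteq\mathcal{I}_1$ can be chosen with $p\notin K_0$; and the remark that $h^3$ ``cannot coincide with the diagonal'' is not by itself enough, since a graph can touch the diagonal only at the endpoint $p$ without crossing anywhere else. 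The missing justification is short, so supply it: if $h(p)=p$, then $p$ is strictly interior to $\mathcal{I}_0$ (because $\mathcal{I}_1\cup\mathcal{I}_2\subseteq\mathcal{I}_0$ and $p$ is interior to $\mathcal{I}_1\cup\mathcal{I}_2$), so the points $c_0,c_1\in\mathcal{I}_1$ where $h$ attains $\mathcal{I}_0^0$ and $\mathcal{I}_0^1$ satisfy $c_0,c_1\neq p$; since $p$ is an endpoint of $\mathcal{I}_1$, the closed interval spanned by $c_0$ and $c_1$ lies in $\mathcal{I}_1\setminus\{p\}$, and $h$ already covers $\mathcal{I}_0\supseteq K_1$ on it, so a pullback $K_0$ avoiding $p$ exists there. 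Once $p\notin K_0$, plain IVT on $h^3(x)-x$ over $K_0$ (using $h^3(K_0)=\mathcal{I}_1\supseteq K_0$) gives $x_0\neq p$ with $h^3(x_0)=x_0$, and your own itinerary argument excludes period $1$, so minimal pullbacks are unnecessary. With that one line added, your proof is complete and handles a degenerate case that the paper's own proof glosses over.
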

\begin{proof}
Because $\mathcal{I}_1 \subset \mathcal{I}_0=h(\mathcal{I}_2)$, an interval $\mathcal{I}_3\subset \mathcal{I}_2$ exists such that $\mathcal{I}_1=h(\mathcal{I}_3)$. Because $\mathcal{I}_3 \subset h(\mathcal{I}_2)$, an interval $\mathcal{I}_4 \subset \mathcal{I}_2$ exists such that $\mathcal{I}_3=h(\mathcal{I}_4)$. Therefore, $\mathcal{I}_0=h^3(\mathcal{I}_4)$. Because $\mathcal{I}_4\subset\mathcal{I}_0$, by the Intermediate Value Theorem, $x_0\in\mathcal{I}_4$ exists such that $h^3(x_0)=x_0$. Specifically, $x_1\in\mathcal{I}_3$ and $x_2\in\mathcal{I}_1$ exist such that $x_1=h(x_0), x_2=h(x_1)$ and $x_0=h(x_2)$. Moreover, $\mathcal{I}_3\cap\mathcal{I}_1=\emptyset$ because $\mathcal{I}_3\subset \mathcal{I}_2$ and $\mathcal{I}^{\circ}_1\cap \mathcal{I}^{\circ}_2=\emptyset$. Then, it follows that $\mathcal{I}_3\cap\mathcal{I}_4=\emptyset$ because $\mathcal{I}_1=h(\mathcal{I}_3)$ and $\mathcal{I}_3=h(\mathcal{I}_4)$. Thus $x_0, x_1, x_2$ are all distinct. Hence, the period of $x_0$ is $3$. The proof is illustrated in Figure~\ref{fig:WeeklyDiscussion_20200420Page-8}.
\end{proof}

\begin{figure}
 \centering
  \includegraphics[width=8cm]{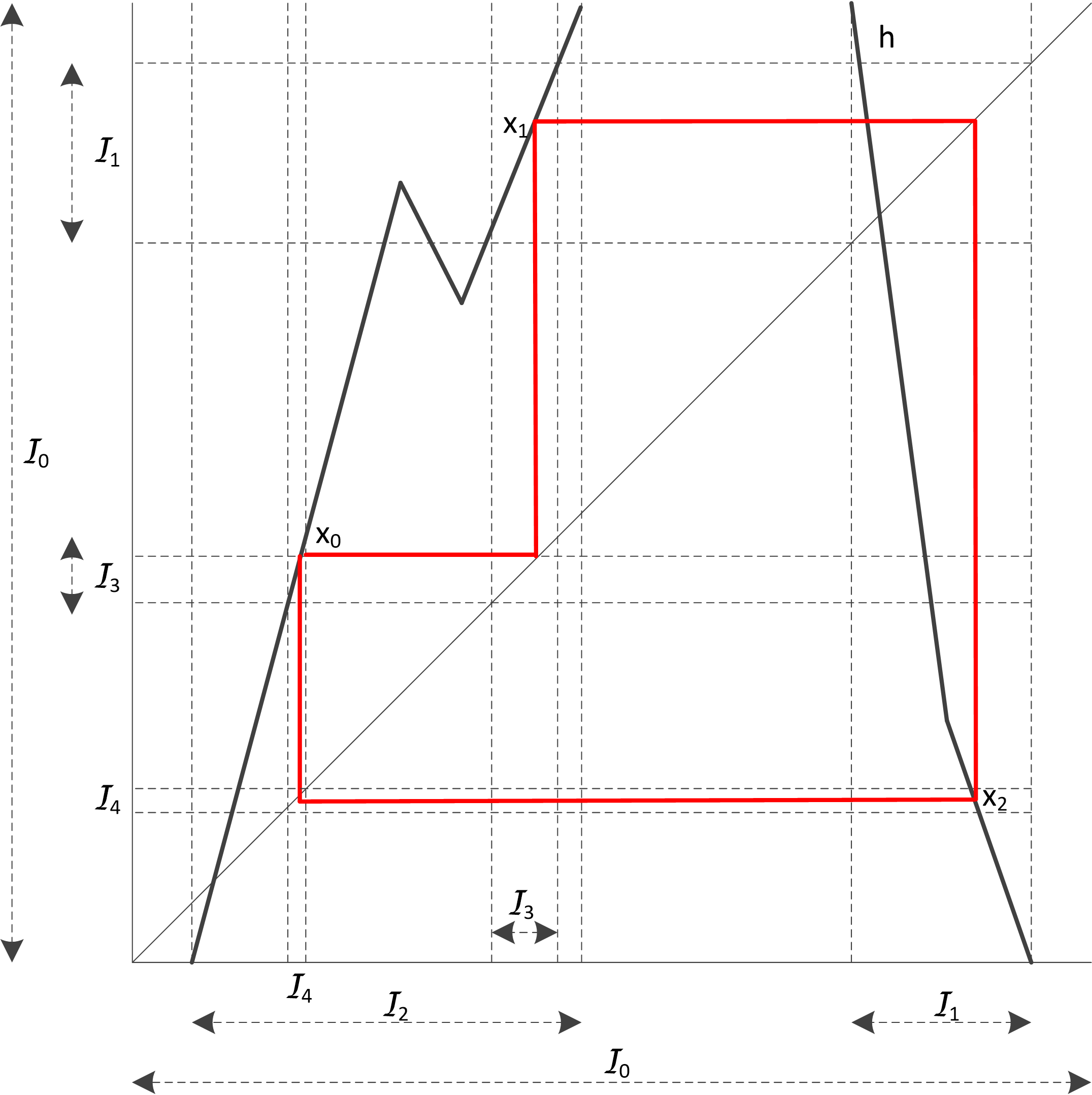}
  \caption{Proof of Theorem~\ref{theorem:period3}. The thick black lines represent $h$ and the thick red lines show the period-$3$ trajectory of $x_0 \xrightarrow{h} x_1\xrightarrow{h} x_2 \xrightarrow{h} x_0$.}
  \label{fig:WeeklyDiscussion_20200420Page-8}
\end{figure} 

\begin{corollary}
Consider a $\lambda$-preserving continuous map $h$ from $[0,1]$ onto itself. Suppose that an interval $\mathcal{J}$ exists such that $h(\mathcal{J})=\mathcal{J}$ and $h^{-1}(\mathcal{J})=\mathcal{J}$. Let $c$ 
be an endpoint of $\mathcal{J}$. If $h(c)=\mathcal{J}^0$ or $h(c)=\mathcal{J}^1$, and if $d_0\in\mathcal{J}$ with $d_0\neq c$ exists such that $h(d_0)=h(c)$, then a periodic point $x_0\in\mathcal{J}$ of period $3$ exists.
\label{corollary:period3}
\end{corollary}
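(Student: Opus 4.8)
The plan is to deduce the corollary from Theorem~\ref{theorem:period3} by exhibiting nested intervals $\mathcal{I}_0 \supseteq \mathcal{I}_1, \mathcal{I}_2$ inside $\mathcal{J}$ with the required self-covering structure. Because $h(\mathcal{J}) = \mathcal{J}$ and $h^{-1}(\mathcal{J}) = \mathcal{J}$, the restriction $h|_{\mathcal{J}}$ is a continuous $\lambda$-preserving surjection of $\mathcal{J}$ onto itself, so I may argue entirely inside $\mathcal{J}$. First I would reduce to $c = \mathcal{J}^0$: the reflection $R(x) = \mathcal{J}^0 + \mathcal{J}^1 - x$ maps $\mathcal{J}$ onto itself and is an isometry (hence $\lambda$-preserving), and conjugating $h|_{\mathcal{J}}$ by $R$ interchanges the two endpoints while preserving every hypothesis and the existence of a period-$3$ point. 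Thus I may assume $c = \mathcal{J}^0$ and $d_0 \in (\mathcal{J}^0, \mathcal{J}^1]$.

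In the primary case $h(c) = \mathcal{J}^0$, the value $\mathcal{J}^0$ is the minimum of $h$ on $\mathcal{J}$ (since $h(\mathcal{J}) = \mathcal{J}$) and is attained at both ends of $[c,d_0]$, as $h(c) = h(d_0) = \mathcal{J}^0$. Let $M = \max_{[c,d_0]} h$, attained at some $m \in (c,d_0)$; here $M > \mathcal{J}^0$, for otherwise $h \equiv \mathcal{J}^0$ on $[c,d_0]$, which contradicts $\lambda$-preservation because the preimage of the single point $\mathcal{J}^0$ would have positive measure. Applying the Intermediate Value Theorem on $[c,m]$ and on $[m,d_0]$ (where $h$ runs between the values $\mathcal{J}^0$ and $M$ while staying in $[\mathcal{J}^0,M]$) gives $h([c,m]) = h([m,d_0]) = [\mathcal{J}^0, M]$. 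I would then set $\mathcal{I}_0 = [\mathcal{J}^0, M]$, $\mathcal{I}_1 = [c,m]$, $\mathcal{I}_2 = [m,d_0]$: these have disjoint interiors and each maps onto $\mathcal{I}_0$, matching the hypotheses of Theorem~\ref{theorem:period3} provided I can verify $\mathcal{I}_1, \mathcal{I}_2 \subseteq \mathcal{I}_0$.

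The main obstacle is exactly this containment, since a priori the $x$-coordinates $m$ and $d_0$ need not lie below the height $M$. This is where $\lambda$-preservation is indispensable. On $[c,d_0]$ the map takes values in $[\mathcal{J}^0,M]$ and attains the two endpoint-values $\mathcal{J}^0, M$ only on a null set, so $h^{-1}\big((\mathcal{J}^0,M)\big) \cap [c,d_0]$ has measure $d_0 - c$; since globally $\lambda\big(h^{-1}((\mathcal{J}^0,M))\big) = M - \mathcal{J}^0$, I obtain $d_0 - c \le M - \mathcal{J}^0$, i.e. $d_0 \le M$ (using $c = \mathcal{J}^0$). As $m < d_0 \le M$, both $\mathcal{I}_1$ and $\mathcal{I}_2$ sit inside $\mathcal{I}_0 \subseteq \mathcal{J}$, and Theorem~\ref{theorem:period3} then yields a period-$3$ point $x_0 \in \mathcal{J}$.

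The case $h(c) = \mathcal{J}^1$ is the part I expect to demand the most care. If the opposite endpoint is fixed, $h(\mathcal{J}^1) = \mathcal{J}^1$, then $\mathcal{J}^1$ and $c$ provide two preimages of $\mathcal{J}^1$ and the problem collapses to the primary case anchored at $\mathcal{J}^1$. The genuinely delicate configuration is when $h$ swaps the endpoints; there I would take $\mathcal{I}_0 = [M, \mathcal{J}^1]$ with $M = \min_{[c,d_0]} h$, use as one subinterval the right half $[m,d_0]$ of the valley on $[c,d_0]$, and for the second subinterval the piece of $h$ just to the right of $d_0$ on which $h$ descends from $\mathcal{J}^1$ back down to the level $M$, invoking the analogous measure inequality to control its endpoints. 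I expect establishing the containment $\mathcal{I}_1,\mathcal{I}_2 \subseteq \mathcal{I}_0$ in this swapped configuration to be the crux of the whole argument, and it is the step where the geometry of $h$ near $d_0$, rather than measure alone, must be used carefully.
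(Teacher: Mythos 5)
Your treatment of the fixed-endpoint case $h(c)=\mathcal{J}^0$ (after reducing to $c=\mathcal{J}^0$) is correct and complete, and it actually streamlines the paper's argument. The paper splits on the position of a preimage $d_1$ of $\mathcal{J}^1$ relative to $d_0$, taking $\mathcal{I}_0=\mathcal{J}$ when $d_1<d_0$ and $\mathcal{I}_0=[c,d_0]$ when $d_1>d_0$, in the latter case proving $h([c,d_0])\supseteq[c,d_0]$ by exactly the $\lambda$-preservation trick you use ($h^{-1}$ of the top piece would otherwise sit strictly inside a shorter interval). Your single choice $\mathcal{I}_0=[\mathcal{J}^0,M]$ with the inequality $d_0\le M$ handles both situations at once, and every step checks out: $M>\mathcal{J}^0$, the maximizer $m$ is interior to $[c,d_0]$ since $h(c)=h(d_0)=\mathcal{J}^0<M$, the two IVT images equal $[\mathcal{J}^0,M]$ exactly, and the null-set argument for the levels $\{\mathcal{J}^0,M\}$ is legitimate because $\lambda$-preservation forces point-preimages to be null. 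Your reflection reduction and your disposal of the sub-case $h(c)=\mathcal{J}^1$, $h(\mathcal{J}^1)=\mathcal{J}^1$ by re-anchoring at the fixed endpoint are also fine.

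The genuine gap is the one you flagged yourself: the endpoint-swapping case. The containment $\mathcal{I}_1,\mathcal{I}_2\subseteq\mathcal{I}_0$ there is not merely delicate --- it can fail, and no argument can repair it, because the corollary as stated is \emph{false} in that case. Take $\mathcal{J}=[0,1]$, $c=0$, and define $h$ by $h(x)=1-4x$ on $[0,\frac{1}{8}]$, $h(x)=4x$ on $[\frac{1}{8},\frac{1}{4}]$, $h(x)=\frac{3}{2}-2x$ on $[\frac{1}{4},\frac{1}{2}]$, $h(x)=1-x$ on $[\frac{1}{2},1]$. This $h$ is continuous, onto, and $\lambda$-preserving by Lemma~\ref{lemma:basicderivativeinverse} (for $y\in(\frac{1}{2},1)$ the reciprocal slopes give $\frac{1}{4}+\frac{1}{4}+\frac{1}{2}=1$; for $y\in(0,\frac{1}{2})$ the single branch has slope $-1$); it even lies in $\mathbb{G}$. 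All hypotheses of Corollary~\ref{corollary:period3} hold with $h(c)=1=\mathcal{J}^1$ and $d_0=\frac{1}{4}$, yet $h([0,\frac{1}{2}])=[\frac{1}{2},1]$ and $h([\frac{1}{2},1])=[0,\frac{1}{2}]$, so $h^3$ maps each half into the other and the only solution of $h^3(x)=x$ is the fixed point $\frac{1}{2}$: there is no period-$3$ point. In your proposed construction one gets $M=\frac{1}{2}$, $\mathcal{I}_0=[\frac{1}{2},1]$, while the candidate subintervals $[\frac{1}{8},\frac{1}{4}]$ and $[\frac{1}{4},\frac{1}{2}]$ lie outside $\mathcal{I}_0$ --- precisely the failure you anticipated. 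To be clear, the paper is not innocent here either: it dismisses $h(c)=\mathcal{J}^1$ with ``can be proved analogously,'' which this example refutes. The swapped case is sound only under extra structure of the kind present where the corollary is actually invoked (Case~$2$ in Section~\ref{sec:Markov}, where $\mathcal{J}$ sits inside an interval $\mathcal{J}_{i_0}$ with $g(\mathcal{J}_{i_0})=\mathcal{J}_{i_0}$ coming from a minimum-length collection as in Lemma~\ref{lemma:Jcollection1}, which excludes a two-cycle of subintervals filling $\mathcal{J}$ as above); some such added hypothesis is needed before either your argument or the paper's can close this case.
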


\begin{proof}
Without loss of generality, suppose that $c=\mathcal{J}^0$. Suppose that $h(c)=\mathcal{J}^0$. The case of $h(c)=\mathcal{J}^1$ can be proved analogously. 

From the hypothesis, there exists $d_1\in\mathcal{J}$ such that $h(d_1)=\mathcal{J}^1$. If $d_0>d_1$, then let $\mathcal{I}_1=[c,d_1]$, $\mathcal{I}_2=[d_1,d_0]$ and $\mathcal{I}_0=\mathcal{J}$. The conclusion follows from Theorem~\ref{theorem:period3}. Otherwise, $d_0<d_1$. Note that $h([c,d_0])\supseteq [c,d_0]$, because otherwise $h^{-1}([d_0,\mathcal{J}^1]) \subset [d_0,\mathcal{J}^1]$ and $\lambda$ is not preserved. Thus, there exist $c'$ and $d'_0$ with $c<c'\le d'_0<d_0$ such that $h([c,c'])=h([d'_0,d_0])=[c,d_0]$. Let $\mathcal{I}_1=[c,c']$, $\mathcal{I}_2=[d'_0,d_0]$ and $\mathcal{I}_0=[c,d_0]$. The conclusion follows from Theorem~\ref{theorem:period3}. 
\end{proof}

Let $g\in\mathbb{G}$. Suppose that a collection of $\{\mathcal{J}_1, \mathcal{J}_2, \ldots\}$ of $g$ exist to have the properties stated in Lemma~\ref{lemma:Jcollection1}. 

\emph{Case $1$}. Suppose that either $[0,1]\setminus \bigcup_{i\ge1}\mathcal{J}_i^{\circ}\neq\emptyset$ and $g(x)=x$ on $[0,1]\setminus \bigcup_{i\ge1}\mathcal{J}_i^{\circ}$, or $|\{\mathcal{J}_1, \mathcal{J}_2, \ldots\}|>1$ and $g(\mathcal{J}_i)=\mathcal{J}_i$ for all $i$. Then an interval $\mathcal{J} \subseteq \mathcal{J}_i$ for any $i$ exists such that the hypothesis of Corollary~\ref{corollary:period3} holds with $h(c)=c$. Hence, a periodic point of period $3$ exists.

\emph{Case $2$}. Suppose that either $[0,1]\setminus \bigcup_{i\ge1}\mathcal{J}_i^{\circ}\neq\emptyset$ and $g(x)=1-x$ on $[0,1]\setminus \bigcup_{i\ge1}\mathcal{J}_i^{\circ}$,  or $|\{\mathcal{J}_1, \mathcal{J}_2, \ldots\}|>1$ and $g(\mathcal{J}_i)>g(\mathcal{J}_j), \forall \mathcal{J}_i<\mathcal{J}_j$. If $g(\mathcal{J}_i)\neq\mathcal{J}_i, \forall i$, then the period of any periodic point is even, because $g^2(\mathcal{J}_i)=\mathcal{J}_i$. Otherwise, $\exists i_0$ such that $g(\mathcal{J}_{i_0})=\mathcal{J}_{i_0}$. In this case, the period of any periodic point $x$ is even when $x\in\mathcal{J}_i$ with $i\neq i_0$, and an interval $\mathcal{J} \subseteq \mathcal{J}_{i_0}$ exists such that the hypothesis of Corollary~\ref{corollary:period3} holds with $h(c)=1-c$. Hence, a periodic point of period $3$ exists.

\emph{Case $3$}. Suppose that $\{\mathcal{J}_1, \mathcal{J}_2, \ldots\}=\{[0,1]\}$. This case is of particular importance because of Lemma~\ref{lemma:Jcollection2}. Two results have been shown for an interval map $h$ in the literature. 
\begin{itemize}
\item
First \cite[Proposition.\ 2.18]{ruette2015chaos}, if $h$ is transitive, then $h$ is TM if and only if it has a periodic point of odd period greater than $1$. 
\item
Second, define Sharkovsky's order of positive integers by 
\[
3 \lhd 5 \lhd 7 \lhd \cdots \lhd 2\cdot 3 \lhd 2\cdot 5 \lhd 2\cdot 7 \lhd \cdots \lhd 2^2\cdot 3 \lhd 2^2\cdot 5 \lhd 2^2 \cdot 7 \lhd \cdots \lhd 2^3 \lhd 2^2 \lhd 2 \lhd 1.
\]
Sharkovsky's theorem \cite{Sarkovskii1964} states that if $h$ has a periodic point of period $n$, then $h$ has periodic points of period $m$ for all integers $m\rhd n$. Because $3 \lhd n$ for any $n\neq 3$, Li-Yorke theorem is a specific case of Sharkovsky's theorem. 
\end{itemize}
From these two results, it follows that in the case where $\{\mathcal{J}_1, \mathcal{J}_2, \ldots\}=\{[0,1]\}$, there exists an odd number $n_0$ such that periodic points of period $n$ exist for any odd number $n\ge n_0$, no periodic points of period $n$ exist for any odd number $1<n< n_0$, and periodic points of period $n$ exist for $n=1$ and any even number $n$. 

\begin{example}
Let
\begin{equation}
g(x)=\left\{
\begin{array}{ll}
4x+\frac{1}{2}-\delta, & 0\le x <\frac{1}{2}\delta\\
2x+\frac{1}{2}, & \frac{1}{2}\delta \le x <\frac{1}{4}\\
-2x+\frac{3}{2}, & \frac{1}{4}\le x < \frac{3}{4}\\
2x-\frac{3}{2}, & \frac{3}{4}\le x < 1-\frac{1}{2}\delta \\
4x-\frac{7}{2}+\delta, & 1-\frac{1}{2}\delta \le x \le 1
\end{array}
\right.
\label{eq:example_delta_n0}
\end{equation}
for $0<\delta<\frac{1}{2}$. It can be shown that $\delta$ exists for any target $n_0$. The smaller target value of $n_0$, the larger value of $\delta$ is needed. Two examples are shown in Figure~\ref{fig:Report_finalfig1}.

\begin{figure}
\centering
\begin{subfigure}{.49\textwidth}
  \centering
  \includegraphics[width=1.\linewidth]{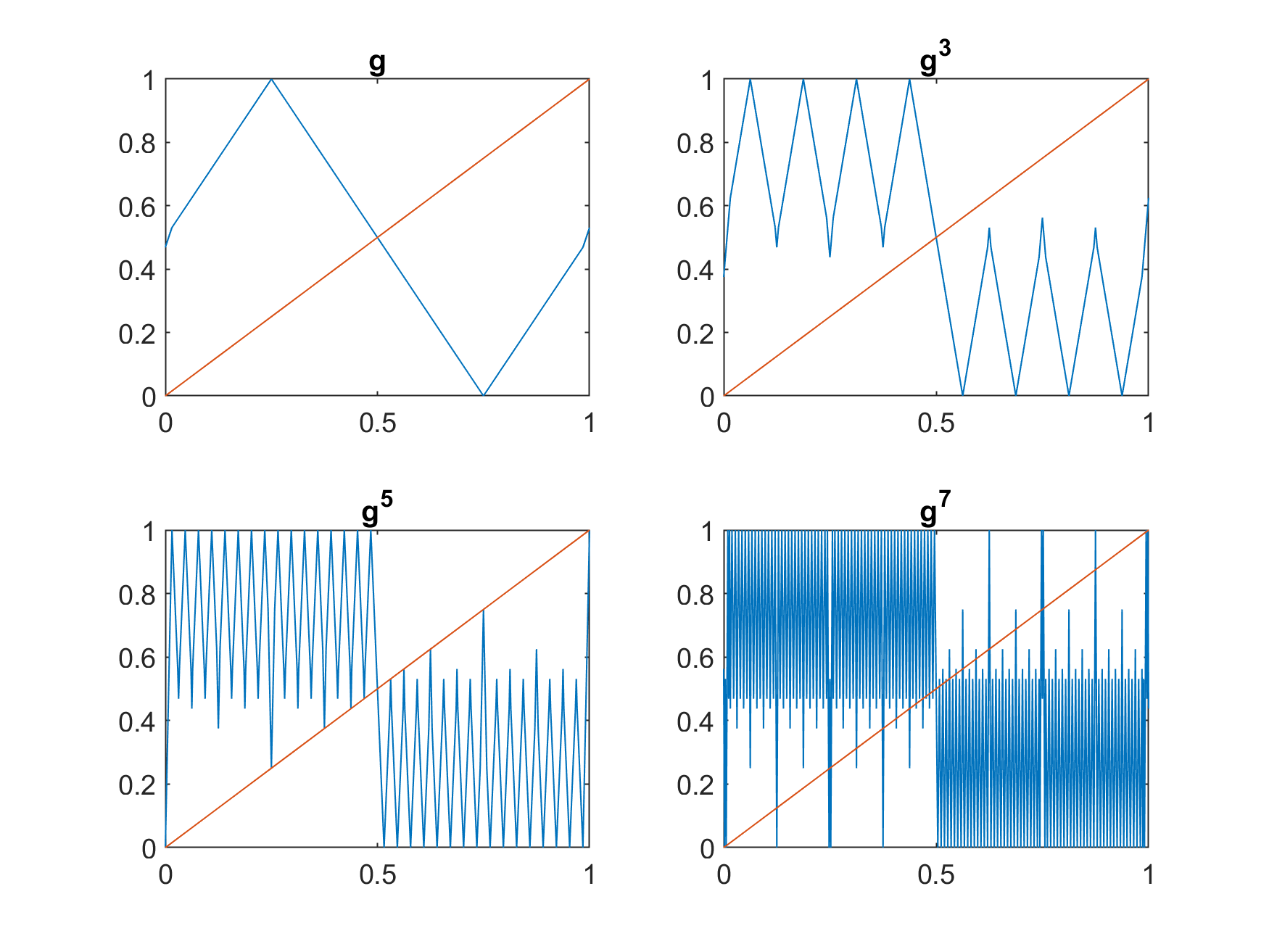}
    \caption{}
\end{subfigure}
\begin{subfigure}{.49\textwidth}
  \centering
  \includegraphics[width=1.\linewidth]{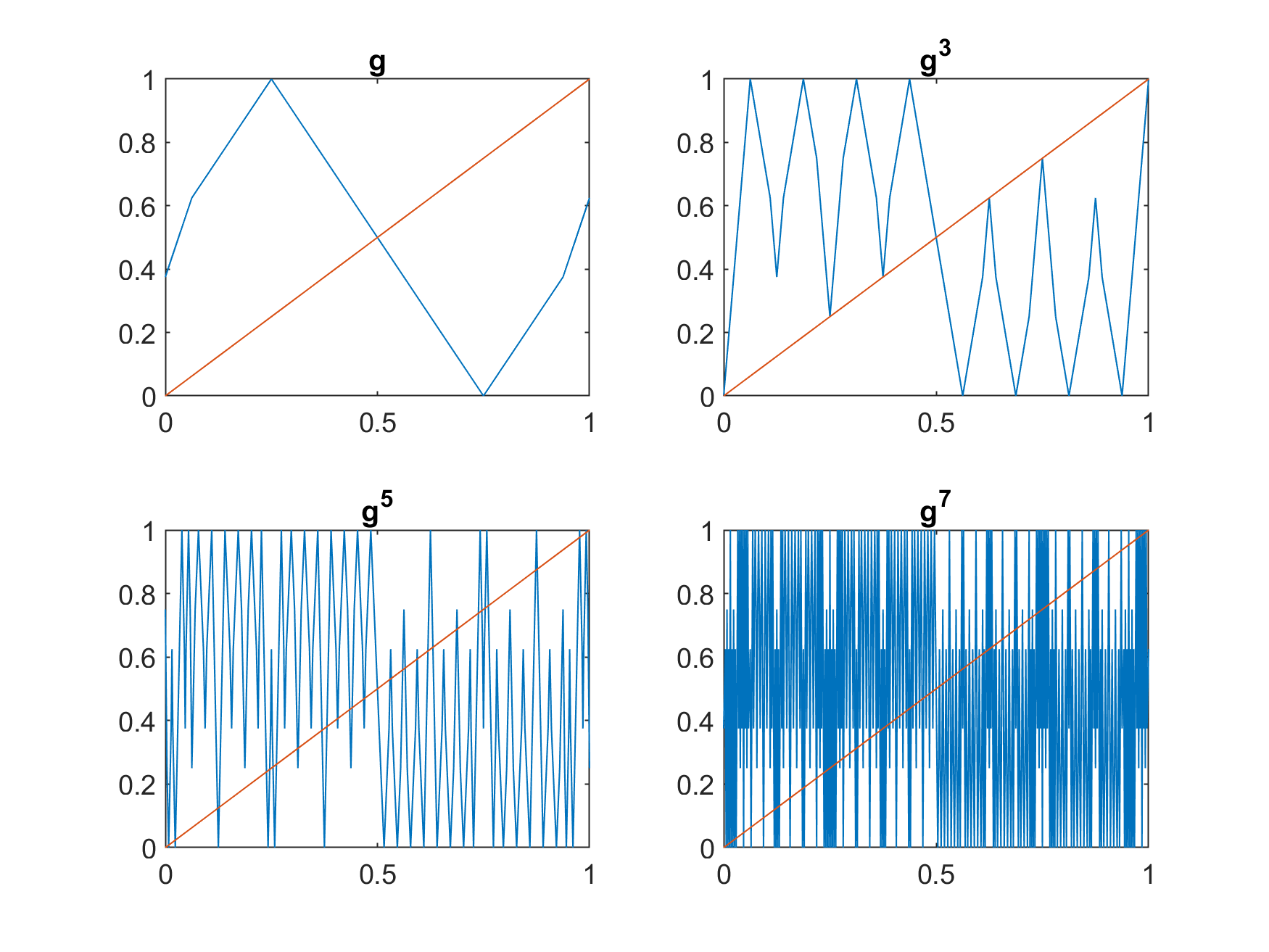}
    \caption{}
\end{subfigure}
\caption{Two examples of $g(x)$ defined in (\ref{eq:example_delta_n0}). Point $x_0$ is a periodic point of period $i$ if the graph of $g^i(x)$ intersects the red line representing $y=x$ at $x=x_0$, and if $x_0$ is not a periodic point of any period $j<i$. In (a), $\delta=2^{-5}$. Periodic points of period $3$ do not exist while periodic points of periods $5$ and $7$ exist. In (b), $\delta=2^{-3}$. Periodic points of period $3, 5, 7$ all exist. }
\label{fig:Report_finalfig1}
\end{figure}
\end{example}

\section{Entropy \label{sec:entropy}}

\begin{definition}[\textsl{Entropy}]
The entropy of a map $h$ on $\mathcal{I}$ where $\mathcal{I}\subset [0,1]$ is 
\[
c_{\lambda,\mathcal{I}}(h) = \int_{\mathcal{I}^0}^{\mathcal{I}^1} \log_2 |h'(x)| \,d \lambda(x).
\]
When $\mathcal{I}=[0,1]$, simplify notation $c_{\lambda, [0, 1]}(h)$ to $c_{\lambda}(h)$.
\end{definition}

Let $g\in\mathbb{G}$. Suppose that for interval $\mathcal{Y}$, $g^{-1}(\mathcal{Y})$ consists of $m$ affine legs on intervals $\mathcal{I}_i$ for $i=1, 2, \ldots, m$. Let $2^{k_i}$ be the absolute value of the slope of the affine segment of $g$ on interval $\mathcal{I}_i$. By definition, 
\[
\sum_{i=1}^m c_{\lambda, \mathcal{I}_i}(g)=\sum_{i=1}^m k_i \left|\mathcal{I}_i\right|=\sum_{i=1}^m k_i 2^{-k_i}\left|\mathcal{Y}\right|.
\]
The minimum value of entropy $\sum_{i=1}^m c_{\lambda, \mathcal{I}_i}(g)$ given $|\mathcal{Y}|$ is obtained when $\{k_i\}$ solves the following integer optimization
\begin{eqnarray}
&&\min_{k_1, \ldots, k_m} \sum_{i=1}^m k_i 2^{-k_i} \label{eq:minentropy}\\
&&\mbox{s.t. }\sum_{i=1}^m 2^{-k_i}=1. \label{eq:lambdaconservation}
\end{eqnarray}

\begin{lemma}
The solution to (\ref{eq:minentropy}) and (\ref{eq:lambdaconservation}) is 
\begin{equation}
k_i=\left\{
\begin{array}{cc}
i, i=1, 2, \ldots, m-1\\
m-1, i=m.
\end{array}
\right.
\label{eq:minimumentropysolution}
\end{equation}
\label{lemma:minimumentropysolution}
\end{lemma}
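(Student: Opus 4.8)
The plan is to prove Lemma~\ref{lemma:minimumentropysolution} by strong induction on the number of legs $m$, using a single structural operation---\emph{merging} two deepest legs---that preserves the constraint (\ref{eq:lambdaconservation}) while lowering $m$ by one and decreasing the objective (\ref{eq:minentropy}) by a controlled amount.

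First I would record feasibility and the target value. Substituting (\ref{eq:minimumentropysolution}) into (\ref{eq:lambdaconservation}) gives $\sum_{i=1}^{m-1}2^{-i}+2^{-(m-1)}=1$, so the candidate is feasible, and using $\sum_{i=1}^{n} i\,2^{-i}=2-(n+2)2^{-n}$ a short computation shows its objective equals $V(m):=2-2^{2-m}$. The goal is then to show $f(k_1,\ldots,k_m):=\sum_i k_i 2^{-k_i}\ge V(m)$ for every feasible tuple, with equality exactly at (\ref{eq:minimumentropysolution}).

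The engine of the induction is two elementary facts about a feasible multiset $\{k_1,\ldots,k_m\}$ with $m\ge 2$ (so every $k_i\ge 1$). Write $K=\max_i k_i$. (i) The multiplicity $n_K$ of $K$ is even, hence at least $2$: multiplying (\ref{eq:lambdaconservation}) by $2^K$ yields $\sum_i 2^{K-k_i}=2^K$, in which every term with $k_i<K$ is even and $2^K$ is even, forcing $n_K$ even. (ii) The depth is bounded, $K\le m-1$. Given (i) and (ii) I would merge: replace two entries equal to $K$ by a single entry equal to $K-1$. This keeps (\ref{eq:lambdaconservation}) intact, since $2\cdot 2^{-K}=2^{-(K-1)}$, produces a feasible $(m-1)$-leg multiset $C'$, and changes the objective by $f(C')-f(C)=(K-1)2^{-(K-1)}-2\cdot K 2^{-K}=-2^{-(K-1)}$. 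Hence $f(C)=f(C')+2^{-(K-1)}\ge V(m-1)+2^{-(K-1)}$ by the inductive hypothesis, and since $K\le m-1$ gives $2^{-(K-1)}\ge 2^{2-m}$, we conclude $f(C)\ge V(m-1)+2^{2-m}=V(m)$. The base cases $m=1$ (only $\{0\}$) and $m=2$ (only $\{1,1\}$) are immediate.

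The step I expect to be the main obstacle is fact (ii), the depth bound $K\le m-1$, since it is exactly the inequality that makes the inductive constant close. I would argue it by noting that one leg already occupies the value $K$, so the remaining $m-1$ entries must have their $2^{-k_i}$ sum to $1-2^{-K}=\sum_{j=1}^{K}2^{-j}$; because $2^K-1$ has exactly $K$ ones in binary and the minimum number of (possibly repeated) powers of two summing to a positive integer equals its binary digit sum, at least $K$ further legs are needed, whence $m\ge K+1$. (Equivalently, a root-to-leaf path of length $K$ in the complete binary tree associated with the Kraft equality forces at least one extra leaf at each of its $K$ branchings.) Finally, for uniqueness I would track the equality cases: $f(C)=V(m)$ forces $K=m-1$ and $f(C')=V(m-1)$, so by induction $C'=\{1,2,\ldots,m-2,m-2\}$ and $n_K=2$, which reconstructs $C$ as precisely (\ref{eq:minimumentropysolution}).
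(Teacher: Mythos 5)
Your proof is correct, but it proceeds by a genuinely different route than the paper. The paper's proof first relaxes the integer program, uses a Lagrangian to dismiss the interior critical point (all $k_i$ equal, which maximizes), and then argues by checking the boundary cases $k_1=1$ and $k_m=m-1$, peeling off one coordinate at a time; this argument is rather heuristic, since the reduction ``either boundary condition leads to the same solution'' is asserted more than proved. You instead work entirely within the integers: strong induction on $m$, driven by a merge operation that replaces two deepest legs $K,K$ by one leg $K-1$, preserving the Kraft equality (\ref{eq:lambdaconservation}) and decreasing the objective (\ref{eq:minentropy}) by exactly $2^{-(K-1)}$. The two supporting facts are sound and proved where they need to be: the parity argument (multiply (\ref{eq:lambdaconservation}) by $2^K$; all terms with $k_i<K$ are even, so the multiplicity $n_K$ is even, hence $\ge 2$) is the standard Kraft-equality observation, and the depth bound $K\le m-1$ via the binary digit sum of $2^K-1$ is exactly the inequality that closes the induction, since $2^{-(K-1)}\ge 2^{2-m}$ and $V(m-1)+2^{2-m}=V(m)$ with $V(m)=2-2^{2-m}$, which indeed equals the paper's $c_{\min}(m)=\sum_{i=1}^{m-1}i\,2^{-i}+(m-1)2^{-(m-1)}$. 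What your approach buys is twofold: it is fully rigorous without appeal to a continuous relaxation, and it yields uniqueness of the minimizer (up to permutation) as a traceable equality case --- equality forces $K=m-1$ and $f(C')=V(m-1)$, the inductive hypothesis pins down $C'=\{1,\ldots,m-2,m-2\}$, and since $\max C'=m-2<K$ you get $n_K=2$, reconstructing (\ref{eq:minimumentropysolution}) exactly --- a point the paper's lemma asserts but never verifies. The paper's version, in exchange, is shorter and makes visible why the extremum lies on the boundary of the feasible region.
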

\begin{proof}
We first treat $k_i$ as continuous variables. Define the Lagrangian map 
\[
L(k_1, \ldots, k_m, \mu)=\sum_{i=1}^m k_i 2^{-k_i}+\mu \left(\sum_{i=1}^m 2^{-k_i}-1\right).
\]
Setting $\frac{\partial L}{\partial k_i}=0$ leads to $k_i$ be equal for all $i$, which is the interior solution and actually maximizes, rather than minimizing, the entropy. Next we check the boundary conditions. Without loss of generality, suppose that $k_1\le k_2\le\cdots\le k_m$. It is easy to see from (\ref{eq:lambdaconservation}) that $k_1\ge1$ and $k_m\le m-1$. If $k_1=1$, then the original problem is reduced to $\min \sum_{i=2}^m k_i 2^{-k_i}$ subject to $\sum_{i=2}^m 2^{-k_i}=\frac{1}{2}$ and we can continue to check the boundary conditions. If $k_m= m-1$, then $k_{m-1}=m-1, k_{m-2}=m-2, k_{m-3}=m-3, \ldots, k_1=1$. Either boundary condition leads to the same solution (\ref{eq:minimumentropysolution}), which minimizes the entropy.
\end{proof}

The set of $\{k_i\}$ (\ref{eq:minimumentropysolution}) not only results in the minimum entropy given $m$ but also will be shown to be the unique set that possesses the property of Lemma~\ref{lemma:klswitch}, which is a stepping stone to proving Theorem~\ref{theorem:denseG}. Before stating Lemma~\ref{lemma:klswitch}, we take a detour to solve a dynamic matching problem. 

\begin{problem}[Dynamic Matching]
Consider $m$ buckets and $m$ pumps. Pump $i$ has a fixed pumping rate $\alpha_i>0$. At any time, exactly one pump is pumping water into one bucket. The matching policy at time $t$ can be represented by a permutation $\Pi_t: \{1,\ldots,m\}\rightarrow\{1,\ldots,m\}$. Initially all buckets are empty at time $t=0$. The task is to find a dynamic matching policy $\Pi_t$ at any time $t\in[0,1]$ so that at $t=1$, bucket $i$ has $\beta_i>0$ amount of water. By the law of conservation, $\sum_{i=1}^m \beta_i = \sum_{i=1}^m \alpha_i$. 
\label{problem:dynamicmatching}
\end{problem}

Without loss of generality, suppose that $\alpha_1\ge\alpha_2\cdots\ge\alpha_m$ and $\beta_1\ge\beta_2\cdots\ge\beta_m$. Not any $\{\alpha_i\}$ and $\{\beta_i\}$ have a solution. For example, if $\beta_m <\alpha_m$ or $\beta_1 >\alpha_1$, then no solution exists  because the target of bucket $m$ or $1$, respectively, cannot be met. The following lemma states a necessary and sufficient condition of the existence of a solution. 

\begin{lemma}
A solution of Problem~\ref{problem:dynamicmatching} exists if and only if $\sum_{j=1}^i(\alpha_j-\beta_j)\ge0$ for all $i$.
\label{lemma:dynamicmatching}
\end{lemma}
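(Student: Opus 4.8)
The plan is to recognize Problem~\ref{problem:dynamicmatching} as a question about \emph{majorization} and to reduce it to the existence of a doubly stochastic matrix. First I would encode a matching policy $\Pi_t$ by the matrix $P=(P_{ij})$ with $P_{ij}=\lambda\{t\in[0,1]:\ \text{pump } j \text{ feeds bucket } i\}$. Since $\Pi_t$ is a permutation at every time, each bucket is fed by exactly one pump and each pump feeds exactly one bucket, so every row and every column of $P$ sums to $1$; that is, $P$ is doubly stochastic. The water accumulated in bucket $i$ is $\sum_{j=1}^m P_{ij}\alpha_j=(P\alpha)_i$, so a policy achieving target $\beta$ exists only if $P\alpha=\beta$ for some doubly stochastic $P$. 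Conversely, given such a $P$, the Birkhoff--von Neumann theorem writes $P=\sum_l c_l\,\Pi^{(l)}$ as a convex combination of permutation matrices; partitioning $[0,1]$ into consecutive intervals of lengths $c_l$ and setting $\Pi_t=\Pi^{(l)}$ on the $l$-th interval realizes $P$ exactly. Thus a solution exists if and only if there is a doubly stochastic $P$ with $P\alpha=\beta$, and since $\sum_i\alpha_i=\sum_i\beta_i$ by conservation, the claimed condition $\sum_{j=1}^i(\alpha_j-\beta_j)\ge0$ for all $i$ is precisely the statement that $\beta$ is majorized by $\alpha$.

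With this reduction the lemma becomes the Hardy--Littlewood--P\'olya characterization of majorization, which I would prove in both directions. For \emph{necessity}, assume $\beta=P\alpha$ with $P$ doubly stochastic and $\alpha_1\ge\cdots\ge\alpha_m$. Fixing $i$ and writing $d_j=\sum_{l=1}^i P_{lj}$, one has $d_j\in[0,1]$ (column sums are $1$) and $\sum_{j=1}^m d_j=i$ (row sums are $1$), whence $\sum_{l=1}^i\beta_l=\sum_{j=1}^m d_j\alpha_j\le\sum_{j=1}^i\alpha_j$, because $\alpha$ is nonincreasing and total weight $i$ is best spent on the $i$ largest entries $\alpha_j$. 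This is exactly $\sum_{j=1}^i(\alpha_j-\beta_j)\ge0$.

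For \emph{sufficiency} I would argue constructively. If the running vector (initially $\alpha$) is not yet $\beta$, the endpoint inequalities force $\alpha_1\ge\beta_1$ and, subtracting the full conservation identity, $\alpha_m\le\beta_m$, so some coordinate exceeds its target and some falls short. Choosing indices $p<q$ with the value at $p$ above $\beta_p$ and the value at $q$ below $\beta_q$, and performing a \emph{Robin Hood transfer} of size $\delta=\min(\text{excess at }p,\ \text{deficit at }q)$, replaces the vector by $T\alpha$, where $T$ is the doubly stochastic \emph{$T$-transform} $(1-\theta)I+\theta Q$ and $Q$ is the transposition of $p$ and $q$. Each such step leaves the coordinate sum unchanged, keeps $\beta$ majorized by the new vector (so the transfer can be repeated), and—by the choice of $\delta$—makes at least one coordinate agree with $\beta$. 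Hence after at most $m-1$ transfers the running vector equals $\beta$, the product of the $T$-transforms is the desired doubly stochastic $P$ with $P\alpha=\beta$, and the Birkhoff construction of the first paragraph turns $P$ back into an explicit policy $\Pi_t$.

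The two partial-sum estimates are routine; the step I expect to require the most care is the termination argument in sufficiency—specifically, verifying that a single well-chosen transfer preserves the invariant that the current vector still majorizes $\beta$ (so that admissible indices $p<q$ continue to exist) while strictly reducing the number of coordinates on which the current vector disagrees with $\beta$, guaranteeing that the construction halts in finitely many steps.
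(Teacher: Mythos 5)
Your proposal is correct in substance but takes a genuinely different route from the paper. The paper proves Lemma~\ref{lemma:dynamicmatching} by a direct greedy construction: it walks through pump--bucket pairs in decreasing order, interprets the prefix sum $\sum_{j\le i}(\alpha_j-\beta_j)$ as an \emph{accumulated excess capacity}, and covers each shortage $\beta_i-\alpha_i$ by time-sharing the pumps whose assignments are still tentative; necessity is read off from the same process (a negative prefix sum is exactly where the greedy gets stuck, since $\beta_i>\alpha_i\ge\alpha_l$ for $l>i$ means later pumps cannot help). You instead encode a policy as the matrix $P_{ij}=\lambda\{t:\Pi_t \text{ matches pump } j \text{ to bucket } i\}$, observe that $P$ is doubly stochastic and that the achievable targets are exactly $\{P\alpha: P \text{ doubly stochastic}\}$ (Birkhoff--von Neumann giving the converse direction), and then invoke the Hardy--Littlewood--P\'olya characterization of majorization. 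Your necessity argument --- $\sum_{l\le i}\beta_l=\sum_j d_j\alpha_j\le\sum_{j\le i}\alpha_j$ with $d_j=\sum_{l\le i}P_{lj}\in[0,1]$ and $\sum_j d_j=i$ --- is cleaner and more transparent than the paper's narrative version. What the paper's approach buys in exchange is a fully explicit policy with few switchings and no appeal to classical theorems, which matters downstream: the proof of Lemma~\ref{lemma:klswitch} explicitly reuses ``the constructive steps in the proof of Lemma~\ref{lemma:dynamicmatching}.''

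One step does need repair, and it is the one you flagged: in the sufficiency direction you allow an \emph{arbitrary} pair $p<q$ with excess at $p$ and deficit at $q$, and claim each such transfer ``keeps $\beta$ majorized by the new vector.'' As stated this is false. Take $\alpha=(4,2,2,0)$ and $\beta=(3,3,1,1)$ (both sorted, prefix sums $1,0,1,0\ge0$). Choosing $p=1$, $q=4$, $\delta=\min(1,1)=1$ yields $(3,2,2,1)$, whose second prefix sum is $5<6$, so majorization fails; moreover no admissible pair $p<q$ remains (the only excess is at index $3$, the only deficit at index $2$), so the construction stalls short of $\beta$, and indeed no doubly stochastic matrix can carry $(3,2,2,1)$ to $\beta$. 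The standard fix (Marshall--Olkin) is the specific selection rule: take $p$ to be the \emph{largest} index with an excess and $q$ the \emph{smallest} index greater than $p$ with a deficit. Then every coordinate strictly between $p$ and $q$ agrees with $\beta$, so for $p\le i<q$ the prefix slack equals $\sum_{l\le p}(\alpha_l-\beta_l)\ge\alpha_p-\beta_p\ge\delta$, the transfer preserves both sortedness and majorization, and it zeroes out at least one disagreement, giving termination in at most $m-1$ steps. (In the example this rule picks $p=3$, $q=4$, then $p=1$, $q=2$, reaching $\beta$.) With that selection rule your argument is complete.
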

\begin{proof}
If $\alpha_1-\beta_1<0$, then no solution exists to meet the target of bucket $1$. The following assumes $\alpha_1-\beta_1\ge0$. Let pump $1$ only serve bucket $1$. If $\alpha_1-\beta_1>0$, the arrangement of pump $1$ and bucket $1$ is tentative, as the fraction of time that pump $1$ serves bucket $1$ will be adjusted in subsequent steps. The quantity $\alpha_1-\beta_1$ represents the excess capacity due to the capacity of pump $1$ in excess of the target of bucket $1$.

Now consider pump $2$ and bucket $2$. If $\sum_{j=1}^2 (\alpha_j-\beta_j)<0$, the excess capacity $\alpha_1-\beta_1$ is insufficient to cover the shortage of $\alpha_2-\beta_2$. Moreover, from $\alpha_2-\beta_2<0$ it follows that $\beta_2>\alpha_2\ge\alpha_i$ for $i> 2$. No solution exists to meet the target of bucket $2$. The following assumes $\sum_{j=1}^2 (\alpha_j-\beta_j)\ge0$. There are two cases. In the first case, $\beta_2\ge\alpha_2$. Utilize the excess capacity $\alpha_1-\beta_1$ to cover the shortage of $\alpha_2-\beta_2$. Specifically, let bump $2$ serve bucket $2$ for time interval $1-z$ while bump $1$ serves bucket $1$, and then bump $1$ serve bucket $2$ for time interval $z$ while bump $2$ serves bucket $1$, where $z$ solves $z\alpha_1+(1-z)\alpha_2 = \beta_2$. Because $\alpha_2\le\beta_2\le\beta_1\le\alpha_1$, a unique solution $z$ exists with $0\le z\le1$.  All the arrangements regarding pump $2$ and bucket $2$ are final because pump $2$ has no excess capacity to offer and bucket $2$ has met its target. The arrangement of pump $1$ and bucket $1$ is still tentative if $\sum_{j=1}^2 (\alpha_j-\beta_j)>0$ because they have excess capacity to offer. In the second case, $\beta_2<\alpha_2$. Let bump $2$ only serve bucket $2$. The arrangements of pump $2$ and bucket $2$ is also tentative as they have excess capacity to offer just like pump $1$ and bucket $1$. In either case, $\sum_{j=1}^2 (\alpha_j-\beta_j)$ represent the accumulated excess capacity due to the total capacity of pumps $1$ and $2$ in excess of the total target of buckets $1$ and $2$. 

Continue the preceding process for $i=3, \ldots, m$. If $\sum_{j=1}^i (\alpha_j-\beta_j)<0$ for some $i$, then no solution exists to meet the target of bucket $i$, because the accumulated excess capacity from $1, 2, \ldots, i-1$, i.e., $\sum_{j=1}^{i-1} (\alpha_j-\beta_j)$, is insufficient to cover the shortage of $(\alpha_i-\beta_i)$. Otherwise, if $\beta_i\ge\alpha_i$, then utilize the accumulated excess capacity to cover the shortage of $(\alpha_i-\beta_i)$ by letting a subset of pumps $1, \ldots, i-1$, whose arrangements have so far been tentative, to serve bucket $i$ for some fraction of time interval to meet its target while letting pump $i$ to serve the corresponding buckets, thereby reducing the accumulated excess capacity by $(\alpha_i-\beta_i)$. If $\beta_i<\alpha_i$, then tentatively let bump $i$ serve bucket $i$ and note that they have excess capacity to offer, thereby increasing the accumulated excess capacity by $(\alpha_i-\beta_i)$. The process ends at $i=m$ when the accumulated excess capacity is used up to exactly cover the shortage of $(\alpha_m-\beta_m)$, because $\sum_{j=1}^m(\alpha_j-\beta_j)=0$ from the law of conservation.
\end{proof}

Now consider $\mathcal{Y}$ where $g^{-1}(\mathcal{Y})$ consists of $m$ affine legs. The absolute values of the slopes of the $i$-th affine leg is $2^{l_i}$. Let $\{k_i\}$ satisfies (\ref{eq:minimumentropysolution}) and be distinct from $\{l_i\}$. Is it possible to replace the $i$-th affine leg with \emph{piecewise} affine segments on the same interval such that the absolute value of any slope is in the set of $\{2^{k_i}\}$ while preserving $\lambda$? Lemma~\ref{lemma:klswitch} states that not only such a replacement exists, but also the new map $g_1$ after the replacement is an element of $\mathbb{G}$ and is within $\epsilon>0$ neighborhood of the original map $g$. The importance of such a replacement is that the entropy of the new $g_1$ reaches the minimum value given $m$. As will be clear in Theorem~\ref{theorem:denseG}, another map in $\mathbb{G}$ can be constructed from $g_1$ to have any target value of entropy that is greater than the minimum value.

\begin{lemma}
Let $g\in\mathbb{G}$ and $\mathcal{Y}$ be an interval with dyadic endpoints. Suppose that $g^{-1}(\mathcal{Y})$ consists of $m$ affine legs. The absolute value of the slope of the $i$-th leg is equal to $2^{l_i}$. Partition $[0,1]$ into $2m+1$ intervals $\mathcal{I}_j$ for $j=1, 2,\ldots, 2m+1$, such that $g^{-1}(\mathcal{Y})=\bigcup_{i=1}^m\mathcal{I}_{2i}$. Then $g_1\in\mathbb{G}$ exists and $[0,1]$ is partitioned into $2m+1$ intervals $\mathcal{J}_j$ for $j=1, 2,\ldots, 2m+1$ such that 
\begin{itemize}
\item
$|\mathcal{J}_{2i+1}|=|\mathcal{I}_{2i+1}|, g_1(\mathcal{J}_{2i+1})\simeq g(\mathcal{I}_{2i+1})$; 
\item
$g_1^{-1}(\mathcal{Y})$ consists of $m$ legs on $\{\mathcal{J}_{2i}\}$, i.e., $g_1^{-1}(\mathcal{Y}) = \bigcup_{i=1}^m \mathcal{J}_{2i}$; 
\item
$\rho(g,g_1)<\epsilon$;
\item
$\forall y\in \mathcal{Y}$, $g_1^{-1}(y)=\{x_1, \ldots, x_m\}$. If none of $x_i$ is a breakpoint, then the set of the absolute values of the slopes is $\{2^{k_i}\}$ where $\{k_i\}$ is given in (\ref{eq:minimumentropysolution}).
\end{itemize}
\label{lemma:klswitch}
\end{lemma}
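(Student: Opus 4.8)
The plan is to reshape $g$ only on the preimage region $g^{-1}(\mathcal{Y})=\bigcup_{i=1}^m\mathcal{I}_{2i}$, leaving the ``gap'' pieces $g|_{\mathcal{I}_{2i+1}}$ (which map outside $\mathcal{Y}^\circ$) untouched, and to recognise the reshaping as an instance of the Dynamic Matching Problem~\ref{problem:dynamicmatching}. Observe first that consecutive legs have opposite orientation (the alternating-sign remark following Lemma~\ref{lemma:basicderivativeinverse}), so the sign of the slope on leg $i$ is forced by its position and is unaffected by any reshaping that keeps the leg monotone; hence every reshaped leg still joins the same two gaps and $g_1$ stays continuous. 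The only freedom is the \emph{magnitude} of the slope, which may vary with the height $y$ along a leg. For each $y\in\mathcal{Y}$ let $\pi_y$ be an assignment of the $m$ magnitudes $\{2^{k_1},\dots,2^{k_m}\}$ to the $m$ legs. At every level the reciprocal slopes then sum to $\sum_j 2^{-k_j}=1$ by (\ref{eq:lambdaconservation}), so by Lemma~\ref{lemma:basicderivativeinverse} $\lambda$ is automatically preserved at each $y$ regardless of $\pi_y$. The horizontal width accumulated by leg $i$ is $\int_{\mathcal{Y}}2^{-k_{\pi_y(i)}}\,dy$, and to keep the gaps fixed we must make this equal the original width $|\mathcal{I}_{2i}|=2^{-l_i}|\mathcal{Y}|$. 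Normalising $y$ to time $t\in[0,1]$, this is exactly Problem~\ref{problem:dynamicmatching} with pumps of rate $\alpha_j=2^{-k_j}$ and buckets of target $\beta_i=2^{-l_i}$.

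By Lemma~\ref{lemma:dynamicmatching} a matching policy exists iff $\sum_{j=1}^i(\alpha_j-\beta_j)\ge0$ for all $i$ after sorting both sequences in decreasing order, i.e. iff the decreasing rearrangement of $\{2^{-k_j}\}$ majorises that of $\{2^{-l_j}\}$. Here the crux is that the minimum-entropy choice (\ref{eq:minimumentropysolution}) has partial sums $\sum_{j=1}^i 2^{-k_j}=1-2^{-i}$ for $i\le m-1$ (and $1$ for $i=m$), which I claim is the \emph{largest} possible among all dyadic partitions of $1$ into $m$ unit fractions. To see $\sum_{j=1}^i 2^{-l_j}\le1-2^{-i}$, sort so that $l_1\le\cdots\le l_m$ and induct on $i$: writing $P_i=\sum_{j\le i}2^{-l_j}$, either $l_i\ge i$, in which case $P_i=P_{i-1}+2^{-l_i}\le(1-2^{-(i-1)})+2^{-i}=1-2^{-i}$; or $l_i\le i-1$, in which case $P_i$ is a multiple of $2^{-l_i}$ lying strictly below $1$ (there remain $m-i\ge1$ positive terms), whence $P_i\le 1-2^{-l_i}\le1-2^{-i}$. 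This establishes the majorisation, so Lemma~\ref{lemma:dynamicmatching} applies; the same computation shows (\ref{eq:minimumentropysolution}) is the \emph{unique} target for which a matching exists against \emph{every} admissible $\{l_i\}$, which is the uniqueness promised before the lemma.

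It remains to turn the abstract policy into an actual $g_1\in\mathbb{G}$ with $\rho(g,g_1)<\epsilon$. The matching produced by the proof of Lemma~\ref{lemma:dynamicmatching} is piecewise constant in $t$ with finitely many switches, so reshaping leg $i$ into the corresponding monotone piecewise-affine zigzag adds only finitely many interior breakpoints. To control $\rho$ I would first subdivide $\mathcal{Y}$ by horizontal dyadic lines into thin strips of height $<\epsilon$ (legal since $\mathcal{Y}$ has dyadic endpoints) and run the matching \emph{inside each strip} with per-strip targets $2^{-l_i}|\mathcal{Y}_s|$; because each leg then recovers its original width at every strip boundary, the reshaped graph meets the original diagonal of leg $i$ at those dyadic levels and stays confined to a single strip-box in between, so $\rho(g,g_1)<\epsilon$. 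Since all rates $2^{-k_j}$, targets $2^{-l_i}$, and strip heights are dyadic, the policy can be chosen with dyadic switch levels (realised by a dyadic doubly-stochastic schedule, i.e. a dyadic average of permutation matrices on a common dyadic time grid), so every new breakpoint is dyadic and every new slope is $\pm2^{k_j}$; thus $g_1\in\mathbb{G}$. Finally $g_1^{-1}(\mathcal{Y})=\bigcup_i\mathcal{J}_{2i}$ with $\mathcal{J}_{2i}=\mathcal{I}_{2i}$ and $\mathcal{J}_{2i+1}=\mathcal{I}_{2i+1}$, and for any $y$ avoiding the finitely many switch levels the $m$ preimages carry exactly the magnitudes $\{2^{k_i}\}$, which is the last asserted property.

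I expect the conceptual heart to be the majorisation inequality of the second paragraph, since it is precisely where the special form (\ref{eq:minimumentropysolution}) is used and where the link to Lemma~\ref{lemma:dynamicmatching} is forged. The genuinely technical obstacle is the \emph{simultaneous} bookkeeping in the third paragraph — keeping $\lambda$-preservation exact, all breakpoints dyadic, and $\rho<\epsilon$ at once — for which the thin-strip device and the dyadic realisability of the matching are the main tools, in the same spirit as the window perturbations of Theorem~\ref{theorem:approximationproperty}.
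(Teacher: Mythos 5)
Your overall route is the same as the paper's: subdivide $\mathcal{Y}$ into thin dyadic strips, reshape the $m$ legs inside each strip by a piecewise-constant permutation schedule, and identify the width bookkeeping with Problem~\ref{problem:dynamicmatching} via $\alpha_j=2^{-k_j}$, $\beta_i=2^{-l_i}$, then invoke Lemma~\ref{lemma:dynamicmatching}. Your induction proof of the majorisation $\sum_{j\le i}2^{-l_j}\le 1-2^{-i}$ is a clean, self-contained replacement for the paper's grouping argument (\ref{eq:kl_1}), and that part is correct. The genuine gap is in your last paragraph: you assert, without proof, that the matching policy ``can be chosen with dyadic switch levels,'' and you lean on this to conclude $\mathcal{J}_{2i}=\mathcal{I}_{2i}$ and $\mathcal{J}_{2i+1}=\mathcal{I}_{2i+1}$ exactly. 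The switch times coming out of Lemma~\ref{lemma:dynamicmatching} solve equations of the form $z\,\alpha_{j_1}+(1-z)\,\alpha_{j_2}=\beta_i$, whose solutions are generically \emph{not} dyadic: a bucket of target $2^{-2}$ covered by pumps of rates $2^{-1}$ and $2^{-3}$ forces $z=1/3$. Your parenthetical appeal to a ``dyadic doubly-stochastic schedule'' is precisely the nontrivial claim that a doubly stochastic matrix $D$ with dyadic entries and $D\alpha=\beta$ always exists; Birkhoff--von Neumann decomposes a dyadic $D$ once you have one, but nothing in your proposal produces one, and the dyadic rationals are not closed under the divisions that arise when solving these systems.

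The paper deliberately avoids that claim: it takes the (possibly non-dyadic) switch intervals $\{\mathcal{Y}_{1,t}\}$ and perturbs them to nearby dyadic intervals $\{\mathcal{Y}'_{1,t}\}$ subject only to (\ref{eq:maxYY'shift}) and $\sum_t|\mathcal{Y}'_{1,t}|=|\mathcal{Y}_1|$. After this, the per-leg widths are no longer exact, so in general $|\mathcal{J}_{2i}|\neq|\mathcal{I}_{2i}|$ (see (\ref{eq:J2inew}) and Figure~\ref{fig:WeeklyDiscussion_20200504Page-2}), and the odd intervals $\mathcal{J}_{2i+1}$ are horizontally \emph{shifted} copies of the $\mathcal{I}_{2i+1}$ with only the lengths preserved. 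That is exactly why the lemma's conclusion is phrased as $|\mathcal{J}_{2i+1}|=|\mathcal{I}_{2i+1}|$ with $g_1(\mathcal{J}_{2i+1})\simeq g(\mathcal{I}_{2i+1})$ rather than $\mathcal{J}_j=\mathcal{I}_j$; your stronger conclusion should itself have been a warning sign. To repair the argument, either prove dyadic realizability of the schedule (a substantive claim you did not attempt, and not obviously true in general), or adopt the paper's perturbation step and redo your continuity and $\rho$ bookkeeping with the shifted partition, spending the remaining $\epsilon/2$ budget on the shifts as the paper does.
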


\begin{proof}
Partition $\mathcal{Y}$ evenly into $2^M$ intervals $\mathcal{Y}_s$ for $s=1, 2, \ldots, 2^M$ where $2^{-M}<\frac{\epsilon}{2}$. Intervals $\mathcal{I}_{2i}$ for $i=1, 2, \ldots, m$ are correspondingly partitioned into $2^M$ intervals $\{\mathcal{I}_{2i,s}\}$ where $g^{-1}(\mathcal{Y}_s)=\bigcup_{i=1}^m \mathcal{I}_{2i,s}$. First construct as follows $g_2$ on $g^{-1}(\mathcal{Y}_1)=\bigcup_{i=1}^m \mathcal{I}_{2i,1}$ such that $g_2(\mathcal{I}^0_{2i,1})=g(\mathcal{I}^0_{2i,1})$ and $g_2(\mathcal{I}^1_{2i,1})=g(\mathcal{I}^1_{2i,1})$, and $\forall y\in\mathcal{Y}_1$, the set of the absolute values of the derivatives at $g_2^{-1}(y)$ is $\{2^{k_i}\}$ where $\{k_i\}$ is given in (\ref{eq:minimumentropysolution}). The same $g_2$ construction is then employed on $\mathcal{I}_{2i,s}$ for $s=2, 3, \ldots, 2^M$ while keeping the continuity in $g_2$. See Figure~\ref{fig:WeeklyDiscussion_20200504Page-1}. 

\begin{figure}
 \centering
  \includegraphics[width=11cm]{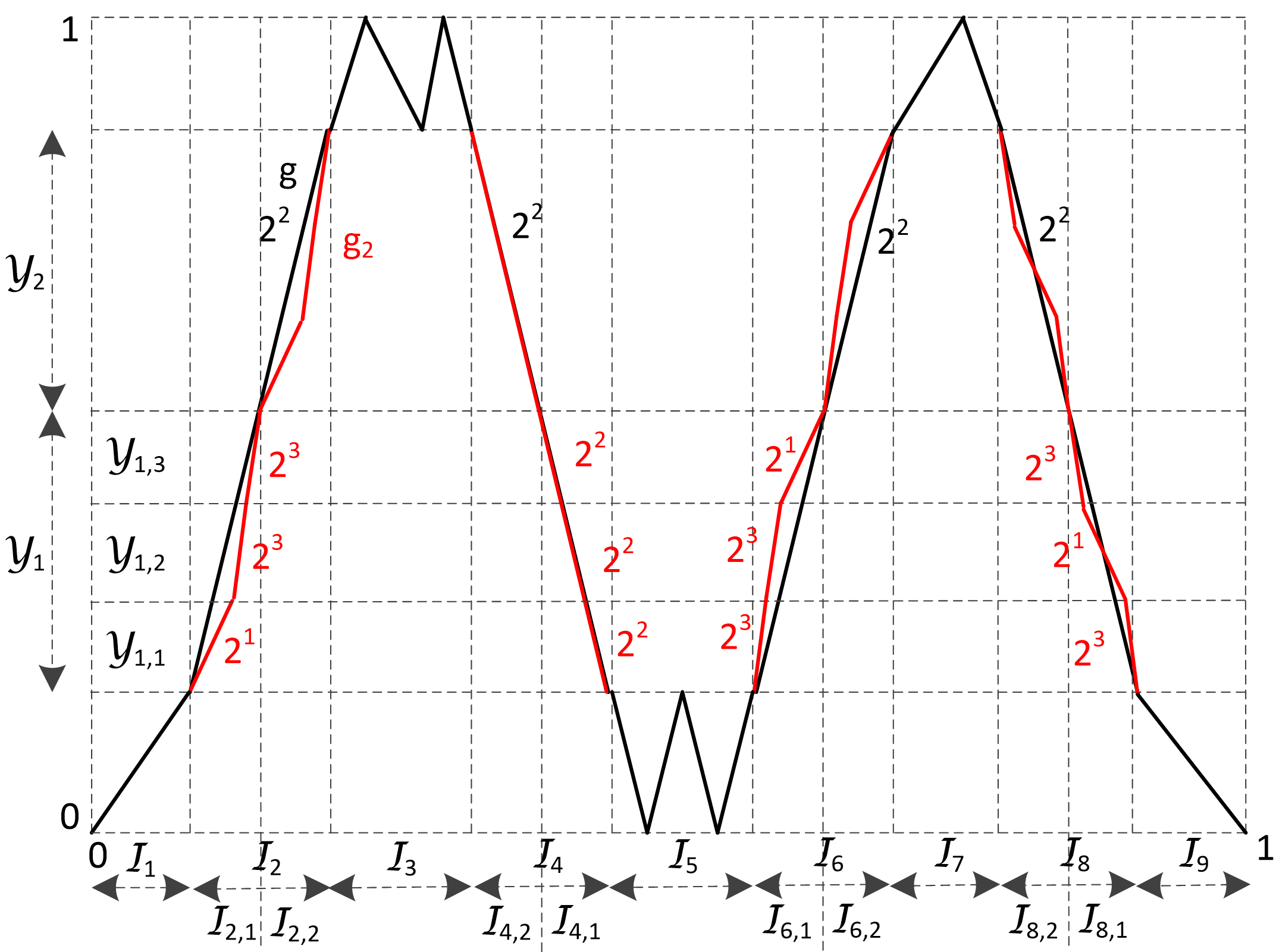}
  \caption{Construction of $g_2$. The original map $g$ is shown in black and the new map $g_2$ is in red.
A label next to an affine segment represents the absolute value of the slope of a replacing segment in red or a replaced one in black. $M=1, m=4$. $\{l_i\}=\{2, 2, 2, 2\}$. $\{k_i\}=\{1, 2, 3, 3\}$. $\mathcal{Y}_1$ is partitioned into $\{\mathcal{Y}_{1,1}, \mathcal{Y}_{1,2}, \mathcal{Y}_{1,3}\}$. To replace $\left(2^{2}, 2^{2}, 2^{2}, 2^{2}\right)$ of $g$, $g_2$ uses $\left(2^{1}, 2^{2}, 2^{3}, 2^{3}\right)$ for $\mathcal{Y}_{1,1}$, $\left(2^{3}, 2^{2}, 2^{3}, 2^{1}\right)$ for $\mathcal{Y}_{1,2}$ and $\left(2^{3}, 2^{2}, 2^{1}, 2^{3}\right)$ for $\mathcal{Y}_{1,3}$. The same construction of $g_2$ is employed for $\mathcal{Y}_2$ with horizontal shift for continuity.}
  \label{fig:WeeklyDiscussion_20200504Page-1}
\end{figure} 

Partition $\mathcal{Y}_1$ into a number of intervals $\{\mathcal{Y}_{1,t}\}$ and perturb $g$ as follows to obtain $g_2$. On $g^{-1}(\mathcal{Y}_{1,t})$, replace the $m$ legs of $g$ with affine segments whose slopes keep the same signs as the original $m$ segments of $g$ and take the absolute values equal to a permutation of $\{2^{k_i}\}$. That is, for $i=1, \ldots, m$, replace $2^{l_i}$ with $2^{k_{j_t(i)}}$ where $j_t(i)$ represents a permutation of $\Pi_t: \{1,\ldots,m\}\rightarrow\{1,\ldots,m\}$. The length of the $i$-th leg on $g^{-1}(\mathcal{Y}_{1,t})$ on the $x$-axis is thus $2^{-k_{j_t(i)}}|\mathcal{Y}_{1,t}|$. Permutation $\Pi_t$ used for different interval $\mathcal{Y}_{1,t}$ can be different. The task is to keep the total length of the $i$-th leg of $g^{-1}(\mathcal{Y}_1)$ on the $x$-axis unchanged, i.e., for all $i$,
\[
\sum_t 2^{-k_{j_t(i)}}|\mathcal{Y}_{1,t}| = 2^{-l_i}|\mathcal{Y}_1|=|\mathcal{I}_{2i,1}|,
\]
by employing appropriate $\{\mathcal{Y}_{1,t}\}$ partition and permutations $\{\Pi_t\}$. This problem is equivalent to Problem~\ref{problem:dynamicmatching} by viewing a new, replacing affine segment as a pump at rate $\alpha_i=2^{-k_i}$ and an original, replaced affine segment as a bucket with target $\beta_i=2^{-l_i}$ and viewing $\{\mathcal{Y}_{1,t}\}$ partition as the partition of service time intervals and $\{\Pi_t\}$ as the matching policy in the service intervals. 

In (\ref{eq:minimumentropysolution}), $k_1\le k_2\le \cdots \le k_m$. Without loss of generality, suppose that $l_1\le l_2 \le\cdots \le l_m$. Set $\{l_i\}$ can be partitioned into $\{l_{i_0+1}, \ldots, l_{i_1}\}, \{l_{i_1+1},\ldots, l_{i_2}\}, \ldots, \{l_{i_{n-1}+1},\ldots, l_{i_n}\}, \{l_{i_n+1}\}$ with $i_0=0, i_n=m-1$ such that 
\begin{eqnarray}
2^{-k_1}&=&2^{-l_{i_0+1}}+\cdots+2^{- l_{i_1}},\nonumber\\
2^{-k_2}&=&2^{-l_{i_1+1}}+\cdots+2^{- l_{i_2}}, \nonumber\\
&\cdots& \label{eq:kl_1}\\
2^{-k_n}&=&2^{-l_{i_{n-1}+1}}+\cdots+2^{- l_{i_n}},\nonumber\\
2^{-k_{n+1}}+\cdots+2^{-k_{i_n+1}} &=& 2^{-l_{i_n+1}}. \nonumber
\end{eqnarray}
As an example, suppose that $\{l_i\}=\{3, 3, 3, 3, 3, 3, 4, 4, 4, 4\}$ and $\{k_i\}=\{1, 2, 3, 4, 5, 6, 7, 8, 9, 9\}$. It follows that 
$2^{-1}=2^{-3}+2^{-3}+2^{-3}+2^{-3}, 2^{-2}=2^{-3}+2^{-3}, 2^{-3}=2^{-4}+2^{-4}, 2^{-4}=2^{-4}$ and $2^{-5}+2^{-6}+2^{-7}+2^{-8}+2^{-9}+2^{-9}=2^{-4}$. It can be shown from (\ref{eq:kl_1}) that $\sum_{j=1}^i (2^{-k_j}-2^{-l_j})\ge0$ for $i=1, \ldots, m$. From Lemma~\ref{lemma:dynamicmatching}, a solution of Problem~\ref{problem:dynamicmatching} exists and can be readily derived from the constructive steps in the proof of Lemma~\ref{lemma:dynamicmatching}. An example is illustrated in Figure~\ref{fig:WeeklyDiscussion_20200504Page-1}.

The $g_2$ constructed above may not be an element in $\mathbb{G}$, because the endpoints of $\{\mathcal{Y}_{1,t}\}$ from the solution of Lemma~\ref{lemma:dynamicmatching} are not necessarily all dyadic. If so, replace $\{\mathcal{Y}_{1,t}\}$ with $\{\mathcal{Y}'_{1,t}\}$, which are all dyadic, while keeping permutations $\Pi_t$ unchanged and $\sum_t |\mathcal{Y}'_{1,t}|=|\mathcal{Y}_1|$ to become $g_1$. 
Choose $\{\mathcal{Y}'_{1,t}\}$ sufficiently close to $\{\mathcal{Y}_{1,t}\}$ with 
\begin{equation}
\max_t \left||\mathcal{Y}_{1,t}|-|\mathcal{Y}'_{1,t}|\right|<\frac{\epsilon}{2}\cdot\frac{1}{2^{\max_i (k_i,l_i)}}\cdot \frac{1}{m}. 
\label{eq:maxYY'shift}
\end{equation}
Unlike $g_2$, $|g_1^{-1}(\mathcal{Y}_1)|$ is not necessarily equal to $|g^{-1}(\mathcal{Y}_1)|$ on every leg. Because $\sum_{i=1}^m 2^{-k_i}=\sum_{i=1}^m 2^{-l_i}$, the total of $|g_1^{-1}(\mathcal{Y}_1)|$ on all the $m$ legs is equal to that of $|g^{-1}(\mathcal{Y}_1)|$. That is, 
\[
\sum_t 2^{-k_{j_t(i)}}|\mathcal{Y}'_{1,t}| = 2^{-l_i}|\mathcal{Y}_1|
\]
may not hold for all $i$; however, 
\begin{equation}
\sum_{i=1}^m \sum_t 2^{-k_{j_t(i)}}|\mathcal{Y}'_{1,t}| = \sum_{i=1}^m 2^{-l_i}|\mathcal{Y}_1|.
\label{eq:I2iJ2iequal}
\end{equation}

To complete the proof, partition $[0,1]$ into $2m+1$ intervals $\{\mathcal{J}_{j}\}$ such that $g_1^{-1}(\mathcal{Y})=\bigcup_{i=1}^m \mathcal{J}_{2i}$ and $|\mathcal{J}_{2i+1}|=|\mathcal{I}_{2i+1}|$.  Specifically, let $|\mathcal{J}_{1}|=|\mathcal{I}_{1}|$. For $i=1, \ldots, m$, let 
\begin{equation}
|\mathcal{J}_{2i}|=\sum_{s=1}^{2^M}\sum_t 2^{-k_{j_t(i)}}|\mathcal{Y}'_{s,t}|
\label{eq:J2inew}
\end{equation}
and let $\mathcal{J}_{2i+1}$ be a horizontally shifted version of $\mathcal{I}_{2i+1}$ to accommodate small discrepancy between $|\mathcal{I}_{2i}|$ and $|\mathcal{J}_{2i}|$. From (\ref{eq:I2iJ2iequal}) and (\ref{eq:J2inew}), 
\[
\sum_{i=1}^m|\mathcal{J}_{2i}| 
= \sum_{i=1}^m\sum_{s=1}^{2^M}\sum_t 2^{-k_{j_t(i)}}|\mathcal{Y}'_{s,t}|
=\sum_{s=1}^{2^M}\sum_{i=1}^m\sum_t 2^{-k_{j_t(i)}}|\mathcal{Y}'_{s,t}|
=\sum_{s=1}^{2^M}\sum_{i=1}^m2^{-l_i}|\mathcal{Y}_s|
=\sum_{i=1}^m|\mathcal{I}_{2i}|.
\]
Thus, 
\[
\sum_{j=1}^{2m+1}|\mathcal{J}_{j}|=\sum_{i=1}^{m}|\mathcal{J}_{2i}|+\sum_{i=0}^{m}|\mathcal{J}_{2i+1}|=\sum_{i=1}^m|\mathcal{I}_{2i}|+\sum_{i=0}^{m}|\mathcal{I}_{2i+1}|=1.
\] 
Therefore, $\{\mathcal{J}_{j}\}$ is a valid partition of $[0,1]$. See Figure~\ref{fig:WeeklyDiscussion_20200504Page-2} for an illustration. 

\begin{figure}
 \centering
  \includegraphics[width=9.9cm]{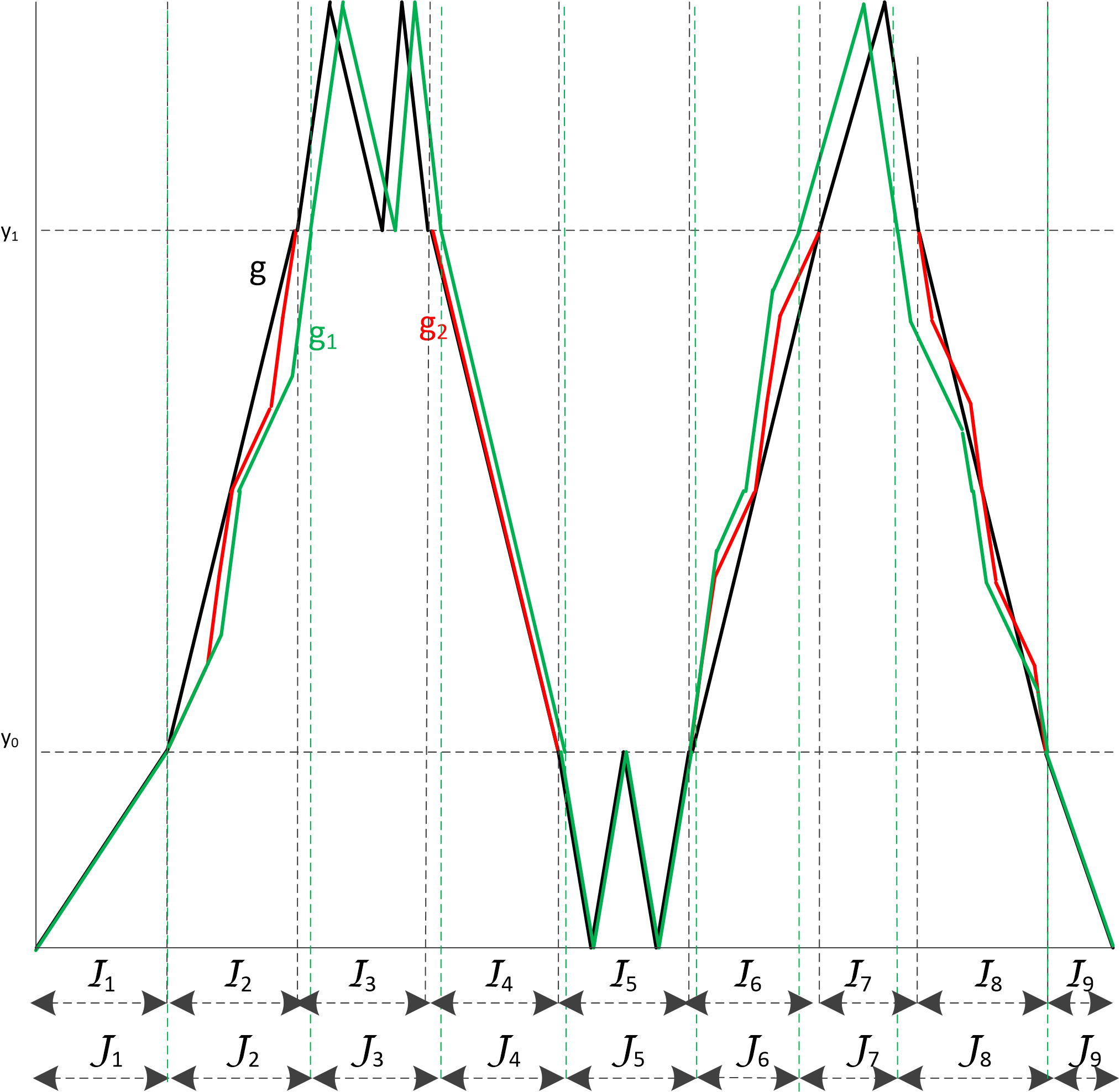}
  \caption{Construction of $g_1$. Map $g_1$ is obtained by slightly perturbing $g_2$ obtained in Figure~\ref{fig:WeeklyDiscussion_20200504Page-1} such that all the breakpoints of $g_1$ are dyadic.  Sets $\{\mathcal{I}_i\}$ and $\{\mathcal{J}_i\}$ are the partitions of $[0,1]$ by $g_2$ and $g_1$ respectively. $|\mathcal{J}_{2i+1}|=|\mathcal{I}_{2i+1}|$ for $i=1, \ldots, m$. It is not necessary that $|\mathcal{J}_{2i}|=|\mathcal{I}_{2i}|$ for all $i$ because all the breakpoints of $g_2$ are not necessarily dyadic; however, $\sum_{i=1}^m|\mathcal{I}_{2i}|=\sum_{i=1}^m|\mathcal{J}_{2i}|$. 
}
  \label{fig:WeeklyDiscussion_20200504Page-2}
\end{figure} 

Map $g_1$ on $\bigcup_{i=1}^m\mathcal{J}_{2i}$ has been constructed in the above. Next, let $g_1(\mathcal{J}_{2i+1})\simeq g(\mathcal{I}_{2i+1})$ for $i=0,\ldots,m$. From (\ref{eq:maxYY'shift}), the maximum discrepancy between $g_1$ and $g_2$ caused by the replacement of $\{\mathcal{Y}_{1,t}\}$ on $\bigcup_{i=1}^m\mathcal{I}_{2i}$ by $\{\mathcal{Y}'_{1,t}\}$ on $\bigcup_{i=1}^m\mathcal{J}_{2i}$ and due to the horizontal shifts from $\{\mathcal{I}_{2i+1}\}$ to $\{\mathcal{J}_{2i+1}\}$ is $\frac{\epsilon}{2}$. Hence, by construction, $g_1\in\mathbb{G}$ and $\rho(g,g_1)<\rho(g,g_2)+\rho(g_2,g_1)<\frac{\epsilon}{2}+\frac{\epsilon}{2}=\epsilon$.
\end{proof}

Lemma~\ref{lemma:klswitch} states that $m$ legs of affine segments with the absolute values of the slopes equal to any set $\{2^{l_i}\}$ can be approximated by $m$ legs of piecewise affine segments with the absolute values of the slopes $\{2^{k_i}\}$ with $\{k_i\}$ given by (\ref{eq:minimumentropysolution}). Set of integers $\{k_i\}$ given by (\ref{eq:minimumentropysolution}) is unique in the sense that by the analogy of Problem~\ref{problem:dynamicmatching}, any target amounts $\{2^{-l_i}\}$ are achievable by pumping rates $\{2^{-k_i}\}$ and the converse does not hold. Targets $\{2^{-k_i}\}$ are not achievable by any different pumping rates $\{2^{-l_i}\}$, because the difference between the maximum and minimum achievable amounts is upper-bounded by $\max_i 2^{-l_i} - \min_i 2^{-l_i}$, which is smaller than what is required by the targets $\max_i 2^{-k_i} - \min_i 2^{-k_i}=2^{-1}-2^{-m+1}$.

The slope of an affine segment of $g\in\mathbb{G}$ has to be in the form of $\pm 2^{k}$ for non-negative integer $k$. If $g$ is required to be LEO, then $k$ must be positive, and as a result, the entropy must be at least equal to $1$. If $g$ has $m$ legs on $[0,1]$, from Lemma~\ref{lemma:minimumentropysolution}, the minimum value of entropy $c_{\lambda}(g)$ is given by
\begin{equation}
c_{\min}(m)=\sum_{i=1}^{m-1} i 2^{-i}+(m-1)2^{-(m-1)}.
\end{equation}
It is easy to confirm that $c_{\min}(m)$ is an increasing sequence of $m$ and $\lim_{m\to\infty} c_{\min}(m)=2$. Therefore, for any $m$,
\begin{equation}
c_{\min}(m)<2.
\label{eq:cmin}
\end{equation}

\begin{theorem}
For any $c\in[2,\infty)$ and $\epsilon>0$, the subset of Markov LEO maps in $\mathbb{G}$ whose entropy is within $\epsilon$ of $c$ is dense in $C(\lambda)$.
\label{theorem:denseG}
\end{theorem}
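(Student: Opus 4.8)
The plan is to produce the desired map in three stages: first approximate $b$ by a LEO map, then reduce its entropy below the target, and finally raise the entropy back up to within $\epsilon$ of $c$ --- all through perturbations confined to thin horizontal strips so that neither the approximation of $b$ nor the LEO and Markov properties are lost. Fix $b\in C(\lambda)$ and $\epsilon>0$. I would begin by invoking Theorem~\ref{theorem:GLEOdense} to obtain a LEO map $g\in\mathbb{G}$ with $\rho(b,g)<\epsilon/3$; since $g\in\mathbb{G}$ it is automatically Markov by Corollary~\ref{corollary:markovmap}, so the only quantity left to adjust is the entropy $c_\lambda(g)$, which Theorem~\ref{theorem:GLEOdense} does not control.

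The threshold $c\ge 2$ enters here. By~(\ref{eq:cmin}) the minimum-entropy configuration of any strip contributes strictly less than $2$ per unit height, so I can always drive the total entropy below $c$ but cannot in general push it below $2$; this is exactly why the statement is restricted to $c\ge 2$. Concretely, partition the $y$-axis into $2^M$ strips of height $2^{-M}<\epsilon/3$, with $M$ large enough that every breakpoint of $g$ lies on a strip boundary. Each strip $\mathcal{Y}_s$ then has $g^{-1}(\mathcal{Y}_s)$ consisting of $m_s\ge 2$ affine legs (a single leg is impossible for a LEO map, since $\lambda$-preservation would force a slope of $\pm1$). Applying Lemma~\ref{lemma:klswitch} independently on each strip --- legitimate because the supports $g^{-1}(\mathcal{Y}_s)$ are pairwise disjoint --- replaces every configuration by the minimum-entropy solution of Lemma~\ref{lemma:minimumentropysolution}, yielding $g_1\in\mathbb{G}$ with $c_\lambda(g_1)=\sum_s 2^{-M}c_{\min}(m_s)<2\le c$ and, since each point moves within its own strip, $\rho(g,g_1)<\epsilon/3$.

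From $g_1$ I would climb back up to the target by inserting window perturbations into thin strips. Replacing an affine leg of horizontal length $\ell$ inside a strip of height $2^{-M}$ by a $\lambda$-preserving window perturbation with relative slopes $2^{k'_j}$ ($k'_j\ge1$) raises the entropy by $\ell\sum_j k'_j 2^{-k'_j}$ while displacing the graph vertically by at most $2^{-M}$; the smallest such increment, from a $2$-fold window, is $\ell$, and arbitrarily steep windows give arbitrarily large increments. Because the preimages of distinct strips are disjoint, each point is displaced by at most $2^{-M}<\epsilon/3$ regardless of how many legs are altered, so the resulting $g_2$ satisfies $\rho(g_1,g_2)<\epsilon/3$ and hence $\rho(b,g_2)<\epsilon$. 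To hit the entropy target, I would use a few steep windows to add roughly $c-c_\lambda(g_1)$, then fine-tune with $2$-fold windows placed on pre-subdivided legs of length $\ell<\epsilon$: the attainable totals increase in steps smaller than $\epsilon$ and grow without bound, so they form an $\epsilon$-net of $[0,\infty)$ and some choice gives $|c_\lambda(g_2)-c|<\epsilon$.

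It remains to confirm that $g_2$ is still LEO and Markov. Markovness is free from Corollary~\ref{corollary:markovmap}. For LEO, by Theorem~\ref{theorem:GLEOMT} it suffices to keep $g_2$ topologically mixing, i.e.\ to keep the collection of Lemma~\ref{lemma:Jcollection2} equal to $\{[0,1]\}$. Both operations respect this: Lemma~\ref{lemma:klswitch} preserves the branch count $m_s$ and, together with the window perturbations, leaves every branch surjective onto its strip with the legs' alternating orientation intact, so the map stays fully branching and no proper invariant interval can form. I expect this last point --- proving that the entropy-reducing step of Lemma~\ref{lemma:klswitch} cannot break topological mixing --- to be the main obstacle, since it is the only place where slopes are decreased rather than increased; I would isolate it as a separate claim and prove it by showing the induced transition structure on the Markov partition remains primitive. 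The companion difficulty, that the dyadically constrained entropy increments are fine enough to $\epsilon$-approximate an arbitrary $c$, is handled by the thin-strip construction above.
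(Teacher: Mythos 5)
Your proposal follows essentially the same route as the paper's proof: Theorem~\ref{theorem:GLEOdense} to get a Markov LEO approximant, Lemma~\ref{lemma:klswitch} applied strip by strip to drive the entropy down to $c_{\lambda}(g_1)\le 2\le c$ (exactly where the hypothesis $c\ge 2$ enters), and then a window perturbation on a short dyadic segment to add entropy $\Delta x\,(l-2^{-l+1})\approx c-c_{\lambda}(g_1)$, the paper doing this with a single $(2^l-1)$-leg window of slopes $2^{k+l}$ rather than your coarse-plus-fine-tuning scheme, which is an inessential difference. The point you flag as the main obstacle --- verifying that the entropy-reducing step and the window insertions preserve LEO --- is left implicit in the paper's proof, so isolating it as a separate claim as you propose makes the argument more careful but not different in approach.
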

\begin{proof}
Let $h\in C(\lambda)$. We will show that $g\in\mathbb{G}$ exists such that $g$ is Markov and LEO, $\rho(h,g)<\epsilon$ and $|c-c_{\lambda}(g)|<\epsilon$.

By Theorem~\ref{theorem:GLEOdense}, $g_0\in\mathbb{G}$ exists such that $g_0$ is Markov and LEO, and $\rho(h,g_0)<\frac{\epsilon}{3}$. Partition $[0,1]$ into $n$ intervals $\{\mathcal{Y}_i\}$ such that $g_0^{-1}(\mathcal{Y}_i)$ has $m_i$ legs for $i=1, \ldots, n$. Applying Lemma~\ref{lemma:klswitch} to all the $n$ intervals, $g_1\in \mathbb{G}$ exists such that $\rho(g_0,g_1)<\frac{\epsilon}{3}$ and the entropy of $g_1$ is given by 
\begin{equation}
c_{\lambda}(g_1)=\sum_{i=1}^{n}c_{\min}(m_i)\lambda(\mathcal{Y}_i)\le 2 \sum_{i=1}^{n}\lambda(\mathcal{Y}_i)=2. 
\label{eq:cminxm}
\end{equation}
The inequality is because of (\ref{eq:cmin}).

Consider any $c\ge2$. If $\left|c-c_{\lambda}(g_1)\right|\le\epsilon$, then let $g=g_1$ and the proof is complete. Otherwise, $\left|c-c_{\lambda}(g_1)\right|>\epsilon$. From (\ref{eq:cminxm}), $c-c_{\lambda}(g_1)>\epsilon$. Select any affine segment of $g_1$ of dyadic endpoints and dyadic length $\Delta x$ and replace it with a window perturbation to obtain $g$, as shown in Figure~\ref{fig:WeeklyDiscussion_20200504Page-3}. Specifically, suppose that the selected affine segment is of slope $2^{k}$. Let $\Delta x <2^{-k}\cdot\frac{\epsilon}{3}$. Thus, $\rho(g_1,g)<\frac{\epsilon}{3}$. Hence, $\rho(h,g)<\rho(h,g_0)+\rho(g_0,g_1)+\rho(g_1,g)<\epsilon$.

\begin{figure}
 \centering
  \includegraphics[width=6cm]{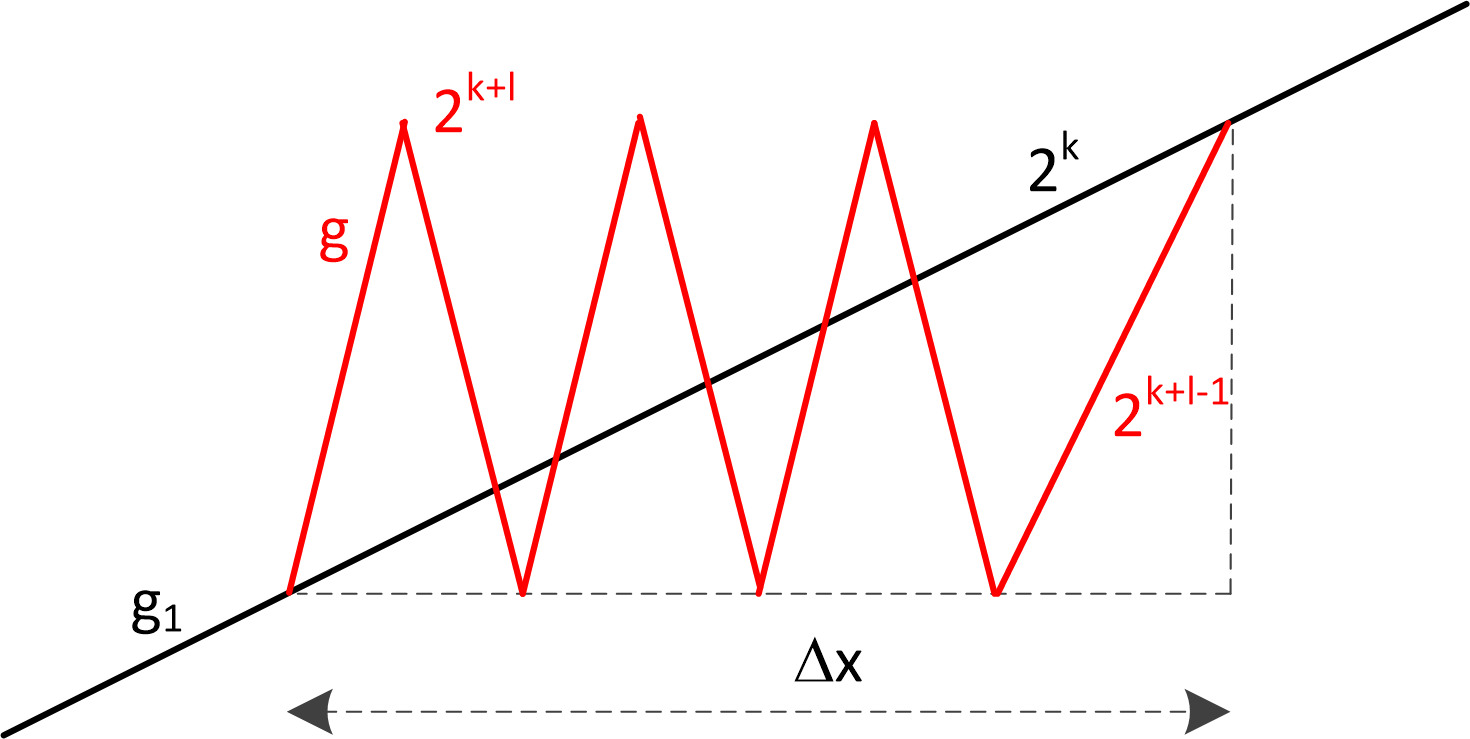}
  \caption{Increase entropy by replacing an affine segment (black) with a window perturbation (red). In this figure, $l=3$.}
  \label{fig:WeeklyDiscussion_20200504Page-3}
\end{figure} 

The window perturbation consists of $2^l-1$ legs, where each of the first $2^l-2$ legs is of slope of absolute value $2^{k+l}$ and the last leg is of slope of absolute value $2^{k+l-1}$, for integer $l$. The increase in entropy from $g_1$ to $g$ on $\Delta x$ is given by $\Delta x \cdot  (l-2^{-l+1})$. Thus, 
\[
c_{\lambda}(g)=c_{\lambda}(g_1)+\Delta x \cdot  (l-2^{-l+1}).
\]
Select $\Delta x$ sufficiently small and $l$ sufficiently large such that 
\[
\left|\Delta x \cdot  (l-2^{-l+1}) - \left(c-c_{\lambda}(g_1)\right)\right| <\epsilon.
\]
Hence, $|c_{\lambda}(g)-c|<\epsilon$, which completes the proof.
\end{proof}

The authors in \cite[Proposition.\ 21]{2019arXiv190607558B} prove that for every $c\in(0,\infty)$, the subset of Markov LEO $PA(\lambda)$ is dense in $C(\lambda)$ with $c_{\lambda}(h)=c$. Comparison with Theorem~\ref{theorem:denseG} shows that because the breakpoints in $\mathbb{G}$ are constrained to be dyadic, $c_{\lambda}(g)$ can only be within any $\epsilon$ neighborhood of an entropy target $c$ but is not guaranteed to be exactly equal to $c$. Moreover, to be LEO, the absolute value of the slope of any affine segment can only take a discrete value of $2^k$ for positive integer $k$ in $\mathbb{G}$, but can be any real number no smaller than $1$ in $PA(\lambda)$. Given a target map $h$, the smallest entropy value achievable by the dense subset of $\mathbb{G}$ is thus greater than what is achievable by that of $PA(\lambda)$.

\section{Decomposition \label{sec:generators}}

Recall that $\mathbb{F}$ is generated by two generator maps. Theorem~\ref{theorem:handfulgeneratormaps} in this section is to show that any map in $\mathbb{G}$ can be expressed as a composition of a finite number of basic maps in $\mathbb{G}$ and the generators in $\mathbb{F}$. To this end, first Theorem~\ref{theorem:1} shows that any map in $\mathbb{G}$ is a composition of maps in $\mathbb{F}$ and window perturbations, and then all window perturbations are shown to be generated by a few basic maps in $\mathbb{G}$. On the other hand, Theorem~\ref{theorem:Gnotfinitelygenerated} shows that unlike $\mathbb{F}$, $\mathbb{G}$ is not finitely generated.

First, consider type I breakpoints. 

\begin{lemma}
Let $\{\mathcal{I}_1<\cdots<\mathcal{I}_{2n+1}\}$ be a partition of $[0,1]$. Let $g\in \mathbb{G}$. Suppose that for $i=1, \ldots, n$, $g$ is an affine segment on $\mathcal{I}_{2i}$ whose slope is $(-1)^{p_i}2^{k_{i}}$, and $\mathcal{Y}=g(\mathcal{I}_{2i})$ is the same for all $i$. If  integers $\{l_{i}\}$ exist such that  
\begin{equation}
\sum_{i=1}^n 2^{-k_{i}} = \sum_{i=1}^n 2^{-l_{i}},
\label{eq:k=l}
\end{equation}
then $f_1\in \mathbb{F}$, $g_1\in \mathbb{G}$ and another partition of $[0,1]$ $\{\mathcal{J}_1<\cdots<\mathcal{J}_{2n+1}\}$ exist such that composition $g_1(f_1(x))=g(x)$ for any $x \in[0,1]$, $g_1(\mathcal{J}_j) \simeq g(\mathcal{I}_j)$ for any $j$, and 
\begin{itemize}
\item
for odd $j$, $|\mathcal{J}_j|=|\mathcal{I}_j|$;
\item
for even $j=2i$, $g_1$ on $\mathcal{J}_{2i}$ is an affine segment whose slope is equal to $(-1)^{p_i}2^{l_{i}}$.
\end{itemize}
\label{lemma:typeI_straighten_0}
\end{lemma}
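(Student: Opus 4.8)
The plan is to realize $g$ as $g_1\circ f_1$ by choosing $f_1\in\mathbb{F}$ to be a reparametrization of the $x$-axis that rescales each even interval $\mathcal{I}_{2i}$ just enough to convert its slope under $g$ from $(-1)^{p_i}2^{k_i}$ to $(-1)^{p_i}2^{l_i}$, and then setting $g_1=g\circ f_1^{-1}$. Because $f_1$ is an increasing homeomorphism, $g_1\circ f_1=g$ is automatic, and the substance of the proof is to check that the lengths can be chosen consistently so that $f_1\in\mathbb{F}$ and $g_1\in\mathbb{G}$.

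First I would prescribe the target partition by its lengths: for odd $j$ set $|\mathcal{J}_j|=|\mathcal{I}_j|$, and for even $j=2i$ set $|\mathcal{J}_{2i}|=2^{k_i-l_i}|\mathcal{I}_{2i}|$. Since $g$ maps $\mathcal{I}_{2i}$ affinely onto $\mathcal{Y}$ with slope magnitude $2^{k_i}$, we have $|\mathcal{I}_{2i}|=2^{-k_i}|\mathcal{Y}|$, hence $|\mathcal{J}_{2i}|=2^{-l_i}|\mathcal{Y}|$. Invoking the hypothesis (\ref{eq:k=l}),
\[
\sum_i |\mathcal{J}_{2i}| = |\mathcal{Y}|\sum_i 2^{-l_i} = |\mathcal{Y}|\sum_i 2^{-k_i} = \sum_i |\mathcal{I}_{2i}|,
\]
so together with the odd intervals the total length is preserved and $\{\mathcal{J}_1<\cdots<\mathcal{J}_{2n+1}\}$ is a genuine ordered partition of $[0,1]$. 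I then define $f_1$ to send each $\mathcal{I}_j$ affinely and increasingly onto $\mathcal{J}_j$; its slope is $1$ on odd intervals and $2^{k_i-l_i}$ on even intervals, both integer powers of $2$, and it is increasing with $f_1(0)=0,f_1(1)=1$. To confirm $f_1\in\mathbb{F}$ it remains to check dyadic breakpoints: the endpoints of the $\mathcal{I}_j$ are dyadic and $|\mathcal{Y}|$ is dyadic by Lemma~\ref{lemma:alldyadic}, so each $|\mathcal{J}_j|$ is dyadic, and the endpoints of the $\mathcal{J}_j$, being partial sums of dyadic lengths from $0$, are dyadic.

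Next I would verify $g_1=g\circ f_1^{-1}\in\mathbb{G}$ together with the listed properties. Composition preserves piecewise-affineness, continuity, surjectivity, and (since $f_1$ carries dyadics to dyadics) dyadic breakpoints, while slopes stay of the form $\pm 2^{(\cdot)}$. On $\mathcal{J}_{2i}$ the chain rule gives slope $(-1)^{p_i}2^{k_i}\cdot 2^{l_i-k_i}=(-1)^{p_i}2^{l_i}$, as required. The relation $g_1(\mathcal{J}_j)\simeq g(\mathcal{I}_j)$ is immediate: for parameter $\alpha\in[0,1]$, $f_1^{-1}(\mathcal{J}_j^0+\alpha|\mathcal{J}_j|)=\mathcal{I}_j^0+\alpha|\mathcal{I}_j|$, so $g_1(\mathcal{J}_j^0+\alpha|\mathcal{J}_j|)=g(\mathcal{I}_j^0+\alpha|\mathcal{I}_j|)$, which is exactly the defining condition of $\simeq$. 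This also yields the odd-index length statement and the even-index slope statement.

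The main obstacle is the $\lambda$-preservation of $g_1$, since composing the measure-preserving $g$ with the non-measure-preserving $f_1^{-1}$ does not in general preserve measure. I would settle this via Lemma~\ref{lemma:basicderivativeinverse}: for almost every target $y$, the multiset of slope magnitudes of $g_1$ over $y$ agrees with that of $g$ except that each even leg's exponent $k_i$ is replaced by $l_i$, the odd legs being unchanged translates. Consequently the defining sum $\sum 2^{-(\cdot)}$ over the preimages changes only by replacing $\sum_i 2^{-k_i}$ with $\sum_i 2^{-l_i}$, which are equal by (\ref{eq:k=l}); hence the sum remains $1$ and $g_1$ is $\lambda$-preserving. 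This is precisely where hypothesis (\ref{eq:k=l}) does the essential work, by guaranteeing that the stretching $2^{k_i-l_i}$ applied leg by leg leaves both the total $x$-length (so that $f_1$ is a self-homeomorphism) and the leg-wise conservation sum (so that $g_1\in\mathbb{G}$) intact.
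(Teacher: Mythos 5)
Your proposal is correct and essentially reproduces the paper's proof: you prescribe the same partition $\{\mathcal{J}_j\}$ by the same lengths, the same $f_1$ (slope $1$ on odd intervals, $2^{k_i-l_i}$ on even ones), and your $g_1=g\circ f_1^{-1}$ coincides with the map the paper constructs directly, with the composition identity verified in the reverse order. Your explicit verification of $\lambda$-preservation via Lemma~\ref{lemma:basicderivativeinverse} just spells out the step the paper dispatches with ``by (\ref{eq:k=l}), $g_1$ is $\lambda$-preserving.''
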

\begin{proof}
The set of intervals $\{\mathcal{J}_j\}$ is completely defined once their lengths are defined. Specifically, let 
\begin{equation}
|\mathcal{J}_j|=\left\{
\begin{array}{ll}
|\mathcal{I}_j|, &\mbox{for odd $j$;}\\
|\mathcal{Y}|\cdot 2^{-l_{i}}, & \mbox{for even $j=2i$.}
\end{array}
\right.
\label{eq:k=l2}
\end{equation}
The endpoints of any interval $\mathcal{J}_j$ are dyadic by construction. To show the above partition is a valid one, note that for $i=1,2,\ldots,n$, 
\begin{eqnarray*}
|\mathcal{I}_{2i}| &=& |\mathcal{Y}|\cdot 2^{-k_{i}}, |\mathcal{J}_{2i}| = |\mathcal{Y}| \cdot 2^{-l_{i}} \\
\xRightarrow[\text{ }]{\text{By (\ref{eq:k=l})}}
\sum_{i=1}^n|\mathcal{I}_{2i}|&=&\sum_{i=1}^n|\mathcal{J}_{2i}|
\implies \sum_{j=1}^{2n+1} |\mathcal{J}_j| =\sum_{j=1}^{2n+1} |\mathcal{I}_j|=1.
\end{eqnarray*}

Construct $g_1$ as follows. With odd $j$, for $x\in \mathcal{J}_j$, let $g_1(x)=g(x-d_j)$, where $d_j=\mathcal{J}_{j}^0-\mathcal{I}_{j}^0$. For even $j=2i$, the graph of $g_1(x)$ on $\mathcal{J}_j$ is the affine segment that connects the right endpoint of $g_1$ on $\mathcal{J}_{j-1}$ and the left endpoint of $g_1$ on $\mathcal{J}_{j+1}$. Thus by construction (\ref{eq:k=l2}), the slope of the affine segment is $(-1)^{p_i}2^{l_i}$. Moreover, by (\ref{eq:k=l}), $g_1$ is $\lambda$-preserving. Hence, $g_1\in \mathbb{G}$.
 
Construct $f_1$ as follows. Set $f_1(0)=0$. For $j=1,2,\ldots, 2n+1$ and $x\in \mathcal{I}_{j}$, the slope of $f_1$ is set to $1$ for odd $j$ and to $2^{k_i-l_i}$ for even $j=2i$. By construction, the breakpoints of $f_1$ are dyadic and all the slopes are in the form of $2^m$ for integer $m$. To validate that $f_1\in \mathbb{F}$, note that 
\begin{eqnarray*}
|\mathcal{I}_{2i}|&=&|\mathcal{Y}|\cdot 2^{-k_i}
\implies |f_1(\mathcal{I}_{2i})|=|\mathcal{Y}|\cdot 2^{-k_i}\cdot 2^{k_i-l_i}=|\mathcal{Y}|\cdot 2^{-l_i}\\
\xRightarrow[\text{ }]{\text{By (\ref{eq:k=l})}} \sum_{i=1}^n|f_1(\mathcal{I}_{2i})|&=&\sum_{i=1}^n|\mathcal{I}_{2i}|
\implies \sum_{j=1}^{2n+1}|f_1(\mathcal{I}_{j})|=\sum_{j=1}^{2n+1}|\mathcal{I}_{j}|=1
\end{eqnarray*}

Finally, to show that $g_1(f_1)=g$, note that by construction of $f_1$ and $g_1$ that for $j=1, 2, \ldots, 2n+1$,
\[
f_1(\mathcal{I}_{j})\simeq \mathcal{J}_{j}, g_1(\mathcal{J}_{j})\simeq g(\mathcal{I}_{j})\implies g_1(f_1(\mathcal{I}_{j}))\simeq g(\mathcal{I}_{j})
\]
Hence,  $g_1(f_1)=g$. 
\end{proof}

\begin{corollary}
Let $\{\mathcal{I}'_1<\cdots<\mathcal{I}'_{2n+1}\}$ be a partition of $[0,1]$. Let $g\in \mathbb{G}$. Suppose that for $i=1, \ldots, n$, $g$ is a piecewise affine segment containing a single type I breakpoint $A_i$ on $\mathcal{I}'_{2i}$. Suppose that $\mathcal{Y}=g(\mathcal{I}'_{2i})$ is the same for all $i$ and that $A_{i,y}$ is the same for $i=1, \ldots, n$. Let $(-1)^{p_i}2^{l_{i}}$ be the slope of the affine segment on $g^{-1}([\mathcal{Y}^0,A_{i,y}])\cap \mathcal{I}'_{2i}$ and $(-1)^{p_i}2^{k_{i}}$ be the slope of the affine segment on $g^{-1}([A_{i,y},\mathcal{Y}^1])\cap \mathcal{I}'_{2i}$. If (\ref{eq:k=l}) holds, then $f_1\in \mathbb{F}$, $g_1\in \mathbb{G}$ and another partition of $[0,1]$ $\{\mathcal{J}'_1<\cdots<\mathcal{J}'_{2n+1}\}$ exist such that composition $g_1(f_1(x))=g(x)$ for any $x \in[0,1]$,
\begin{itemize}
\item
for odd $j$, $|\mathcal{I}'_j|=|\mathcal{J}'_j|$ and $g_1(\mathcal{J}'_j) \simeq g(\mathcal{I}'_j)$;
\item
for even $j$, $g_1$ on $\mathcal{J}'_j$ is an affine segment.
\end{itemize}
\label{lemma:typeI_straighten_1}
\end{corollary}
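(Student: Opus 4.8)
The plan is to reduce the corollary to Lemma~\ref{lemma:typeI_straighten_0} by isolating the portion of $g$ lying above the common breakpoint height. Write $A_y$ for the common value $A_{i,y}$ and split $\mathcal{Y}$ at $A_y$ into $\mathcal{Y}_L=[\mathcal{Y}^0,A_y]$ and $\mathcal{Y}_U=[A_y,\mathcal{Y}^1]$. On each even interval $\mathcal{I}'_{2i}$ the single type I breakpoint $A_i$ splits $\mathcal{I}'_{2i}$ into an affine piece of slope $(-1)^{p_i}2^{l_i}$ mapping onto $\mathcal{Y}_L$ and an affine piece of slope $(-1)^{p_i}2^{k_i}$ mapping onto $\mathcal{Y}_U$. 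Because $A_i$ is of type I these two pieces carry the same sign, so straightening the breakpoint amounts to forcing the two slope exponents to agree.

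First I would build an auxiliary partition $\{\tilde{\mathcal{I}}_1<\cdots<\tilde{\mathcal{I}}_{2n+1}\}$ of $[0,1]$ whose even-indexed intervals are exactly the upper pieces $g^{-1}(\mathcal{Y}_U)\cap\mathcal{I}'_{2i}$, and whose odd-indexed intervals absorb the original odd intervals $\mathcal{I}'_{2i\pm1}$ together with the adjacent lower pieces $g^{-1}(\mathcal{Y}_L)\cap\mathcal{I}'_{2i}$. Since the $\mathcal{I}'_{2i}$ are separated by odd intervals, each contributes exactly one upper piece, so the upper pieces are mutually separated and this alternating partition is well defined; the position of the lower piece (left or right of its upper piece) is dictated by $p_i$ and only affects which neighboring odd interval absorbs it. On this partition $g$ is affine on each even interval with image $\mathcal{Y}_U$ and slope $(-1)^{p_i}2^{k_i}$, so Lemma~\ref{lemma:typeI_straighten_0} applies with $\mathcal{Y}:=\mathcal{Y}_U$, current exponents $\{k_i\}$ and target exponents $\{l_i\}$; the hypothesis $\sum_i 2^{-k_i}=\sum_i 2^{-l_i}$ required there is precisely (\ref{eq:k=l}).

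Lemma~\ref{lemma:typeI_straighten_0} then yields $f_1\in\mathbb{F}$, $g_1\in\mathbb{G}$ and a partition $\{\tilde{\mathcal{J}}_j\}$ with $g_1\circ f_1=g$, where each upper piece is replaced by an affine segment of slope $(-1)^{p_i}2^{l_i}$ and each odd interval is reproduced under an exact horizontal shift, $g_1(\tilde{\mathcal{J}}_j)\simeq g(\tilde{\mathcal{I}}_j)$ with $|\tilde{\mathcal{J}}_j|=|\tilde{\mathcal{I}}_j|$. Consequently the lower piece retains its slope $(-1)^{p_i}2^{l_i}$, and since $g_1$ is continuous it meets the transformed upper piece at height $A_y$. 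Equal slope plus continuity forces the union of the two pieces to be a single affine segment; I would declare this union to be $\mathcal{J}'_{2i}$ and the translated copies of the original $\mathcal{I}'_{2i\pm1}$ (carved out of the lemma's odd intervals) to be $\mathcal{J}'_{2i\pm1}$, which gives the desired $2n+1$ intervals. Verifying the conclusion is then routine: for odd $j$ the shift property gives $|\mathcal{J}'_j|=|\mathcal{I}'_j|$ and $g_1(\mathcal{J}'_j)\simeq g(\mathcal{I}'_j)$, while for even $j$ we have just argued $g_1$ is affine on $\mathcal{J}'_j$.

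The main obstacle is organizational rather than analytic: one must set up the auxiliary partition so that the lower and upper pieces of each $\mathcal{I}'_{2i}$ end up adjacent in the output and reconnect with matching slopes. The orientation sign $p_i$ determines whether the lower piece sits to the left or to the right of its upper piece, so care is needed to assign it to the correct neighboring odd interval and to check that the exact horizontal translation furnished by Lemma~\ref{lemma:typeI_straighten_0} carries the lower piece into position continuously with the straightened upper piece. Once this bookkeeping is in place, continuity of $g_1$ together with the equality of the slope exponents does all the real work.
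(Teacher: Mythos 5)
Your proof is correct and takes essentially the same route as the paper: apply Lemma~\ref{lemma:typeI_straighten_0} to the upper sub-segments mapping onto $[A_{i,y},\mathcal{Y}^1]$ (replacing exponents $\{k_i\}$ by $\{l_i\}$, which is exactly what (\ref{eq:k=l}) licenses) while leaving the lower sub-segments untouched, so that equal slopes plus continuity merge each pair into a single affine segment. The paper's proof is a terser statement of this same reduction; your explicit auxiliary alternating partition merely makes the bookkeeping the paper leaves implicit.
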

\begin{proof}
By Lemma~\ref{lemma:typeI_straighten_0}, one can replace the affine segment of $g$ on $g^{-1}([A_{i,y},\mathcal{Y}^1])\cap \mathcal{I}'_{2i}$ with another one whose slope is equal to $(-1)^{p_i}2^{l_i}$ for $i=1, 2, \ldots, n$, while keeping the slope of affine segment unchanged on $g^{-1}([\mathcal{Y}^0,A_{i,y}])\cap \mathcal{I}'_{2i}$. As a result, $g_1$ on $[{\mathcal{I}'_{2i}}^0, {\mathcal{I}'_{2i}}^1]$ is an affine segment with no type I breakpoint inside. 
\end{proof}

Figure~\ref{fig:WeeklyDiscussion_20200316Page-2} illustrates the use of Lemma~\ref{lemma:typeI_straighten_0} and Corollary~\ref{lemma:typeI_straighten_1}. 

\begin{figure}
 \centering
  \includegraphics[width=12cm]{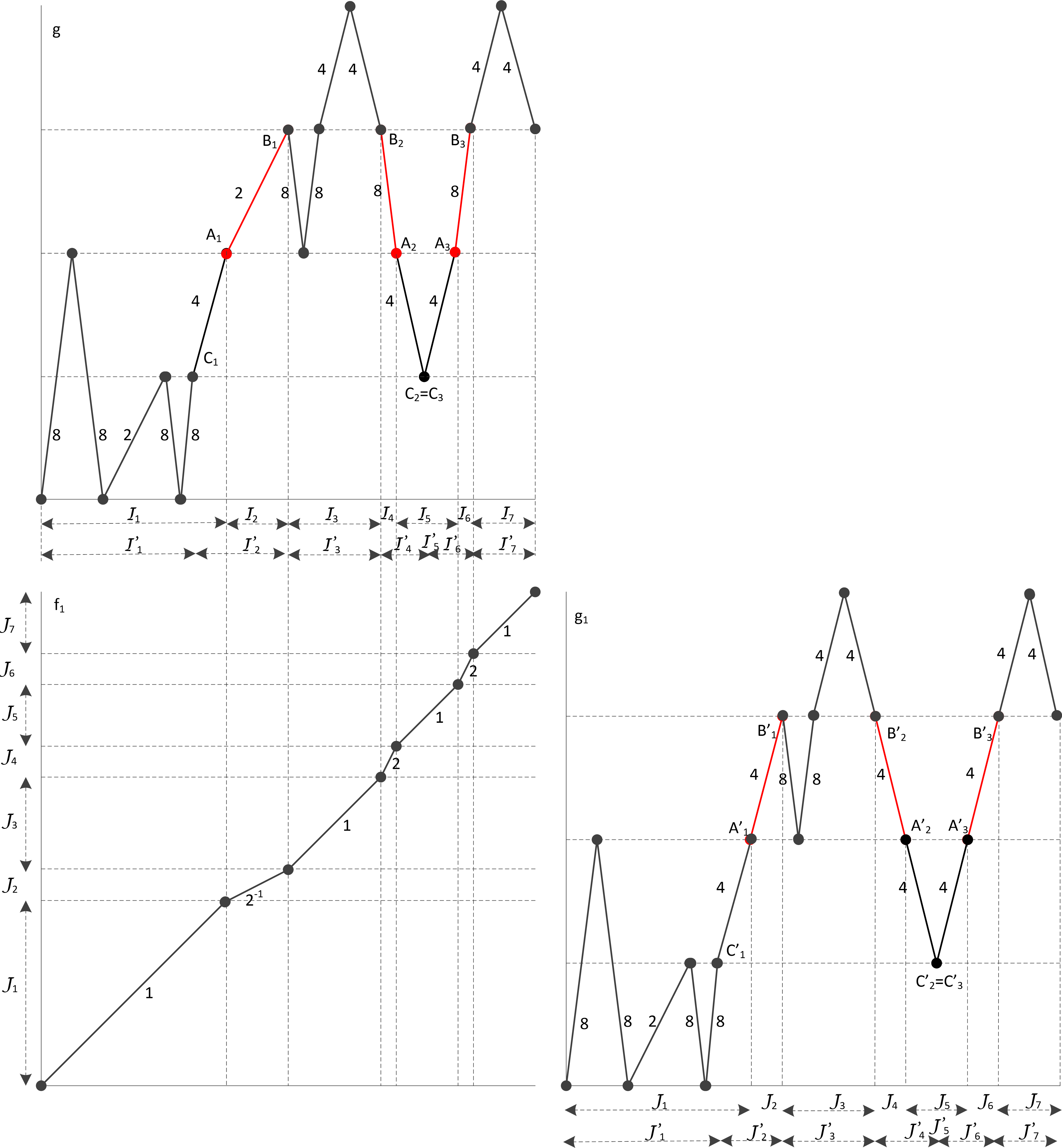}
  \caption{Use of Lemma~\ref{lemma:typeI_straighten_0} and Corollary~\ref{lemma:typeI_straighten_1}. The red segments in $g$ are replaced by those in $g_1$ where $g=g_1\circ f_1$. As a result, type I breakpoints $A_1, A_2, A_3$ of $g$ are eliminated in $g_1$ because the left and right derivatives are the same at $A'_1, A'_2, A'_3$. The number next to an affine segment represents the absolute value of the slope.}
  \label{fig:WeeklyDiscussion_20200316Page-2}
\end{figure} 

By Corollary~\ref{lemma:typeI_straighten_1}, $g$ can be generated by $g_1$, which eliminates certain type I breakpoints of $g$. In particular, the following corollary holds.
\begin{corollary}
Let $\{\mathcal{I}_i\}$ for $i=1, \ldots,m$ be a set of intervals with mutually disjoint interiors. Let $g\in \mathbb{G}$. If $g$ is monotone on $\mathcal{I}_i$ for all $i$, $\mathcal{Y}=g(\mathcal{I}_i)$ is the same for all $i$, and $g^{-1}(\mathcal{Y})=\bigcup_{i=1}^m \mathcal{I}_i$, then all the type I breakpoints in the interiors of $\{\mathcal{I}_i\}$ can be eliminated. 
\label{corollary:typeI_straighten_1}
\end{corollary}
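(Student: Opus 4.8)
The plan is to prove the corollary by repeatedly invoking Corollary~\ref{lemma:typeI_straighten_1}, peeling off one horizontal layer of type I breakpoints at a time. First I would record the heights at which the relevant breakpoints occur. By Lemma~\ref{lemma:alldyadic} every such breakpoint has a dyadic $y$-coordinate, so the distinct heights of type I breakpoints lying strictly inside $\mathcal{Y}$ form a finite set $y_1 < \cdots < y_{r-1}$; together with $y_0 = \mathcal{Y}^0$ and $y_r = \mathcal{Y}^1$ they cut $\mathcal{Y}$ into $r$ horizontal bands. Because $g$ is monotone on each $\mathcal{I}_i$ and no breakpoint height lies in the interior of a band, the restriction of $g$ to the preimage of a single band on a single leg is one affine segment.

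I would then argue by induction on $r$. When $r=1$ there are no interior type I breakpoints and nothing remains to be done. For the inductive step, focus on the topmost interior height $y_{r-1}$ and the region $\mathcal{Y}' = [y_{r-2}, \mathcal{Y}^1]$. For each leg that genuinely breaks at $y_{r-1}$ set $\mathcal{I}'_{2i} = g^{-1}(\mathcal{Y}') \cap \mathcal{I}_i$; these become the even-indexed members of a partition of $[0,1]$ whose odd-indexed members collect the lower portions of the legs, the non-breaking legs, and everything outside $\bigcup_i \mathcal{I}_i$ (inserting a degenerate single-point interval wherever two legs abut, which is permitted). Within each $\mathcal{I}'_{2i}$ the only interior breakpoint height is $y_{r-1}$, so each such restricted leg carries a single type I breakpoint $A_i$ with the common height $A_{i,y} = y_{r-1}$, slope $(-1)^{p_i}2^{l_i}$ below and $(-1)^{p_i}2^{k_i}$ above; the sign $(-1)^{p_i}$ is constant along the leg precisely because a type I breakpoint does not reverse orientation.

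The hypothesis (\ref{eq:k=l}) of Corollary~\ref{lemma:typeI_straighten_1} then holds for free: applying Lemma~\ref{lemma:basicderivativeinverse} at a height just below and just above $y_{r-1}$ gives $\sum_i 2^{-l_i} = 1 = \sum_i 2^{-k_i}$ summed over all $m$ legs, and cancelling the contributions of legs with $l_i = k_i$ (those not breaking at $y_{r-1}$) leaves exactly the required balance over the breaking legs. Invoking Corollary~\ref{lemma:typeI_straighten_1} then yields $f_1 \in \mathbb{F}$ and $g_1 \in \mathbb{G}$ with $g = g_1 \circ f_1$ such that on every leg the entire band region $\mathcal{Y}'$ is now a single affine segment, so $y_{r-1}$ ceases to be a breakpoint height. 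Since the odd intervals, and hence the portions of the legs below $y_{r-2}$, are reproduced by $g_1$ up to the linear equivalence $\simeq$, the heights $y_1,\ldots,y_{r-2}$ persist unchanged and no new ones appear; thus $g_1$ has $r-1$ bands. The inductive hypothesis applied to $g_1$ produces $\hat g \in \mathbb{G}$ free of type I breakpoints in the legs together with $\hat f \in \mathbb{F}$ and $g_1 = \hat g \circ \hat f$, and since $\mathbb{F}$ is closed under composition, $g = \hat g \circ (\hat f \circ f_1)$ exhibits the desired elimination.

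The main obstacle is organizational rather than computational: arranging, at each layer, a legitimate partition of $[0,1]$ in which the even intervals are exactly the band-restricted legs with all their breakpoints seated at the single common height $y_{r-1}$, and then verifying that the straightening supplied by Corollary~\ref{lemma:typeI_straighten_1} leaves the structure below $y_{r-2}$ intact so the induction can continue. The one genuinely substantive point, that the balance condition (\ref{eq:k=l}) demanded by Corollary~\ref{lemma:typeI_straighten_1} comes at no cost, follows immediately from $\lambda$-preservation evaluated on the two sides of a single breakpoint height.
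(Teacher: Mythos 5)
Your proposal is correct and follows essentially the same route as the paper: both arguments rest on the observation that Lemma~\ref{lemma:basicderivativeinverse}, applied just below and just above a breakpoint height, forces $\sum_i 2^{-k_i(y)}=1$ in every horizontal band, so the balance condition (\ref{eq:k=l}) required by Corollary~\ref{lemma:typeI_straighten_1} holds automatically, and the actual straightening is delegated to that corollary. The only difference is bookkeeping: the paper fixes a single common target set $\{l_i\}$ with $\sum_i 2^{-l_i}=1$ and replaces the slopes in all bands against it in one pass, whereas you peel off the topmost breakpoint height inductively with the band below serving as the target; both procedures terminate with each leg affine.
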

\begin{proof}
$\forall y\in\mathcal{Y}$, $g^{-1}(\mathcal{Y})$ consists of one point on each $\mathcal{I}_i$. If the derivative exists at all points of $g^{-1}(y)$, then (\ref{eq:sum2^-k=1}) holds and so does (\ref{eq:k=l}) with a common set of $\{l_i\}$ for any $y$. The set of the affine segments in the neighborhood of $g^{-1}(y)$, i.e., $g^{-1}([y-\delta, y+\delta])$ for some $\delta>0$, whose slopes have absolute values equal to $2^{k_i(y)}$ on $\mathcal{I}_i$ can be replaced by affine segments with absolute values of slopes equal to $2^{l_i}$ with $\sum_{i=1}^m 2^{-l_i}=1$. Set $\{2^{l_i}\}$ remain the same for all $y\in\mathcal{Y}$. Thus, by Corollary~\ref{lemma:typeI_straighten_1}, the piecewise affine segment on each $\mathcal{I}_i$ is replaced by an affine segment and any breakpoint in the interior of $\mathcal{I}_i$ is eliminated.
\end{proof}

Corollary~\ref{corollary:typeI_straighten_1} covers the case of $g^{-1}(g(\bigcup_{i=1}^m \mathcal{I}_i)) = \bigcup_{i=1}^m \mathcal{I}_i$. The opposite case is where $\forall y\in g(\bigcup_{i=1}^m \mathcal{I}_i)$, $g^{-1}(y)\not\subset\bigcup_{i=1}^m \mathcal{I}_i$, which is addressed next.

\begin{lemma}
Let $\{\mathcal{I}_1<\cdots<\mathcal{I}_{2m+1}\}$ be a partition of $[0,1]$. Suppose that $g\in\mathbb{G}$ is an affine segment on $\mathcal{I}_{2i}$ and $\mathcal{Y}=g(\mathcal{I}_{2i})$ is the same for $i=1, \ldots, m$. If $\forall y\in \mathcal{Y}$, $g^{-1}(y)\not\subset\bigcup_{i=1}^m \mathcal{I}_i$, then $f_1\in\mathbb{F}, g_1\in\mathbb{G}$ and another partition of $[0,1]$ $\{\mathcal{J}_1<\cdots<\mathcal{J}_{2m+1}\}$ exist such that composition $g_1(f_1(x))=g(x)$ for any $x \in[0,1]$, $g_1(\mathcal{J}_{2i})\simeq g(\mathcal{I}_{2i})$ and $g_1$ is an affine segment on $\mathcal{J}_{2i}$ with the absolute value of the slope being $2^{k_i}$ where $\sum_{i=1}^m 2^{-k_i}=2^{-K}$ for some positive integer $K$.
\label{lemma:typeI_straighten_2_1}
\end{lemma}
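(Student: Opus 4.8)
The plan is to produce the factorization $g=g_1\circ f_1$ by choosing $f_1\in\mathbb{F}$ to be a reparametrization of the domain that rescales the intervals comprising $g^{-1}(\mathcal{Y})$ and acts with slope $1$ (a pure translation) elsewhere, and then setting $g_1=g\circ f_1^{-1}$. Write the slope of $g$ on $\mathcal{I}_{2i}$ as $(-1)^{p_i}2^{a_i}$, so that the $m$ legs contribute $S=\sum_{i=1}^m 2^{-a_i}$ to the identity $\sum_{x\in g^{-1}(y)}\frac{1}{|g'(x)|}=1$ of Lemma~\ref{lemma:basicderivativeinverse}. The hypothesis that $g^{-1}(y)\not\subset\bigcup_{i=1}^m\mathcal{I}_{2i}$ for every $y\in\mathcal{Y}$ guarantees that each $y$ retains at least one preimage outside the $m$ legs, so the outside carries positive contribution $1-S>0$. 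This is exactly the slack needed: moving the total leg contribution off its current value $S$ is only consistent with $\lambda$-preservation if there is compensating outside structure over every $y$.

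Next I would fix the convenient target $K=1$, which the statement permits. Assign the $m$ legs new slope magnitudes $2^{k_i}$ with $\sum_{i=1}^m 2^{-k_i}=2^{-1}$ (for instance $k_i=i+1$ for $i<m$ and $k_m=m$), retaining the sign $(-1)^{p_i}$ so that $g_1(\mathcal{J}_{2i})\simeq g(\mathcal{I}_{2i})$; this forces $f_1$ to have slope $2^{a_i-k_i}$ on $\mathcal{I}_{2i}$, shrinking the total leg width from $S|\mathcal{Y}|$ to $\tfrac12|\mathcal{Y}|$. The complementary requirement is that the outside must now contribute exactly $1-2^{-1}=\tfrac12$ at \emph{every} $y$. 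To arrange this pointwise I would partition $\mathcal{Y}$ at the $g$-images of all breakpoints lying outside the legs; these images are dyadic by Lemma~\ref{lemma:alldyadic}, and over each resulting sub-interval $\mathcal{Y}_s$ the outside part of $g^{-1}(\mathcal{Y}_s)$ consists of $r_s\ge1$ full, monotone, affine legs. On each $\mathcal{Y}_s$ I reassign these $r_s$ legs new slope magnitudes $2^{b'}$ whose reciprocals sum to $\tfrac12$, which is solvable for every $r_s\ge1$; this choice determines $f_1$ on the outside as the corresponding interval rescaling, and introduces only dyadic breakpoints at the (dyadic) preimages of the $\mathcal{Y}_s$ boundaries.

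It then remains to verify the claimed properties. Every rescaling ratio is a quotient of dyadic lengths, so all slopes of $f_1$ are integer powers of $2$ and its breakpoints are dyadic; since $|g^{-1}(\mathcal{Y})|=|\mathcal{Y}|$ for $g$ and the target profile likewise has total preimage width $|\mathcal{Y}|$, the rescaling has net-zero length change and $f_1$ is an increasing homeomorphism fixing $0$ and $1$, hence $f_1\in\mathbb{F}$. By construction $g_1=g\circ f_1^{-1}$ is continuous and piecewise affine with dyadic breakpoints and slopes $\pm2^{k}$, and for every $y\in\mathcal{Y}$ its inside legs contribute $2^{-K}=\tfrac12$ and its outside legs contribute $\tfrac12$, so $\sum_{x\in g_1^{-1}(y)}\frac{1}{|g_1'(x)|}=1$ and $g_1\in\mathbb{G}$. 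The new partition is read off as $\mathcal{J}_j=f_1(\mathcal{I}_j)$, with $|\mathcal{J}_{2i}|=2^{-k_i}|\mathcal{Y}|$, and $\sum_{i=1}^m 2^{-k_i}=2^{-1}$ as required.

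The main obstacle is the pointwise (in $y$) control of the outside contribution: a naive uniform rescaling of the outside would demand the scaling factor $(1-2^{-K})/(1-S)$ to be a power of $2$, which it is not in general. The two devices that dissolve this difficulty are (a) choosing $K=1$, so the required outside total $\tfrac12=2^{-1}$ can be split into \emph{any} number $r_s\ge1$ of dyadic pieces, and (b) the sub-partition of $\mathcal{Y}$ by outside-breakpoint images, which turns the outside into a finite family of full affine legs on each $\mathcal{Y}_s$ and reduces the assignment $\sum 2^{-b'}=\tfrac12$ to a finite, always-solvable bookkeeping problem. The solvability of this last step is precisely what the hypothesis $g^{-1}(y)\not\subset\bigcup_{i=1}^m\mathcal{I}_{2i}$ secures, since that is what forces $r_s\ge1$ for every $s$.
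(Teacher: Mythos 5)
Your proposal is correct, but it takes a genuinely different route from the paper's proof of Lemma~\ref{lemma:typeI_straighten_2_1}. Both arguments begin identically: partition $\mathcal{Y}$ at the (dyadic, by Lemma~\ref{lemma:alldyadic}) images of breakpoints so that over each sub-interval the outside preimage consists of $n_j\ge 1$ full affine legs, and both exploit the fact that, since $g$ is affine on each $\mathcal{I}_{2i}$, the inside exponents do not depend on the sub-interval. Where you diverge is in how the target exponents are produced and realized. The paper does not choose $K$: it writes the inside contribution as $L\cdot 2^{-K}$ with $L$ odd and runs an iterative reduction, at each round selecting subsets $\Phi_1,\Phi_2$ of inside legs whose exponents are shifted and a subset $\Psi$ of outside legs whose exponents are decremented by one to compensate, until $L=1$; the final $K''$ is whatever this process yields, and the factorization $g=g_1\circ f_1$ is then obtained by invoking Lemma~\ref{lemma:typeI_straighten_0} sub-interval by sub-interval. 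You instead observe that the statement only demands \emph{some} positive $K$, normalize outright to $K=1$ via an explicit inside assignment with $\sum_i 2^{-k_i}=2^{-1}$, and reduce the whole problem to the per-$\mathcal{Y}_s$ bookkeeping identity that $\tfrac12$ is a sum of $r_s$ dyadic reciprocals for every $r_s\ge 1$ (e.g.\ $2^{-2},\ldots,2^{-r_s},2^{-r_s}$), building $f_1$ directly as the piecewise rescaling with slope $2^{a_i-k_i}$ on the legs and slope $1$ elsewhere, rather than routing through Lemma~\ref{lemma:typeI_straighten_0}. Your approach buys brevity and transparency: it eliminates the number-theoretic iteration entirely, and it makes visible exactly where the hypothesis $g^{-1}(y)\not\subset\bigcup_i\mathcal{I}_{2i}$ is used (it forces $r_s\ge1$, i.e.\ a nonempty outside budget over every $y$, which is what makes the pointwise $\lambda$-preservation check via Lemma~\ref{lemma:basicderivativeinverse} solvable); your per-$\mathcal{Y}_s$ width computation also shows $f_1$ preserves total length on \emph{each} $g^{-1}(\mathcal{Y}_s)$, not just globally, which cleanly settles $f_1(0)=0$, $f_1(1)=1$. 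The paper's heavier machinery buys modularity — Lemma~\ref{lemma:typeI_straighten_0} is reused elsewhere, and its iterative scheme perturbs the outside exponents only by unit decrements on subsets — but neither feature is needed for the statement itself. One point worth making explicit in your write-up: the per-$\mathcal{Y}_s$ reassignment of outside slopes introduces new type~I breakpoints at the preimages of $\mathcal{Y}_s$-boundaries (dyadic, so harmless for membership in $\mathbb{G}$), and this is legitimate because the lemma imposes no condition on $g_1$ off $\bigcup_i\mathcal{J}_{2i}$; the paper's proof incurs the same side effect through its $j$-dependent outside exponents $k''_{j,i}$.
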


\begin{proof}
Partition $\mathcal{Y}$ into intervals $\{\mathcal{Y}_j\}$, $j=1, 2, \ldots, n$, such that no breakpoint exists whose $y$-coordinate falls in the interior of any $\mathcal{Y}_j$, i.e., $\nexists$ breakpoint $B$ such that $B_y \in(\mathcal{Y}_j^0, \mathcal{Y}_j^1)$. Consider $g^{-1}(\mathcal{Y}_j)$. Let $g^{-1}(\mathcal{Y}_j)=\{\mathcal{I}_{j,1}, \mathcal{I}_{j,2}, \ldots, \mathcal{I}_{j,m}, \mathcal{I}_{j,m+1}, \ldots, \mathcal{I}_{j,m+n_j}\}$ with mutually disjoint interiors, where interval
$\mathcal{I}_{j,i}\subset\mathcal{I}_{2i}$ for $i=1, \ldots, m$. By the hypothesis of the lemma, $n_j\ge1$. Because $\mathcal{Y}=\bigcup_{j=1}^n \mathcal{Y}_j$, it follows that for $i=1, \ldots, m$,
\begin{equation}
\bigcup_{j=1}^n \mathcal{I}_{j,i}=\mathcal{I}_{2i}.
\label{eq:Iij=I} 
\end{equation}

The graph of $g$ is affine on every $\mathcal{I}_{j,i}$ because no breakpoint exists on $\mathcal{Y}_j^{\circ}$. Let $2^{k_{j,i}}$ be the absolute value of the slope of the affine segment on $\mathcal{I}_{j,i}$. Because $g$ is $\lambda$-preserving, 
\begin{equation}
\sum_{i=1}^{m+n_j} 2^{-k_{j,i}}=1.
\label{eq:LK0}
\end{equation}
Let 
\begin{equation}
\sum_{i=1}^{m} 2^{-k_{j,i}}=L\cdot 2^{-K},
\label{eq:LK}
\end{equation}
where $L$ is an odd integer. Because the graph of $g$ is an affine segment on $\mathcal{I}_{2i}$, $k_{j,i}$ does not depend on $j$ when $i=1, \ldots, m$, which is the reason that $L, K$ do not have subscript $j$ in (\ref{eq:LK}).

Assume that $L>1$. The following iterative procedure is employed to decrement $L$ by adjusting $k_{j,i}$ where $1\le i\le m+n_j$ while satisfying (\ref{eq:LK0}) so that eventually $L=1$.

From (\ref{eq:LK}), $\sum_{i=1}^{m} 2^{-k_{j,i}+K}=L$. Focus on $k_{j,i}$ that is greater than or equal to $K$ and arrange them in an increasing order. Add such $2^{-k_{j,i}+K}$ terms one-by-one until the sum reaches $1$. Therefore, a subset of $\{1, 2, \ldots, m\}$, denoted by $\Phi_1$, exists such that $\sum_{i\in\Phi_1} 2^{-k_{j,i}}=2^{-K}$. Because $L-1$ is even, let $\sum_{i=1, i\not\in\Phi_1}^{m} 2^{-k_{j,i}}=(L-1)\cdot 2^{-K}=L'\cdot 2^{-K'}$, where $0<K'<K$ and $L'$ is odd. $L'<L$. Similarly, a subset of $\{1, 2, \ldots, m\}\setminus\Phi_1$, denoted by $\Phi_2$, exists such that $\sum_{i\in\Phi_2} 2^{-k_{j,i}}=2^{-K'}$. Now let 
\begin{equation}
k'_{j,i}=\left\{
\begin{array}{ll}
k_{j,i}+K'+1-K, & i\in\Phi_1;\\
k_{j,i}+1, & i\in\Phi_2;\\
k_{j,i}, & i\in\{1, 2, \ldots, m\}\setminus(\Phi_1\cup\Phi_2).
\end{array}
\right.
\label{eq:kk'1}
\end{equation}
Therefore, 
\[
\sum_{i=1}^{m} 2^{-k'_{j,i}}=\left(\sum_{i\in\Phi_1} +\sum_{i\in\Phi_2} +\sum_{ i\in\{1, 2, \ldots, m\}\setminus(\Phi_1\cup\Phi_2)}\right)2^{-k'_{j,i}}=2^{-K}\cdot 2^{K-K'-1}+2^{-K'}\cdot 2^{-1}+(L'-1)\cdot 2^{-K'}=L'\cdot 2^{-K'}. 
\]
On the other hand, $\sum_{i=m+1}^{m+n_j} 2^{-k_{j,i}}=1-L\cdot 2^{-K}$. Similarly, a subset of $\{m+1, m+2, \ldots, m+n_j\}$, denoted by $\Psi$, exists such that $\sum_{i\in\Psi} 2^{-k_{j,i}}=2^{-K}$. Now let 
\begin{equation}
k'_{j,i}=\left\{
\begin{array}{ll}
k_{j,i}-1, & i\in\Psi;\\
k_{j,i}, & i\in\{m+1, m+2, \ldots, m+n_j\}\setminus\Psi.
\end{array}
\right.
\label{eq:kk'2}
\end{equation}
Therefore, 
\[
\sum_{i=m+1}^{m+n_j} 2^{-k'_{j,i}}=\left(\sum_{i\in\Psi} +\sum_{ i\in\{m+1, m+2, \ldots, m+n_j\}\setminus\Psi}\right)2^{-k'_{j,i}}=2^{-K}\cdot 2^1+
(1-L\cdot 2^{-K}-2^{-K})=1-L'\cdot 2^{-K'}.
\] 
From $\{k_{j,i}\}$ to $\{k'_{j,i}\}$, $L$ drops to $L'$. The above process continues until $L=1$. Because $k_{j,i}$ does not depend on $j$ when $i=1, 2, \ldots, m$, neither does $k'_{j,i}$ in (\ref{eq:kk'1}). This property remains in the process. 

When the process ends,  for any $j$
\begin{equation}
\sum_{i=1}^{m} 2^{-k''_{j,i}}=2^{-K''}.
\label{eq:LK''}
\end{equation} 
Because 
\begin{equation}
\sum_{i=1}^{m+n_j} 2^{-k''_{j,i}}=\sum_{i=1}^{m+n_j} 2^{-k_{j,i}}=1,
\label{eq:k''k=1}
\end{equation}
apply Lemma~\ref{lemma:typeI_straighten_0} for all $j$ one-by-one. Then $f_1\in\mathbb{F}, g_1\in\mathbb{G}$ and another partition of $[0,1]$ 
$\{\mathcal{J}_1<\cdots<\mathcal{J}_{2m+1}\}$ exist such that composition $g_1(f_1(x))=g(x)$ for any $x \in[0,1]$. Just like (\ref{eq:Iij=I}), for $i=1, \ldots, m$, $\mathcal{J}_{2i}$ can be decomposed to $\mathcal{J}_{2i}=\bigcup_{j=1}^n \mathcal{J}_{j,i}$ such that $g_1(\mathcal{J}_{j,i})\simeq g(\mathcal{I}_{j,i})$. Thus, $g_1(\mathcal{J}_{2i})\simeq g(\mathcal{I}_{2i})$. The graph of $g_1$ is an affine segment on $\mathcal{J}_{j,i}$ with the absolute value of the slope being $2^{k''_{j,i}}$, which does not depend on $j$ when $i=1, 2, \ldots, m$. Thus, $g_1$ is an affine segment in every $\mathcal{J}_{2i}$. The absolute values of the slopes of these affine segments for $i=1, \ldots, m$ satisfies (\ref{eq:LK''}).  This completes the proof.
\end{proof}

\begin{figure}
 \centering
  \includegraphics[width=14cm]{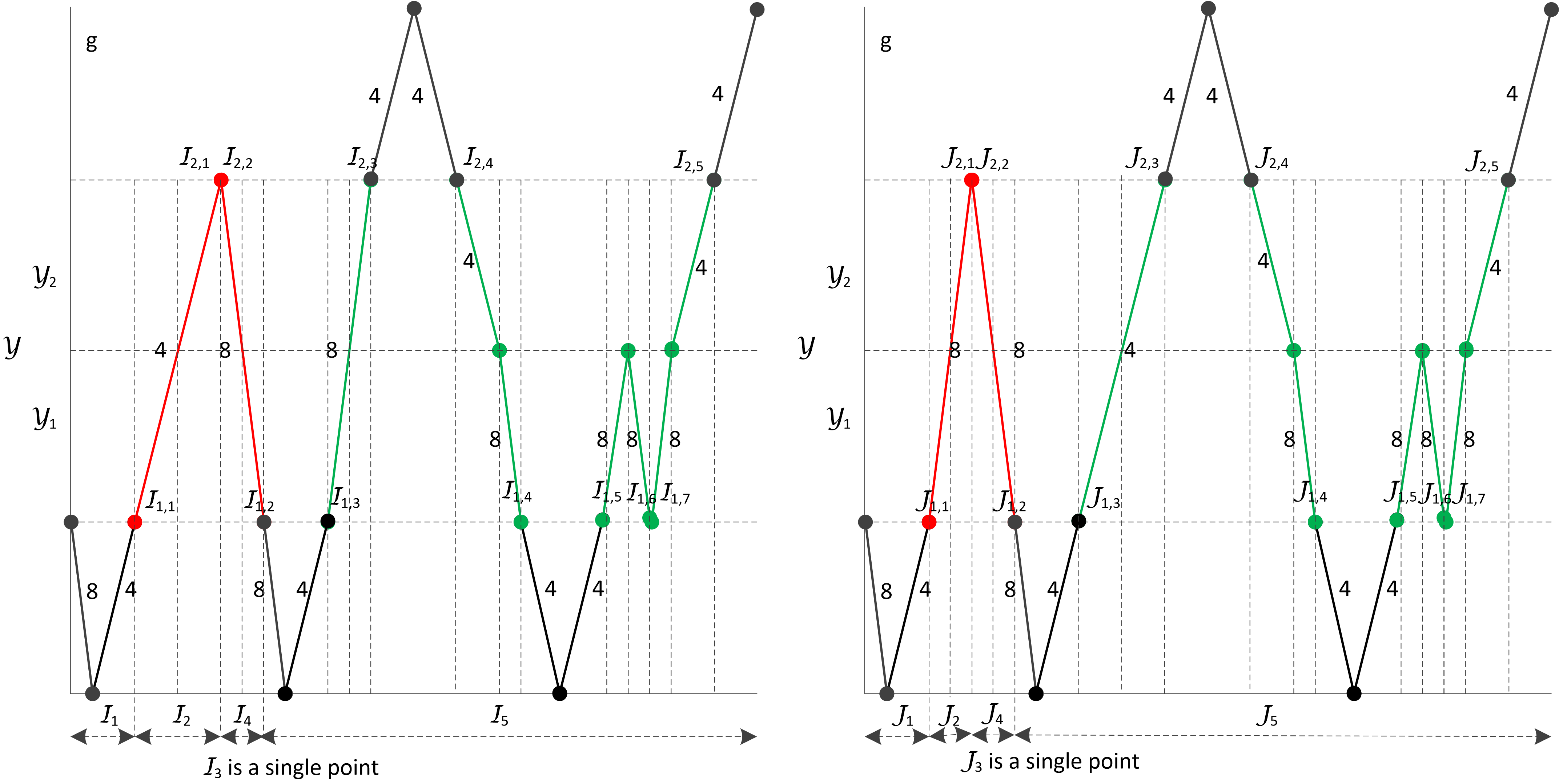}
  \caption{Illustration of the proof of Lemma~\ref{lemma:typeI_straighten_2_1}. The graphs of $g$ on $\{\mathcal{I}_{2i}\}$ and $g_1$ on $\{\mathcal{J}_{2i}\}$ are drawn in red. The graphs of $g$ on $g^{-1}(\mathcal{Y}) \cap ([0,1]\setminus \bigcup_{i=1}^m\mathcal{I}_{2i})$ and $g_1$ on $g_1^{-1}(\mathcal{Y}) \cap ([0,1]\setminus \bigcup_{i=1}^m\mathcal{J}_{2i})$ are drawn green. $m=2,n=2, n_1=7, n_2=5$.  $g$ on $\bigcup_{i=1}^m\mathcal{I}_{2i}$ satisfies $2^{-2}+2^{-3}=3\cdot2^{-3}$ in (\ref{eq:LK}). 
$g_1$ on $\bigcup_{i=1}^m\mathcal{J}_{2i}$ satisfies $2^{-3}+2^{-3}=1\cdot2^{-2}$ in (\ref{eq:LK''}). }
\label{fig:WeeklyDiscussion_20200316Page-7}
\end{figure} 

Figure~\ref{fig:WeeklyDiscussion_20200316Page-7} shows an example to illustrate the proof of Lemma~\ref{lemma:typeI_straighten_2_1}. 

Next, consider type II breakpoints. Let $w$ be an $m$-fold window perturbation on interval $\mathcal{I}$ and $w(x)=x$ or $w(x)=1-x$ if $x\in[0,1]\setminus\mathcal{I}$ to continuously connect the end points of the window perturbation. Abusing the terminology, we call $w$ a window perturbation too. Let $2^{k_i}$ be the absolute value of the slope of the $i$-th leg of the $m$-fold window perturbation $w$ for $i=1, 2, \ldots, m$. $\sum_{i=1}^m 2^{-k_i}=1$ to be $\lambda$-preserving.

Suppose that $g_1$ on $\mathcal{I}$ is an affine segment, whose slope has an absolute value of $2^{K}$. Then composition $g_1(w_1)$ is identical to $g_1$ on $[0,1]\setminus\mathcal{I}$. On $\mathcal{I}$ the affine segment of $g_1$ is replaced by an $m$-fold window perturbation, whose $i$-th leg has an absolute value of the slope $2^{k_i+K}$. The ratio of the slopes of any two legs of $g_1(w_1)$ is the same as that of the corresponding legs of $w_1$. Lemma~\ref{lemma:typeI_straighten_2a} follows.

\begin{lemma}
Suppose that $g$ on interval $\mathcal{I}$ is an $m$-fold window perturbation. Let $2^{k_i+K}$ be the absolute value of the slope of the $i$-th leg. If $\sum_{i=1}^m 2^{-k_i}=1$, then $g_1\in\mathbb{G}$ and an $m$-fold window perturbation map $w_1$ exist such that composition $g_1(w_1(x))=g(x)$ for any $x \in[0,1]$, $g_1([0,\mathcal{I}^0])\simeq g([0,\mathcal{I}^0])$, $g_1([\mathcal{I}^1,1])\simeq g([\mathcal{I}^1,1])$, and $g_1$ is an affine segment on $\mathcal{I}$ whose slope has the absolute value of $2^K$.
\label{lemma:typeI_straighten_2a}
\end{lemma}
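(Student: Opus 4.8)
The plan is to exhibit $w_1$ and $g_1$ explicitly and then verify the three required properties, the construction being dictated by a single length identity. First I would record that identity: since $g$ on $\mathcal{I}$ is an $m$-fold window perturbation onto $\mathcal{Y}=g(\mathcal{I})$ whose $i$-th leg has slope of absolute value $2^{k_i+K}$, the base of the $i$-th leg has length $2^{-(k_i+K)}|\mathcal{Y}|$; summing over $i$ and using the hypothesis $\sum_{i=1}^m 2^{-k_i}=1$ gives $|\mathcal{I}|=2^{-K}|\mathcal{Y}|$, i.e. $|\mathcal{Y}|=2^{K}|\mathcal{I}|$. This is precisely the condition under which a single affine segment of slope magnitude $2^{K}$ on $\mathcal{I}$ lands on an interval of length $|\mathcal{Y}|$. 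I then define $w_1$ to be the identity on $[0,1]\setminus\mathcal{I}$ and, on $\mathcal{I}$, the $m$-fold window perturbation onto $\mathcal{I}$ whose $i$-th leg has slope of absolute value $2^{k_i}$; because $\sum_i 2^{-k_i}=1$ this is $\lambda$-preserving on $\mathcal{I}$, and its leg bases, of length $2^{-k_i}|\mathcal{I}|$, coincide with those of $g$ by the identity $|\mathcal{Y}|=2^K|\mathcal{I}|$. Finally I define $g_1$ to agree with $g$ on $[0,1]\setminus\mathcal{I}$ and to be the single affine segment of slope magnitude $2^{K}$ on $\mathcal{I}$.

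Next I would verify the three conclusions. For (i), that $w_1$ is a genuine window perturbation: continuity forces the leg orientations to alternate, and I fix the orientation of the first leg so that $w_1(\mathcal{I}^0)=\mathcal{I}^0$, which makes $w_1$ connect to the identity at both endpoints of $\mathcal{I}$; since the legs of $g$ already alternate, this choice is consistent. For (ii), that $g_1\in\mathbb{G}$: it is piecewise affine with slopes $\pm 2^{k}$, its breakpoints outside $\mathcal{I}$ are inherited from $g$, and the two new breakpoints $\mathcal{I}^0,\mathcal{I}^1$ are breakpoints of $g$ and hence dyadic by Lemma~\ref{lemma:alldyadic}; the only substantive check is $\lambda$-preservation, which follows from Lemma~\ref{lemma:basicderivativeinverse}: for $y\in\mathcal{Y}^{\circ}$ the $m$ preimages of $g$ lying in $\mathcal{I}$ contribute $\sum_i 2^{-(k_i+K)}=2^{-K}$ to the balance, while the single preimage of $g_1$ in $\mathcal{I}$ contributes the same $2^{-K}$ through its slope $2^{K}$, and the preimages outside $\mathcal{I}$ are identical for $g$ and $g_1$; for $y\notin\mathcal{Y}$ neither map has a preimage in $\mathcal{I}$, so $g_1$ inherits the exact balance of $g$. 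For (iii), that $g_1\circ w_1=g$: outside $\mathcal{I}$ this is immediate because $w_1$ is the identity and $g_1=g$ there, and on the $i$-th leg base $w_1$ maps affinely onto $\mathcal{I}$ while $g_1$ maps $\mathcal{I}$ affinely onto $\mathcal{Y}$, so the composite is affine onto $\mathcal{Y}$ with slope magnitude $2^{k_i}\cdot 2^{K}=2^{k_i+K}$ and, by the orientation choice in (i), with the same sign as the $i$-th leg of $g$.

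The main obstacle is the bookkeeping of orientations at the endpoints: the sign of the affine map $g_1$ and the alternating signs of the legs of $w_1$ must be chosen together so that the composite reproduces the exact orientation of \emph{every} leg of $g$ while $w_1$ still connects to the identity on $[0,1]\setminus\mathcal{I}$; concretely, requiring $w_1(\mathcal{I}^0)=\mathcal{I}^0$ pins the sign of $g_1$ to the orientation of the first leg of $g$, after which all remaining signs are forced. Apart from this, the only content-bearing step is the $\lambda$-preservation of $g_1$, which reduces to the observation that collapsing the $m$ legs (whose total preimage density per unit target is $2^{-K}$) into one affine segment of slope $2^{K}$ leaves the local measure balance of Lemma~\ref{lemma:basicderivativeinverse} unchanged.
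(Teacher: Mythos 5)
Your construction is in substance the paper's own: the lemma is established there by exactly the observation you make, namely that post-composing an affine segment of slope $\pm 2^{K}$ on $\mathcal{I}$ with a window perturbation $w_1$ whose legs have slopes of absolute value $2^{k_i}$ (and which is the identity off $\mathcal{I}$) reproduces legs of absolute slope $2^{k_i+K}$, and your length identity $|\mathcal{Y}|=2^{K}|\mathcal{I}|$, the matching of leg bases, and the uniqueness of an affine map of a given base onto $\mathcal{Y}$ with prescribed orientation make the verification of $g_1\circ w_1=g$ complete. The $\lambda$-preservation check via Lemma~\ref{lemma:basicderivativeinverse} is also correct. For odd $m$ your proof goes through as written.

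There is, however, a genuine gap at precisely the point you flag as ``the main obstacle'' and then claim to have resolved. When $m$ is even, the legs of $w_1$ alternate in orientation, so fixing $w_1(\mathcal{I}^0)=\mathcal{I}^0$ forces $w_1(\mathcal{I}^1)=\mathcal{I}^0\neq\mathcal{I}^1$: the map $w_1$ cannot connect to the identity at \emph{both} endpoints of an interior $\mathcal{I}$, contrary to your assertion. Nor can any other choice repair this, because the obstruction sits in the conclusion itself: for even $m$ the first and last legs of $g$ have opposite orientation, so $g(\mathcal{I}^0)=g(\mathcal{I}^1)$ (both equal the same endpoint of $\mathcal{Y}$), whereas any $g_1$ affine of slope $\pm 2^{K}$ on $\mathcal{I}$ that agrees with $g$ on $[0,\mathcal{I}^0]$ and on $[\mathcal{I}^1,1]$ would need $|g_1(\mathcal{I}^1)-g_1(\mathcal{I}^0)|=2^{K}|\mathcal{I}|=|\mathcal{Y}|>0$. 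Hence for even $m$ the statement is attainable only when $\mathcal{I}$ abuts an endpoint of $[0,1]$, i.e.\ $\mathcal{I}^0=0$ or $\mathcal{I}^1=1$, so that the continuity constraint on the far side disappears (and when $\mathcal{I}^0=0$ the orientation of the first leg of $w_1$ is forced by continuity at $\mathcal{I}^1$, so your normalization $w_1(\mathcal{I}^0)=\mathcal{I}^0$ must be replaced by its mirror image). The paper records exactly this restriction in the text immediately following the lemma (``for even $m$, $\mathcal{I}$ must cover at least one endpoint''); your proof needs the corresponding parity case split, and as written the even-$m$, interior-$\mathcal{I}$ case is asserted but false.
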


Figure~\ref{fig:20200224fig_1} shows the use of Lemma~\ref{lemma:typeI_straighten_2a}. Interval $\mathcal{I}$ can be anywhere on $[0,1]$ for odd $m$. For even $m$, $\mathcal{I}$ must cover at least one endpoint; that is, $\mathcal{I}^0=0$ or $\mathcal{I}^1=1$.

\begin{figure}
\centering
\begin{subfigure}{.49\textwidth}
  \centering
  \includegraphics[width=1.\linewidth]{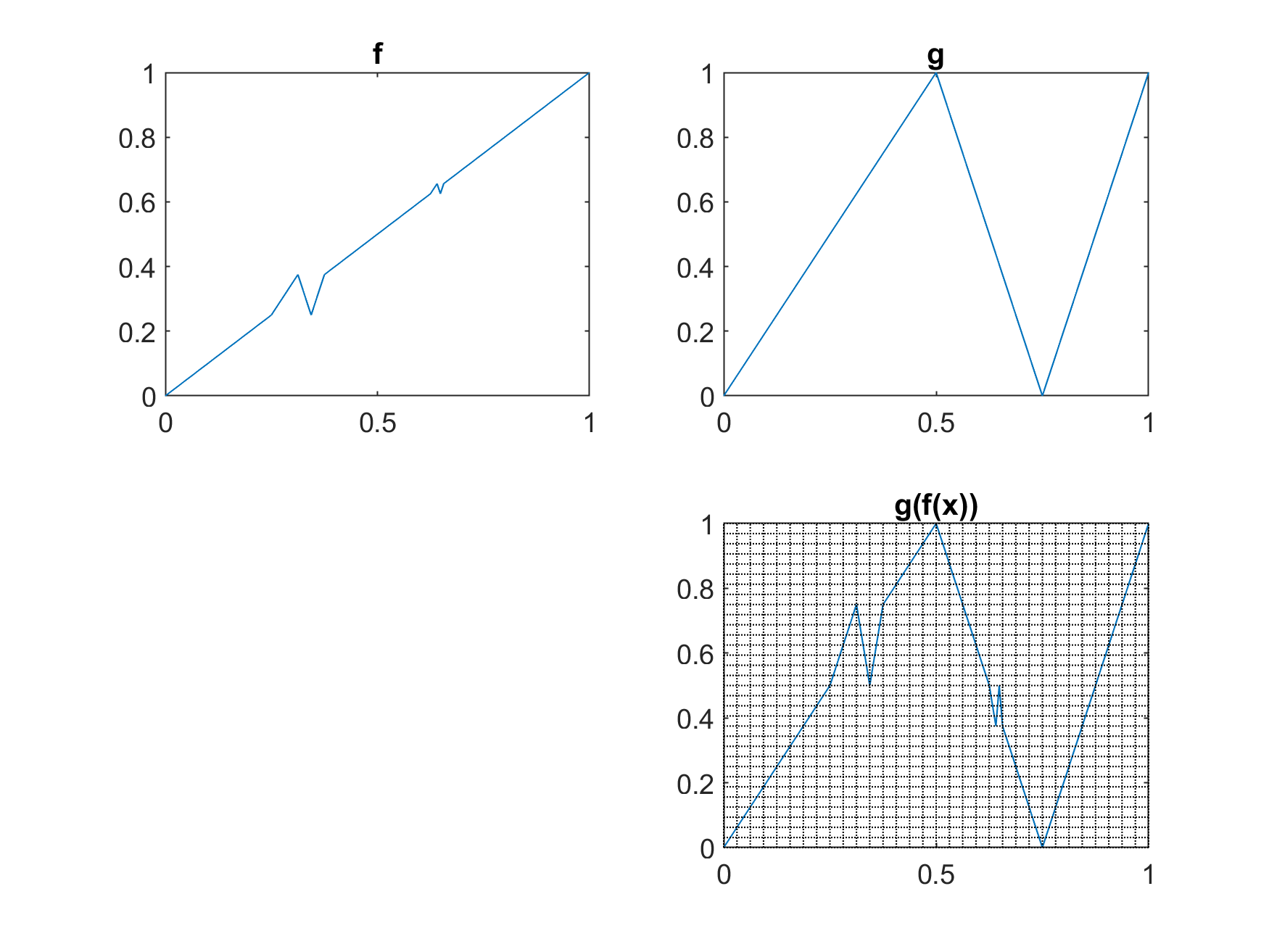}
    \caption{}
\end{subfigure}
\begin{subfigure}{.49\textwidth}
  \centering
  \includegraphics[width=1.\linewidth]{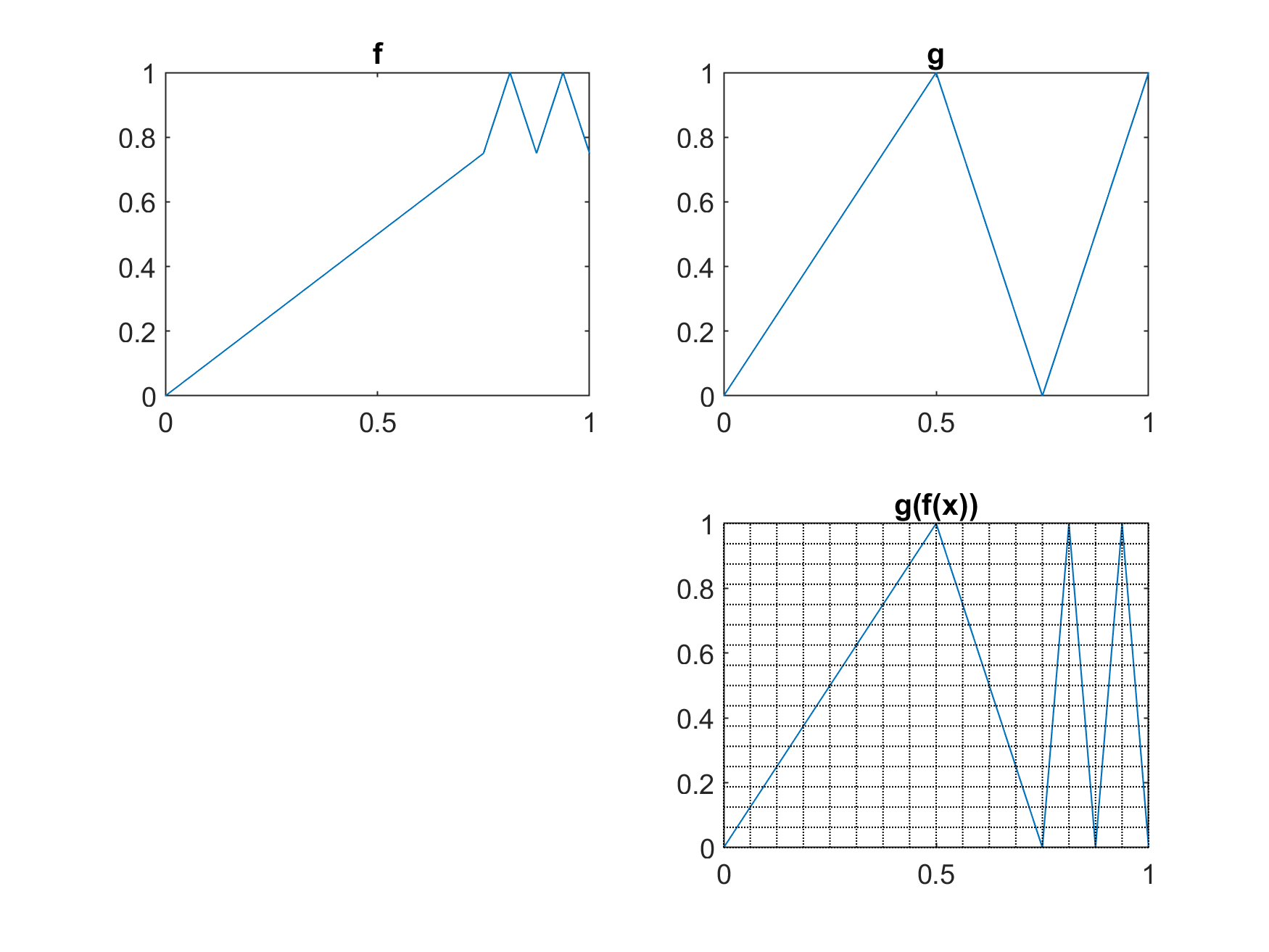}
    \caption{}
\end{subfigure}
\caption{Use of Lemma~\ref{lemma:typeI_straighten_2a}: (a) the case of odd $m$ and (b) the case of even $m$. A few type II breakpoints in $g(f(x))$ are eliminated in $g(x)$.}
\label{fig:20200224fig_1}
\end{figure}

A special case of Lemma~\ref{lemma:typeI_straighten_2a} is when $K=0$. Because $\sum_{i=1}^m 2^{-k_i}=1$, $\forall y\in g(\mathcal{I})$, $g^{-1}(y)\subset\mathcal{I}$. Thus, $g^{-1}(g(\mathcal{I})) = \mathcal{I}$. This case is similar to what Corollary~\ref{corollary:typeI_straighten_1} covers. On the other hand, just like Lemma~\ref{lemma:typeI_straighten_2_1}, Lemma~\ref{lemma:typeI_straighten_2} covers the case where $\forall y\in g(\bigcup_{i=1}^m \mathcal{I}_i)$, $g^{-1}(y)\not\subset\bigcup_{i=1}^m \mathcal{I}_i$. Specifically, in Lemma~\ref{lemma:typeI_straighten_2_1} when $\mathcal{I}_{2i}$, for $i=1, \ldots, m$, are adjacent, i.e., each of $\mathcal{I}_{3}, \mathcal{I}_{5}, \ldots, \mathcal{I}_{2m-1}$ is reduced to a single point, the $m$ affine segments form a $m$-fold window perturbation and can be further simplified as stated in Lemma~\ref{lemma:typeI_straighten_2}.

\begin{lemma}
Suppose that in Lemma~\ref{lemma:typeI_straighten_2_1}, $\bigcup_{i=1}^m \mathcal{I}_{2i}$ is an interval, denoted by $\mathcal{I}$. That is, $g$ is an $m$-fold window perturbation on $\mathcal{I}$. Then $f_1\in\mathbb{F}, g_1\in\mathbb{G}$, an $m$-fold window perturbation map $w_1$ and another partition of $[0,1]$ $\{\mathcal{J}_1<\cdots<\mathcal{J}_{2m+1}\}$ exist such that composition $g_1(w_1(f_1(x)))=g(x)$ for any $x \in[0,1]$, and $\bigcup_{i=1}^m \mathcal{J}_{2i}$ is an interval on which $g_1$ is an affine segment. 
\label{lemma:typeI_straighten_2}
\end{lemma}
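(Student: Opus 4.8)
The plan is to first reduce to the situation already handled by Lemma~\ref{lemma:typeI_straighten_2_1} and then ``peel off'' the window structure into a standalone window perturbation. Applying Lemma~\ref{lemma:typeI_straighten_2_1} directly, one obtains $f_1\in\mathbb{F}$, a map $\hat g\in\mathbb{G}$, and a partition $\{\mathcal{J}_1<\cdots<\mathcal{J}_{2m+1}\}$ with $\hat g(f_1(x))=g(x)$, $\hat g(\mathcal{J}_{2i})=\mathcal{Y}$, and $\hat g$ affine on each $\mathcal{J}_{2i}$ with slope of absolute value $2^{k_i}$, where $\sum_{i=1}^m 2^{-k_i}=2^{-K}$ for some positive integer $K$. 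Because $\bigcup_{i=1}^m\mathcal{I}_{2i}$ is an interval, the intervening odd intervals $\mathcal{I}_3,\ldots,\mathcal{I}_{2m-1}$ are single points, and the length-matching $|\mathcal{J}_j|=|\mathcal{I}_j|$ on odd indices in the construction of Lemma~\ref{lemma:typeI_straighten_2_1} (via Lemma~\ref{lemma:typeI_straighten_0}) forces $\mathcal{J}_3,\ldots,\mathcal{J}_{2m-1}$ to be single points as well. Hence $\mathcal{J}:=\bigcup_{i=1}^m\mathcal{J}_{2i}$ is an interval on which $\hat g$ is itself an $m$-fold window perturbation onto $\mathcal{Y}$, with legs of alternating sign and absolute slopes $2^{k_i}$.

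The key arithmetic observation is that $\sum_{i=1}^m 2^{-k_i}=2^{-K}$ forces $2^{-k_i}\le 2^{-K}$, i.e.\ $k_i\ge K$ for every $i$ (with strict inequality whenever $m>1$). I would therefore define a standalone $m$-fold window perturbation $w_1$ on the same interval $\mathcal{J}$, with $w_1(x)=x$ off $\mathcal{J}$ and $i$-th leg of absolute slope $2^{k_i-K}\ge1$, chosen with the same alternating sign pattern as the legs of $\hat g$. This $w_1$ is $\lambda$-preserving because $\sum_{i=1}^m 2^{-(k_i-K)}=2^{K}\sum_{i=1}^m 2^{-k_i}=1$, each leg sweeps the full height of $\mathcal{J}$, and its breakpoints are dyadic since those of $\hat g$ are; thus $w_1\in\mathbb{G}$. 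I would then define $g_1$ to agree with $\hat g$ off $\mathcal{J}$ and to be a single affine segment on $\mathcal{J}$ of absolute slope $2^{K}$ carrying $\mathcal{J}$ onto $\mathcal{Y}$, with the slope sign $\sigma$ selected so that $\sigma$ times the sign of the first leg of $w_1$ equals the sign of the first leg of $\hat g$. Replacing the $m$ legs (which contribute preimage measure $\sum_i 2^{-k_i}=2^{-K}$) by one affine segment of slope $2^{K}$ (contributing $2^{-K}$) preserves $\lambda$ and keeps all breakpoints dyadic, so $g_1\in\mathbb{G}$ and $g_1$ is affine on the interval $\mathcal{J}=\bigcup_{i=1}^m\mathcal{J}_{2i}$, as required.

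It then remains to verify $g_1\circ w_1=\hat g$. Off $\mathcal{J}$ this is immediate since $w_1$ is the identity there. On $\mathcal{J}$, the $i$-th leg of $w_1$ and the $i$-th leg $\mathcal{J}_{2i}$ of $\hat g$ occupy the same subinterval, because both have length $2^{-(k_i-K)}|\mathcal{J}|=2^{-k_i}|\mathcal{Y}|$ and sit in the same order starting from $\mathcal{J}^0$; no permutation of legs is needed. On that subinterval $w_1$ maps affinely onto $\mathcal{J}$ with slope $\pm2^{k_i-K}$ and $g_1$ then maps $\mathcal{J}$ affinely onto $\mathcal{Y}$ with slope $\sigma 2^{K}$, so the composite is affine of slope $\pm2^{k_i}$; the alternating choice of $\sigma$ guarantees the correct sign, and since both maps traverse all of $\mathcal{Y}$ the endpoint values coincide, giving equality leg by leg. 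Therefore $g=\hat g\circ f_1=g_1\circ w_1\circ f_1$, which is the desired factorization, with $f_1\in\mathbb{F}$, $w_1$ an $m$-fold window perturbation, $g_1\in\mathbb{G}$, and $\bigcup_{i=1}^m\mathcal{J}_{2i}$ an interval on which $g_1$ is affine.

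I expect the main obstacle to be the bookkeeping in the last step: matching not merely the absolute slopes but the signs and the vertical endpoint values of each composite leg against $\hat g$, so that the equality is pointwise and not just up to the coarse leg structure. A secondary point requiring care is tracing the construction of Lemma~\ref{lemma:typeI_straighten_2_1} to certify that collapsing $\bigcup_{i=1}^m\mathcal{I}_{2i}$ to an interval indeed collapses $\bigcup_{i=1}^m\mathcal{J}_{2i}$ to an interval; this follows from the odd-index length preservation but should be recorded explicitly.
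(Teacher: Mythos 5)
Your proof is correct and takes essentially the same route as the paper: apply Lemma~\ref{lemma:typeI_straighten_2_1}, observe that the odd-indexed $\mathcal{J}_3,\ldots,\mathcal{J}_{2m-1}$ collapse to points so that $\mathcal{J}=\bigcup_{i=1}^m\mathcal{J}_{2i}$ is an interval carrying an $m$-fold window perturbation, and then split that perturbation into a single affine segment of absolute slope $2^{K}$ composed with a window map whose $i$-th leg has absolute slope $2^{k_i-K}$ (the paper's (\ref{eq:LK''}) data). The only difference is cosmetic: where the paper invokes Lemma~\ref{lemma:typeI_straighten_2a} to conclude $g_2=g_1(w_1)$, you verify the leg-by-leg matching of slopes, signs, and endpoint values directly, which simply inlines that lemma's argument.
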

\begin{proof}
Maps $f_1, g_2$ and interval partition $\{\mathcal{J}_1<\cdots<\mathcal{J}_{2m+1}\}$ are obtained by Lemma~\ref{lemma:typeI_straighten_2_1} such that $g_2(f_1)=g$.
Each of $\mathcal{J}_{3}, \mathcal{J}_{5}, \ldots, \mathcal{J}_{2m-1}$ is reduced to a single point just like $\mathcal{I}_{3}, \mathcal{I}_{5}, \ldots, \mathcal{I}_{2m-1}$. Thus, $\bigcup_{i=1}^m \mathcal{J}_{2i}$ is an interval, denoted by $\mathcal{J}$. In the proof of Lemma~\ref{lemma:typeI_straighten_2_1}, $g_2$ on $\mathcal{J}$ is an $m$-fold window perturbation, just like $g$ on $\mathcal{I}$. The difference between the two $m$-fold window perturbations is that the slopes of their legs are described in (\ref{eq:LK}) and (\ref{eq:LK''}) respectively. Now let $w_1$ be an $m$-fold window perturbation on $\mathcal{J}$ where the slope of the $i$-th leg is $(-1)^{i+1}2^{k''_{j,i}-K''}$ for $i=1, 2, \ldots, m$. Modify $g_2$ to become $g_1$ where the $m$-fold window perturbation on $\mathcal{J}$ of $g_2$ is replaced by an affine segment of $g_1$ whose slope has an absolute value of $2^{K''}$. The sign of the slope is such that $g_1$ is continuous at the endpoints of $\mathcal{J}$. By Lemma~\ref{lemma:typeI_straighten_2a}, $g_2=g_1(w_1)$. Therefore, $g_1(w_1(f_1))=g$. 
\end{proof}

\begin{figure}
 \centering
  \includegraphics[width=12cm]{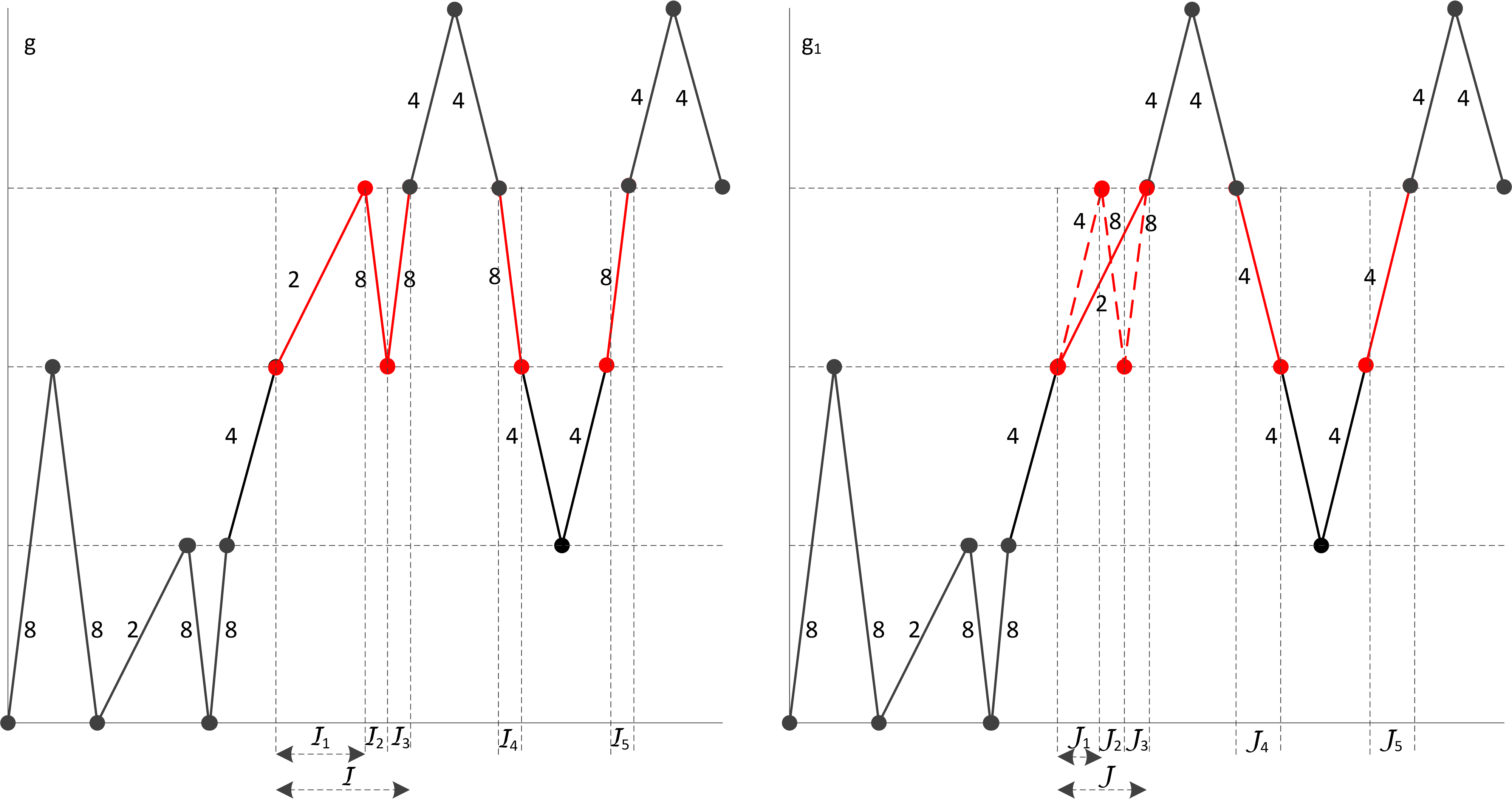}
  \caption{Use of Lemma~\ref{lemma:typeI_straighten_2}. The three solid red affine segments in the right figure represent $g_1$. The leftmost one is replaced by three dashed red segments due to a window perturbation $w_1$ with $m=3$. The resultant five red segments, dashed and solid, are horizontally adjusted by $f_1$ to generate $g$ in the left figure. Two type II breakpoints of $g$ are eliminated in $g_1$.}
  \label{fig:WeeklyDiscussion_20200316Page-3}
\end{figure} 
Figure~\ref{fig:WeeklyDiscussion_20200316Page-3} illustrates the use of Lemma~\ref{lemma:typeI_straighten_2}. By Lemma~\ref{lemma:typeI_straighten_2}, $g$ can be generated by $g_1$ by eliminating $m-1$ type II breakpoints.

In Lemma~\ref{lemma:typeI_straighten_2a} and Lemma~\ref{lemma:typeI_straighten_2}, $g$ has to be an $m$-fold window perturbation on $\mathcal{I}$. The $m$-fold window perturbation consists of $m$ legs each of which is an affine segment. One can generalize the notion of $m$-fold window perturbation such that it consists of $m$ legs each of which itself consists of piecewise affine segments. The more precise definition is given below.

\begin{definition}[\textsl{Generalized Window Perturbation}]
A generalized $m$-fold window perturbation $g$ on $\mathcal{I}$ is defined as follows. Suppose that $\mathcal{I}$ is partitioned into $\{\mathcal{I}_1<\mathcal{I}_2<\cdots<\mathcal{I}_m\}$. $g(\mathcal{I}_i)\simeq g(\mathcal{I}_j)$ if both $i$ and $j$ are either odd or even and $g(\mathcal{I}_i)\simeq g(\hat{\mathcal{I}}_j)$ if one of $i,j$ is even and the other is odd where $\hat{\mathcal{I}}_j$ represents $\mathcal{I}_j$ flipped horizontally. Each $\mathcal{I}_i$ is referred to as a component interval of $\mathcal{I}$. 
\end{definition}
Figure~\ref{fig:WeeklyDiscussion_20200316Page-5} provides two examples of the generalized $m$-fold window perturbations, one for an even $m$ and the other for an odd $m$.

\begin{figure}
 \centering
  \includegraphics[width=5cm]{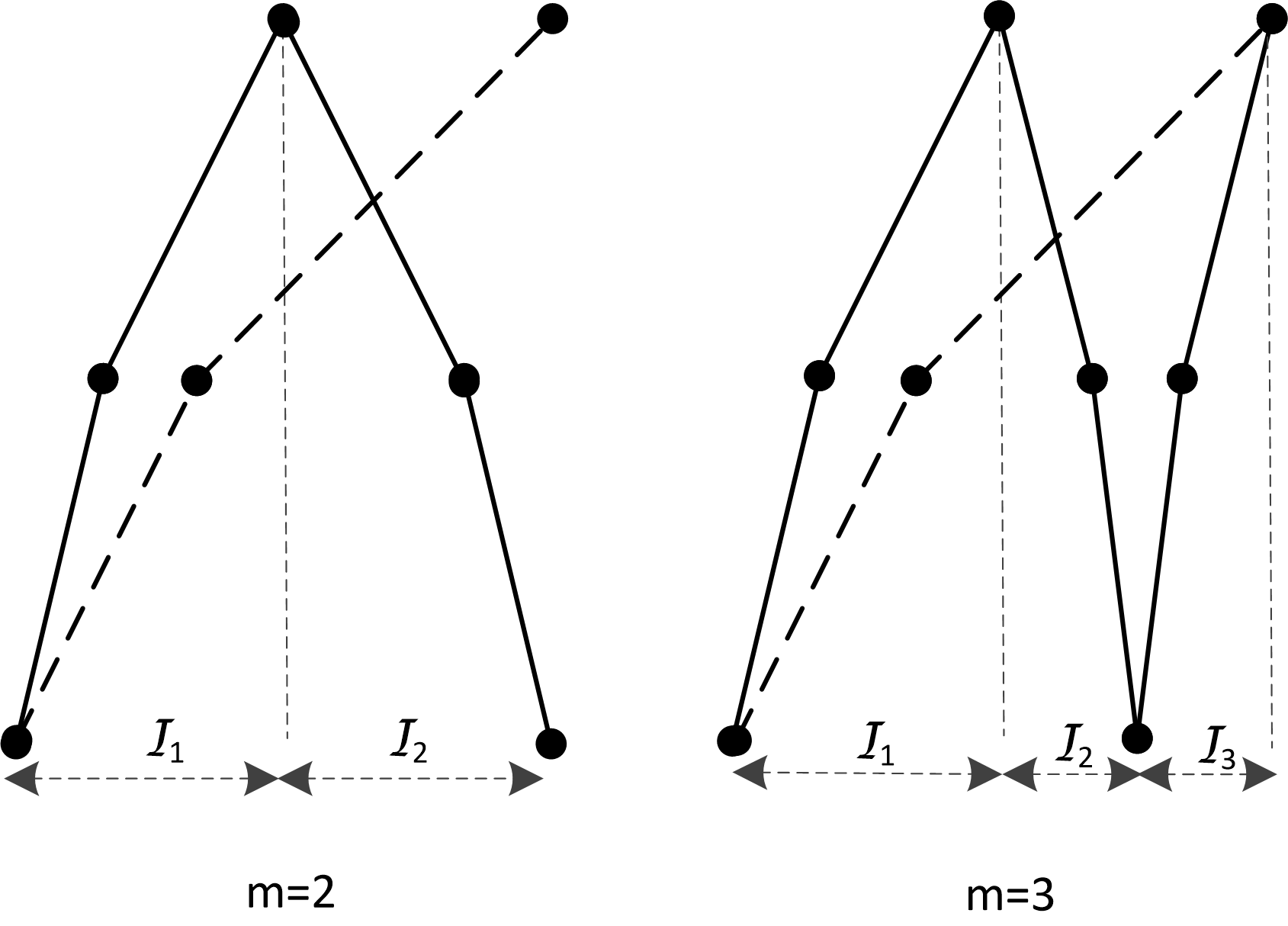}
  \caption{Illustration of generalized $m$-fold window perturbation}
  \label{fig:WeeklyDiscussion_20200316Page-5}
\end{figure} 

\begin{corollary}
Lemma~\ref{lemma:typeI_straighten_2a} and Lemma~\ref{lemma:typeI_straighten_2} hold
if $g$ is a generalized $m$-fold window perturbation instead of an $m$-fold window perturbation on interval $\mathcal{I}$, except that in the conclusion $g_1$ is a piecewise affine segment, instead of an affine segment, on $\mathcal{J}$ where $g_1(\mathcal{J})\simeq g(\mathcal{I}_0)$ or $g_1(\mathcal{J})\simeq g(\hat{\mathcal{I}_0})$ with $\mathcal{I}_0$ being one component interval of $\mathcal{I}$. Whether  $g_1(\mathcal{J})\simeq g(\mathcal{I}_0)$ or $g_1(\mathcal{J})\simeq g(\hat{\mathcal{I}_0})$ depends on the continuity of $g_1$.
\label{lemma:typeI_straighten_2b}
\end{corollary}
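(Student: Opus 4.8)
The plan is to reduce the generalized case to the two lemmas already proved by factoring out the common profile shared by all the component intervals. Since $g$ is a generalized $m$-fold window perturbation on $\mathcal{I}$, fix one component interval $\mathcal{I}_0$ and let $P$ denote the restriction $g|_{\mathcal{I}_0}$, regarded as a piecewise affine map onto $g(\mathcal{I})$. By the definition of generalized window perturbation, every leg of $g$ equals $P$ (when the leg has the same parity as $\mathcal{I}_0$) or its horizontal reflection $\hat{P}$ (when the parity differs), reparametrized to the leg's own domain. Because $g\in\mathbb{G}$, the profile $P$ is piecewise affine with dyadic breakpoints and with slopes of the form $\pm 2^k$.

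First I would collapse the profile to an ordinary window perturbation. Replace the profile on each leg of $g$ by a single affine segment spanning the same domain interval and the same range $g(\mathcal{I})$, and call the resulting map $\bar{g}$. Then $\bar{g}$ is an ordinary $m$-fold window perturbation on $\mathcal{I}$ with exactly the same leg-domain partition as $g$, so it satisfies the hypotheses of Lemma~\ref{lemma:typeI_straighten_2a} (respectively Lemma~\ref{lemma:typeI_straighten_2}). Applying that lemma produces the window perturbation $w_1$ (and, in the Lemma~\ref{lemma:typeI_straighten_2} case, the map $f_1\in\mathbb{F}$ together with the partition $\{\mathcal{J}_1<\cdots<\mathcal{J}_{2m+1}\}$) and a map $\bar{g}_1$ that is affine on the merged interval $\mathcal{J}$, with $\bar{g}_1\circ w_1=\bar{g}$ (resp. $\bar{g}_1\circ w_1\circ f_1=\bar{g}$). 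The point is that $w_1$ and $f_1$ depend only on the leg-to-leg length ratios, which are identical for $g$ and $\bar{g}$, so the very same $w_1$ and $f_1$ will serve for $g$.

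Next I would reinstate the profile inside the outer factor. Define $g_1$ by replacing the affine segment of $\bar{g}_1$ on $\mathcal{J}$ with the profile, i.e. require $g_1(\mathcal{J})\simeq g(\mathcal{I}_0)$, leaving $g_1$ unchanged elsewhere. To check $g_1\in\mathbb{G}$ I would verify that the horizontal rescaling from $\mathcal{I}_0$ to $\mathcal{J}$ is by a power of $2$: using $\lambda$-preservation $\sum_i 2^{-k_i}=1$, a short computation gives the rescaling factor as a power of $2$ (in the Lemma~\ref{lemma:typeI_straighten_2a} case $\mathcal{J}=\mathcal{I}$ and one finds $|\mathcal{I}_0|/|\mathcal{I}|=2^{-k_{i_0}}$), so the profile's slopes $\pm 2^s$ become slopes $\pm 2^{s-k_{i_0}}$ and its breakpoints remain dyadic since $\mathcal{J}$ has dyadic endpoints. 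It then remains to confirm $g_1\circ w_1=g$ (resp. $g_1\circ w_1\circ f_1=g$): on each leg, $w_1$ (and $f_1$) deliver the interval $\mathcal{J}$ with an orientation determined by the parity of the leg, increasing on odd legs and decreasing on even legs, and applying $g_1$ then reproduces $P$ on the odd legs and $\hat{P}$ on the even legs, which is exactly $g$ by the definition of generalized window perturbation.

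The main obstacle is the orientation bookkeeping in this last step: one must match the alternating orientations forced on the legs of $w_1$ by continuity against the reflection pattern $g(\mathcal{I}_i)\simeq g(\hat{\mathcal{I}}_j)$ built into the definition. This is precisely why the conclusion allows either $g_1(\mathcal{J})\simeq g(\mathcal{I}_0)$ or $g_1(\mathcal{J})\simeq g(\hat{\mathcal{I}}_0)$: the correct alternative is selected by the orientation of the first leg of $w_1$ needed to keep $g_1$ continuous across the endpoints of $\mathcal{J}$. Everything else is a direct transcription of the proofs of Lemma~\ref{lemma:typeI_straighten_2a} and Lemma~\ref{lemma:typeI_straighten_2}, with the single modification that the outer factor $g_1$ now carries the profile rather than a single affine segment.
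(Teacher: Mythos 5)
The paper states this corollary without any proof, so there is no official argument to compare against; your factor-out-the-profile construction is precisely the reasoning the corollary tacitly relies on, and its final shape --- $g_1$ carries the common profile on the merged interval $\mathcal{J}$, the inner maps $w_1$ (and, in the Lemma~\ref{lemma:typeI_straighten_2} branch, $f_1$) are determined purely by the component lengths, and continuity at the endpoints of $\mathcal{J}$ selects between $g_1(\mathcal{J})\simeq g(\mathcal{I}_0)$ and $g_1(\mathcal{J})\simeq g(\hat{\mathcal{I}}_0)$ --- is exactly what the later applications in Corollary~\ref{lemma:typeI_straighten_4} and Theorem~\ref{theorem:1} require. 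Your orientation bookkeeping in the last step is also correct.

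One step, however, fails as literally written: you apply Lemma~\ref{lemma:typeI_straighten_2a} and Lemma~\ref{lemma:typeI_straighten_2} to the collapsed map $\bar{g}$, but $\bar{g}$ need not be $\lambda$-preserving and hence need not lie in $\mathbb{G}$, which those lemmas implicitly assume (throughout the paper, $g$ denotes an element of $\mathbb{G}$). Straightening a leg redistributes measure across level sets: write $\sigma(y)$ for the sum of $|\mbox{slope}|^{-1}$ over the profile's preimages of $y$ within one component; if $\sigma$ is not constant in $y$, replacing each component by a single affine segment changes $\sum 1/|g'|$ on some level sets and destroys $\lambda$-preservation. This is not a pathological case --- the monotone two-slope profiles arising in the proof of Corollary~\ref{lemma:typeI_straighten_4} (slope magnitude $2^{k''_i-K''}$ below $c$ and $2^{k''_i}$ above $c$) have non-constant $\sigma$, so there the collapsed $\bar{g}$ is genuinely outside $\mathbb{G}$. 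The repair is already contained in your own observation that $w_1$ and $f_1$ depend only on the leg-to-leg length ratios: bypass $\bar{g}$ entirely, construct $w_1$ directly as the $m$-fold window perturbation whose $i$-th leg maps $\mathcal{I}_i$ affinely onto $\mathcal{J}$ with slope $\pm|\mathcal{J}|/|\mathcal{I}_i|=\pm 2^{k_i}$, $\sum_{i=1}^m 2^{-k_i}=1$, and then verify $g_1\in\mathbb{G}$ on the actual map. Relatedly, your membership check for $g_1$ covers only the power-of-two slopes and dyadic breakpoints; $\lambda$-preservation of $g_1$ must also be confirmed, and it does hold by a level-set identity you should record: the single scaled profile on $\mathcal{J}$ contributes $\sigma(y)\,|\mathcal{J}|/|\mathcal{I}_0|$ to $\sum 1/|g_1'|$, which equals the combined contribution $\sigma(y)\sum_{i}|\mathcal{I}_i|/|\mathcal{I}_0|$ of the $m$ components of $g$, while the graph outside $\mathcal{J}$ is unchanged. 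With these two patches your argument is complete and supplies the proof the paper omits.
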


Recall that Corollary~\ref{corollary:typeI_straighten_1} and Lemma~\ref{lemma:typeI_straighten_2_1} consider two cases respectively: either $\forall y\in g(\mathcal{I})$, $g^{-1}(y)\subset\mathcal{I}$ or $\forall y\in g(\mathcal{I})$, $g^{-1}(y)\not\subset\mathcal{I}$. Corollary~\ref{lemma:typeI_straighten_4} addresses a mixed case using the notion of generalized window perturbations.

\begin{corollary}
Let $g\in\mathbb{G}$. Let interval $\mathcal{I}\subset [0,1]$ and $\mathcal{Y}=g(\mathcal{I})$. Suppose that $\mathcal{I}=\bigcup_{i=1}^m \mathcal{I}_i$ where $\{\mathcal{I}_i\}$ are $m$ intervals with mutually disjoint interiors and $g$ is monotone on every $\mathcal{I}_i$. Suppose that $c\in (\mathcal{Y}^0, \mathcal{Y}^1)$ exists such that $\forall y\in [\mathcal{Y}^0, c)$, $g^{-1}(y)\subset\mathcal{I}$, and $\forall y\in [c, \mathcal{Y}^1]$, $g^{-1}(y)\not\subset\mathcal{I}$. 
Then $f_1\in\mathbb{F}, g_1\in\mathbb{G}$, and an $m$-fold window perturbation map $w_1$ and interval $\mathcal{J}$ exists such that $g=g_1(w_1(f_1))$, the graph of $g_1$ on $\mathcal{J}$ consists of two affine segments connected by a type I breakpoint, and $g_1([0,\mathcal{J}^0])\simeq g([0,\mathcal{I}^0]), g_1([\mathcal{J}^1,1])\simeq g([\mathcal{I}^1,1])$.
\label{lemma:typeI_straighten_4}
\end{corollary}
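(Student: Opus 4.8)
The plan is to use the value $c$ to cut $\mathcal{Y}$ into a lower band $\mathcal{Y}_\ell=[\mathcal{Y}^0,c]$, on which every fiber $g^{-1}(y)$ lies entirely in $\mathcal{I}$, and an upper band $\mathcal{Y}_u=[c,\mathcal{Y}^1]$, on which every fiber meets the complement of $\mathcal{I}$. These are exactly the two situations already treated in isolation: $\mathcal{Y}_\ell$ is the ``complete'' case of Corollary~\ref{corollary:typeI_straighten_1} (equivalently the $K=0$ instance of Lemma~\ref{lemma:typeI_straighten_2a}), while $\mathcal{Y}_u$ is the case of Lemma~\ref{lemma:typeI_straighten_2_1}. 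The goal is to straighten both bands with a single orientation-preserving $f_1\in\mathbb{F}$, peel off one common $m$-fold window perturbation $w_1$, and show that what remains, $g_1$ on an interval $\mathcal{J}$, is exactly two affine pieces meeting at height $c$.

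First I would record the $\lambda$-preservation balance in each band. For $y\in\mathcal{Y}_\ell^{\circ}$ the fiber lies in $\mathcal{I}$, so the slope exponents of the $\mathcal{I}$-legs over $y$ satisfy $\sum 2^{-k_i}=1$; for $y\in\mathcal{Y}_u^{\circ}$ some mass escapes $\mathcal{I}$, so by Lemma~\ref{lemma:typeI_straighten_2_1} the $\mathcal{I}$-legs can be straightened to affine segments with $\sum_{i=1}^m 2^{-k_i}=2^{-K}$ for a positive integer $K$ (positivity of $K$ is exactly what that lemma guarantees and is forced by the escaping mass). Applying Corollary~\ref{corollary:typeI_straighten_1} to the lower band and Lemma~\ref{lemma:typeI_straighten_2_1} to the upper band, both ultimately via Lemma~\ref{lemma:typeI_straighten_0} and the dynamic matching of Lemma~\ref{lemma:dynamicmatching}, produces after composing on the right with a single $f_1\in\mathbb{F}$ a map $g_2$ with $g_2\circ f_1=g$, whose restriction to $\mathcal{I}$, now repartitioned into an interval $\mathcal{J}$, is an $m$-fold window perturbation with leg exponents $\{k_i\}$, $\sum 2^{-k_i}=1$, below $c$ and the \emph{same} $m$ legs with those exponents shifted uniformly by $K$ above $c$.

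Next I would extract the window perturbation as in Lemma~\ref{lemma:typeI_straighten_2a}: take $w_1$ to be the $m$-fold window perturbation on $\mathcal{J}$ whose $i$-th leg has slope $\pm 2^{k_i}$, and define $g_1$ on $\mathcal{J}$ to be the single leg satisfying $g_1\circ w_1=g_2$ there. By construction $g_1$ then has slope $\pm 2^{0}=\pm1$ on the part of $\mathcal{J}$ mapping into $\mathcal{Y}_\ell$ and slope $\pm 2^{K}$ on the part mapping into $\mathcal{Y}_u$; since $g_1$ is a single monotone leg on $\mathcal{J}$ of fixed orientation, these two slopes carry the same sign, so the junction at height $c$ is a type I breakpoint and $g_1$ on $\mathcal{J}$ is the asserted pair of affine segments. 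Off $\mathcal{J}$ nothing was altered in the $y$-direction, so $g_1([0,\mathcal{J}^0])\simeq g([0,\mathcal{I}^0])$ and $g_1([\mathcal{J}^1,1])\simeq g([\mathcal{I}^1,1])$, and assembling the three maps gives $g=g_1\circ w_1\circ f_1$.

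The main obstacle is the compatibility step in the middle: I must select one leg-exponent multiset $\{k_i\}$ with $\sum 2^{-k_i}=1$ that simultaneously straightens the complete lower band and, after the uniform shift by $K$, straightens the incomplete upper band to total mass $2^{-K}$, using a single $w_1$ of a single fold-count $m$. This is precisely where the feasibility condition $\sum_{j\le i}(2^{-k_j}-2^{-l_j})\ge0$ of Lemma~\ref{lemma:dynamicmatching}, together with the mass-splitting identity of Lemma~\ref{lemma:typeI_straighten_2_1}, must be verified to hold in both bands at once. The delicate point is legs that terminate exactly at height $c$, which would otherwise make the fold-count differ above and below; this is absorbed by inserting degenerate single-point component intervals so that the same $m$ legs persist formally across $c$, keeping $w_1$ genuinely $m$-fold throughout.
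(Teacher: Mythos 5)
Your proposal is correct and follows essentially the same route as the paper's proof: cut each $\mathcal{I}_i$ at the level $c$ into $\mathcal{I}_{i,0}\cup\mathcal{I}_{i,1}$, straighten the lower band via Corollary~\ref{corollary:typeI_straighten_1} and the upper band via Lemma~\ref{lemma:typeI_straighten_2_1} with the compatible exponent choice (exponents $k_i$ below $c$ and $k_i+K$ above, so $\sum_i 2^{-k_i}=1$), recognize the result as a generalized $m$-fold window perturbation, and peel off $w_1$ (the paper cites Corollary~\ref{lemma:typeI_straighten_2b} for this extraction) so that $g_1$ is left with two affine pieces of slopes $\pm1$ and $\pm2^{K}$ meeting in a type I breakpoint. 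Two minor notes: the compatibility step is carried by Lemma~\ref{lemma:typeI_straighten_0} rather than the dynamic-matching Lemma~\ref{lemma:dynamicmatching} (which the paper uses only for Lemma~\ref{lemma:klswitch} in the entropy section), and since each $\mathcal{I}_i$ maps onto all of $\mathcal{Y}$ no leg terminates exactly at height $c$, so your degenerate-interval patch, while harmless, is not needed.
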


\begin{proof}
Because $g$ is monotone on every $\mathcal{I}_i$, $\mathcal{I}_i$ can be partitioned into $\mathcal{I}_i=\mathcal{I}_{i,0}\cup\mathcal{I}_{i,1}$ where $g(\mathcal{I}_{i,0})=[\mathcal{Y}^0, c]$ and $g(\mathcal{I}_{i,1})=[c, \mathcal{Y}^1]$. Combining Corollary~\ref{corollary:typeI_straighten_1} and Lemma~\ref{lemma:typeI_straighten_2_1}, $f_1\in\mathbb{F}, g_2\in\mathbb{G}$ and interval $\mathcal{J}=\bigcup_{i=1}^m(\mathcal{J}_{i,0}\cup\mathcal{J}_{i,1})$, where $\{\mathcal{J}_{i,0},\mathcal{J}_{i,1}\}$ are intervals with mutually disjoint interiors, exist such that $g_2(f_1)=g$, and $g_2([0,\mathcal{J}^0])\simeq g([0,\mathcal{I}^0]), g_2([\mathcal{J}^1,1])\simeq g([\mathcal{I}^1,1])$. For $i=1, 2, \ldots, m$, the graph of $g_2$ on $\mathcal{J}_{i,1}$ is an affine segment whose slope has the absolute value $2^{k''_i}$ with $\sum_{i=1}^m 2^{-k''_i}=2^{-K''}$ for some positive integer $K''$, and the graph of $g_2$ on $\mathcal{J}_{i,0}$ is an affine segment with the absolute value of the slope equal to $2^{k''_i-K''}$. Recall that $\sum_{i=1}^m 2^{-k''_i+K''}=1$. Hence, $g_2$ is a generalized $m$-fold window perturbation on $\mathcal{J}$ where $\mathcal{J}_{i,0}\cup\mathcal{J}_{i,1}$ is a component interval. By Corollary~\ref{lemma:typeI_straighten_2b}, an $m$-fold window perturbation $w_1$ on $\mathcal{J}$ exists such that $g_2=g_1(w_1)$. Interval $\mathcal{J}$ can be partitioned to two intervals $\mathcal{J}=\mathcal{J}_0\cup\mathcal{J}_1$ with mutually disjoint interiors such that the graph of $g_1$  on $\mathcal{J}_0$ is affine with absolute value of the slope equal to $1$ and the graph of $g_1$ on $\mathcal{J}_1$ is affine with absolute value of the slope equal to $2^{K''}$.
\end{proof}

Corollary~\ref{lemma:typeI_straighten_4} holds under a slightly modified hypothesis: $\forall y\in [\mathcal{Y}^0, c]$, $g^{-1}(y)\not\subset\mathcal{I}$, and $\forall y\in (c, \mathcal{Y}^1]$, $g^{-1}(y)\subset\mathcal{I}$. One can further extend the result to a scenario where $c_1, c_2\in (\mathcal{Y}^0, \mathcal{Y}^1)$ exist with $c_1<c_2$ such that $\forall y\in [\mathcal{Y}^0, c_1)\cup(c_2, \mathcal{Y}^1]$, $g^{-1}(y)\subset\mathcal{I}$, and $\forall y\in [c_1, c_2]$, $g^{-1}(y)\not\subset\mathcal{I}$. The same conclusion as in Corollary~\ref{lemma:typeI_straighten_4} holds except that $g_1$ on $\mathcal{I}$ consists of three affine segments connected by two type I breakpoints.

In Lemma~\ref{lemma:typeI_straighten_2_1} and Lemma~\ref{lemma:typeI_straighten_2} $g$ is required to be an $m$-fold window perturbation on $\mathcal{I}$. This requirement is relaxed in Lemma~\ref{lemma:typeI_straighten_3}.

\begin{lemma}
Replacing ``$g\in\mathbb{G}$ is an affine segment on interval $\mathcal{I}_{2i}$'' in Lemma~\ref{lemma:typeI_straighten_2_1} and replacing ``$g$ is an $m$-fold window perturbation on interval $\mathcal{I}$'' in Lemma~\ref{lemma:typeI_straighten_2} by ``let $g$ be monotone on interval $\mathcal{I}_{2i}$ for all $i$'', the conclusions in Lemma~\ref{lemma:typeI_straighten_2_1} and in Lemma~\ref{lemma:typeI_straighten_2} still hold. 
\label{lemma:typeI_straighten_3}
\end{lemma}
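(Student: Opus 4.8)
The plan is to re-run the proofs of Lemma~\ref{lemma:typeI_straighten_2_1} and Lemma~\ref{lemma:typeI_straighten_2} essentially verbatim, isolating the single place where affineness of $g$ on $\mathcal{I}_{2i}$ was used and supplying a substitute. Since $g$ is monotone on each $\mathcal{I}_{2i}$, every interior breakpoint of $g$ lying in $\mathcal{I}_{2i}$ has its left and right derivatives of the same sign, hence is a type I breakpoint, and by Definition~\ref{def:breakpoints} together with Lemma~\ref{lemma:alldyadic} it is dyadic in both coordinates. Thus $g|_{\mathcal{I}_{2i}}$ is a monotone, piecewise affine bijection onto $\mathcal{Y}$ whose breakpoints are dyadic.

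First I would set up the strip decomposition exactly as in the proof of Lemma~\ref{lemma:typeI_straighten_2_1}, but refine it: partition $\mathcal{Y}$ into intervals $\{\mathcal{Y}_j\}$ whose interiors contain the $y$-coordinate of no breakpoint of $g$ at all, now including the interior type I breakpoints of the $\mathcal{I}_{2i}$. On each strip $g^{-1}(\mathcal{Y}_j)=\bigcup_i \mathcal{I}_{j,i}$ with $g$ affine on every leg, exactly reproducing the configuration (\ref{eq:LK0})--(\ref{eq:LK}) that drives the original argument. The only difference is that the inner exponents $k_{j,i}$ (for $1\le i\le m$) may now depend on $j$, so that $\sum_{i=1}^m 2^{-k_{j,i}}$ need no longer be independent of $j$.

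The main obstacle is precisely this loss of $j$-independence, because in the original proof the $j$-independence of the inner exponents was the sole reason the reconstructed $g_1$ turned out to be affine on $\mathcal{J}_{2i}$. To recover it I would not run the decrement process of (\ref{eq:kk'1})--(\ref{eq:kk'2}) toward an arbitrary target; instead I would prescribe a common inner target $\{l_1,\dots,l_m\}$, independent of $j$, with $\sum_{i=1}^m 2^{-l_i}=2^{-1}$ (for instance $l_i=i+1$ for $i<m$ and $l_m=m$), and route the resulting discrepancy into the extra legs. This is always possible: the hypothesis that $g^{-1}(y)\not\subset\bigcup_i\mathcal{I}_i$ for all $y\in\mathcal{Y}$ forces $n_j\ge1$ extra legs on each strip, and $1-2^{-1}=2^{-1}$ can be written as a sum of exactly $n_j$ negative powers of two for every $n_j\ge1$ (repeatedly split $2^{-t}=2^{-(t+1)}+2^{-(t+1)}$ as needed). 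The total leg mass on each strip stays equal to $1$, so the matching hypothesis $\sum_i 2^{-k_{j,i}}=\sum_i 2^{-k''_{j,i}}$ of Lemma~\ref{lemma:typeI_straighten_0} holds, and applying Lemma~\ref{lemma:typeI_straighten_0} strip by strip (the preimages $g^{-1}(\mathcal{Y}_j)$ are disjoint in $x$, so the local reparametrizations assemble into a single $f_1\in\mathbb{F}$) yields $g_1\in\mathbb{G}$ with $g=g_1\circ f_1$. Because the inner targets $l_i$ are independent of $j$, the pieces $\mathcal{J}_{j,i}$ stack contiguously into an interval $\mathcal{J}_{2i}$ on which $g_1$ is affine with slope of absolute value $2^{l_i}$, and $\sum_{i=1}^m 2^{-l_i}=2^{-1}$ is of the required form $\sum_{i=1}^m 2^{-k_i}=2^{-K}$ with $K=1$. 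I would flag explicitly that the auxiliary clause $g_1(\mathcal{J}_{2i})\simeq g(\mathcal{I}_{2i})$ of Lemma~\ref{lemma:typeI_straighten_2_1} must be dropped here, since $g$ is no longer affine on $\mathcal{I}_{2i}$; the interior type I breakpoints of $g$ are instead absorbed into $f_1$, which is exactly the desired simplification.

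Finally, the monotone version of Lemma~\ref{lemma:typeI_straighten_2} follows from the monotone version of Lemma~\ref{lemma:typeI_straighten_2_1} by the original argument of Lemma~\ref{lemma:typeI_straighten_2} with no change: when $\bigcup_{i=1}^m \mathcal{I}_{2i}$ is an interval, each $\mathcal{J}_{2i+1}$ collapses to a point, so $\bigcup_{i=1}^m \mathcal{J}_{2i}$ is an interval $\mathcal{J}$ on which the intermediate map is an $m$-fold window perturbation, and Lemma~\ref{lemma:typeI_straighten_2a} extracts a window perturbation $w_1$ leaving $g_1$ affine on $\mathcal{J}$, giving $g=g_1\circ w_1\circ f_1$. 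I expect the $j$-dependence handled in the previous paragraph to be the only genuinely new point; the remaining steps are bookkeeping inherited from the affine case.
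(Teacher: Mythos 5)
Your proposal is correct, and it follows the paper's overall strategy — partition $\mathcal{Y}$ into strips containing no breakpoint ordinates in their interiors, apply Lemma~\ref{lemma:typeI_straighten_0} strip by strip to force the inner exponents to be independent of the strip index $j$, then observe that the resulting $g_1$ is affine on each $\mathcal{J}_{2i}$ so the affine-case lemmas take over — but it differs in one genuine way at the key step. The paper restores $j$-independence by picking $j^*=\arg\min_j n_j$ and copying the entire exponent profile of strip $j^*$ onto every other strip via (\ref{eq:k''lemma7}), splitting the last exponent of the $j^*$-profile into a dyadic chain to fill the $n_j-n_{j^*}$ surplus legs; after that the inner mass $\sum_{i=1}^m 2^{-k''_{j,i}}$ is merely some $L\cdot 2^{-K}$, so the paper finishes by literally invoking Lemma~\ref{lemma:typeI_straighten_2_1} on $g_1$, whose $L$-decrement procedure (\ref{eq:kk'1})--(\ref{eq:kk'2}) produces the required form $2^{-K''}$. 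You instead prescribe the terminal inner profile in one shot — a fixed $\{l_i\}$ with $\sum_{i=1}^m 2^{-l_i}=2^{-1}$ — and route the complementary mass $2^{-1}$ onto the $n_j\ge1$ outer legs by repeated halving, which is exactly the same splitting trick the paper uses but aimed directly at the target; this yields $K=1$ immediately and makes the second-stage application of Lemma~\ref{lemma:typeI_straighten_2_1} unnecessary. The trade-off: the paper's two-stage reduction is more modular (its second stage is verbatim an earlier lemma, and strip $j^*$ is left untouched), while yours is shorter and produces an explicit uniform $K$. Your remaining checks are sound — the leg count $m+n_j$ and total mass $1$ per strip are preserved, so Lemma~\ref{lemma:typeI_straighten_0} applies, and the $j$-independent inner slopes make the pieces $\mathcal{J}_{j,i}$ stack into intervals $\mathcal{J}_{2i}$ on which $g_1$ is affine — and your explicit flag that the clause $g_1(\mathcal{J}_{2i})\simeq g(\mathcal{I}_{2i})$ must be weakened in the monotone setting is consistent with the paper, which likewise only asserts the piecewise relation $g_1(\mathcal{J}_{j,i})\simeq g(\mathcal{I}_{j,i})$ at the strip level. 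The reduction of the Lemma~\ref{lemma:typeI_straighten_2} part to the interval/window-perturbation argument is identical in both treatments.
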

\begin{proof}
The difference from Lemma~\ref{lemma:typeI_straighten_2_1} and Lemma~\ref{lemma:typeI_straighten_2} is that $g$ is not necessarily an affine segment on $\mathcal{I}_{2i}$. As in the proof of Lemma~\ref{lemma:typeI_straighten_2_1}, partition interval $\mathcal{Y}$ into intervals $\{\mathcal{Y}_j\}$, $j=1, 2, \ldots, n$ such that no breakpoint exists whose $y$-coordinate falls in the interior of any $\mathcal{Y}_j$. Consider $g^{-1}(\mathcal{Y}_j)$. Let $g^{-1}(\mathcal{Y}_j)=\{\mathcal{I}_{j,1}, \mathcal{I}_{j,2}, \ldots, \mathcal{I}_{j,m}, \mathcal{I}_{j,m+1}, \ldots, \mathcal{I}_{j,m+n_j}\}$. By the hypothesis of the lemma, interval $\mathcal{I}_{j,i}\subset\mathcal{I}$ for $i=1, \ldots, m$, and interval $\mathcal{I}_{j,i}\cap\mathcal{I}=\emptyset$ for $i=m+1, \ldots, m+n_j$ and $n_j\ge1$. Let $2^{k_{j,i}}$ be the absolute value of the slope of the affine segment on $\mathcal{I}_{j,i}$.

Let $j^*=\arg\min_{j=1, 2, \ldots, n} n_j$. If $\arg\min_{j=1, 2, \ldots, n} n_j$ is not unique, then pick any one of them as $j^*$. For $j=1, \ldots, n$, let 
\begin{equation}
k''_{j,i}=\left\{
\begin{array}{ll}
k_{j^*,i}, & i=1, 2, \ldots, m+n_{j^*}-1;\\
k_{j^*,m+n_{j^*}}+i-m-n_{j^*}+1, & i=m+n_{j^*}, m+n_{j^*}+1, \ldots, m+n_j-1;\\
k_{j^*,m+n_{j^*}}+n_j-n_{j^*}, & i=m+n_j.
\end{array}
\right. 
\label{eq:k''lemma7}
\end{equation}
It is easy to verify that (\ref{eq:k''k=1}) holds. As in the proof of Lemma~\ref{lemma:typeI_straighten_2_1}, one can find $f_1\in\mathbb{F}, g_1\in\mathbb{G}$ and a partition of $[0,1]$ $\{\mathcal{J}_1<\cdots<\mathcal{J}_{2m+1}\}$, where $\mathcal{J}_{2i}$ is further partitioned to $\mathcal{J}_{2i}=\bigcup_{j=1}^n\mathcal{J}_{j,i}$, such that $g_1(f_1(x))=g(x)$, $g_1(\mathcal{J}_{j,i})\simeq g(\mathcal{I}_{j,i})$, and the absolute value of the slope changes from $2^{k_{j,i}}$ of $g$ on $\mathcal{I}_{j,i}$ to $2^{k''_{j,i}}$ of $g_1$ on $\mathcal{J}_{j,i}$ for $i=1, \ldots, n_j$ and $j=1,\ldots,n$, and remains unchanged from $g$ on $[0,1]\setminus \bigcup_{i=1}^m\mathcal{I}_{2i}$ to $g_1$ on $[0,1]\setminus \bigcup_{i=1}^m\mathcal{J}_{2i}$. 

Note from the preceding construction (\ref{eq:k''lemma7}) that $k''_{j,i}$ does not depend on $j$ when $i=1, 2, \ldots, m$, because $n_{j^*}\ge1$. Thus, the graph of $g_1$ on $\mathcal{J}_{2i}$ is one affine segment and Lemma~\ref{lemma:typeI_straighten_2_1} is applicable to $g_1$ and the conclusion in Lemma~\ref{lemma:typeI_straighten_2_1} still holds. 

Moreover, if $\bigcup_{i=1}^m\mathcal{I}_{2i}$ is an interval, then as in the proof of Lemma~\ref{lemma:typeI_straighten_2}, $\bigcup_{i=1}^m\mathcal{J}_{2i}$ is an interval too, denoted by $\mathcal{J}$. The graph of $g_1$ on $\mathcal{J}$ is an $m$-fold window perturbation. Hence, Lemma~\ref{lemma:typeI_straighten_2} is applicable to $g_1$ and the conclusion in Lemma~\ref{lemma:typeI_straighten_2} still holds. 
\end{proof}

In summary, Lemma~\ref{lemma:typeI_straighten_0} to Lemma~\ref{lemma:typeI_straighten_3} can be used to eliminate type I and type II breakpoints. The following theorem shows that for any $g\in\mathbb{G}$, all interior breakpoints can be eliminated by repetitively applying these lemmas and corollaries. The only $\mathbb{G}$ maps that have no interior breakpoints are the trivial maps $g_{0,+}$ and $g_{0,-}$. 

\begin{theorem} 
Let $g\in\mathbb{G}$. Then $g$ is equal to the composition of a trivial map followed by a combination of $\mathbb{F}$ maps and window perturbations.
\label{theorem:1}
\end{theorem}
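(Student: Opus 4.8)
The plan is to induct on the number $N$ of interior breakpoints of $g$, at each stage peeling off finitely many $\mathbb{F}$ maps and window perturbations from the right so that the residual map in $\mathbb{G}$ has strictly fewer interior breakpoints. For the base case $N=0$, the map $g$ is a single affine segment on $[0,1]$; since it is $\lambda$-preserving and onto, Lemma~\ref{lemma:basicderivativeinverse} applied to the single leg forces $2^{-k}=1$, i.e.\ slope $\pm1$, so $g$ is either $g_{0,+}$ or $g_{0,-}$ and is already trivial.

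For the inductive step I would seek a factorization $g=g_1\circ\phi$ in which $\phi$ is a composition of finitely many $\mathbb{F}$ maps and window perturbations and $g_1\in\mathbb{G}$ has fewer interior breakpoints than $g$; applying the inductive hypothesis to $g_1$ then yields $g=(\text{trivial})\circ(\cdots)\circ\phi$, as required. I handle the two breakpoint types in turn, ordered lexicographically by the pair (number of type I breakpoints, number of type II breakpoints). While $g$ still has a type I breakpoint, I group a maximal family of intervals on which $g$ is monotone with a common image interval $\mathcal{Y}$ and apply the appropriate straightening result: Corollary~\ref{corollary:typeI_straighten_1} when $g^{-1}(g(\mathcal{I}))=\mathcal{I}$, the monotone generalization of Lemma~\ref{lemma:typeI_straighten_2_1} provided by Lemma~\ref{lemma:typeI_straighten_3} when $g^{-1}(y)\not\subset\mathcal{I}$ for all $y$, and Corollary~\ref{lemma:typeI_straighten_4} (with its stated two-breakpoint extension) in the mixed case. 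Each of these composes $g$ with an $\mathbb{F}$ map (and, in the mixed case, a window perturbation) and replaces monotone piecewise affine legs by affine legs, so the residual map has strictly fewer type I breakpoints; since the signs of the slopes are preserved, no new type II breakpoints are created.

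Once no type I breakpoint remains, every interior breakpoint is a local maximum or minimum, so the monotone affine legs of $g$ over each range strip assemble into $m$-fold window perturbations. I then apply Lemma~\ref{lemma:typeI_straighten_2}, relaxed by Lemma~\ref{lemma:typeI_straighten_3} and, where the legs are only congruent up to a horizontal flip, Corollary~\ref{lemma:typeI_straighten_2b}, to collapse such an $m$-fold window perturbation into a single affine segment by composing with a window perturbation and an $\mathbb{F}$ map; this removes $m-1\ge1$ type II breakpoints. Thus every reduction step strictly decreases the lexicographic measure, the process halts after finitely many steps, and the final residual map, having no interior breakpoints, is trivial.

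The step I expect to be the main obstacle is guaranteeing that the type II structure can always be organized into a window perturbation to which Lemma~\ref{lemma:typeI_straighten_2} or Corollary~\ref{lemma:typeI_straighten_2b} applies, while simultaneously keeping the bookkeeping consistent with termination. Concretely, I must verify that after the type I phase one can always isolate a maximal run of monotone legs sharing a common range whose preimage is a single interval (so the window-perturbation lemmas apply), and that the residual type I breakpoint left behind by the mixed-case Corollary~\ref{lemma:typeI_straighten_4} still causes the total interior-breakpoint count to drop, so that the lexicographic induction genuinely closes. Granting these, $g$ factors as a trivial map composed with finitely many $\mathbb{F}$ maps and window perturbations.
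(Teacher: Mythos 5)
Your induction skeleton---peel $\mathbb{F}$ maps and window perturbations off the right, strictly decreasing an interior-breakpoint measure, with the slope-$\pm1$ base case---is the same strategy the paper uses, merely phrased contrapositively there (reduce maximally, then show the residual $g_1$ has no interior breakpoints). The base case and the type~I bookkeeping are essentially fine. But the step you yourself flag as ``the main obstacle'' is not a verification detail: it is the entire substance of the paper's proof, and it genuinely does not follow from the lemmas you cite. After the type~I phase it is \emph{false} in general that the affine legs over a range strip assemble into an $m$-fold (or generalized) window perturbation: Lemma~\ref{lemma:typeI_straighten_2} and Corollary~\ref{lemma:typeI_straighten_2b} require the union of the legs' domains to be a single interval, and in a nested zig-zag configuration---successive down-facing peaks $A_1, A_3, \ldots$ with strictly decreasing heights, up-facing valleys $A_2, A_4, \ldots$ with strictly increasing heights, every peak above every valley, as in Figure~\ref{fig:WeeklyDiscussion_20200316Page-6}(c)---no range strip has an interval preimage, no two adjacent legs share a common range, and none of your three local cases (Corollary~\ref{corollary:typeI_straighten_1}, Lemma~\ref{lemma:typeI_straighten_3}, Corollary~\ref{lemma:typeI_straighten_4}) applies anywhere. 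The paper closes exactly this gap with a global argument your proposal lacks: walk from the left endpoint $A_0$ through the type~II breakpoints $A_1, A_2, \ldots$, rule out $A_{2,y}\le A_{0,y}$ by a three-way case analysis on $\max(g_1([A_{2,x},1]))$ (each case manufactures a generalized $2$-fold window perturbation), rule out $A_{3,y}\ge A_{1,y}$ via Corollary~\ref{lemma:typeI_straighten_4} with Lemma~\ref{lemma:typeI_straighten_3}, and then kill the only surviving possibility---the strictly nested zig-zag---at the right endpoint $A_n$, where $A_{n,y}$ necessarily lies strictly between $A_{n-1,y}$ and $A_{n-2,y}$, yielding a point $B_{n-1}$ and a $2$-fold window perturbation that eliminates $A_{n-1}$. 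Without this ordering-plus-endpoint contradiction, your lexicographic induction simply does not close on the configurations that matter.

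A secondary inaccuracy compounds this: your two-phase plan (``first remove all type~I breakpoints, then collapse type~II structure'') is not literally achievable. Type~I breakpoints whose images lie at the $y$-levels of type~II breakpoints or of $g(0), g(1)$ resist strip-by-strip elimination by Corollary~\ref{corollary:typeI_straighten_1}, since the legs over an adjacent strip obey a different $\lambda$-preservation constraint and slopes cannot in general be matched across the strip boundary (this is precisely the role of the exceptional set $\mathcal{A}$ in the proof of Theorem~\ref{theorem:C[g]}); moreover Corollary~\ref{lemma:typeI_straighten_4} deliberately reintroduces a type~I breakpoint. So after your type~I phase the residual map need not be a pure zig-zag of affine segments, and the paper accordingly does not run a separate type~I phase at all: it straightens the segments between consecutive $A_i$'s on the fly, inside the case analysis, using Lemma~\ref{lemma:typeI_straighten_2_1} and Lemma~\ref{lemma:typeI_straighten_0} at the moment each local window-perturbation structure is assembled. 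To repair your write-up you would need to replace the claimed assembly of window perturbations over common strips with the paper's walk argument (or an equivalent global analysis), at which point your induction and the paper's maximal-reduction contradiction become the same proof.
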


\begin{proof}
Suppose that Lemma~\ref{lemma:typeI_straighten_0} to Lemma~\ref{lemma:typeI_straighten_3} have been applied to eliminate the breakpoints of $g$ such that $g=g_1\circ f_1\circ w_1 \circ f_2\circ w_2\circ \cdots$ where $g_1\in\mathbb{G}$ and $f_1\circ w_1 \circ f_2\circ w_2\circ \cdots$ represent a combination of $\mathbb{F}$ maps and window perturbations. Assume that no more interior breakpoints in $g_1$ can be eliminated using the preceding lemmas. Next we show that $g_1$ has no interior breakpoints.

Denote by $A_0$ the point of $g_1$ at $A_{0,x}=0$. Without loss of generality, suppose that the derivative at $A_0$ is positive. As $x$ increases from $0$, $g_1(x)$ increases until it reaches another point $A_1$ where $g_1$ stops increasing. If $A_{1,x}=1$, then $g_1=g_{0,+}$ and the proof is done. Otherwise, $A_1$ must be a type II breakpoint and the right derivative is negative. As $x$ increases from $A_{1,x}$, $g_1(x)$ decreases until it reaches another type II breakpoint $A_2$. 

First, suppose that $A_{2,y}\le A_{0,y}$. Then a unique point $B_2$ exists where $A_{1,x}<B_{2,x}<A_{2,x}$ and $B_{2,y}=A_{0,y}$, as shown in Figure~\ref{fig:WeeklyDiscussion_20200316Page-6}(a). Consider the following three cases illustrated respectively by three dash lines coming out of point $A_2$ in Figure~\ref{fig:WeeklyDiscussion_20200316Page-6}(a).

\begin{figure}
 \centering
  \includegraphics[width=12cm]{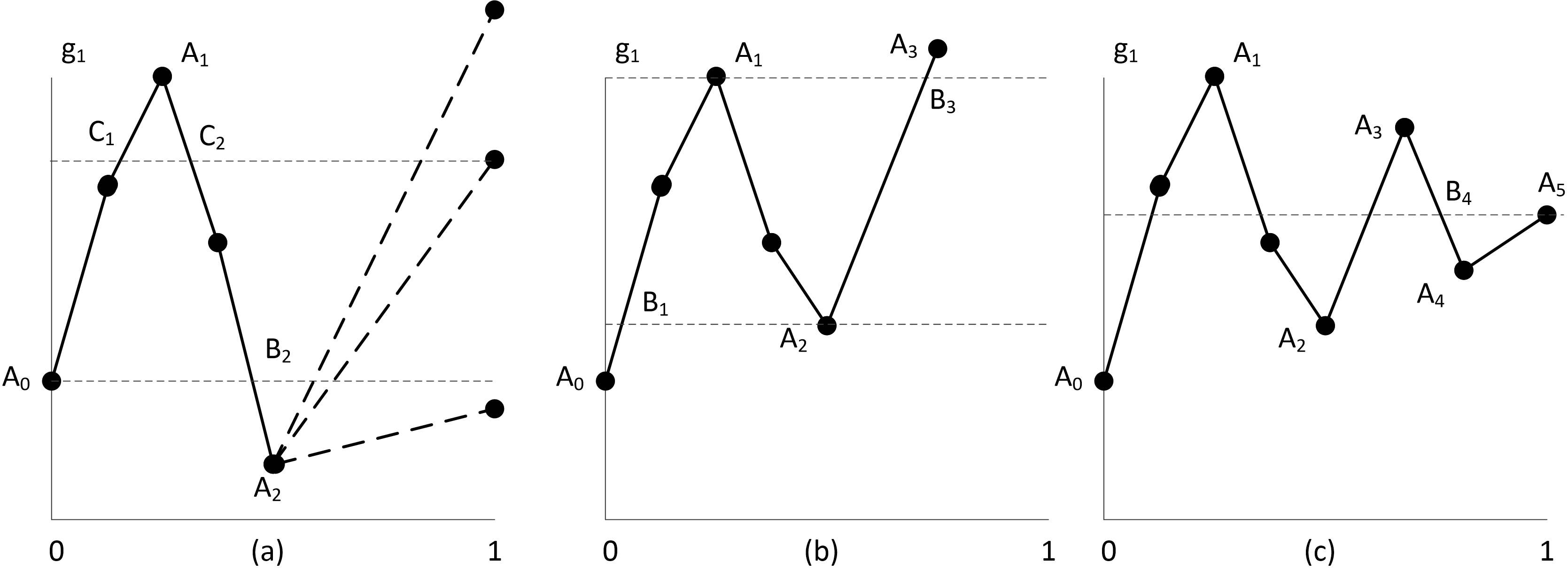}
  \caption{Proof of Theorem~\ref{theorem:1}. In (c), $n=5$.}
  \label{fig:WeeklyDiscussion_20200316Page-6}
\end{figure} 
\begin{itemize}
\item
Assume $\max(g_1([A_{2,x},1]))< A_{0,y}$. Then $\forall y\in[A_{0,y},A_{1,y}]$, $\nexists x\in(B_{2,x},1]$ such that $g_1(x)=y$. That is, $g_1^{-1}(y)$ consists of two points $x_1, x_2$ where $x_1\in[A_{0,x}, A_{1,x}], x_2\in[A_{1,x}, B_{2,x}]$. Thus, $A_0 A_1$ and $A_1 B_2$ must be affine segments with slopes $2, -2$ respectively. By Lemma~\ref{lemma:typeI_straighten_2a}, breakpoint $A_1$ can be eliminated with a $2$-fold window perturbation. Contradiction.
\item
Assume $A_{0,y}\le\max(g_1([A_{2,x},1]))\le A_{1,y}$. Let $C_1, C_2$ be points between $A_0, A_1$ and between $A_1, B_2$ respectively such that $C_{1_y}=C_{2,y}=\max(g_1([A_{2,x},1]))$. Then $\forall y\in(\max(g_1([A_{2,x},1])),A_{1,y}]$, $\nexists x\in(B_{2,x},1]$ such that $g_1(x)=y$. Thus, $C_1 A_1$ and $A_1C_2$ must be affine segments with slopes $2, -2$ respectively. On the other hand, $\forall y\in[A_{0,y}, \max(g_1([A_{2,x},1]))]$, $\exists x\in(B_{2,x},1]$ such that $g_1(x)=y$. By Lemma~\ref{lemma:typeI_straighten_2_1}, one can eliminate all type I breakpoints, if any, between $A_0, C_1$ and between $C_2, B_2$. By Lemma~\ref{lemma:typeI_straighten_0}, make $A_0C_1$ and $C_2B_2$ affine segments with the same slope except for the sign. Therefore, the graph of $g_1$ on $A_0C_1A_1C_2B_2$ is a generalized $2$-fold window perturbation. By Corollary~\ref{lemma:typeI_straighten_2b}, breakpoint $A_1$ can be eliminated with a $2$-fold window perturbation. Contradiction.
\item
Assume $\max(g_1([A_{2,x},1]))> A_{1,y}$. By Lemma~\ref{lemma:typeI_straighten_2_1}, one can eliminate all type I breakpoints, if any, between $A_0, A_1$ and between $A_1, B_2$. By Lemma~\ref{lemma:typeI_straighten_0}, make $A_0A_1$ and $A_1B_2$ affine segments with the same slope except for the sign. Therefore, the graph of $g_1$ on $A_0A_1B_2$ is a $2$-fold window perturbation. By Lemma~\ref{lemma:typeI_straighten_2a}, breakpoint $A_1$ can be eliminated with a $2$-fold window perturbation. Contradiction.
\end{itemize}

In the following, suppose that $A_{2,y}> A_{0,y}$. Then a unique point $B_1$ exists where $A_{0,x}<B_{1,x}<A_{1,x}$ and $B_{1,y}=A_{2,y}$. As $x$ increases from $A_{2,x}$, $g_1(x)$ increases until it reaches another type II breakpoint $A_3$. If $A_{3,y}\ge A_{1,y}$, then a unique point $B_3$ exists where $A_{2,x}<B_{3,x}<A_{3,x}$ and $B_{3,y}=A_{1,y}$, as shown in Figure~\ref{fig:WeeklyDiscussion_20200316Page-6}(b). By Corollary~\ref{lemma:typeI_straighten_4} and Lemma~\ref{lemma:typeI_straighten_3}, type II breakpoints $A_1$ and $A_2$ can both be eliminated. Contradiction.

Therefore, $A_{3,y}< A_{1,y}$. The process continues as shown in Figure~\ref{fig:WeeklyDiscussion_20200316Page-6}(c). For odd $i$, type II breakpoint $A_i$ is facing down and $A_{i,y}>A_{i+2,y}$. For even $i$, type II breakpoint $A_i$ is facing up and $A_{i,y}<A_{i+2,y}$. $A_{2i+1,y}>A_{2j,y}$ for any $i,j$. Suppose that $A_n$ is the endpoint where $A_{n,x}=1$. $\min(A_{n-1,y},A_{n-2,y})<A_{n,y}<\max(A_{n-1,y},A_{n-2,y})$. Therefore, a unique point $B_{n-1}$ exists where $A_{n-2,x}<B_{n-1,x}<A_{n-1,x}$ and $B_{n-1,y}=A_{n,y}$. By Lemma~\ref{lemma:typeI_straighten_2_1}, one can eliminate all type I breakpoints, if any, between $B_{n-1}, A_{n-1}$ and between $A_{n-1}, A_n$. By Lemma~\ref{lemma:typeI_straighten_0}, make $B_{n-1}A_{n-1}$ and $A_{n-1}, A_n$ affine segments with the same slope except for the sign. Therefore, the graph of $g_1$ on $B_{n-1}A_{n-1}A_n$ is a $2$-fold window perturbation. By Lemma~\ref{lemma:typeI_straighten_2a}, breakpoint $A_{n-1}$ can be eliminated with a $2$-fold window perturbation. Contradiction.

Hence, we conclude that $g_1$ has no interior breakpoints.
\end{proof}

Because any $\mathbb{F}$ map can be generated by the two generators defined in (\ref{eq:fAfB}), it suffices to study the basic maps in $\mathbb{G}$ to generate the window perturbations thanks to Theorem~\ref{theorem:1}.

Denote by $w_{m,\mathcal{J}}$ an $m$-fold window perturbation map where $w_{m,\mathcal{J}}(x)=x$ for $x\in[0,1]\setminus \mathcal{J}$ and $w_{m,\mathcal{J}}(x)$ is an $m$-fold window perturbation on $\mathcal{J}$. Specifically, let $\{\mathcal{J}_1<\mathcal{J}_2<\cdots<\mathcal{J}_m\}$ be a partition of $\mathcal{J}$. The graph of $w_{m,\mathcal{J}}$ is an affine segment on each $\mathcal{J}_i$ with slope $(-1)^{i-1} 2^{k_i}$ where $\sum_{i=1}^m 2^{-k_i}=1$. The graph of $w_{m,\mathcal{J}}$ on $\mathcal{J}_i$ is referred to as the $i$-th leg of $w_{m,\mathcal{J}}$. Map $w_{m,\mathcal{J}}$ defined here is from the lower left corner to the upper right corner. Map $1-w_{m,\mathcal{J}}$, which is from the upper left corner to the lower right corner, can be generated by $g_{0,-} (w_{m,\mathcal{J}})$.

\begin{lemma}
Any $(m+2)$-fold window perturbation map $w_{m+2,\mathcal{J}}$ on interval $\mathcal{J}$ is equal to $w_{m,\mathcal{J}}(w(f))$ where $w$ is a $3$-fold window perturbation $w$ and $f\in\mathbb{F}$. 
\label{lemma:mtom+2}
\end{lemma}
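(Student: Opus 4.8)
The plan is to build the required factorization by \emph{stacking} a three-fold window inside an $m$-fold window and then absorbing any residual discrepancy into the $\mathbb{F}$-map $f$. Following the convention of the paper, $w_{m+2,\mathcal{J}}$ has legs on a partition $\mathcal{J}_1<\cdots<\mathcal{J}_{m+2}$ of $\mathcal{J}$ with slopes $(-1)^{i-1}2^{k_i}$ and $\sum_{i=1}^{m+2}2^{-k_i}=1$. First I would fix \emph{any} valid $m$-fold window $w_{m,\mathcal{J}}$ with leg slopes $(-1)^{j-1}2^{a_j}$ (so $\sum_{j=1}^m 2^{-a_j}=1$, first leg positive) and \emph{any} three-fold window $w$ supported on the domain $P_1$ of the first leg of $w_{m,\mathcal{J}}$, with slopes $2^{b_1},-2^{b_2},2^{b_3}$ and $\sum_{l=1}^3 2^{-b_l}=1$. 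Set $\tilde w:=w_{m,\mathcal{J}}\circ w$.

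Second, I would verify that $\tilde w$ is itself an $(m+2)$-fold window on $\mathcal{J}$ with the correct sign pattern. On $P_1$ the three sub-legs of $w$ compose with the positive slope $2^{a_1}$ to give slopes $2^{a_1+b_1},-2^{a_1+b_2},2^{a_1+b_3}$ (signs $+,-,+$), while on each $P_j$ with $j\ge 2$ the map $w$ is the identity, so $\tilde w$ has slope $(-1)^{j-1}2^{a_j}$ there. Ordered along $\mathcal{J}$ this yields exactly $3+(m-1)=m+2$ legs with alternating signs $(-1)^{i-1}$, and $\lambda$-preservation is automatic since $2^{-a_1}\sum_{l}2^{-b_l}+\sum_{j\ge2}2^{-a_j}=2^{-a_1}+(1-2^{-a_1})=1$. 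Thus $\tilde w\in\mathbb{G}$ is a genuine $(m+2)$-fold window, with some exponent sequence $\{k_i'\}$ determined by the chosen $a_j,b_l$; in general $\{k_i'\}\ne\{k_i\}$.

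The key step is then to show that $w_{m+2,\mathcal{J}}$ and $\tilde w$, being two $(m+2)$-fold windows on $\mathcal{J}$ with the \emph{same} sign pattern, differ only by precomposition with a map in $\mathbb{F}$. I would define $f$ to be the identity off $\mathcal{J}$ and, on $\mathcal{J}$, the increasing piecewise affine map sending the $i$-th leg domain $\mathcal{J}_i$ of $w_{m+2,\mathcal{J}}$ affinely onto the $i$-th leg domain of $\tilde w$, that is with slope $2^{k_i-k_i'}$. Because $\mathcal{J}$ has dyadic endpoints and $\sum_i 2^{-k_i'}=\sum_i 2^{-k_i}=1$, the target domains tile $\mathcal{J}$ and have dyadic endpoints, so $f$ is a well-defined increasing bijection of $[0,1]$ with dyadic breakpoints and slopes $2^{k_i-k_i'}$ that are (possibly negative) integer powers of $2$; hence $f\in\mathbb{F}$. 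On each $\mathcal{J}_i$ the composition $\tilde w\circ f$ carries $\mathcal{J}_i$ onto $\mathcal{J}$ with slope $2^{k_i-k_i'}\cdot(\pm2^{k_i'})=\pm2^{k_i}$ and the same sign as $w_{m+2,\mathcal{J}}$, so the two affine legs coincide; therefore $\tilde w\circ f=w_{m+2,\mathcal{J}}$, which gives $w_{m+2,\mathcal{J}}=w_{m,\mathcal{J}}\circ w\circ f$.

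The main obstacle to anticipate is precisely why the reparametrization $f$ cannot be dispensed with: a naive attempt to realize $w_{m+2,\mathcal{J}}$ directly as $w_{m,\mathcal{J}}\circ w$ would force the first three target slopes to satisfy $2^{-k_1}+2^{-k_2}+2^{-k_3}=2^{-a_1}$, i.e.\ a partial sum of the $\lambda$-preservation identity to be an exact power of $2$, which need not hold. The role of $f$ is exactly to adjust each leg's slope by an arbitrary integer power of $2$ while respecting the single global constraint $\sum_i 2^{-k_i'}=1$ that \emph{every} window satisfies, and the one point demanding care is confirming that this $f$ genuinely lands in $\mathbb{F}$ (dyadic breakpoints, together with the bijectivity of $f$ on $\mathcal{J}$ forced by $\sum_i 2^{-k_i'}=1$).
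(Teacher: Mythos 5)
Your proposal is correct and takes essentially the same route as the paper: you compose $w_{m,\mathcal{J}}$ with a $3$-fold window nested inside a single leg (you use the first leg, the paper uses the $m$-th) and then precompose with an $f\in\mathbb{F}$ that converts the resulting exponent sequence into the prescribed one, which is legitimate since both sequences satisfy $\sum_i 2^{-k_i}=1$. Your explicit construction of $f$ by affinely matching the leg domains is precisely the content of the paper's appeal to Lemma~\ref{lemma:typeI_straighten_0}, so the two arguments coincide in substance.
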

\begin{proof}
Let $w_{m,\mathcal{J}}$ be a $m$-fold window perturbation. Let $\mathcal{J}_m$ be the interval corresponding to the $m$-th leg of $w_{m,\mathcal{J}}$. Let $w_{3,\mathcal{J}_m}$ be a $3$-fold window perturbation on $\mathcal{J}_m$ with the absolute values of the slopes being $2^{q_j}$ for $j=1,2,3$ on the three legs respectively. By definition, $\sum_{j=1}^{3}2^{-q_j}=1$. 

By construction, $w_{m,\mathcal{J}}(w_{3,\mathcal{J}_m})$ is an $(m+2)$-fold window perturbation map on interval $\mathcal{J}$ where $w_{m,\mathcal{J}}(w_{3,\mathcal{J}_m})(x)=w_{m,\mathcal{J}}(x)$ for $x\in[0,1]\setminus \mathcal{J}_m$ and $w_{m,\mathcal{J}}(w_{3,\mathcal{J}_m})(x)$ consists of three legs on $\mathcal{J}_m$ whose slopes are $(-1)^{m-1+j} 2^{k_m+q_j}$ for $j=1,2,3$. Interval $\mathcal{J}_m$ is thus partitioned to three intervals $\mathcal{J}_{m_j}$ corresponding to the three legs.

Let $\mathcal{J}'_1<\mathcal{J}'_2<\cdots<\mathcal{J}'_{m+2}$ be the partition of $\mathcal{J}$ of any desired $(m+2)$-fold window perturbation $w_{m+2,\mathcal{J}}$, which is an affine segment on each $\mathcal{J}'_i$ with slope $(-1)^{i-1} 2^{l_i}$. Because 
\[
\sum_{i=1}^{m+2} 2^{-l_i}=\sum_{i=1}^{m-1} 2^{-k_i}+\sum_{j=1}^{3}2^{-k_m-q_j}=1, 
\]
by Lemma~\ref{lemma:typeI_straighten_0}, $f\in\mathbb{F}$ exists to map $\mathcal{J}'_i$ to $\mathcal{J}_i$ for $i=1, \ldots, m-1$ and $\mathcal{J}'_{m-1+j}$ to $\mathcal{J}_{m_j}$ for $j=1, 2, 3$ without altering anything on $[0,1]\setminus\mathcal{J}$. Hence, $w_{m+2,\mathcal{J}}=w_{m,\mathcal{J}}(w_{3,\mathcal{J}_m}(f))$.
\end{proof}

From Lemma~\ref{lemma:mtom+2}, any $m$-fold window perturbation $w_{m,\mathcal{J}}$ can be generated by repetitively applying $3$-fold window perturbations on appropriate intervals of $\mathcal{J}$ to $1$-fold $w_{1,\mathcal{J}}$ for odd $m$ or $2$-fold window perturbation $w_{2,\mathcal{J}}$ for even $m$. The $1$-fold window perturbation $w_{1,\mathcal{J}}$ is simply $g_{0,+}$ or $g_{0,-}$. Next we show that all $3$-fold or $2$-fold window perturbations can be generated by a finite number of basic window perturbations.

Define the basic $3$-fold window perturbation
$\bar{w}_{3,[\frac{1}{4},\frac{1}{2}]}$ as the special case of $w_{3,[\frac{1}{4},\frac{1}{2}]}$ with the absolute values of the slopes being $2, 4, 4$ on the three legs respectively. Lemma~\ref{lemma:3foldgenerator} states that almost any $3$-fold window perturbation can be generated by the the basic $3$-fold window perturbation. The remaining cases of $3$-fold window perturbations are addressed in Lemma~\ref{lemma:3foldgenerator0}.

\begin{lemma}
Any $3$-fold window perturbation $w_{3,\mathcal{J}}$ is equal to $f_1(\bar{w}_{3,[\frac{1}{4},\frac{1}{2}]}(f_2))$ for $f_1, f_2\in\mathbb{F}$ if $0<\mathcal{J}^0<\mathcal{J}^1<1$.
\label{lemma:3foldgenerator}
\end{lemma}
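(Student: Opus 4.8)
The plan is to use $\lambda$-preservation to pin down the shape of the target and then realize it by pre- and post-composing the basic window with maps of $\mathbb{F}$. Since the legs of any $3$-fold window perturbation have sign pattern $(+,-,+)$ (slopes $(-1)^{i-1}2^{k_i}$) and $1=\sum_{i=1}^{3}2^{-k_i}$ has, in positive integers, the unique solution $\{k_i\}=\{1,2,2\}$, the leg-slope magnitudes of $w_{3,\mathcal{J}}$ are a permutation $(2^{a_1},2^{a_2},2^{a_3})$ of $(2,4,4)$, while $\bar{w}_{3,[\frac14,\frac12]}$ realizes the pattern $(2,4,4)$ with magnitudes $2^{\bar k_i}$, $(\bar k_1,\bar k_2,\bar k_3)=(1,2,2)$, on $\mathcal{K}=[\frac14,\frac12]$ with leg intervals $\mathcal{K}_1<\mathcal{K}_2<\mathcal{K}_3$. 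Because $f_1,f_2$ are increasing they preserve the left-to-right order of legs and the signs, so the plan is to match, leg by leg, the three affine legs of $f_1\circ\bar{w}_{3,[\frac14,\frac12]}\circ f_2$ to those of $w_{3,\mathcal{J}}$.

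First I would fix the outer structure. As $0<\mathcal{J}^0<\mathcal{J}^1<1$ with $\mathcal{J}^0,\mathcal{J}^1,\frac14,\frac12$ all dyadic, there is $f_1\in\mathbb{F}$ with $f_1([0,\tfrac14])=[0,\mathcal{J}^0]$, $f_1(\mathcal{K})=\mathcal{J}$, and $f_1([\tfrac12,1])=[\mathcal{J}^1,1]$, using only the standard flexibility of $\mathbb{F}$ on dyadic partitions as in the construction preceding Theorem~\ref{theorem:approximationproperty}; strict interiority is exactly what keeps the two outer intervals nondegenerate, which is why the endpoint cases are left to Lemma~\ref{lemma:3foldgenerator0}. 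I then set $f_2=f_1^{-1}$ on $[0,1]\setminus\mathcal{J}^{\circ}$, so that off $\mathcal{J}$ the factor $\bar{w}_{3,[\frac14,\frac12]}$ acts as the identity and $f_1\circ\bar{w}_{3,[\frac14,\frac12]}\circ f_2$ is the identity there, as required.

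The heart of the argument is the definition of $f_2$ on $\mathcal{J}$ so that the composite is affine on each leg. Choose the (freely fixed) $f_1$ above; it partitions $\mathcal{K}$ into pieces $\mathcal{K}^{(j)}$ on which $f_1$ has constant slope $2^{c_j}$, with images $\mathcal{J}^{(j)}=f_1(\mathcal{K}^{(j)})$ of length $2^{c_j}|\mathcal{K}^{(j)}|$. On the $i$-th leg the composite has slope $f_1'(z)\,\bar{w}'(f_2(x))\,f_2'(x)$ with $z=\bar{w}(f_2(x))$ sweeping all of $\mathcal{K}$; requiring its magnitude to equal the target $2^{a_i}$ on the sub-piece of the leg whose $\bar{w}$-image lies in $\mathcal{K}^{(j)}$ forces $f_2'=2^{a_i-\bar k_i-c_j}$, a power of $2$. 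This is the key point, and the resolution of the apparent obstruction that a single affine map $\mathcal{K}\to\mathcal{J}$ would need the non-dyadic slope $|\mathcal{J}|/|\mathcal{K}|$: the map $f_1$ need not be affine on $\mathcal{K}$, because each of its interior breakpoints is cancelled by a matching breakpoint of $f_2$, leaving every composite leg a single affine segment of the prescribed slope and sign.

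Finally I would verify consistency and conclude. The sub-piece of leg $i$ lying over $\mathcal{K}^{(j)}$ maps onto the leg-$i$ preimage of $\mathcal{K}^{(j)}$ (of length $|\mathcal{K}^{(j)}|2^{-\bar k_i}$) with slope $2^{a_i-\bar k_i-c_j}$, hence has $x$-length $|\mathcal{J}^{(j)}|2^{-a_i}$; summing over $j$ telescopes to $|\mathcal{J}|2^{-a_i}=|\mathcal{J}_i|$, the correct target leg length, and summing over $i$ gives $|\mathcal{J}|$, so $f_2$ is a genuine increasing bijection $\mathcal{J}\to\mathcal{K}$ carrying each $\mathcal{J}_i$ onto $\mathcal{K}_i$. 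All breakpoints produced are dyadic by Lemma~\ref{lemma:alldyadic} and all slopes are powers of $2$, so $f_1,f_2\in\mathbb{F}$ and $f_1\circ\bar{w}_{3,[\frac14,\frac12]}\circ f_2=w_{3,\mathcal{J}}$. I expect the main obstacle to be precisely this bookkeeping—checking the telescoping length identity together with dyadicity simultaneously for each of the three slope permutations; should the direct verification become unwieldy, I would first apply Lemma~\ref{lemma:typeI_straighten_0} to normalize the slope pattern to $(2,4,4)$ by absorbing a preliminary $\mathbb{F}$ factor, thereby isolating the interval change $\mathcal{K}\to\mathcal{J}$ as the only remaining work.
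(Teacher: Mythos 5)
Your proposal is correct and is essentially the paper's proof: both realize $w_{3,\mathcal{J}}$ as $f_1\circ\bar{w}_{3,[\frac{1}{4},\frac{1}{2}]}\circ f_2$ by matching the dyadic partition $\{0,\frac{1}{4},\frac{1}{2},1\}$ to $\{0,\mathcal{J}^0,\mathcal{J}^1,1\}$ (partition points from Lemma~\ref{lemma:x1x2x3}), taking $f_2=f_1^{-1}$ off the window, and cancelling each interior breakpoint of $f_1$ against a matching breakpoint of $f_2$ through the window legs so that every composite leg is a single affine segment --- precisely the paper's paired slopes $s$ and $1/s$ built from the point $(x_2,y_2)$. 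The only organizational difference is that the paper first produces the canonical slope pattern and then adjusts to arbitrary $(2^{q_1},2^{q_2},2^{q_3})$ with an extra $f_3$ via Lemma~\ref{lemma:typeI_straighten_0}, which is exactly the fallback you name, whereas you solve for $f_2$'s slopes $2^{a_i-\bar{k}_i-c_j}$ to hit the target legs directly and verify the telescoping length identity; both verifications are sound.
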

\begin{proof}
We prove the lemma by construction as illustrated in Figure~\ref{fig:WeeklyDiscussion_20200406Page-6}. 
\begin{figure}
 \centering
  \includegraphics[width=14cm]{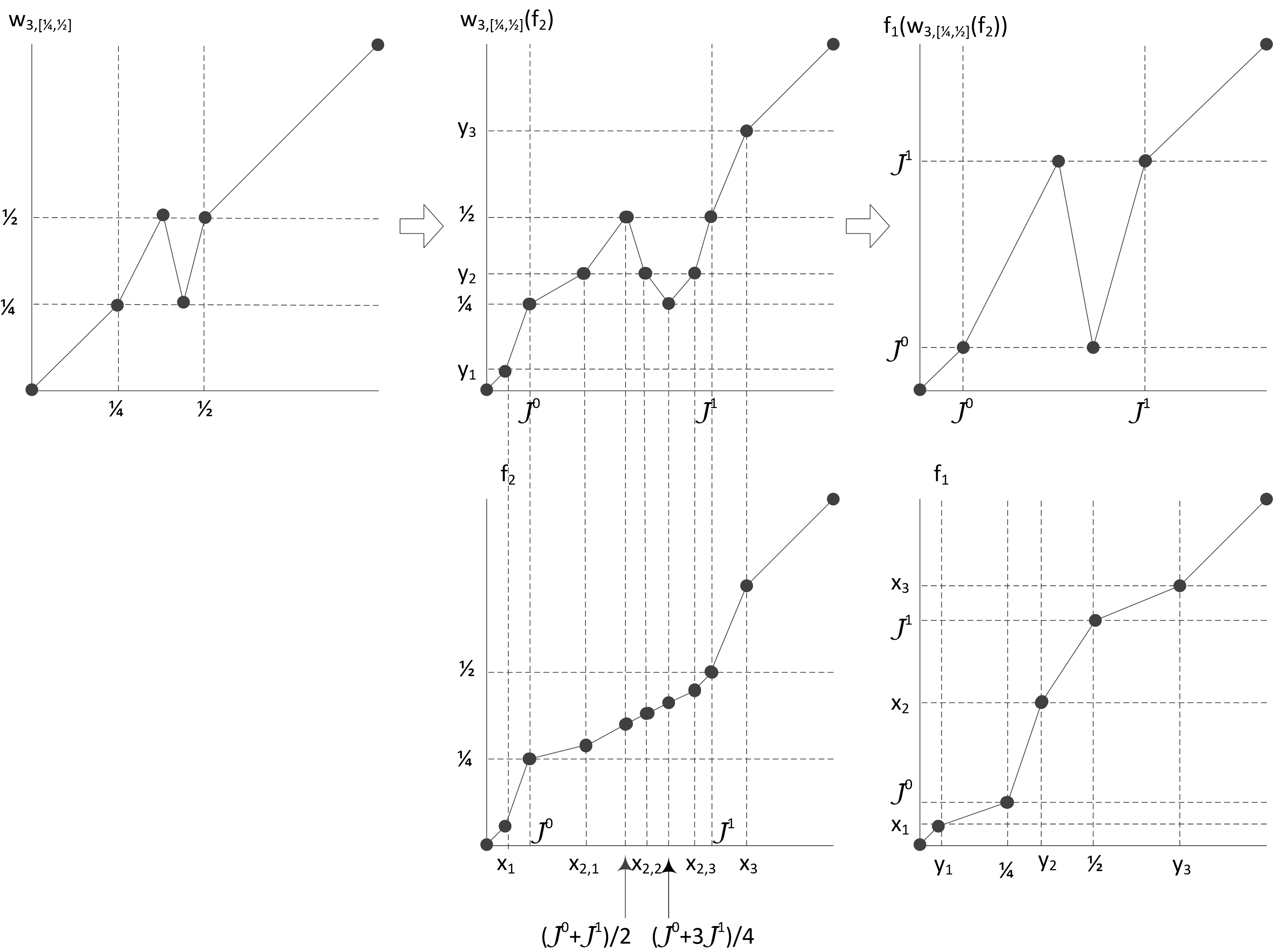}
  \caption{Construction of $w_{3,\mathcal{J}}$ with $\bar{w}_{3,[\frac{1}{4},\frac{1}{2}]}$.}
  \label{fig:WeeklyDiscussion_20200406Page-6}
\end{figure}

Map $f_2$, which scales $\bar{w}_{3,[\frac{1}{4},\frac{1}{2}]}$ horizontally to $\bar{w}_{3,[\frac{1}{4},\frac{1}{2}]}(f_2)$, does the following.
\begin{enumerate}[label=(\alph*)]
\item
Map $[0, \mathcal{J}^0]$ to $[0,\frac{1}{4}]$. If $\frac{\frac{1}{4}}{\mathcal{J}^0}$ is in the form of $2^k$, then $f_2$ on $[0, \mathcal{J}^0]$ is an affine segment; otherwise, $f_2$ on $[0, \mathcal{J}^0]$ consists of two affine segments separated by point $(x_1,y_1)$, a partition point between points $(0,0)$ and $(\mathcal{J}^0, \frac{1}{4})$ by Lemma~\ref{lemma:x1x2x3}.
\item
Map $[\mathcal{J}^0, \mathcal{J}^1]$ to $[\frac{1}{4}, \frac{1}{2}]$. If $\frac{\frac{1}{2}-\frac{1}{4}}{\mathcal{J}^1-\mathcal{J}^0}$ is in the form of $2^k$, then $f_2$ on $[\mathcal{J}^0, \mathcal{J}^1]$ is an affine segment; otherwise, a point $(x_2,y_2)$ exists in Lemma~\ref{lemma:x1x2x3} such that $\frac{\frac{1}{2}-y_2}{\mathcal{J}^1-x_2}$ and $\frac{y_2-\frac{1}{4}}{x_2-\mathcal{J}^0}$ are both in the form of $2^k$. The graph of $f_2$ on $[\mathcal{J}^0, \mathcal{J}^1]$ consists of six affine segments separated by partition points $(x_{2,1}, \frac{1}{4}+\frac{y_2-\frac{1}{4}}{2})$, $(\frac{\mathcal{J}^0+\mathcal{J}^1}{2}, \frac{3}{8})$, $(x_{2,2}, \frac{3}{8}+\frac{\frac{1}{2}-y_2}{4})$, $(\frac{\mathcal{J}^0+3\mathcal{J}^1}{4}, \frac{7}{16})$ and $(x_{2,3}, \frac{7}{16}+\frac{y_2-\frac{1}{4}}{4})$ between points $(\mathcal{J}^0, \frac{1}{4})$ and $(\mathcal{J}^1, \frac{1}{2})$, where $x_{2,1}=\mathcal{J}^0+\frac{x_2-\mathcal{J}^0}{2}$, $x_{2,2}=\frac{\mathcal{J}^0+\mathcal{J}^1}{2}+\frac{\mathcal{J}^1-x_2}{4}$ and $x_{2,3}=\frac{\mathcal{J}^0+3\mathcal{J}^1}{4}+\frac{x_2-\mathcal{J}^0}{4}$. Note that the derivative of $f_2$ is $\frac{y_2-\frac{1}{4}}{x_2-\mathcal{J}^0}$ on $[\mathcal{J}^0, x_{2,1}]$, $[x_{2,2},\frac{\mathcal{J}^0+3\mathcal{J}^1}{4}]$ and $[\frac{\mathcal{J}^0+3\mathcal{J}^1}{4}, x_{2,3}]$, and is $\frac{\frac{1}{2}-y_2}{\mathcal{J}^1-x_2}$ on $[x_{2,1},\frac{\mathcal{J}^0+\mathcal{J}^1}{2}]$, $[\frac{\mathcal{J}^0+\mathcal{J}^1}{2}, x_{2,2}]$ and $[x_{2,3}, \mathcal{J}^1]$.
\item
Map $[\mathcal{J}^1,1]$ to $[\frac{1}{2},1]$. If $\frac{1-\frac{1}{2}}{1-\mathcal{J}^1}$ is in the form of $2^k$, then $f_2$ on $[\mathcal{J}^1,1]$ is an affine segment; otherwise, $f_2$ on $[\mathcal{J}^1,1]$ consists of two affine segments separated by point $(x_3,y_3)$, a partition point between points $(\mathcal{J}^1, \frac{1}{2})$ and $(1,1)$ by Lemma~\ref{lemma:x1x2x3}.
\end{enumerate}

Map $f_1$, which scales $\bar{w}_{3,[\frac{1}{4},\frac{1}{2}]}(f_2)$ vertically to $f_1(\bar{w}_{3,[\frac{1}{4},\frac{1}{2}]}(f_2))$, does the following.
\begin{enumerate}[label=(\alph*)]
\item
Map $[0,\frac{1}{4}]$ to $[0, \mathcal{J}^0]$. If $\frac{\mathcal{J}^0}{\frac{1}{4}}$ is in the form of $2^k$, then $f_1$ on $[0, \frac{1}{4}]$ is an affine segment; otherwise, by symmetry, $f_1$ on $[0,\frac{1}{4}]$ consists of two affine segments separated by point $(y_1,x_1)$, where $x_1$ and $y_1$ are given in the preceding step (a).
\item
Map $[\frac{1}{4}, \frac{1}{2}]$ to $[\mathcal{J}^0, \mathcal{J}^1]$. If $\frac{\mathcal{J}^1-\mathcal{J}^0}{\frac{1}{2}-\frac{1}{4}}$ is in the form of $2^k$, then $f_1$ on $[\frac{1}{4}, \frac{1}{2}]$ is an affine segment; otherwise, by symmetry, $f_1$ on $[\frac{1}{4}, \frac{1}{2}]$ consists of two affine segments separated by point $(y_2, x_2)$, where $x_2$ and $y_2$ are given in the preceding step (b).
\item
Map $[\frac{1}{2},1]$ to $[\mathcal{J}^1,1]$. If $\frac{1-\frac{1}{2}}{1-\mathcal{J}^1}$ is in the form of $2^k$, then $f_1$ on $[\frac{1}{2},1]$ is an affine segment; otherwise, by symmetry, $f_1$ on $[\frac{1}{2},1]$ consists of two affine segments separated by $(y_3,x_3)$, where $x_3$ and $y_3$ are given in the preceding step (c).
\end{enumerate}

Hence, $f_1$ and $f_2$ scale and translate the $3$-fold window perturbation on $[\frac{1}{4}, \frac{1}{2}]$ to $[\mathcal{J}^0, \mathcal{J}^1]$. Finally, the absolute values of the slopes are $2, 2, 4$ on the three legs of $f_1(\bar{w}_{3,[\frac{1}{4},\frac{1}{2}]}(f_2))$. One can apply $f_3\in\mathbb{F}$ such that $f_1(\bar{w}_{3,[\frac{1}{4},\frac{1}{2}]}(f_2(f_3)))$ achieves any $2^{q_1}, 2^{q_2}, 2^{q_3}$ by Lemma~\ref{lemma:typeI_straighten_0}.
\end{proof}
Next consider $2$-fold window perturbations. 
\begin{lemma}
Any $2$-fold window perturbation $w_{2,\mathcal{J}}$ can be generated by $w_{2,[\frac{3}{4},1]}$ if $\mathcal{J}\subset[0,1]$.
\label{lemma:2foldgenerator}
\end{lemma}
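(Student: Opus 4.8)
The plan is to mirror the construction of Lemma~\ref{lemma:3foldgenerator}, while exploiting a rigidity special to the $2$-fold case: since a $2$-fold window perturbation has only two legs, the $\lambda$-preservation identity $\sum_{i=1}^{2}2^{-k_i}=1$ of Lemma~\ref{lemma:basicderivativeinverse} forces $k_1=k_2=1$, so both slopes have absolute value exactly $2$. Hence every $w_{2,\mathcal J}$ has the same slope profile and is determined by $\mathcal J$ alone, and because the two legs are equal in length its peak sits at the midpoint of $\mathcal J$. As a $2$-fold window perturbation that is the identity outside $\mathcal J$ must reach an endpoint of $[0,1]$ (otherwise the two ends of the window carry the same value and continuity with the identity fails), it suffices to treat $\mathcal J=[a,1]$ with $a$ dyadic; the case $\mathcal J=[0,b]$ then follows by conjugating with the affine, $\lambda$-preserving trivial map $g_{0,-}$. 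Thus the goal reduces to exhibiting $f_1,f_2\in\mathbb F$ with $w_{2,[a,1]}=f_1\circ w_{2,[\frac34,1]}\circ f_2$.

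First I would build the horizontal reparametrization $f_2\in\mathbb F$, sending $[0,a]$ onto $[0,\frac34]$ and $[a,1]$ onto $[\frac34,1]$, and, crucially, point-symmetric on the window block $[a,1]$ about its center $\left(\tfrac{a+1}{2},\tfrac78\right)$, i.e.\ $f_2\!\left(\tfrac{a+1}{2}\right)=\tfrac78$ and $f_2\bigl((a+1)-x\bigr)=\tfrac74-f_2(x)$. Such an $f_2$ is obtained by joining the dyadic points $\left(a,\tfrac34\right)$ and $\left(\tfrac{a+1}{2},\tfrac78\right)$ on the left half by an increasing piecewise-affine path with dyadic breakpoints and slopes $2^k$ (using Lemma~\ref{lemma:x1x2x3}, and reflection across the diagonal when the required slope is below $1$), reflecting this path through the center to define $f_2$ on the right half, and joining $(0,0)$ to $\left(a,\tfrac34\right)$ by any $\mathbb F$-path on $[0,a]$. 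Reflection preserves power-of-$2$ slopes and dyadicity of breakpoints, so $f_2\in\mathbb F$.

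Next I would pin down the vertical reparametrization $f_1$. On $[0,\frac34]$ I set $f_1=f_2^{-1}$, so that on $[0,a]$, where $w_{2,[\frac34,1]}$ acts as the identity, the composition collapses to $f_2^{-1}\circ f_2=\mathrm{id}$. On the window block I deliberately do \emph{not} take $f_1=f_2^{-1}$; instead I set $f_1(s)=2\,f_2^{-1}\!\left(\tfrac{s}{2}+\tfrac38\right)-a$ for $s\in[\tfrac34,1]$, the unique choice that forces the ascending leg of the composition to have slope exactly $2$. One checks that $f_1$ is continuous at $s=\tfrac34$ (both branches give $a$), that $f_1'(s)=(f_2^{-1})'\!\left(\tfrac{s}{2}+\tfrac38\right)$ is a power of $2$ with dyadic breakpoints, and that $f_1$ is an increasing bijection of $[0,1]$; hence $f_1\in\mathbb F$.

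Finally I would verify $f_1\circ w_{2,[\frac34,1]}\circ f_2=w_{2,[a,1]}$. On $[0,a]$ it is the identity by the choice of $f_1$ on $[0,\frac34]$; on the ascending half the definition of $f_1$ makes the composition equal $2x-a$, an affine segment of slope $2$ rising to the peak $\left(\tfrac{a+1}{2},1\right)$; on the descending half the point-symmetry of $f_2$ (equivalently, symmetry of $(f_2^{-1})'$ about $\tfrac78$) forces slope exactly $-2$, returning to $(1,a)$. Thus the composition is precisely $w_{2,[a,1]}$, and—in contrast to the $3$-fold case—no final slope-correcting factor is needed, since both slopes already equal the forced value $2$. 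The main obstacle, and the reason the conjugation used for odd folds cannot be copied verbatim, is exactly a $\lambda$-preservation hazard on the window block: a conjugation $f_2^{-1}\circ w\circ f_2$ by a map that is non-affine on the block fails to preserve Lebesgue measure, and for a general dyadic $a$ one cannot make $f_2$ affine on $[a,1]$ with power-of-$2$ slope. The remedy is precisely to decouple $f_1$ from $f_2^{-1}$ on the block and to impose the symmetry of $f_2$, which together pin the two legs to slope $\pm2$ while keeping $f_1,f_2$ inside $\mathbb F$.
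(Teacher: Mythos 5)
Your proposal is correct and follows essentially the same route as the paper: the paper likewise realizes $w_{2,\mathcal{J}}$ with $\mathcal{J}^1=1$ as $f_1\circ w_{2,[\frac{3}{4},1]}\circ f_2$ via the scaling construction of Lemma~\ref{lemma:3foldgenerator} and handles $\mathcal{J}^0=0$ by the same $g_{0,-}$-conjugation $g_{0,-}\circ w_{2,[1-\mathcal{J}^1,1]}\circ g_{0,-}$. Your only additions are explicit details the paper leaves implicit -- the observation that $\lambda$-preservation forces both leg slopes to be $\pm 2$, and the point-symmetric choice of $f_2$ that makes a final slope-correcting factor unnecessary -- which sharpen but do not change the argument.
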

\begin{proof}
As in Lemma~\ref{lemma:3foldgenerator}, it can be shown with scaling construction that any $2$-fold window perturbation $w_{2,\mathcal{J}}$ can be generated with $f_1(w_{2,[\frac{3}{4},1]}(f_2))$ for $f_1, f_2\in\mathbb{F}$ if $0<\mathcal{J}^0<\mathcal{J}^1=1$. On the other hand, $w_{2,\mathcal{J}_1}$ with $0=\mathcal{J}_1^0<\mathcal{J}_1^1<1$ can be generated with $g_{0,-}(w_{2,\mathcal{J}}(g_{0,-}))$ where $\mathcal{J}=[1-\mathcal{J}_1^1,1]$. Hence, the conclusion follows.
\end{proof}
Finally consider the special cases of $3$-fold window perturbations that are not addressed in Lemma~\ref{lemma:3foldgenerator}.
\begin{lemma}
Any $3$-fold window perturbation $w_{3,\mathcal{J}}$ with $\mathcal{J}^0=0$ and/or $\mathcal{J}^1=1$ is equal to the composition of $2$-fold window perturbations and $f\in\mathbb{F}$.
\label{lemma:3foldgenerator0}
\end{lemma}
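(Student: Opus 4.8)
The plan is to collapse the statement to a single canonical configuration and then exhibit the boundary $3$-fold as a composition of exactly two $2$-fold window perturbations. First I would remove the right-boundary case by reflection: since $g_{0,-}\circ w_{3,[a,1]}\circ g_{0,-}$ is a $3$-fold window perturbation on $[0,1-a]$ (conjugation by $g_{0,-}$ sends the window $[a,1]$ to $[0,1-a]$ and preserves the up--down--up leg pattern), the case $\mathcal{J}^1=1$ reduces to the case $\mathcal{J}^0=0$, and the case $\mathcal{J}=[0,1]$ is just the endpoint $b=1$ of the latter. Hence it suffices to treat $\mathcal{J}=[0,b]$ with $0<b\le 1$, where the target map has the shape $0\to b\to 0\to b$ on $[0,b]$ (three legs, $w(0)=0$, $w(b)=b$, peak $b$, valley $0$) and is the identity on $[b,1]$.

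The crux of the argument is the construction
\[
w_{3,[0,b]} \;=\; w_{2,[0,b]}\circ w_{2,[0,b/2]},
\]
which I would verify by tracking the three legs. Note that a $2$-fold on a left-touching window is a ``valley'': $w_{2,[0,b/2]}$ sends $0\mapsto b/2$, dips to $0$ at $b/4$, returns to $b/2$, and is the identity on $[b/2,1]$, while $w_{2,[0,b]}$ sends $0\mapsto b$, dips to its unique valley $0$ at $x=b/2$, returns to $b$, and is the identity on $[b,1]$. The decisive observation is that the inner map's value at $0$, namely $b/2$, is exactly the location of the outer valley, so the composite starts at $w_{2,[0,b]}(b/2)=0$. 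Following the inner map through its descent, ascent, and identity stretch and composing with the outer valley then produces precisely $0\to b$ on $[0,b/4]$, $b\to 0$ on $[b/4,b/2]$, $0\to b$ on $[b/2,b]$, and the identity on $[b,1]$; the composite slopes come out to $4,-4,2$, which is admissible since $\tfrac14+\tfrac14+\tfrac12=1$. Both factors are $2$-fold window perturbations, hence generated by $w_{2,[\frac34,1]}$ and $g_{0,-}$ through Lemma~\ref{lemma:2foldgenerator}, and their composition lies in $\mathbb{G}$. A final $f\in\mathbb{F}$ then adjusts the leg slopes from $4,-4,2$ to any admissible profile $2^{q_1},-2^{q_2},2^{q_3}$ with $\sum_i 2^{-q_i}=1$ by Lemma~\ref{lemma:typeI_straighten_0}, exactly as in the closing step of Lemma~\ref{lemma:3foldgenerator}.

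The main obstacle is precisely the degeneracy that forced Lemma~\ref{lemma:3foldgenerator} to exclude the boundary: when $\mathcal{J}^0=0$, a $2$-fold on a left sub-window of $[0,b]$ cannot be a tent but must descend from a positive height (it has to return to the window's top to meet the identity on the right), so no single $2$-fold starts at height $0$, and two such valleys composed generically yield a map with an \emph{even} number of legs. The half-window choice $[0,b/2]$ resolves both difficulties at once: aligning the inner value $w_{2,[0,b/2]}(0)=b/2$ with the unique valley of $w_{2,[0,b]}$ at $x=b/2$ forces the composite to begin at $0$ and simultaneously collapses what would otherwise be a fourth leg, leaving exactly three. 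I expect the only care required beyond this is bookkeeping: confirming that all breakpoints ($b,b/2,b/4$ and the $\mathbb{F}$-induced partition points) remain dyadic so that every intermediate map genuinely lies in $\mathbb{G}$ or $\mathbb{F}$, and checking that the slope-adjusting $f\in\mathbb{F}$ preserves the window structure, both of which follow routinely from Lemma~\ref{lemma:alldyadic}, Lemma~\ref{lemma:2foldgenerator}, and Lemma~\ref{lemma:typeI_straighten_0}.
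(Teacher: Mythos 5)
Your proposal is correct and takes essentially the same route as the paper: the paper treats the mirror case $\mathcal{J}^1=1$, writes the basic $3$-fold as $\bar{w}_{3,\mathcal{J}}=w_{2,\mathcal{J}}\circ w_{2,\mathcal{J}_{1/2}}$ with $\mathcal{J}_{1/2}=[\frac{\mathcal{J}^0+1}{2},1]$ the half-window, and then fixes the slopes by some $f\in\mathbb{F}$ via Lemma~\ref{lemma:typeI_straighten_0} --- exactly your $w_{3,[0,b]}=w_{2,[0,b]}\circ w_{2,[0,b/2]}$ construction conjugated by $g_{0,-}$. Your explicit leg-tracking (slopes $4,-4,2$) and the reflection reduction merely supply details the paper compresses into ``the case $\mathcal{J}^0=0$ can be addressed similarly.''
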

\begin{proof}
Suppose $\mathcal{J}^1=1$. The case of $\mathcal{J}^0=0$ can be addressed similarly.

Consider the $3$-fold window perturbation $\bar{w}_{3,\mathcal{J}}$ as the special case of $w_{3,\mathcal{J}}$ with the absolute values of the slopes being $2, 4, 4$ on the three legs respectively.
\[
\bar{w}_{3,\mathcal{J}}=w_{2,\mathcal{J}}\left(w_{2,\mathcal{J}_{\frac{1}{2}}}\right),
\]
where interval $\mathcal{J}_{\frac{1}{2}}=[\frac{\mathcal{J}^0+1}{2},1]$. Then any ${w}_{3,\mathcal{J}}=\bar{w}_{3,\mathcal{J}}(f)$ for some $f\in\mathbb{F}$ by Lemma~\ref{lemma:typeI_straighten_0}.
\end{proof}

The following theorem follows from Theorem~\ref{theorem:1} and Lemmas~\ref{lemma:mtom+2}, \ref{lemma:3foldgenerator}, \ref{lemma:2foldgenerator} and \ref{lemma:3foldgenerator0}.
\begin{theorem}
Let $g\in\mathbb{G}$. Then $g$ is equal to the composition of a combination of $g_{0,+}$, $g_{0,-}$, $\bar{w}_{3,[\frac{1}{4},\frac{1}{2}]}$, $w_{2,[\frac{3}{4},1]}$, $w_{2,[0,1]}$ and the two generator maps of $\mathbb{F}$ defined in (\ref{eq:fAfB}). 
\label{theorem:handfulgeneratormaps}
\end{theorem}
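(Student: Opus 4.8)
The plan is to assemble the theorem directly from Theorem~\ref{theorem:1} together with the four window-perturbation lemmas, treating the statement as a bookkeeping result that tracks which generators each building block requires. First I would invoke Theorem~\ref{theorem:1} to write $g = t \circ f_1 \circ w_1 \circ f_2 \circ w_2 \circ \cdots$, where $t$ is a trivial map, each $f_j \in \mathbb{F}$, and each $w_j$ is a window perturbation. The trivial map $t$ is either $g_{0,+}$ or $g_{0,-}$, both already on the generator list. Each $f_j$ is a composition of the two generators of (\ref{eq:fAfB}) by the classical generation result for $\mathbb{F}$. Hence it remains only to show that every window perturbation $w_{m,\mathcal{J}}$ appearing in the factorization is itself a composition of the listed generators.

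Next I would reduce the fold number $m$ by parity. Applying Lemma~\ref{lemma:mtom+2} repeatedly expresses $w_{m,\mathcal{J}}$ as a composition of one window perturbation of fold $m-2$, a $3$-fold window perturbation on the last leg, and an $\mathbb{F}$ map; iterating peels $m$ down to its base case while accumulating only $3$-fold window perturbations and $\mathbb{F}$ maps. For odd $m$ the base case is the $1$-fold $w_{1,\mathcal{J}}$, which equals $g_{0,+}$ or $g_{0,-}$; for even $m$ the base case is a $2$-fold $w_{2,\mathcal{J}}$. So the theorem is reduced to generating all $3$-fold and all $2$-fold window perturbations from the listed maps.

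For the $3$-fold perturbations I would split on whether $\mathcal{J}$ meets an endpoint. When $0 < \mathcal{J}^0 < \mathcal{J}^1 < 1$, Lemma~\ref{lemma:3foldgenerator} gives $w_{3,\mathcal{J}} = f_1(\bar{w}_{3,[\frac{1}{4},\frac{1}{2}]}(f_2))$ with $f_1, f_2 \in \mathbb{F}$, so only $\bar{w}_{3,[\frac{1}{4},\frac{1}{2}]}$ and the $\mathbb{F}$ generators are used. When $\mathcal{J}^0 = 0$ or $\mathcal{J}^1 = 1$, Lemma~\ref{lemma:3foldgenerator0} writes $w_{3,\mathcal{J}}$ as a composition of $2$-fold window perturbations and an $\mathbb{F}$ map, deferring to the $2$-fold case. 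For the $2$-fold perturbations, Lemma~\ref{lemma:2foldgenerator} handles every $w_{2,\mathcal{J}}$ with $\mathcal{J} \subsetneq [0,1]$ using $w_{2,[\frac{3}{4},1]}$ together with $g_{0,-}$ and $\mathbb{F}$ maps. The single remaining case is $\mathcal{J} = [0,1]$, which is the generator $w_{2,[0,1]}$ itself and is exactly why that map must appear on the list. Collecting all the building blocks shows that $g$ is a composition of $g_{0,+}$, $g_{0,-}$, $\bar{w}_{3,[\frac{1}{4},\frac{1}{2}]}$, $w_{2,[\frac{3}{4},1]}$, $w_{2,[0,1]}$ and the two $\mathbb{F}$ generators.

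I expect the main subtlety, rather than any single hard estimate, to be the endpoint bookkeeping: confirming that the $3$-fold perturbation on the last leg $\mathcal{J}_m$ produced by Lemma~\ref{lemma:mtom+2} inherits the correct endpoint condition (interior versus boundary-touching) so that exactly one of Lemma~\ref{lemma:3foldgenerator} or Lemma~\ref{lemma:3foldgenerator0} applies, and that the $2$-folds spawned by Lemma~\ref{lemma:3foldgenerator0} always touch an endpoint, so that Lemma~\ref{lemma:2foldgenerator} and the generator $w_{2,[0,1]}$ between them cover every possibility. Once this case analysis is laid out, the argument is a finite composition of already-established factorizations.
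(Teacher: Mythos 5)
Your proposal is correct and follows essentially the same route as the paper, which derives the theorem directly from Theorem~\ref{theorem:1} together with Lemmas~\ref{lemma:mtom+2}, \ref{lemma:3foldgenerator}, \ref{lemma:2foldgenerator} and \ref{lemma:3foldgenerator0}. The endpoint bookkeeping you flag (interior versus boundary-touching $3$-folds, and $\mathcal{J}=[0,1]$ as the irreducible $2$-fold case) is exactly the case split those lemmas are stated to cover, so you have simply made explicit what the paper leaves implicit.
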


Figure~\ref{fig:WeeklyDiscussion_20200406Page-7} plots $g_{0,-}$, $\bar{w}_{3,[\frac{1}{4},\frac{1}{2}]}$, $w_{2,[\frac{3}{4},1]}$, and $w_{2,[0,1]}$.
\begin{figure}
 \centering
  \includegraphics[width=16cm]{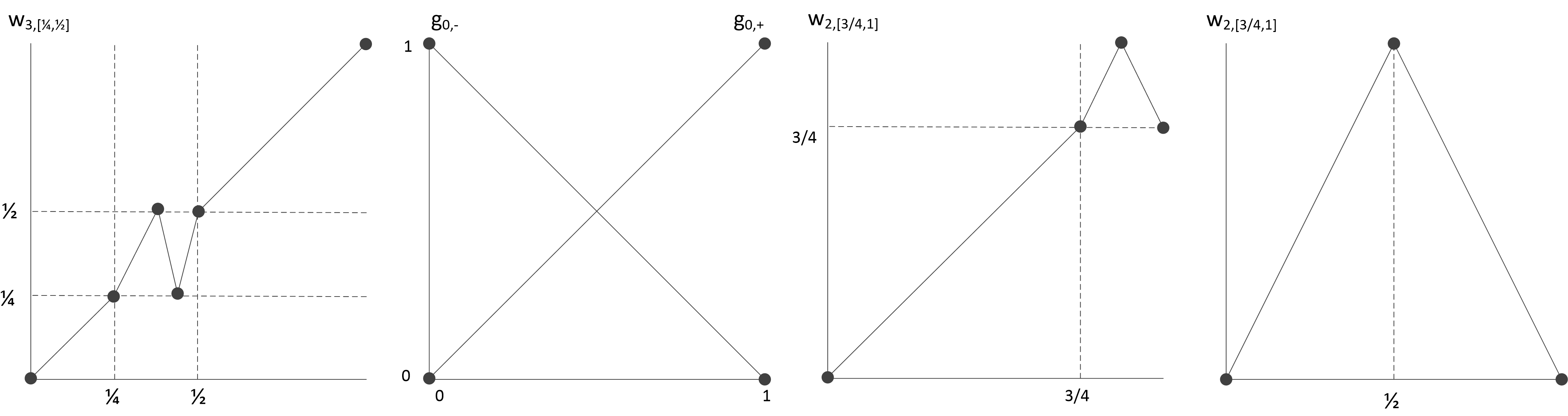}
  \caption{Plots of basic maps in $\mathbb{G}$: $\bar{w}_{3,[\frac{1}{4},\frac{1}{2}]}$, $g_{0,+}$, $g_{0,-}$, $w_{2,[\frac{3}{4},1]}$, and $w_{2,[0,1]}$}
  \label{fig:WeeklyDiscussion_20200406Page-7}
\end{figure} 

Finally, Theorem~\ref{theorem:Gnotfinitelygenerated} shows that unlike $\mathbb{F}$, $\mathbb{G}$ is not finitely generated. To this end, Lemma~\ref{lemma:typeIIbreakpoints} studies the number of type II breakpoints of a composition map in $\mathbb{G}$. Denote by $\#(g)$ the number of type II breakpoints of map $g$.
\begin{lemma}
Let $g_1, g_2 \in \mathbb{G}$. Then $\#(g_1\circ g_2)\ge \#(g_1)+\#(g_2)$.
\label{lemma:typeIIbreakpoints}
\end{lemma}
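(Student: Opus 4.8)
The plan is to read $\#(g)$ geometrically. Every interior breakpoint of $g\in\mathbb{G}$ is either type I (the derivative keeps its sign) or type II (the derivative reverses sign), so the type II breakpoints are exactly the interior local extrema of $g$; they cut $[0,1]$ into maximal intervals of monotonicity (\emph{laps}), and hence if $g$ has $\ell$ laps then $\#(g)=\ell-1$. Writing $h=g_1\circ g_2$, I would prove the inequality by exhibiting $\#(g_1)+\#(g_2)$ \emph{distinct} type II breakpoints of $h$, split into two families. Let $0=x_0^*<x_1^*<\cdots<x_{p-1}^*<x_p^*=1$ bound the laps of $g_2$, so its turning points are $x_1^*,\ldots,x_{p-1}^*$ with $p-1=\#(g_2)$, and let $u_1<\cdots<u_{q-1}$ be the $x$-coordinates (arguments) of the type II breakpoints of $g_1$, so $q-1=\#(g_1)$.

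The first family consists of the turning points $x_j^*$ of $g_2$ themselves. Near $x_j^*$ the map $g_2$ attains a local extremum value $v_j=g_2(x_j^*)$, approached from the same side on both the left and the right of $x_j^*$ (from below for a local maximum, from above for a local minimum). Where the derivatives exist $h'=g_1'(g_2)\,g_2'$, and the factor $g_1'(g_2)$ is evaluated at arguments lying just on one side of the single value $v_j$, so it has a constant nonzero sign on both sides of $x_j^*$, while $g_2'$ reverses sign; since every slope of $g_1$ is $\pm2^{k}\neq0$, the sign of $h'$ reverses and $x_j^*$ is a type II breakpoint of $h$. Crucially this argument uses only a one-sided derivative of $g_1$ at $v_j$, so it remains valid even when $v_j$ coincides with a breakpoint value of $g_1$. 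This yields $\#(g_2)$ type II breakpoints of $h$.

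The second family comes from the turning arguments $u_i$ of $g_1$. On the interior of each lap of $g_2$ the map $g_2$ is a monotone homeomorphism onto the interior of its image $J$, so $h$ reverses direction at the unique preimage of $u_i$ whenever $u_i\in J^{\circ}$. The point is that \emph{every} $u_i\in(0,1)$ lies in the interior of at least one lap image: because $g_2$ is onto $[0,1]$ it attains values below and above $u_i$, so by the Intermediate Value Theorem $g_2$ crosses the level $u_i$ transversally at some $z$ that is not a local extremum, hence is interior to a lap. At such $z$ the factor $g_1'(g_2)$ changes sign while $g_2'$ does not, so $z$ is a type II breakpoint of $h$, interior to a lap and therefore distinct from every $x_j^*$; distinct $u_i$ and distinct laps give distinct $z$. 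This produces $\#(g_1)$ further type II breakpoints of $h$, disjoint from the first family, and $\#(h)\ge(p-1)+(q-1)=\#(g_1)+\#(g_2)$.

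I expect the main obstacle to lie in the second family, namely ruling out the coincidence in which a turning argument $u_i$ of $g_1$ equals a turning \emph{value} $v_j$ of $g_2$; there the lap-boundary preimages of $u_i$ do not help, and a naive lap-by-lap bookkeeping could appear to lose a breakpoint. The resolution is exactly the ontoness of $g_2$ with the Intermediate Value Theorem above: since $u_i\in(0,1)$ and $g_2$ attains both $0$ and $1$, the level $u_i$ is crossed transversally in the interior of some lap no matter what, so each $u_i$ still contributes. I would also note that endpoint or extreme levels such as $v_0,v_p,0,1$ are harmless, since $u_i\in(0,1)$ excludes them.
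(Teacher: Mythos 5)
Your proof is correct and takes essentially the same route as the paper's: your first family is the paper's Case~1 (each type II breakpoint of $g_2$ survives at the same argument because $g_1$ has a single nonzero slope on a one-sided neighborhood of the extremal value, even if that value is a breakpoint of $g_1$), and your second family is the paper's Case~2 (each type II breakpoint of $g_1$ pulls back through a point where $g_2$ is monotone, which is not a turning point of $g_2$, so the two families are disjoint). Your Intermediate Value Theorem argument for a transversal crossing of the level $u_i$ simply makes explicit the existence that the paper asserts from the continuity and surjectivity of $g_2$.
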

\begin{proof}
Consider two cases.

\emph{Case $1$}. Suppose that point $B$ on the graph of $g_2$ is a type II breakpoint. There exists $\delta>0$ such that the graph of $g_2$ is an affine segment on $[B_x-\delta, B_x]$ and a different affine segment on $[B_x, B_x+\delta]$ and the slopes of the two affine segments are of different signs. Either $g_2(B_x-\delta)>g_2(B_x)$ and $g_2(B_x+\delta)>g_2(B_x)$, or $g_2(B_x-\delta)<g_2(B_x)$ and $g_2(B_x+\delta)<g_2(B_x)$. Thus a sufficiently small $\delta$ exists such that the graph of $g_1$ is an affine segment on both $\langle g_2(B_x-\delta), g_2(B_x)\rangle$ and $\langle g_2(B_x+\delta), g_2(B_x)\rangle$. Therefore, $g_1(g_2(B_x))$ is a type II breakpoint on the graph of $g_1\circ g_2$. That is, every type II breakpoint of $g_2$ corresponds to at least one type II breakpoint of $g_1\circ g_2$.

\emph{Case $2$}. Suppose that point $A$ on the graph of $g_1$ is a type II breakpoint. Because $g_2$ is a continuous map onto $[0,1]$, point $C$ exists on the graph of $g_2$ such that $C_x \in g_2^{-1}(A_x)$ and $\delta>0$ exists such that the graph of $g_2$ on $[C_x-\delta, C_x+\delta]$ is monotone. Following the preceding argument in case $1$,  $g_1(g_2(C_x))$ is a type II breakpoint on the graph of $g_1\circ g_2$. That is, every type II breakpoint of $g_1$ corresponds to at least one type II breakpoint of $g_1\circ g_2$. Because point $C$ is not a type II breakpoint of $g_2$, this type II breakpoint on the graph of $g_1\circ g_2$ is not included in the preceding case $1$ and there is no double counting between cases $1$ and $2$. Hence, $\#(g_1\circ g_2)\ge \#(g_1)+\#(g_2)$.
\end{proof}

\begin{theorem}
$\mathbb{G}$ is not finitely generated.
\label{theorem:Gnotfinitelygenerated}
\end{theorem}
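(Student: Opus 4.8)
The plan is to argue by contradiction, and the whole argument hinges on combining the superadditivity of Lemma~\ref{lemma:typeIIbreakpoints} with a classification of the $\mathbb{G}$-maps that have \emph{no} type II breakpoints. Suppose, for contradiction, that $\mathbb{G}=\langle S\rangle$ for some finite set $S\subset\mathbb{G}$. Recall that $\#(g)$ denotes the number of type II breakpoints of $g$, and note that $\#$ is a nonnegative integer invariant on which, by Lemma~\ref{lemma:typeIIbreakpoints}, composition acts superadditively: $\#(g_1\circ\cdots\circ g_\ell)\ge\sum_{j=1}^{\ell}\#(g_j)$. The idea is that superadditivity makes maps with \emph{few} type II breakpoints rigid: they can be assembled only from generators that are themselves very simple, and I will show there are too many simple maps to come from a finite $S$.

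First I would establish the key structural fact: \emph{the only elements of $\mathbb{G}$ with $\#(g)=0$ are the trivial maps $g_{0,+}$ and $g_{0,-}$}. If $g$ has no type II breakpoint, then at every interior breakpoint the left and right derivatives share a sign, so the sign of $g'$ is constant on all of $[0,1]$ and $g$ is strictly monotone; being continuous and onto $[0,1]$, it is a bijection, so each fiber $g^{-1}(y)$ is a single point. Applying Lemma~\ref{lemma:basicderivativeinverse} with $n=1$ forces $2^{-k}=1$, i.e. $|g'|\equiv 1$; hence $g=g_{0,+}$ or $g=g_{0,-}$. This step is where the superadditivity is made to ``bite'': a $\#=0$ generator carries essentially no information.

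Next I would analyze maps $g\in\mathbb{G}$ with $\#(g)=1$. Writing $g=s_{i_1}\circ\cdots\circ s_{i_\ell}$ with $s_{i_j}\in S$, superadditivity gives $1=\#(g)\ge\sum_j\#(s_{i_j})$, so exactly one letter has $\#=1$ and every other letter has $\#=0$, hence equals $g_{0,+}$ or $g_{0,-}$ by the previous step. Since any composition of $g_{0,+}$ and $g_{0,-}$ collapses to $g_{0,+}$ or $g_{0,-}$ (because $g_{0,-}\circ g_{0,-}=g_{0,+}$), we conclude $g=g_{0,-}^{\,\epsilon_1}\circ s\circ g_{0,-}^{\,\epsilon_2}$ for some $s\in S$ with $\#(s)=1$ and $\epsilon_1,\epsilon_2\in\{0,1\}$. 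Letting $T=\{s\in S:\#(s)=1\}$, this shows that the set of \emph{all} elements of $\mathbb{G}$ with exactly one type II breakpoint has size at most $4\,|T|$, which is finite.

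Finally I would exhibit an infinite family contradicting this bound: the $2$-fold window perturbations $w_{2,[a,1]}$, for dyadic $a\in(0,1)$. Each such map is the identity on $[0,a]$ and a $2$-fold window perturbation on $[a,1]$, so it lies in $\mathbb{G}$, has a single type I breakpoint at $x=a$ and a single type II breakpoint at the peak (the midpoint of $[a,1]$); thus $\#(w_{2,[a,1]})=1$, and distinct dyadic $a$ give distinct maps. This produces infinitely many elements of $\mathbb{G}$ with $\#=1$, contradicting the finite bound $4\,|T|$ from the preceding paragraph. Hence no finite $S$ can generate $\mathbb{G}$. The main obstacle, and the conceptual crux, is the classification of $\#=0$ maps in the second paragraph: without the observation that such maps must be trivial (so that power-of-$2$ slopes are forced down to $\pm1$), the superadditivity alone would not pin down the decomposition of $\#=1$ maps, and the pigeonhole step would fail.
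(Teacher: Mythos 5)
Your proposal is correct and follows essentially the same route as the paper: the superadditivity of the type~II breakpoint count from Lemma~\ref{lemma:typeIIbreakpoints} applied to the infinite family $\{w_{2,[\delta,1]}\}$ of maps with exactly one type~II breakpoint. Your explicit classification of the $\#=0$ maps as $g_{0,+}$ and $g_{0,-}$ (via Lemma~\ref{lemma:basicderivativeinverse} with $n=1$) and the resulting finite bound $4\,|T|$ merely make rigorous a step the paper's proof leaves implicit, so this is a more careful writeup of the same argument rather than a different one.
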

\begin{proof}
For any dyadic number $\delta \in (0,1), \#(w_{2,[\delta,1]})=1$. If $g_1, g_2\in\mathbb{G}$ exist such that $w_{2,[\delta,1]}=g_1\circ g_2$, then by Lemma~\ref{lemma:typeIIbreakpoints}, one of $g_1$ and $g_2$ is a trivial map. Thus, $w_{2,[\delta,1]}$ cannot be generated by $w_{2,[\delta',1]}$ with another dyadic number $\delta'\neq\delta$ or other maps in $\mathbb{G}$ with two or more type II breakpoints. As there are infinitely many $\delta$, the set of $\{w_{2,[\delta,1]}\}$ cannot be generated by a finite number of generators.
\end{proof}

\section{Equivalence Classes and Generators \label{sec:equivalence}}

Comparison of Theorem~\ref{theorem:handfulgeneratormaps} and Theorem~\ref{theorem:Gnotfinitelygenerated} indicates that the maps in $\mathbb{F}$ play an important role in allowing a finite number of basic maps to generate any map in $\mathbb{G}$. To study this idea formally, define the notion of equivalence relation and equivalence classes.

\begin{definition}[\textsl{Equivalence Relation}]
A binary relation $\sim$ on $\mathbb{G}$ is defined as follows. Suppose that $g_1, g_2 \in \mathbb{G}$. $g_1\sim g_2$ if and only if $f_1, f_2\in\mathbb{F}$ exist such that $g_2=f_1 \circ g_1 \circ f_2$. Binary relation $\sim$ is an equivalence relation because it is reflexive, symmetric and transitive. 
\label{definition:er}
\end{definition}

\begin{definition}[\textsl{Equivalence Class}]
The equivalence class of $g\in\mathbb{G}$, denoted by $[g]$, is the set $\{\hat{g}\in\mathbb{G}| \hat{g}\sim g\}$.
\label{definition:ec}
\end{definition}

To understand the effect of $f_1$ and $f_2$ on $g$ in Definition~\ref{definition:ec}, let $g\in\mathbb{G}$ and $f\in\mathbb{F}$. Suppose that the graph of $f$ is an affine segment on interval $\mathcal{I}_0=f^{-1}(\mathcal{I}_1)$. The slope of the affine segment is $s=\frac{|\mathcal{I}_1|}{|\mathcal{I}_0|}$. From the properties of $\mathbb{F}$, a portion of the graph of $g$ is scaled at a ratio of $s$ to become part of $g\circ f$ or $f\circ g$. Specifically, to obtain $g\circ f$, the graph of $g$ on $\mathcal{I}_1$ is scaled \emph{horizontally} to an interval $\mathcal{I}'_0$ with $|\mathcal{I}'_0|=|\mathcal{I}_0|$. The exact location of $\mathcal{I}'_0$ on $[0,1]$ is such that the continuity is maintained in $g\circ f$ and thus depends on the scaling of other portions. To obtain $f\circ g$, the graph of $g$ on $g^{-1}(\mathcal{I}_0)$ is scaled \emph{vertically} to an interval $\mathcal{I}'_1$ with $|\mathcal{I}'_1|=|\mathcal{I}_1|$. The exact location of $\mathcal{I}'_1$ on $[0,1]$ is such that the continuity is maintained in $f\circ g$. Figure~\ref{fig:WeeklyDiscussion_20200406Page-1} illustrates the scaling operation of an affine segment of $f$.

\begin{figure}
 \centering
  \includegraphics[width=10cm]{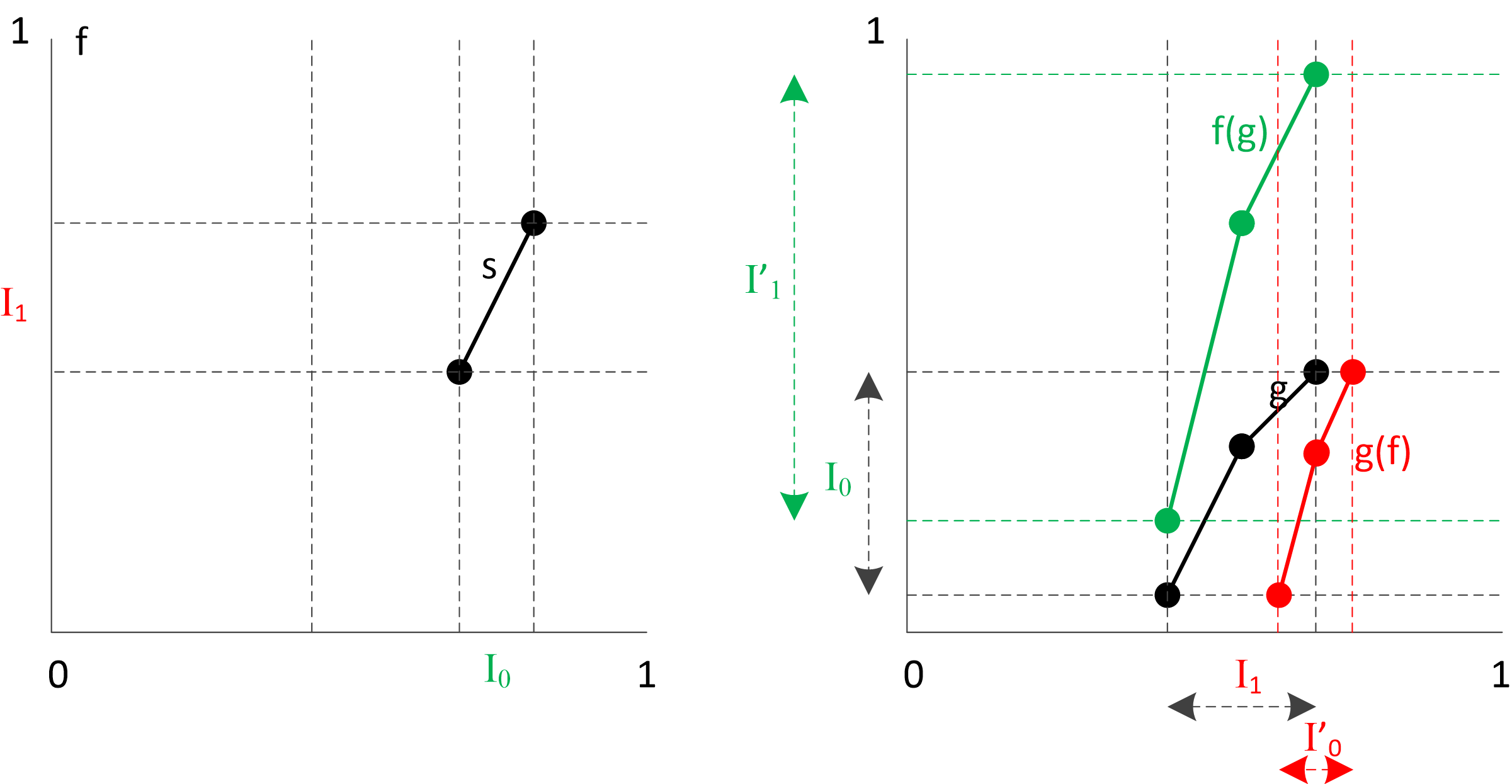}
  \caption{The scaling operation of $g\circ f$ and $f\circ g$ on the graph of $g$ by an affine segment of $f$.}
  \label{fig:WeeklyDiscussion_20200406Page-1}
\end{figure} 

\begin{lemma}
Let $f_1 \in \mathbb{F}$ and $g\in\mathbb{G}$. Then there exists $f_2 \in \mathbb{F}$ such that $f_1 \circ g \circ f_2 \in \mathbb{G}$.
\label{lemma:f1gf2_f2exists}
\end{lemma}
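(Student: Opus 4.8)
The plan is to notice that, among the five defining properties of $\mathbb{G}$, the only one that can fail for a composition with $\mathbb{F}$ maps is $\lambda$-preservation. For \emph{any} $f_2\in\mathbb{F}$ the map $f_1\circ g\circ f_2$ is piecewise affine with finitely many breakpoints; its breakpoints are dyadic, since the breakpoints of $g$ are dyadic by Lemma~\ref{lemma:alldyadic}, the maps in $\mathbb{F}$ send dyadics to dyadics, and the $g$-preimages of the (dyadic) breakpoints of $f_1$ are again dyadic by Lemma~\ref{lemma:alldyadic}; and every slope is a product of $\pm$ integer powers of $2$, hence $\pm$ an integer power of $2$. Thus the whole problem reduces to choosing $f_2$ so that $f_1\circ g\circ f_2$ is $\lambda$-preserving, which I will check via the sum criterion of Lemma~\ref{lemma:basicderivativeinverse}.

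For the construction I would first refine a partition $\mathcal{A}_1<\cdots<\mathcal{A}_N$ of $[0,1]$ so that on each $\mathcal{A}_p$ the map $g$ is affine with slope of absolute value $2^{b_p}$ and the image $g(\mathcal{A}_p)$ lies inside a single affine piece of $f_1$, on which $f_1$ has slope $2^{d_p}$; such a partition is obtained by adjoining to the breakpoints of $g$ the $g$-preimages of the breakpoints of $f_1$. The key observation is that $f_1$ applies a single well-defined \emph{vertical} scaling factor $2^{d_p}$ to the portion of $g$ lying over $\mathcal{A}_p$. I would then let $f_2$ be the increasing piecewise-affine bijection mapping a new partition $\mathcal{A}'_1<\cdots<\mathcal{A}'_N$ affinely onto $\mathcal{A}_1<\cdots<\mathcal{A}_N$ with $|\mathcal{A}'_p|=2^{d_p}|\mathcal{A}_p|$, so that $f_2$ has slope $2^{-d_p}$ on $\mathcal{A}'_p$; that is, $f_2$ undoes \emph{horizontally} exactly the vertical scaling introduced by $f_1$ on each piece.

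To verify the outcome, note that the slope of $h:=f_1\circ g\circ f_2$ on $\mathcal{A}'_p$ is $2^{d_p}\cdot 2^{b_p}\cdot 2^{-d_p}=2^{b_p}$, so $h$ reproduces the slopes of $g$ piece by piece. For a generic $y$, writing $v=f_1^{-1}(y)$ and $g^{-1}(v)=\{u_p\}$ with $u_p\in\mathcal{A}_p$, one has $h^{-1}(y)=\{f_2^{-1}(u_p)\}$ and each preimage contributes $1/|h'|=2^{-b_p}$, whence $\sum_p 1/|h'|=\sum_{u_p\in g^{-1}(v)}1/|g'(u_p)|=1$ by $\lambda$-preservation of $g$; by Lemma~\ref{lemma:basicderivativeinverse}, $h$ is $\lambda$-preserving, so $h\in\mathbb{G}$.

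The step requiring the most care — and the heart of the argument — is confirming that $f_2$ is a genuine $\mathbb{F}$ map, in particular that the $\mathcal{A}'_p$ tile $[0,1]$. Setting $\mathcal{B}_p=f_1(g(\mathcal{A}_p))$, a short length bookkeeping gives $|\mathcal{A}'_p|=|\mathcal{B}_p|\,2^{-b_p}$, so that
\[
\sum_{p=1}^N|\mathcal{A}'_p|=\int_0^1\Big(\sum_{p:\,\mathcal{B}_p\ni y}2^{-b_p}\Big)\,d\lambda(y)=\int_0^1 1\,d\lambda(y)=1,
\]
using once more the pointwise identity $\sum_{u\in g^{-1}(v)}1/|g'(u)|=1$. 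Since the lengths $|\mathcal{A}'_p|=2^{d_p}|\mathcal{A}_p|$ are dyadic, the endpoints of the $\mathcal{A}'_p$ are dyadic, and the slopes $2^{-d_p}$ are positive integer powers of $2$; hence $f_2\in\mathbb{F}$, completing the argument.
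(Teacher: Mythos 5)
Your proposal is correct and takes essentially the same approach as the paper: both construct $f_2$ as the piecewise horizontal rescaling that exactly compensates the vertical scaling factors of $f_1$ (your slope $2^{-d_p}$ on a new partition with $|\mathcal{A}'_p|=2^{d_p}|\mathcal{A}_p|$ is the paper's scaling by $s_i$ on $g^{-1}(f_1^{-1}(\mathcal{Y}_i))$), and both verify that the rescaled intervals tile $[0,1]$ using the $\lambda$-preservation of $g$ — the paper via $\sum_j|\mathcal{I}_{i,j}|=|f_1^{-1}(\mathcal{Y}_i)|$, you via the equivalent integrated slope-sum identity. Your additional refinement making $g$ affine on each piece and the explicit $\lambda$-preservation check through Lemma~\ref{lemma:basicderivativeinverse} are harmless bookkeeping variants of the paper's observation that each portion of the graph of $g$ is scaled horizontally and vertically by the same factor.
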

\begin{proof}
Partition $[0,1]$ into a set of intervals $\{\mathcal{Y}_i\}$ such that the interiors of $f_1^{-1}(\mathcal{Y}_i)$ contains no breakpoints of $f_1$ and the endpoints of $\{\mathcal{Y}_i\}$ are all dyadic for all $i$. Suppose that the derivative of $f_1$ on $f_1^{-1}(\mathcal{Y}_i)$ is $s_i$. From $g$ to $f_1\circ g$, $f_1$ vertically scales the graph of $g$ on $g^{-1}(f_1^{-1}(\mathcal{Y}_i))$ by a factor of $s_i$. 

In the rectangle diagram representation of a map $f\in \mathbb{F}$, $[0,1]$ is partitioned into $\{\mathcal{I}_i\}$ and $f\in\mathbb{F}$ is completely defined by specifying a scaling factor from $\mathcal{I}_i$ to $\mathcal{J}_i$, for $i=1, 2, \ldots, $, where $\{\mathcal{J}_i\}$ is another interval partition of $[0,1]$. To construct $f_2$, let the scaling factor be $s_i$ on $g^{-1}(f_1^{-1}(\mathcal{Y}_i))$ for all $i$. Let $g^{-1}(f_1^{-1}(\mathcal{Y}_i))=\bigcup_j \mathcal{I}_{i,j}$ where $\mathcal{I}_{i,1}, \mathcal{I}_{i,2}, \ldots$ are intervals of mutually disjoint interiors. Map $f_2$ scales the graph of $f_1 \circ g$ on $\mathcal{I}_{i,j}$ horizontally to the graph of  $f_1 \circ g\circ f_2$ on $\mathcal{J}_{i,j}$ by $s_i$ and thus $|\mathcal{J}_{i,j}|=s_i |\mathcal{I}_{i,j}|$. Because $g$ is $\lambda$-preserving, 
\[
\sum_j|\mathcal{I}_{i,j}|=|f^{-1}_1(\mathcal{Y}_i)|=\frac{|\mathcal{Y}_i|}{s_i} \Rightarrow \sum_i\sum_j|\mathcal{J}_{i,j}|=\sum_i |\mathcal{Y}_i|=1.
\]
Therefore, $\mathcal{J}_{i,j}$ is a valid partition of $[0,1]$. Moreover, $s_i$ is in the form of $2^k$ for integer $k$ and the endpoints of $g^{-1}(f_1^{-1}(\mathcal{Y}_i))$ are dyadic because $f_1\in \mathbb{F}$ and $g\in \mathbb{G}$. Therefore, $f_2\in \mathbb{F}$. 

From $f_1\circ g$ to $f_1 \circ g \circ f_2$, $f_2$ horizontally scales the graph of $f_1 \circ g$ on $g^{-1}(f_1^{-1}(\mathcal{Y}_i))$ by a factor of $s_i$. Combining the two steps, from $g$ to $f_1 \circ g \circ f_2$, the graph of $g$ on $g^{-1}(f_1^{-1}(\mathcal{Y}_i))$ is scaled horizontally and vertically by the same factor for all $i$. Because $g\in\mathbb{G}$, it follows that $f_1 \circ g\circ f_2 \in \mathbb{G}$.
\end{proof}

\begin{lemma}
The equivalence classes defined in Definition~\ref{definition:ec} form a partition of set $\mathbb{G}$.
\end{lemma}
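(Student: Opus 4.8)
The plan is to verify directly the two defining properties of a partition: that the equivalence classes cover $\mathbb{G}$ (and are nonempty), and that any two of them are either identical or disjoint. Since Definition~\ref{definition:er} has already established that $\sim$ is reflexive, symmetric, and transitive, all three properties are available for free, and the argument is the standard one that every equivalence relation induces a partition of its underlying set.

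First I would use reflexivity to handle covering and nonemptiness. For any $g \in \mathbb{G}$, taking $f_1 = f_2 = g_{0,+}$ gives $g = f_1 \circ g \circ f_2$, so $g \sim g$ and hence $g \in [g]$. This shows that each class $[g]$ is nonempty and that $\bigcup_{g \in \mathbb{G}} [g] = \mathbb{G}$, since every element of $\mathbb{G}$ lies in at least one class, namely its own.

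Next I would show that if two classes intersect they must coincide. Suppose $[g_1] \cap [g_2] \neq \emptyset$ and pick $h$ in the intersection, so that $h \sim g_1$ and $h \sim g_2$. By symmetry $g_1 \sim h$, and by transitivity $g_1 \sim g_2$. To conclude $[g_1] = [g_2]$, take any $\hat{g} \in [g_1]$; then $\hat{g} \sim g_1$ together with $g_1 \sim g_2$ yields $\hat{g} \sim g_2$ by transitivity, so $\hat{g} \in [g_2]$, giving $[g_1] \subseteq [g_2]$. The reverse inclusion follows by the same argument with the roles of $g_1$ and $g_2$ exchanged, so $[g_1] = [g_2]$. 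Contrapositively, two distinct classes are disjoint.

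Together these two facts are precisely the definition of a partition of $\mathbb{G}$: the classes are nonempty, their union is all of $\mathbb{G}$, and they are pairwise either equal or disjoint. There is essentially no obstacle here beyond bookkeeping; the only point worth stressing is that the substantive verification has already been carried out in establishing that $\sim$ is an equivalence relation, so this lemma is merely the general principle that equivalence classes partition their underlying set, specialized to $\mathbb{G}$.
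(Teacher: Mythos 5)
Your proposal is correct and matches the paper's approach: the paper's one-line proof likewise rests on $\sim$ being an equivalence relation (with symmetry supplied by the invertibility of maps in $\mathbb{F}$, as asserted in Definition~\ref{definition:er}) and then invokes the standard fact that equivalence classes partition the underlying set. Your version simply spells out the routine covering and disjointness arguments that the paper leaves implicit.
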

\begin{proof}
Because any map in $\mathbb{F}$ is invertible, it follows that if $\hat{g}\in[g]$, then $g\in[\hat{g}]$ and $[g]=[\hat{g}]$, and if $\hat{g}_1,\hat{g}_2\in[g]$, then $[\hat{g}_1]=[\hat{g}_2]$. Therefore, any map in $\mathbb{G}$ is in exactly one equivalence class.
\end{proof}

However, the equivalence classes do not form a monoid. Consider the following definition of a binary operation $\odot$ on $[g]$. Let $[g]= [g_1]\odot[g_2]$ where $\hat{g}\in [g]$ if and only if $\hat{g}_1 \in [g_1]$ and $\hat{g}_2 \in [g_2]$ exist such that $\hat{g}\in[\hat{g}_1 \circ \hat{g}_2]$. Example~\ref{example:notec} shows that $[g_1]\odot[g_2]$ is not necessarily a single equivalence class. 

\begin{example}
Let $g_1=w_{2,[\frac{3}{4},1]}$ and $g_2=w_{2,[0,1]}$. Let $\hat{g}_2=w_{2,[0,1]}\in [g_2]$. Consider two elements in equivalence class $[g_1]$: $\hat{g}_{1,1}=w_{2,[\frac{1}{2},1]}\in [g_1]$ and $\hat{g}_{1,2}=w_{2,[\frac{1}{4},1]}\in [g_1]$. Figure~\ref{fig:WeeklyDiscussion_20200406Page-8} compares $\hat{g}_{1,1} \circ \hat{g}_2$ and $\hat{g}_{1,2} \circ \hat{g}_2$ and shows that they are not in the same equivalence class.

\begin{figure}
 \centering
  \includegraphics[width=7cm]{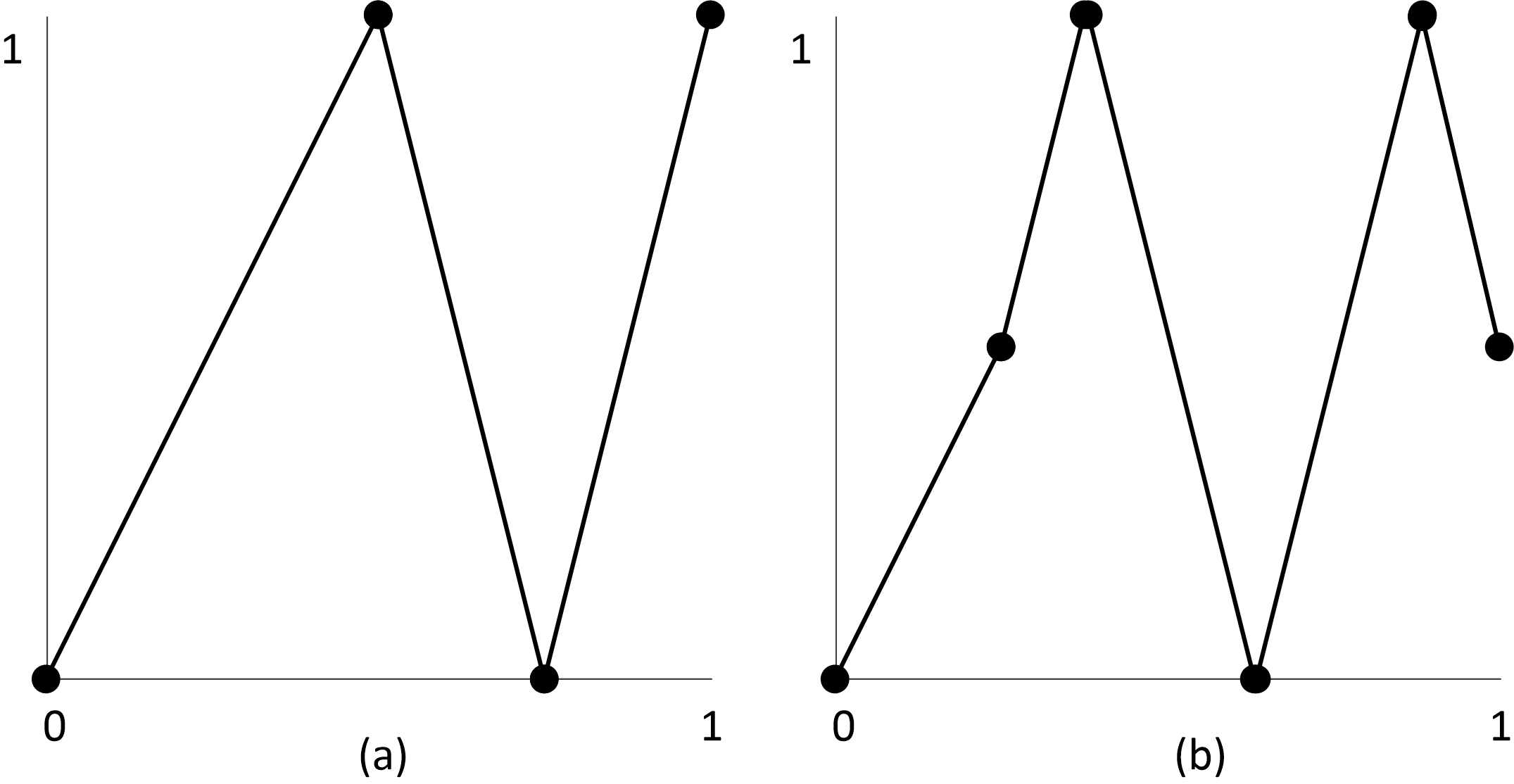}
  \caption{A counterexample to show that (a) $\hat{g}_{1,1} \circ \hat{g}_2$ and (b) $\hat{g}_{1,2} \circ \hat{g}_2$ are not in the same equivalence class. $\hat{g}_{1,1}, \hat{g}_{1,2} \in [g_1]$ where $g_1=w_{2,[\frac{3}{4},1]}$ and $\hat{g}_2=w_{2,[0,1]}$.}
  \label{fig:WeeklyDiscussion_20200406Page-8}
\end{figure}
\label{example:notec}
\end{example}

To avoid the technical difficulty of working with equivalence classes directly, consider the notion of sets of equivalence classes instead.
\begin{definition}[\textsl{Set of Equivalence Classes}]
Let $\Phi\subset \mathbb{G}$. Let $\{[g]_{g\in\Phi}\}$ be the set of equivalence classes $[g]$, $\forall g\in\Phi$. Define a binary operation $\odot$ on $\{[g]_{g\in\Phi}\}$ as follows: $\{[g]_{g\in\Phi_1}\} \odot \{[g]_{g\in\Phi_2}\}$ is the set of equivalence classes $[\hat{g}_1 \circ \hat{g}_2]$ where $g_1\in\Phi_1$ and $g_2\in\Phi_2$ exist such that $\hat{g}_1 \in [g_{1}]$ and $\hat{g}_2 \in [g_{2}]$.
\label{definition:ecset}
\end{definition}

\begin{lemma}
Let $\Phi_1, \Phi_2, \Phi_3 \subset \mathbb{G}$. Then 
\[
\left(\{[g]_{g\in\Phi_1}\} \odot \{[g]_{g\in\Phi_2}\}\right)\odot \{[g]_{g\in\Phi_3} \}= \{[g]_{g\in\Phi_1}\} \odot \left(\{[g]_{g\in\Phi_2}\}\odot \{[g]_{g\in\Phi_3}\}\right).
\]
\label{lemma:setofec_associativity}
\end{lemma}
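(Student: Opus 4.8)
The plan is to restate the claim in terms of $\sim$-saturated subsets of $\mathbb{G}$ and then show that both triple products collapse to a single ``master set.'' For $\Phi\subseteq\mathbb{G}$ write $\overline{\Phi}=\bigcup_{g\in\Phi}[g]$ for its saturation. A set of equivalence classes $\{[g]_{g\in\Phi}\}$ is determined by and determines $\overline{\Phi}$, so equality of two sets of equivalence classes is exactly equality of their saturations, and since $\odot$ depends on its arguments only through those saturations, re-expressing a product by its underlying saturated set is harmless. Under this dictionary Definition~\ref{definition:ecset} says that $\{[g]_{g\in\Phi_1}\}\odot\{[g]_{g\in\Phi_2}\}$ has saturation $\overline{P_{12}}$, where $P_{12}=\{\hat g_1\circ\hat g_2 : \hat g_1\in\overline{\Phi_1},\ \hat g_2\in\overline{\Phi_2}\}$; every such $\hat g_1\circ\hat g_2$ is in $\mathbb{G}$ because $\mathbb{G}$ is closed under composition. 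I would then introduce the master set $T=\{\hat g_1\circ\hat g_2\circ\hat g_3 : \hat g_i\in\overline{\Phi_i}\}\subseteq\mathbb{G}$ and prove that both sides of the identity have saturation $\overline{T}$.

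For the left side the saturated set is $\overline{\{\hat a\circ\hat g_3 : \hat a\in\overline{P_{12}},\ \hat g_3\in\overline{\Phi_3}\}}$. The inclusion $\overline{T}\subseteq(\cdot)$ is immediate, taking $\hat a=\hat g_1\circ\hat g_2\in P_{12}$. For the reverse inclusion I would take $\hat a\in\overline{P_{12}}$, write $\hat a=f_1\circ(\hat g_1\circ\hat g_2)\circ f_2$ with $f_1,f_2\in\mathbb{F}$ and $\hat g_i\in\overline{\Phi_i}$, and compute $\hat a\circ\hat g_3=f_1\circ\hat g_1\circ\hat g_2\circ(f_2\circ\hat g_3)$. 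The stray factor $f_2\in\mathbb{F}$ wedged in front of $\hat g_3$ is not a $\mathbb{G}$-map and cannot just be swallowed; here I invoke Lemma~\ref{lemma:f1gf2_f2exists} to obtain $f_3\in\mathbb{F}$ with $\hat g_3':=f_2\circ\hat g_3\circ f_3\in\mathbb{G}$. Then $\hat g_3'\sim\hat g_3$, so $\hat g_3'\in\overline{\Phi_3}$, and $\hat a\circ\hat g_3=f_1\circ(\hat g_1\circ\hat g_2\circ\hat g_3')\circ f_3^{-1}$. Since $\hat a\circ\hat g_3\in\mathbb{G}$ and $\hat g_1\circ\hat g_2\circ\hat g_3'\in T$, this exhibits $\hat a\circ\hat g_3$ as $\sim$-equivalent to an element of $T$, hence in $\overline{T}$.

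The right side is symmetric but needs two applications of the lemma. Its saturated set is $\overline{\{\hat g_1\circ\hat b : \hat g_1\in\overline{\Phi_1},\ \hat b\in\overline{P_{23}}\}}$ with $P_{23}=\{\hat g_2\circ\hat g_3\}$. Writing $\hat b=f_1\circ\hat g_2\circ\hat g_3\circ f_2$, the product $\hat g_1\circ\hat b$ carries a stray $f_1$ between $\hat g_1$ and $\hat g_2$ and a stray $f_2$ after $\hat g_3$. I would first apply Lemma~\ref{lemma:f1gf2_f2exists} to $(f_1,\hat g_2)$ to get $f$ with $\hat g_2':=f_1\circ\hat g_2\circ f\in\mathbb{G}$, replacing $f_1\circ\hat g_2$ by $\hat g_2'\circ f^{-1}$, and then apply it to $(f^{-1},\hat g_3)$ to get $f'$ with $\hat g_3':=f^{-1}\circ\hat g_3\circ f'\in\mathbb{G}$. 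This pushes all $\mathbb{F}$-factors to the far right: $\hat g_1\circ\hat b=(\hat g_1\circ\hat g_2'\circ\hat g_3')\circ\big((f')^{-1}\circ f_2\big)$ with $\hat g_2'\sim\hat g_2$, $\hat g_3'\sim\hat g_3$ and $\hat g_1\circ\hat g_2'\circ\hat g_3'\in T$, so $\hat g_1\circ\hat b\in\overline{T}$; the reverse inclusion $\overline{T}\subseteq(\cdot)$ is again immediate. Thus both sides have saturation $\overline{T}$ and are equal.

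I expect the main obstacle to be exactly this repeated absorption of interior $\mathbb{F}$-maps: an element of $\mathbb{F}$ sandwiched between two elements of $\mathbb{G}$ is generally not $\lambda$-preserving, and the role of Lemma~\ref{lemma:f1gf2_f2exists} is precisely that one can always pair it with a compensating $\mathbb{F}$-map on the other side so that the combined factor lands back inside $\mathbb{G}$ and inside the correct equivalence class. The remaining work is routine bookkeeping of which saturation is being taken, together with the idempotence $\overline{\overline{T}}=\overline{T}$, which closes the argument.
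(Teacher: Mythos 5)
Your proof is correct, and it rests on exactly the same key ingredient as the paper's: Lemma~\ref{lemma:f1gf2_f2exists}, used to pair each stray interior $\mathbb{F}$-map with a compensating one on the other side so that the sandwiched factor lands back in $\mathbb{G}$ and stays in the correct equivalence class. The organization, however, is genuinely different. The paper argues by direct re-association: it takes an element $g$ of the left-hand side, written as a composition involving ten maps $f_1,\ldots,f_{10}\in\mathbb{F}$, and constructs $f'_1,\ldots,f'_{10}$ one at a time (via that same lemma) so that $g$ acquires the right-hand parenthesization; strictly speaking it exhibits only one containment and leaves the converse implicit. You instead observe that a set of equivalence classes is determined by its saturation and that $\odot$ factors through saturations, and then normalize \emph{both} sides to the common master set $\overline{T}$ with $T=\{\hat g_1\circ\hat g_2\circ\hat g_3 : \hat g_i\in\overline{\Phi_i}\}$. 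This buys you three things the paper's write-up glosses over: the well-definedness of $\odot$ (independence of the choice of representing set $\Phi$), both inclusions at once by symmetry, and a normal form (every element of either triple product is $f\circ(\hat g_1\circ\hat g_2\circ\hat g_3)\circ f'$ with $f,f'\in\mathbb{F}$) that would extend immediately to products of any length. The paper's version, conversely, is more self-contained in that it works straight from Definition~\ref{definition:ecset} without introducing the auxiliary saturation machinery; but your two-step absorption on the right-hand side (first pairing $f_1$ with $\hat g_2$, then the leftover $f^{-1}$ with $\hat g_3$) is exactly the maneuver the paper performs with its $f'_5,f'_6,f'_7,f'_8$, so the mathematical content coincides.
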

\begin{proof}
By Definition~\ref{definition:ecset}, if $g \in (\{[g]_{g\in\Phi_1}\} \odot \{[g]_{g\in\Phi_2}\})\odot \{[g]_{g\in\Phi_3} \}$, then $f_1, f_2, \ldots, f_{10} \in \mathbb{F}$ exist such that for some $g_1\in\Phi_1, g_2\in\Phi_2, g_3\in\Phi_3$, 
\[
g=f_1 \circ \left(\left(f_2 \circ \left( \left(f_3 \circ g_1 \circ f_4 \right) \circ  \left(f_5 \circ g_2 \circ f_6 \right)\right) f_7 \right)\circ \left(f_8 \circ g_3 \circ f_9 \right)\right) \circ f_{10}.
\]
Let 
\[
g=f'_1 \circ \left( \left(f'_2 \circ g_1 \circ f'_3 \right) \circ  \left(f'_4 \circ \left(\left( f'_5 \circ g_2 \circ f'_6 \right) \circ \left(f'_7 \circ g_3 \circ f'_8\right) \right) \circ f'_9 \right)\right) \circ f'_{10},
\]
where $f'_1, \ldots, f'_{10}\in\mathbb{F}$ are determined such that 
\[
\left\{
\begin{array}{l}
f_1 \circ f_2 \circ f_3= f'_1 \circ f'_2,\\
f_4 \circ f_5= f'_3 \circ f'_4 \circ f'_5,\\
f_6 \circ f_7 \circ f_8= f'_6 \circ f'_7,\\
f_9 \circ f_{10}= f'_8 \circ f'_9 \circ f'_{10},
\end{array}
\right.
\]
and $f'_2 \circ g_1 \circ f'_3$, $f'_5 \circ g_2 \circ f'_6$, $f'_7 \circ g_3 \circ f'_8$, $f'_4 \circ \left(\left( f'_5 \circ g_2 \circ f'_6 \right) \circ \left(f'_7 \circ g_3 \circ f'_8\right) \right) \circ f'_9$ are all in $\mathbb{G}$. Let $f'_1=g_{0,+}$ and $f'_2=f_1 \circ f_2 \circ f_3$. From Lemma~\ref{lemma:f1gf2_f2exists}, $f'_3$ exists to make $f'_2 \circ g_1 \circ f'_3\in\mathbb{G}$. Next, let $f'_4=g_{0,+}$ and $f'_5 = (f'_3)^{-1}\circ f_4 \circ f_5$ and $f'_6$ exists to make $f'_5 \circ g_2 \circ f'_6\in\mathbb{G}$. Let $f'_7 = (f'_6)^{-1}\circ f_6 \circ f_7 \circ f_8$ and $f'_8$ exists to make $f'_7 \circ g_3 \circ f'_8\in\mathbb{G}$. Finally, $f'_9$ exists such that $f'_4 \circ \left(\left( f'_5 \circ g_2 \circ f'_6 \right) \circ \left(f'_7 \circ g_3 \circ f'_8\right) \right) \circ f'_9 \in\mathbb{G}$. Let $f'_{10}=(f'_9)^{-1}\circ (f'_8)^{-1}\circ f_9 \circ f_{10}$. Hence, by Definition~\ref{definition:ecset}, $g\in \{[g]_{g\in\Phi_1}\} \odot \left(\{[g]_{g\in\Phi_2}\}\odot \{[g]_{g\in\Phi_3}\}\right)$. This completes the proof.
\end{proof}

\begin{theorem}
Let $\Phi_a=\{g_{0,+}\}, \Phi_b=\{g_{0,-}\}, \Phi_c = \{\bar{w}_{3,[\frac{1}{4},\frac{1}{2}]}\}, \Phi_d=\{w_{2,[\frac{3}{4},1]}\}$ and $\Phi_e=\{w_{2,[0,1]}\}$. Construct a collection of sets of equivalence classes, each of which is equal to $\{[g]_{g\in\Phi_1}\} \odot \{[g]_{g\in\Phi_2}\}\odot \cdots $ where $\Phi_i$ for any $i$ is one of $\Phi_a, \Phi_b, \Phi_c, \Phi_d, \Phi_e$. Then, the collection is a monoid and finitely generated. The union of all the elements of the collection is a set of equivalence classes, the union of which is $\mathbb{G}$.
\end{theorem}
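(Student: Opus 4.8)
The plan is to set $G_a=\{[g]_{g\in\Phi_a}\}$ and likewise $G_b,G_c,G_d,G_e$ for the remaining singleton index sets, and to take the collection $\mathcal{C}$ to consist of all finite $\odot$-products $G_{i_1}\odot\cdots\odot G_{i_N}$ whose factors are drawn from $\{G_a,G_b,G_c,G_d,G_e\}$. That $\mathcal{C}$ is a monoid I would check in three steps. Closure is immediate, since the $\odot$-product of two finite products of generators is again such a product. Associativity, and hence the unambiguity of the iterated product, is exactly Lemma~\ref{lemma:setofec_associativity}. The identity is $G_a=\{[g_{0,+}]\}$: because $\sim$ is defined by pre- and post-composition with maps of $\mathbb{F}$ and $g_{0,+}\in\mathbb{F}$, composing any representative $\hat{g}\in\mathbb{G}$ on either side with $g_{0,+}$ leaves its equivalence class unchanged, so for every $X\in\mathcal{C}$ both $X\odot G_a$ and $G_a\odot X$ return precisely the classes of $X$, giving $X\odot G_a=G_a\odot X=X$. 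Finite generation is then immediate, the five elements $G_a,G_b,G_c,G_d,G_e$ being generators by construction.

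For the last assertion, the union $S$ of all elements of $\mathcal{C}$ is a union of sets of equivalence classes, hence itself a set of equivalence classes, and since every equivalence class is contained in $\mathbb{G}$ the inclusion $\bigcup_{[g]\in S}[g]\subseteq\mathbb{G}$ is trivial. For the reverse inclusion I would fix $g\in\mathbb{G}$ and invoke Theorem~\ref{theorem:handfulgeneratormaps}. Grouping maximal runs of $\mathbb{F}$-factors into single maps of $\mathbb{F}$, $g$ can be put in the alternating form
\[
g=f_0\circ b_1\circ f_1\circ b_2\circ\cdots\circ b_n\circ f_n,
\]
where each $f_j\in\mathbb{F}$ and each $b_j\in\{g_{0,-},\bar{w}_{3,[\frac{1}{4},\frac{1}{2}]},w_{2,[\frac{3}{4},1]},w_{2,[0,1]}\}$ (the identity factors $g_{0,+}$ are deleted, merging adjacent $\mathbb{F}$-factors). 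Writing $G_{b_j}$ for the generator attached to $b_j$, the target is to produce $\hat{b}_j\in[b_j]$ with $\hat{b}_1\circ\cdots\circ\hat{b}_n\sim g$, for then $[g]=[\hat{b}_1\circ\cdots\circ\hat{b}_n]\in G_{b_1}\odot\cdots\odot G_{b_n}\subseteq S$.

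I would build $\hat{b}_j=p_j\circ b_j\circ q_j$ from left to right, using Lemma~\ref{lemma:f1gf2_f2exists} to restore $\lambda$-preservation at each stage. Put $p_1=g_{0,+}$ and choose $q_1\in\mathbb{F}$ with $\hat{b}_1=b_1\circ q_1\in\mathbb{G}$; inductively set $p_j=q_{j-1}^{-1}\circ f_{j-1}\in\mathbb{F}$ and choose $q_j\in\mathbb{F}$ with $\hat{b}_j=p_j\circ b_j\circ q_j\in\mathbb{G}$. Each $\hat{b}_j$ is then in $\mathbb{G}$ and $\sim b_j$, so $\hat{b}_j\in[b_j]$, and the composite lies in $\mathbb{G}$ by closure. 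The telescoping relations $q_{j-1}\circ p_j=f_{j-1}$ collapse the product to $\hat{b}_1\circ\cdots\circ\hat{b}_n=b_1\circ f_1\circ\cdots\circ b_n\circ q_n$, whence $g=f_0\circ(\hat{b}_1\circ\cdots\circ\hat{b}_n)\circ(q_n^{-1}\circ f_n)$ with $f_0,\ q_n^{-1}\circ f_n\in\mathbb{F}$, so $g\sim\hat{b}_1\circ\cdots\circ\hat{b}_n$ as required. (If no basic factor occurs then $g\in\mathbb{F}\cap\mathbb{G}=\{g_{0,+}\}$ and $[g]=G_a\in\mathcal{C}$.) Since $g$ was arbitrary, $\mathbb{G}\subseteq\bigcup_{[g]\in S}[g]$, and equality follows.

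The \emph{main obstacle} is precisely this last inclusion. The naive assignment $\hat{b}_j=b_j\circ f_j$ fails, because maps of $\mathbb{F}$ are not $\lambda$-preserving, so $\hat{b}_j\notin\mathbb{G}$ and the resulting product is not a legitimate element of a $\odot$-product. The remedy is to thread the $\mathbb{F}$-factors through the basic maps one at a time and re-establish measure preservation through Lemma~\ref{lemma:f1gf2_f2exists} at every step, which is the same bookkeeping that drives the proof of associativity in Lemma~\ref{lemma:setofec_associativity}.
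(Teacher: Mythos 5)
Your proposal is correct and follows essentially the same route as the paper: associativity via Lemma~\ref{lemma:setofec_associativity}, finite generation by construction, and the covering of $\mathbb{G}$ via the decomposition of Theorem~\ref{theorem:handfulgeneratormaps}; your telescoping construction with Lemma~\ref{lemma:f1gf2_f2exists} simply spells out the absorption of the intermediate $\mathbb{F}$-factors into $\mathbb{G}$-representatives, a step the paper leaves implicit and which is the same bookkeeping used in its proof of Lemma~\ref{lemma:setofec_associativity}. One detail where you improve on the paper: you correctly identify the identity element as $\{[g_{0,+}]\}=\{[g]_{g\in\Phi_a}\}$, whereas the paper's proof names $\{[g]_{g\in\Phi_e}\}$ with $\Phi_e=\{w_{2,[0,1]}\}$, which appears to be a slip since only composition with the class of the identity map $g_{0,+}$ fixes every product under $\odot$.
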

\begin{proof}
By Lemma~\ref{lemma:setofec_associativity}, associativity holds for the elements in the collection. Equivalence class set $\{[g]_{g\in\Phi_e}\}$ is the identity element. The collection is thus a monoid. By construction, $\{[g]_{g\in\Phi_a}\}$, $\{[g]_{g\in\Phi_b}\}$, $\{[g]_{g\in\Phi_c}\}$, $\{[g]_{g\in\Phi_d}\}$ and $\{[g]_{g\in\Phi_e}\}$ are the generators of the monoid. By Theorem~\ref{theorem:handfulgeneratormaps}, any map $g\in\mathbb{G}$ is equal to the composition of a combination of maps, each of which is in one of equivalence classes $[g]_{g\in\Phi_a}$, $[g]_{g\in\Phi_b}$, $[g]_{g\in\Phi_c}$, $[g]_{g\in\Phi_d}$ and $[g]_{g\in\Phi_e}$. Therefore, the last part of the theorem holds.
\end{proof}

Next we characterize $[g]$. Partition $[0,1]$ into intervals $\{\mathcal{Y}_l\}$, $l=1, 2, \ldots, m$ with $|\mathcal{Y}_l|>0$ for all $l$ and $\mathcal{Y}_1<\mathcal{Y}_2<\cdots<\mathcal{Y}_m$ such that no breakpoint exists whose $y$-coordinate falls in the interior of any $\mathcal{Y}_l$, i.e., no breakpoint $B$ exists such that $B_y \in(\mathcal{Y}_l^0, \mathcal{Y}_l^1)$ for any $l$. As $x$ increases from $0$ to $1$, $g(x)$ moves from one interval to another or stays in one interval but changes the sign of the derivative. We characterize $g$ by the sequence of the indices, referred to as \emph{evolution sequence}, representing the intervals on which $g(x)$ resides. 

More precisely, let $\mathcal{I}_1<\cdots<\mathcal{I}_n$ be a partition of $[0,1]$ such that for any $i=1, 2, \ldots, n$, a unique $l_i \in \{1, 2, \ldots, m\}$ exists where $g(\mathcal{I}_i)= \mathcal{Y}_{l_i}$ and the graph of $g$ is affine on every $\mathcal{I}_i$. Because no breakpoint exists inside any of $\{\mathcal{Y}_l\}$, the construction of $\{\mathcal{I}_i\}$ exists and is unique. The evolution sequence is defined as $\pm l_1 l_2\cdots l_n$, where the sign $+$ or $-$ represents the sign of the derivative of $g$ in interval $\mathcal{I}_1$. In an evolution sequence $\pm l_1 l_2\cdots l_n$, adjacent indices $l_i$ and $l_{i+1}$ differ at most by $1$. In $\mathcal{I}_i\cup\mathcal{I}_{i+1}$, $g$ is increasing if $l_{i+1}-l_{i}=1$ or decreasing if $l_{i+1}-l_{i}=-1$. If $l_{i+1}-l_{i}=0$, $g$ alternates between increasing and decreasing in $\mathcal{I}_i$ and $\mathcal{I}_{i+1}$.

Suppose that $\hat{g}\in[g]$. From the scaling operation illustrated in Figure~\ref{fig:WeeklyDiscussion_20200406Page-1}, partitions $\{I_i\}$ and $\{I'_i\}$ of $[0,1]$ for $i=1, \ldots, n$ exist such that for all $i$, $\hat{g}(I'_i)\simeq c_ig(I_i)+d_i$ for some numbers $c_i,d_i$. The following lemma follows immediately.
\begin{lemma}
If $g_1, g_2\in\mathbb{G}$ have the same evolution sequence, then $g_1$ and $g_2$ are in the same equivalence class.
\label{lemma:sameEC}
\end{lemma}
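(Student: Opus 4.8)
The plan is to produce the two maps $f_1,f_2\in\mathbb{F}$ with $g_2=f_1\circ g_1\circ f_2$ explicitly, using the common evolution sequence to align $g_1$ and $g_2$ leg by leg. Fix the data attached to the shared sequence $\pm l_1l_2\cdots l_n$: for $j=1,2$ let $\{\mathcal{Y}^{(j)}_1<\cdots<\mathcal{Y}^{(j)}_m\}$ be the $y$-partition of $g_j$ and $\{\mathcal{I}^{(j)}_1<\cdots<\mathcal{I}^{(j)}_n\}$ the induced $x$-partition, so that $g_j$ is affine on each $\mathcal{I}^{(j)}_i$ and $g_j(\mathcal{I}^{(j)}_i)=\mathcal{Y}^{(j)}_{l_i}$. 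The first thing I would check carefully is that the sequence determines the sign $\sigma_i\in\{+,-\}$ of the slope of $g_j$ on $\mathcal{I}^{(j)}_i$: $\sigma_1$ is the recorded starting sign, and if $\sigma_i=+$ then at the right end of $\mathcal{I}^{(j)}_i$ the value sits at the top of $\mathcal{Y}^{(j)}_{l_i}$, so either $l_{i+1}=l_i+1$ (and $\sigma_{i+1}=+$) or $l_{i+1}=l_i$ (and $\sigma_{i+1}=-$), with the symmetric statement for $\sigma_i=-$; hence $\sigma_{i+1}$ is a function of $\sigma_i$ and $(l_i,l_{i+1})$, so every $\sigma_i$ is the same for $g_1$ and $g_2$. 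By Lemma~\ref{lemma:alldyadic} all endpoints of every $\mathcal{Y}^{(j)}_l$ and $\mathcal{I}^{(j)}_i$ are dyadic.

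Next I would build $f_1\in\mathbb{F}$ by requiring it to carry $\mathcal{Y}^{(1)}_l$ increasingly onto $\mathcal{Y}^{(2)}_l$ for every $l$. On each $\mathcal{Y}^{(1)}_l$ I connect the dyadic points $\big((\mathcal{Y}^{(1)}_l)^0,(\mathcal{Y}^{(2)}_l)^0\big)$ and $\big((\mathcal{Y}^{(1)}_l)^1,(\mathcal{Y}^{(2)}_l)^1\big)$ by a single affine piece when the slope is an integer power of $2$, and otherwise through a partition point supplied by Lemma~\ref{lemma:x1x2x3} (allowing, as $\mathbb{F}$ does, negative integer exponents so as to cover slopes below $1$). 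The resulting $f_1$ is increasing, fixes $0$ and $1$, and has dyadic breakpoints and power-of-two slopes, so $f_1\in\mathbb{F}$; in particular $f_1^{-1}(\mathcal{Y}^{(2)}_l)=\mathcal{Y}^{(1)}_l$.

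Then I would define $f_2$ leg by leg as the forced map $f_2:=(g_1|_{\mathcal{I}^{(1)}_i})^{-1}\circ f_1^{-1}\circ (g_2|_{\mathcal{I}^{(2)}_i})$ on each $\mathcal{I}^{(2)}_i$. The three factors send $\mathcal{I}^{(2)}_i\xrightarrow{g_2}\mathcal{Y}^{(2)}_{l_i}\xrightarrow{f_1^{-1}}\mathcal{Y}^{(1)}_{l_i}\xrightarrow{g_1^{-1}}\mathcal{I}^{(1)}_i$, so $f_2$ maps $\mathcal{I}^{(2)}_i$ onto $\mathcal{I}^{(1)}_i$; since consecutive target intervals share an endpoint, the pieces glue into a continuous increasing self-map of $[0,1]$ fixing $0$ and $1$. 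Orientation is the crux: $g_2|_{\mathcal{I}^{(2)}_i}$ and $(g_1|_{\mathcal{I}^{(1)}_i})^{-1}$ both carry the sign $\sigma_i$, so their composition with the increasing $f_1^{-1}$ is increasing no matter what $\sigma_i$ is — this is exactly where equality of the two evolution sequences enters. Its slopes are products of integer powers of $2$ and its breakpoints are dyadic (the breakpoints of $f_1^{-1}$ pull back to dyadic points through the affine, dyadic-data maps $g_2$ and $g_1^{-1}$, again by Lemma~\ref{lemma:alldyadic}), whence $f_2\in\mathbb{F}$.

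Finally, on each $\mathcal{I}^{(2)}_i$ the construction telescopes to $f_1\circ g_1\circ f_2=f_1\circ g_1\circ g_1^{-1}\circ f_1^{-1}\circ g_2=g_2$, so the identity holds on all of $[0,1]$ and $g_2=f_1\circ g_1\circ f_2$, i.e. $g_1\sim g_2$ by Definition~\ref{definition:er}. The step I expect to demand the most care is the apparent obstruction that partition points forced into $f_1$ on a $y$-interval $\mathcal{Y}_l$ covered by several legs would create spurious type~I breakpoints in $f_1\circ g_1$; defining $f_2$ leg-wise as above automatically installs the compensating breakpoints on each such leg, so these do not survive in the composition. Making this cancellation precise, together with the sign bookkeeping of the previous paragraph, is the heart of the argument, while continuity, the dyadic and power-of-two conditions, and the final telescoping are routine.
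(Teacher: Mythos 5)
Your construction is essentially an explicit version of the argument the paper leaves implicit (the paper deduces the lemma ``immediately'' from the observation that pre- and post-composition with $\mathbb{F}$ maps rescales the legs of $g$ horizontally and vertically; your $f_1$ and $f_2$ realize exactly those rescalings). Your sign bookkeeping is correct --- the recorded starting sign together with the increments $l_{i+1}-l_i\in\{-1,0,1\}$ does determine every $\sigma_i$, matching the paper's remark that $g$ is increasing when $l_{i+1}-l_i=1$, decreasing when $l_{i+1}-l_i=-1$, and alternates when $l_{i+1}-l_i=0$ --- and the leg-wise definition of $f_2$ with the telescoping $f_1\circ g_1\circ f_2=g_2$ is sound; note that the ``spurious type I breakpoints'' you worry about at the end are harmless for a different reason than you suggest: Definition~\ref{definition:er} only requires the exact pointwise identity $g_2=f_1\circ g_1\circ f_2$, and places no condition on the intermediate compositions.

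There is, however, one step that fails as written: your claim that ``by Lemma~\ref{lemma:alldyadic} all endpoints of every $\mathcal{Y}^{(j)}_l$ are dyadic.'' Lemma~\ref{lemma:alldyadic} concerns points on the graph of $g$, whereas the boundaries of the partition $\{\mathcal{Y}_l\}$ in the definition of an evolution sequence are arbitrary subject only to containing no breakpoint $y$-coordinate in their interiors. They need not be dyadic: for the tent map, $\{[0,a],[a,1]\}$ is an admissible partition for every $a\in(0,1)$, including irrational $a$. With a non-dyadic boundary, your $f_1$ acquires non-dyadic breakpoints and is not in $\mathbb{F}$, and $f_2$ inherits the defect, so the construction collapses exactly where you invoke membership in $\mathbb{F}$. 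The repair is short and should be stated: the breakpoint $y$-coordinates of $g_1$ and $g_2$ are finitely many and dyadic by Lemma~\ref{lemma:alldyadic}; any partition boundary not equal to one of them can be perturbed to a nearby dyadic value without crossing any breakpoint $y$-coordinate, which changes neither evolution sequence, while a boundary equal to a breakpoint $y$-coordinate is already dyadic. After this normalization the endpoints of the $\mathcal{I}^{(j)}_i$ are genuinely covered by Lemma~\ref{lemma:alldyadic} (they are graph preimages of dyadic values), and the rest of your argument goes through. A smaller repair of the same kind: Lemma~\ref{lemma:x1x2x3} is stated only for slopes at least $1$, so your use of it for slopes below $1$ requires the symmetric variant (apply the lemma to the reflected points and invert), which is valid but needs to be said.
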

The converse of Lemma~\ref{lemma:sameEC} is not true. 

Define the size of an evolution sequence $\pm l_1 l_2\cdots l_n$ as $|\pm l_1 l_2\cdots l_n|=mn$, where $m$ represents the number of intervals $\{\mathcal{Y}_l\}$ and $n$ the number of intervals $\{\mathcal{I}_i\}$. It is easy to show that if partition $\{\mathcal{Y}'_l\}$ is a strict refinement of partition $\{\mathcal{Y}_l\}$, then $m'>m$ and $n'>m$. Therefore, for given $g$, one can minimize $|\pm l_1 l_2\cdots l_n|$ by using only the partition $\{\mathcal{Y}_l\}$ where at least one breakpoint exists at the boundary of any two adjacent intervals. In this case, the evolution sequence is referred to as the \emph{characteristic sequence of $g$}, denoted by $C(g)$, as shown in Figure~\ref{fig:WeeklyDiscussion_20200406Page-3}. Characteristic sequence $C(g)$ is unique for any $g$ given the breakpoints of $g$.

\begin{figure}
 \centering
  \includegraphics[width=8cm]{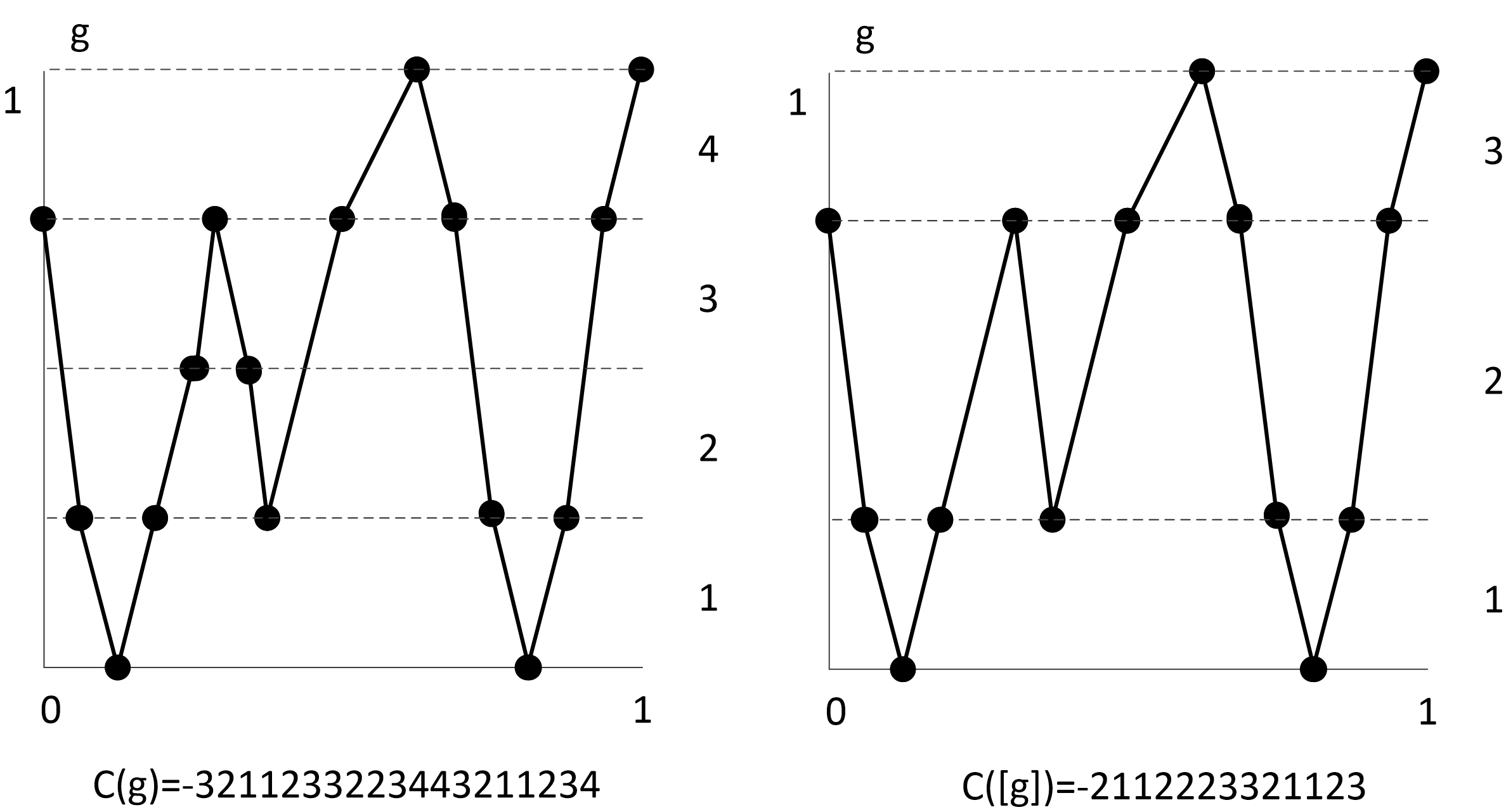}
  \caption{Example of $C(g)$ and $C([g])$. }
  \label{fig:WeeklyDiscussion_20200406Page-3}
\end{figure}

Furthermore, not all elements in $[g]$ have the same size of the characteristic sequence, because $f_1(g(f_2))$ adds or removes type I breakpoints as seen in Section~\ref{sec:generators}. The one of the minimum size is referred to as the \emph{characteristic sequence of $[g]$}, denoted by $C([g])$. That is, suppose that 
\begin{equation}
\hat{g}^*=\arg\min_{\hat{g}\in[g]} |C(\hat{g})|.
\label{eq:g^*}
\end{equation} 
Then
\begin{equation}
C([g])=C(\hat{g}^*). 
\label{eq:C[g]}
\end{equation}
In (\ref{eq:g^*}), $|C(\hat{g})|$ represents the size of $C(\hat{g})$. It may appear that if $\hat{g}^*_1$ and $\hat{g}^*_2$ both satisfy (\ref{eq:g^*}), $C(\hat{g}^*_1)$ and $C(\hat{g}^*_2)$ are not necessarily equal, even though $|C(\hat{g}^*_1)|=|C(\hat{g}^*_2)|$. If so, then $C[g]$ in (\ref{eq:C[g]}) would not be well defined. However, Theorem~\ref{theorem:C[g]} shows that $C(\hat{g}^*_1)=C(\hat{g}^*_2)$. Therefore, $C([g])$ defined in (\ref{eq:C[g]}) is unique given $g$.

\begin{theorem}
If $g^*_1$ and $g^*_2$ both satisfy (\ref{eq:g^*}), then $C(\hat{g}^*_1)=C(\hat{g}^*_2)$.
\label{theorem:C[g]}
\end{theorem}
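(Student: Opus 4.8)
The plan is to show that $C([g])$ is nothing but a canonical encoding of a purely order-theoretic invariant of the equivalence class, and that every minimizer in (\ref{eq:g^*}) realizes exactly this encoding.

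\emph{Step 1: the invariant.} For $\hat{g}\in[g]$ let $0=c_0<c_1<\cdots<c_k<c_{k+1}=1$ be the endpoints together with the type II breakpoints (the local extrema), and let $v_j=\hat{g}(c_j)$. Since consecutive laps have opposite monotonicity, the values $v_0,v_1,\dots,v_{k+1}$ form a zigzag. I would record the \emph{turning pattern} $\Sigma(\hat{g})$ consisting of the sign of the first lap together with the relative order (the ranks among the distinct values) of $v_0,\dots,v_{k+1}$. The first thing to prove is that $\Sigma$ is constant on $[g]$: if $\hat{g}_2=f_1\circ\hat{g}_1\circ f_2$ with $f_1,f_2\in\mathbb{F}$, then because $f_2$ is an increasing homeomorphism fixing $0,1$ it carries the extrema of $\hat{g}_1$ to those of $\hat{g}_1\circ f_2$ preserving their left-to-right order, and because $f_1$ is increasing it sends each $v_j$ to $f_1(v_j)$, preserving both the ranks and the initial sign. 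Hence $\Sigma(\hat{g}_2)=\Sigma(\hat{g}_1)$.

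\emph{Step 2: minimizers have no spurious levels.} For any $\hat{g}$, the coarsest admissible partition $\{\mathcal{Y}_l\}$ used to form $C(\hat{g})$ has boundaries exactly at the distinct $y$-coordinates of the breakpoints of $\hat{g}$; the distinct values among $\{v_j\}$ (which include $0$ and $1$ since $\hat{g}$ is onto) are always among them. I claim a minimizer has \emph{no} boundary other than these turning/endpoint values. Suppose a boundary $b$ is the $y$-coordinate of type I breakpoints only, so no lap turns at height $b$. Then the two strips meeting at $b$ merge into one interval $\mathcal{Y}$ with no turning value inside, $\hat{g}$ is monotone on each component of $\hat{g}^{-1}(\mathcal{Y})$, and $\hat{g}^{-1}(\mathcal{Y})=\bigcup_i\mathcal{I}_i$ with $\hat{g}(\mathcal{I}_i)=\mathcal{Y}$. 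By Corollary~\ref{corollary:typeI_straighten_1} these legs can be straightened to eliminate all type I breakpoints at height $b$; crucially this straightening is realized by right-composition with a map of $\mathbb{F}$ (Lemma~\ref{lemma:typeI_straighten_0} gives $\hat{g}=g_1\circ f_1$ with $f_1\in\mathbb{F}$, so $g_1=\hat{g}\circ f_1^{-1}\sim\hat{g}$), so it stays inside $[g]$. Removing the boundary $b$ lowers $m$ and $n$, hence $|C|$, contradicting minimality. Therefore every boundary of the partition of a minimizer is a distinct value among $\{v_j\}$.

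\emph{Step 3: the sequence is forced.} Once the boundaries are exactly the distinct values among $\{v_j\}$, the number of strips $m$ equals the number of distinct values minus one, and since no breakpoint lies in a strip interior each lap is affine across every strip it meets. Tracing the graph from $x=0$, the evolution sequence is then completely determined: for the $j$-th lap (from $v_j$ to $v_{j+1}$) it lists the strips lying between the ranks of $v_j$ and $v_{j+1}$ in the appropriate order, and the leading sign is that of the first lap. Thus $C(\hat{g})=S(\Sigma(\hat{g}))$ for an explicit function $S$ of the invariant alone, and any type I breakpoint sitting on a boundary leaves this strip walk unchanged. Combining the steps: both $\hat{g}^*_1$ and $\hat{g}^*_2$ are minimizers, so $C(\hat{g}^*_i)=S(\Sigma(\hat{g}^*_i))$, while $\Sigma(\hat{g}^*_1)=\Sigma(\hat{g}^*_2)$ since both lie in $[g]$; hence $C(\hat{g}^*_1)=C(\hat{g}^*_2)$.

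\emph{Main obstacle.} The delicate point is Step 2: one must verify that a partition boundary carrying only type I breakpoints can always be dissolved \emph{within} the equivalence class, that is, using pre/post-composition by $\mathbb{F}$ alone and never a window perturbation. This is exactly why it matters that the relevant straightening results (Lemma~\ref{lemma:typeI_straighten_0} and Corollary~\ref{corollary:typeI_straighten_1}) are obtained purely by right-composition with an invertible $\mathbb{F}$ map; checking that their hypotheses hold at such a boundary — that the merged strip contains no turning value, so that $\hat{g}^{-1}(\mathcal{Y})$ is precisely a union of full legs — is the crux. One must also confirm that dissolving one boundary introduces no breakpoints at new $y$-levels, which holds because the straightening replaces each leg by a single affine piece spanning the merged strip.
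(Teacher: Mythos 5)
Your proposal is correct and takes essentially the same route as the paper's proof: both rest on the facts that pre/post-composition by increasing $\mathbb{F}$-homeomorphisms preserves the type II breakpoints, their left-to-right order, and the ranks of their values (your invariant $\Sigma$), and that type I breakpoints at levels carrying no type II breakpoint can be dissolved \emph{within} the equivalence class by the straightening results (Lemma~\ref{lemma:typeI_straighten_0} and Corollary~\ref{corollary:typeI_straighten_1}, realized as right-composition by an invertible $\mathbb{F}$ map), so any minimizer's partition has boundaries exactly at the critical values. Your Step 3, exhibiting $C(\hat{g})=S(\Sigma(\hat{g}))$ as an explicit function of the order-theoretic invariant, is a slightly more formal rendering of the paper's closing claim that the partitions of two unsimplifiable minimizers are carried to one another by an $\mathbb{F}$-scaling and hence yield the same evolution sequence.
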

\begin{proof}
Let $\mathcal{A}=\{0=y_0<y_1<\cdots<y_m=1\}$ be a set where if $x$ is a type II breakpoint of $g$ or $x=0,1$ then $g(x)\in \mathcal{A}$. Suppose that $x_1$ is a type I breakpoint and $g(x_1)\not\in\mathcal{A}$. Let $\mathcal{Y}$ be an interval such that $g(x_1)\in\mathcal{Y}$ and $\mathcal{Y}\cap\mathcal{A}=\emptyset$. Breakpoint $x_1$ can be eliminated by applying Corollary~\ref{lemma:typeI_straighten_1} on $\mathcal{Y}$. Applying Corollary~\ref{lemma:typeI_straighten_1} to eliminate all such type I breakpoints by some $f\in\mathbb{F}$, one obtains $\hat{g}=g\circ f$. When $\hat{g}$ cannot be simplified with Corollary~\ref{lemma:typeI_straighten_1}, for any interval $\mathcal{Y}$, either $\mathcal{Y}\cap\mathcal{A}\neq\emptyset$ or $\hat{g}^{-1}(\mathcal{Y})$ contains no breakpoints.

For $\hat{g}$, let partition $\{\mathcal{Y}_l\}$ where $\mathcal{Y}_l=[y_{l-1},y_l]$ for $l=1,\ldots,m$. Characteristic sequence $C(\hat{g})$ can be obtained with partition $\{\mathcal{Y}_l\}$, because if $x$ is a breakpoint of $\hat{g}$ then $\hat{g}(x)\in\mathcal{A}$. On the other hand, recall that $\hat{g}_1=f_1(g(f_2))$ with any $f_1, f_2\in\mathbb{F}$ scales $g$ horizontally and vertically, and thus does not add or eliminate any type II breakpoint or endpint. The size of any partition of $[0,1]$ to obtain $C(\hat{g}_1)$ thus cannot be smaller than $|\{\mathcal{Y}_l\}|$. Therefore, $|C(\hat{g})|\le |C(\hat{g}_1)|$ and $\hat{g}$ satisfies (\ref{eq:g^*}). 

Given $g$, $C(\hat{g}^*)$ is unique in the sense that for $\hat{g}^*_1, \hat{g}^*_2\in[g]$, if neither $\hat{g}^*_1$ nor $\hat{g}^*_2$ can be simplified with Corollary~\ref{lemma:typeI_straighten_1}, then $C(\hat{g}^*_1)=C(\hat{g}^*_2)$. The reason is that the partition set $\{\mathcal{Y}_l\}$ defined for $\hat{g}^*_1$ can be scaled by some $f\in\mathbb{F}$ to become that defined for $\hat{g}^*_2$. As a result, $\hat{g}^*_1$ and $\hat{g}^*_2$ have the same evolution sequence.
\end{proof}
 
The following corollary from Lemma~\ref{lemma:sameEC} and Theorem~\ref{theorem:C[g]} provides a simple way to check whether $g_1$ and $g_2$ are in the same equivalence class.
\begin{corollary}
$g_1, g_2\in\mathbb{G}$ are in the same equivalence class if and only if $C([g_1])=C([g_2])$.
\label{corollary:sameEC}
\end{corollary}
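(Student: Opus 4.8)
The statement is the conjunction of two implications, and the plan is to dispatch the forward implication using the partition structure of the equivalence classes and to obtain the reverse implication from Lemma~\ref{lemma:sameEC} applied to the canonical minimal representatives furnished by Theorem~\ref{theorem:C[g]}. In other words, once Theorem~\ref{theorem:C[g]} has established that $C([g])$ is a well-defined invariant of the class, the corollary is essentially a repackaging of Lemma~\ref{lemma:sameEC}.

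For the forward direction, suppose $g_1 \sim g_2$. Since the equivalence classes form a partition of $\mathbb{G}$, this gives $[g_1] = [g_2]$ as sets. The quantity $C([g])$ is, by (\ref{eq:g^*}) and (\ref{eq:C[g]}), computed by minimizing $|C(\hat g)|$ over the single set $[g]$ and reading off the characteristic sequence of any minimizer, which by Theorem~\ref{theorem:C[g]} is well defined. Because the minimization for $g_1$ and for $g_2$ ranges over the identical set $[g_1]=[g_2]$, the two minima and their characteristic sequences coincide, so $C([g_1]) = C([g_2])$. This direction is thus pure bookkeeping once Theorem~\ref{theorem:C[g]} is in hand.

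For the reverse direction, assume $C([g_1]) = C([g_2])$. By Theorem~\ref{theorem:C[g]}, choose for each $i$ a minimizer $\hat g_i^\ast \in [g_i]$ of $|C(\cdot)|$ that can no longer be simplified via Corollary~\ref{lemma:typeI_straighten_1}; then $C(\hat g_i^\ast) = C([g_i])$. The hypothesis forces $C(\hat g_1^\ast) = C(\hat g_2^\ast)$ as sequences $\pm l_1 l_2 \cdots l_n$. Since the characteristic sequence is itself a particular evolution sequence, namely the one read off the coarsest admissible partition $\{\mathcal{Y}_l\}$, equality of these two characteristic sequences means $\hat g_1^\ast$ and $\hat g_2^\ast$ share a common evolution sequence. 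Lemma~\ref{lemma:sameEC} then yields $\hat g_1^\ast \sim \hat g_2^\ast$, whence $[g_1] = [\hat g_1^\ast] = [\hat g_2^\ast] = [g_2]$ and $g_1 \sim g_2$.

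The step I expect to require the most care is the transition, in the reverse direction, from ``equal characteristic sequences'' to the hypothesis of Lemma~\ref{lemma:sameEC}. I must verify that the combinatorial datum $\pm l_1 \cdots l_n$ recorded by $C(\hat g_i^\ast)$ genuinely constitutes an evolution sequence of $\hat g_i^\ast$ in the sense that drives the proof of Lemma~\ref{lemma:sameEC}, that is, that the index sequence alone, independent of the actual interval lengths $|\mathcal{Y}_l|$ and $|\mathcal{I}_i|$, which may differ between the two maps, is what the $\mathbb{F}$-scaling argument behind Lemma~\ref{lemma:sameEC} consumes. This is precisely the content of the scaling relation $\hat g(I_i')\simeq c_i\, g(I_i)+d_i$ recorded just before Lemma~\ref{lemma:sameEC}, so the obstacle is conceptual, a matter of aligning the two notions of ``sequence,'' rather than computational.
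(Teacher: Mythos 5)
Your proposal is correct and follows exactly the route the paper intends: the paper states Corollary~\ref{corollary:sameEC} as an immediate consequence of Lemma~\ref{lemma:sameEC} and Theorem~\ref{theorem:C[g]}, and your two directions (well-definedness of $C([g])$ over the common set $[g_1]=[g_2]$ for the forward implication, and Lemma~\ref{lemma:sameEC} applied to the minimal representatives $\hat g_1^\ast, \hat g_2^\ast$ for the reverse) are precisely that argument spelled out. Your closing observation — that the characteristic sequence is itself an evolution sequence and that Lemma~\ref{lemma:sameEC} consumes only the symbolic datum $\pm l_1\cdots l_n$, not the interval lengths — correctly resolves the only delicate point.
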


\section{Topological Conjugacy \label{sec:conjugacy}}

\begin{definition}[\textsl{Topological Conjugacy}]
Continuous maps $s_1$ and $s_2$ from $[0,1]$ to $[0,1]$ are topologically conjugate if there exists a homeomorphism $h$ such that $s_2 = h \circ s_1 \circ h^{-1}$.
\label{definition:topologicalconjugacy}
\end{definition}
\begin{remark}
Homeomorphism $h$ represents a change of coordinates between $s_1$ and $s_2$. From $s_2 = h \circ s_1 \circ h^{-1}$, it follows that $s^n_2 = h \circ s^n_1 \circ h^{-1}$ for $n\ge0$. Therefore, topologically conjugate $s_1$ and $s_2$ share the same dynamics from the topological viewpoint. 
\end{remark}

It is not always an easy task to determine the topological conjugacy of $s_1$ and $s_2$ directly by the definition. However, for linear or expanding Markov maps, this task is reduced to comparing the index maps $s^*_1$ and $s^*_2$ associated with $s_1$ and $s_2$. A continuous map $s$ is \emph{linear Markov} if it is piecewise affine and the set $P$ of all $s^k(x)$, where $k\ge0$ and $x$ is an endpoint of an affine piece, is finite. A continuous map $s$ is \emph{expanding Markov} if it is piecewise monotone, the set $P$ of all $s^k(x)$, where $k\ge0$ and $x$ is an endpoint of a monotone piece, is finite, and there is a constant $c>1$ such that $|s(x)-s(y)|\ge c|x-y|$ whenever $x$ and $y$ lie in the same monotone piece. Set $P$ is the orbit of all endpoints. The orbit of point $x$ is defined as the set $\{s^k(x) |k\ge0\}$ of any map $s$.

Let $P=\{0=x_0<x_1<\cdots<x_N=1\}$ and $P^*=\{0, 1, \ldots, N\}$. Define index map $s^*: P^* \rightarrow P^*$ by 
\begin{equation}
s^*(i)=j, \mbox{ if } s(x_i)=x_j
\label{eq:s^*definition} 
\end{equation}
for $i=0, 1, \ldots, N$. 

\begin{theorem} [\textsl{Block and Coven, 1987, \cite[Theorem.\ 2.7]{10.2307/2000600}}]
Linear or expanding Markov maps $s_1$ and $s_2$ are topologically conjugate if and only if $s^*_1=s^*_2$ or $s^*_1={}^*s_2$.
\label{theorem:s*1s*2}
\end{theorem}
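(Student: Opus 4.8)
The plan is to establish the two directions of the equivalence separately, after first reducing the orientation-reversing alternative to the orientation-preserving one. Writing $r=g_{0,-}$ for the reflection $r(x)=1-x$, the conjugate map $r\circ s_2\circ r$ is again linear or expanding Markov, its orbit set $P$ is the reflection of that of $s_2$, and a direct computation gives its index map as $i\mapsto N-s_2^{*}(N-i)$, which is precisely ${}^{*}s_2$. Since $r$ is an orientation-reversing homeomorphism, $s_1$ is conjugate to $s_2$ by an orientation-reversing homeomorphism exactly when $s_1$ is conjugate to $r\circ s_2\circ r$ by an orientation-preserving one. Hence it suffices to prove the sharper statement that $s_1$ and $s_2$ admit an \emph{orientation-preserving} conjugacy if and only if $s_1^{*}=s_2^{*}$, and then to apply that statement to the pair $(s_1,\,r\circ s_2\circ r)$ to recover the ${}^{*}s_2$ case.

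For the forward direction, suppose $h$ is an increasing homeomorphism with $s_2=h\circ s_1\circ h^{-1}$. The first step is to argue that $h$ carries the distinguished set $P_1$ onto $P_2$. Since $h$ fixes the endpoints $0$ and $1$ and sends turning points of $s_1$ to turning points of $s_2$ of the same type, and since $s_2^{n}=h\circ s_1^{n}\circ h^{-1}$ for every $n$ so that $h$ maps the forward orbit of each marked point of $s_1$ to the forward orbit of its counterpart, the forward-invariant sets $P_1$ and $P_2$ are matched by $h$; in particular $|P_1|=|P_2|=N+1$. Monotonicity of $h$ then yields $h(x_i^{(1)})=x_i^{(2)}$ for every $i$, and evaluating the conjugacy at $x_i^{(1)}$ gives $x_{s_1^{*}(i)}^{(2)}=s_2(x_i^{(2)})=x_{s_2^{*}(i)}^{(2)}$, whence $s_1^{*}=s_2^{*}$.

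For the reverse direction, assume $s_1^{*}=s_2^{*}$ and construct $h$ by successive refinement of the Markov partitions. Let $\mathcal{P}_n^{(k)}$ be the partition of $[0,1]$ into the maximal intervals on which $s_k^{n}$ is monotone. The Markov property makes $\{\mathcal{P}_n^{(k)}\}_n$ a nested family, each interval at level $n$ being a union of intervals at level $n+1$, and the entire combinatorics of this family — which intervals refine which, and how $s_k^{n}$ permutes them — is governed by the index map $s_k^{*}$ alone. Because $s_1^{*}=s_2^{*}$, there is for each $n$ a canonical order-preserving bijection between the intervals of $\mathcal{P}_n^{(1)}$ and those of $\mathcal{P}_n^{(2)}$ that is compatible across levels. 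Defining $h_n$ to be the increasing piecewise affine homeomorphism matching the endpoints of $\mathcal{P}_n^{(1)}$ with the corresponding endpoints of $\mathcal{P}_n^{(2)}$, the plan is to show that $h_n$ converges uniformly to an increasing homeomorphism $h$, and that the relation $h\circ s_1=s_2\circ h$, valid on every partition endpoint at every finite level, survives the limit by continuity.

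The hard part will be verifying that the limit $h$ is a genuine homeomorphism rather than merely a monotone surjection, and this is exactly where the linear or expanding Markov hypothesis enters. In the expanding case the constant $c>1$ forces the mesh of $\mathcal{P}_n^{(k)}$ to decay geometrically, so on both sides the nested intervals shrink to single points; this simultaneously makes $\{h_n\}$ uniformly Cauchy and guarantees that $h$ is strictly increasing with a continuous inverse, after which the conjugacy identity passes to the limit by continuity on the dense set of partition endpoints. In the linear Markov case, where affine pieces of slope at most $1$ may occur, one must instead invoke the finiteness of $P$ to show that no interval can avoid eventual expansion under iteration, so that the mesh still collapses; controlling these potentially non-expanding pieces, and thereby excluding a nondegenerate interval that $h$ would crush to a point, is the delicate point on which the whole construction rests.
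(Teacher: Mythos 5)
Your proposal cannot be checked against an internal argument: the paper does not prove this statement but imports it from Block and Coven \cite{10.2307/2000600}, and the only commentary it offers (in the paragraph preceding Corollary~\ref{corollary:pfapp-1}) is that Block--Coven use the expanding hypothesis solely to make $\bigcup_n t^{-n}(P)$ dense. Measured against that cited argument, your expanding-case treatment is the standard route and essentially sound: the reflection trick correctly reduces the $s_1^*={}^*s_2$ alternative to the orientation-preserving case, for expanding maps the endpoints of monotone pieces are turning points or endpoints of $[0,1]$, so an increasing conjugacy really does carry $P_1$ onto $P_2$ order-preservingly, and geometric mesh decay makes the pulled-back partition construction converge to a conjugacy.

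Both halves of your \emph{linear} (non-expanding) case, however, have genuine gaps. Forward direction: for a linear Markov map, $P$ contains breakpoints at which the map stays monotone (type~I breakpoints in this paper's terminology), and these are not topological invariants, so the step ``the forward-invariant sets $P_1$ and $P_2$ are matched by $h$'' fails. Concretely, let $h$ be the increasing piecewise affine homeomorphism with the single breakpoint $h(1/4)=1/2$, let $T$ be the tent map, and set $s_2=h\circ T\circ h^{-1}$. Then $s_2$ is piecewise affine with genuine breakpoints at $h(1/8), h(1/4), h(1/2), h(7/8)$, all with finite orbits (since $1/8\mapsto 1/4\mapsto 1/2\mapsto 1\mapsto 0$ and $7/8\mapsto 1/4$ under $T$), so $s_2$ is linear Markov with $|P_2|=6$ while $|P_1|=3$ for $T$: the maps are conjugate by construction, yet $h(P_1)\neq P_2$ and the index maps do not even share a domain. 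So your inference ``monotonicity of $h$ yields $h(x_i^{(1)})=x_i^{(2)}$'' collapses, and any correct necessity argument must first pass to a common refinement (e.g., replace $P_1$ by $P_1\cup h^{-1}(P_2)$ and $P_2$ by its $h$-image, both of which remain valid Markov sets) rather than work with the raw orbit sets; this is also why the ``only if'' direction, read literally with $P$ as defined in this paper, needs that more careful formulation.

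Reverse direction: your proposed fix for the non-expanding pieces --- that ``finiteness of $P$'' shows ``no interval can avoid eventual expansion under iteration, so that the mesh still collapses'' --- is false. A linear Markov map can be affine with slope $\pm1$ on entire basic intervals; indeed the maps of Figure~\ref{fig:WeeklyDiscussion_20200420Page-5}(a),(b) of this paper, which satisfy $g(x)=x$ or $g(x)=1-x$ on whole subintervals, are Markov by Corollary~\ref{corollary:markovmap}. On such intervals the monotonicity partitions of $s^n$ never refine, $\bigcup_n s^{-n}(P)$ is not dense, and your maps $h_n$ converge to a limit whose conjugacy relation is verified only on a non-dense set, which does not determine $h$ off that set. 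There the conjugacy must instead be built by hand: choose $h$ freely on a fundamental domain of each non-expanding piece and propagate it through the dynamics, checking consistency at the interfaces. So the point you flagged as ``delicate'' is precisely where your proposed mechanism breaks, and a genuinely different argument is required for it.
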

Here ${}^*s$ is the reverse of $s^*$, defined by ${}^*s(i)=N-s^*(N-i)$. From Theorem~\ref{theorem:s*1s*2}, a linear or expanding Markov map $s$ is characterized by $s^*$ as far as topological conjugacy is concerned.
The following proposition connects the notions of topological conjugacy and equivalence classes.
\begin{proposition}
Suppose that $g_1, g_2\in\mathbb{G}$. If $g_1^*=g_2^*$, then $g_1$ and $g_2$ are in the same equivalence class. 
\label{proposition:tcec}
\end{proposition}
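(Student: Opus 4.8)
The plan is to avoid constructing the pair $f_1,f_2\in\mathbb{F}$ directly, and instead route through the evolution sequence and invoke Lemma~\ref{lemma:sameEC}. The guiding observation is that the common index map $g_1^\ast=g_2^\ast$ already encodes, purely combinatorially, one and the same evolution sequence for both maps, so that $g_1$ and $g_2$ satisfy the hypothesis of Lemma~\ref{lemma:sameEC} verbatim. A pleasant feature of this route is that the dyadic arithmetic of $\mathbb{G}$ plays no role; only the Markov combinatorics matters.

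First I would record that $g^\ast$ is well defined for every $g\in\mathbb{G}$. By Corollary~\ref{corollary:markovmap} each $g$ is Markov, so every breakpoint is preperiodic and the orbit $P=\{0=x_0<\cdots<x_N=1\}$ of the (finitely many) breakpoints is finite; hence $g$ is linear Markov and $g^\ast$ exists. Since $P$ contains all breakpoints, $g$ is affine on each $[x_{i-1},x_i]$, and since $P$ is forward invariant ($g(P)\subseteq P$) we have $g(x_i)=x_{g^\ast(i)}$. Because the slopes of $g$ are $\pm2^{k}\neq0$, consecutive partition points are not identified, so $g^\ast(i)\neq g^\ast(i-1)$ for every $i$; each interval $[x_{i-1},x_i]$ is therefore strictly monotone.

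Next I would read the evolution sequence off $g^\ast$ alone. Take the $y$-partition $\mathcal{Y}_l=[x_{l-1},x_l]$ for $l=1,\dots,N$. Forward invariance $g(P)\subseteq P$ guarantees that the $y$-coordinate of every breakpoint of $g$ equals some $x_j\in P$, i.e.\ lies on a block boundary and never in a block interior; thus $\{\mathcal{Y}_l\}$ is an admissible $y$-partition for the evolution sequence. On $[x_{i-1},x_i]$ the affine map $g$ is increasing iff $g^\ast(i)>g^\ast(i-1)$ and, as $x$ traverses the interval, $g(x)$ passes through exactly the blocks whose indices step consecutively from $g^\ast(i-1)$ to $g^\ast(i)$. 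Subdividing $[x_{i-1},x_i]$ at the preimages of the intermediate points $x_{g^\ast(i-1)},\dots,x_{g^\ast(i)}$ produces the refined $x$-partition, each of whose pieces maps affinely onto a single block with the label just described. Concatenating these labels over $i=1,\dots,N$ and prefixing the sign of the first interval yields an evolution sequence that is a function of $g^\ast$ only. Consequently $g_1^\ast=g_2^\ast$ forces $g_1$ and $g_2$ to possess literally the same evolution sequence, and Lemma~\ref{lemma:sameEC} then places them in a single equivalence class.

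The step I expect to be the crux is precisely the verification that the $P$-induced $y$-partition is admissible and that each block label along a monotone affine piece is dictated by $g^\ast$; both claims rest on the two structural facts isolated above, namely forward invariance $g(P)\subseteq P$ (so that breakpoint images are partition points) and strict monotonicity $g^\ast(i)\neq g^\ast(i-1)$ (so that each piece genuinely sweeps a nonempty run of blocks). I would also note explicitly that this argument uses the \emph{orientation-matching} alternative $g_1^\ast=g_2^\ast$, not the reverse ${}^\ast g_2$; the reversed case would instead reflect the evolution sequence and is not needed here.
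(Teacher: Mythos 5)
Your proposal is correct and takes essentially the same route as the paper's proof: both derive the $y$-partition $\{\mathcal{Y}_l\}$ from the finite orbit set $P$ of each map, observe that the resulting evolution sequence is a function of the index map alone, so $g_1^*=g_2^*$ forces identical evolution sequences, and conclude via Lemma~\ref{lemma:sameEC}. Your write-up simply makes explicit the details the paper leaves implicit, namely that $g^*$ is well defined via Corollary~\ref{corollary:markovmap}, that forward invariance $g(P)\subseteq P$ makes the $P$-induced partition admissible, and that nonzero slopes give $g^*(i)\neq g^*(i-1)$.
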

\begin{proof}
Because $g_1^*=g_2^*$, $|g_1^*|=|g_2^*|$. Let $N=|g_1^*|=|g_2^*|$. For $k=1,2$, let $P_k=\{0=x_{k,0}<\cdots<x_{k,N}=1\}$ be the set $P$ of $g_k$. The graph of $g_k$ is monotone on any interval $[x_{k,i-1}, x_{k,i}]$ for $i=1, \ldots, N$. Use $P_1$ and $P_2$ to derive the partition set $\{\mathcal{Y}_l\}$ defined in Section~\ref{sec:equivalence} to determine the evolution sequences of $g_1$ and $g_2$. Maps $g_1$ and $g_2$ have the same evolution sequence because $g_1^*=g_2^*$, and are thus in the same equivalence class by Lemma~\ref{lemma:sameEC}.
\end{proof}
The converse of Proposition~\ref{proposition:tcec} is not necessarily true. Figure~\ref{fig:WeeklyDiscussion_20200504Page-5} shows an example of $g_1$ and $g_2$ that are in the same equivalence class but are not topologically conjugate. In this sense, topological conjudacy is a stronger relationship between maps than equivalence classes.
\begin{figure}
 \centering
  \includegraphics[width=14cm]{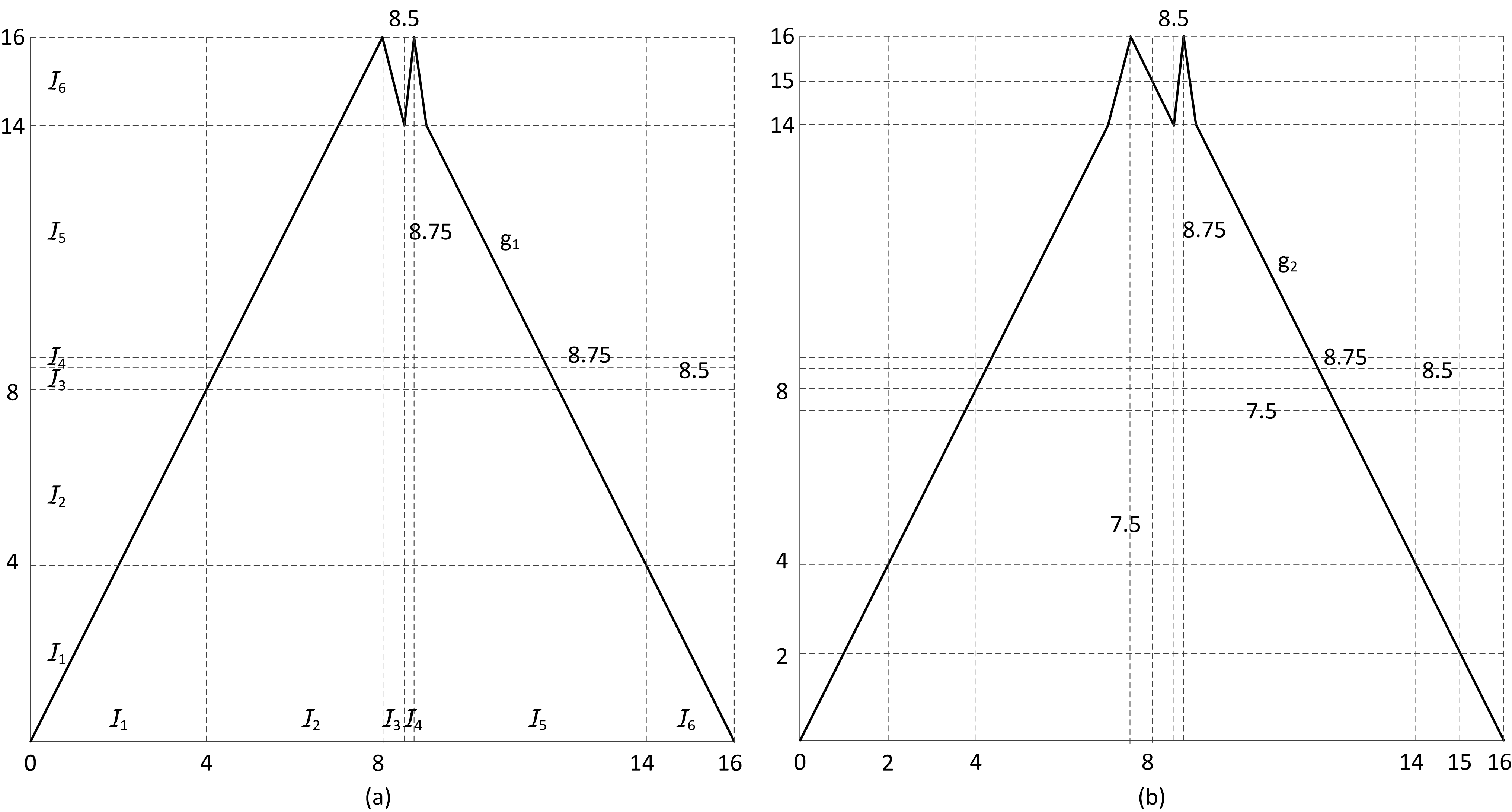}
  \caption{An example of $g_1, g_2$ in the same equivalence class but of distinct partition sets of $P$. In (a), for $g_1$, $P=2^{-4}\cdot\{0, 4, 8, 8.5, 8.75, 14, 16\}$.  In (b), for $g_2$, $P=2^{-4}\cdot\{0, 2, 4, 7.5, 8, 8.5, 8.75, 14, 15, 16\}$. $g_1^*\neq g_2^*$.}
  \label{fig:WeeklyDiscussion_20200504Page-5}
\end{figure} 

Now consider topological conjugacy and $\lambda$-preservation together. Observe the following.
\begin{itemize}
\item
Suppose that $s_1$ and $s_2$ are topologically conjugate. If $s_1\in PA(\lambda)$, it is possible that $s_2\not\in PA(\lambda)$. An example is shown in Figure~\ref{fig:WeeklyDiscussion_20200525fig-1+Page-1}(a). 
\item
It is possible that no $\lambda$-preserving $s_2$ exists to be topologically conjugate to a given $s_1$. Figure~\ref{fig:WeeklyDiscussion_20200525fig-1+Page-1}(b) shows one example of such $s_1$. In this example, for any topologically conjugate $s_2$, $s_2(s_2(1))=1$, $s_2^{-1}(s_2(1))=\{c,1\}$ where $0<c<s_2(1)$, and $s_2(x)<s_2(1)$ for $0\le x<c$ and $s_2(x)>s_2(1)$ for $c<x<1$. Therefore, $\lambda(s_2^{-1}([s_2(1),1]))=\lambda([c,1])>\lambda([s_2(1),1])$. $s_2\not\in C(\lambda)$.
\end{itemize}

\begin{figure}
\centering
\begin{subfigure}{.55\textwidth}
  \centering
  \includegraphics[width=1.\linewidth]{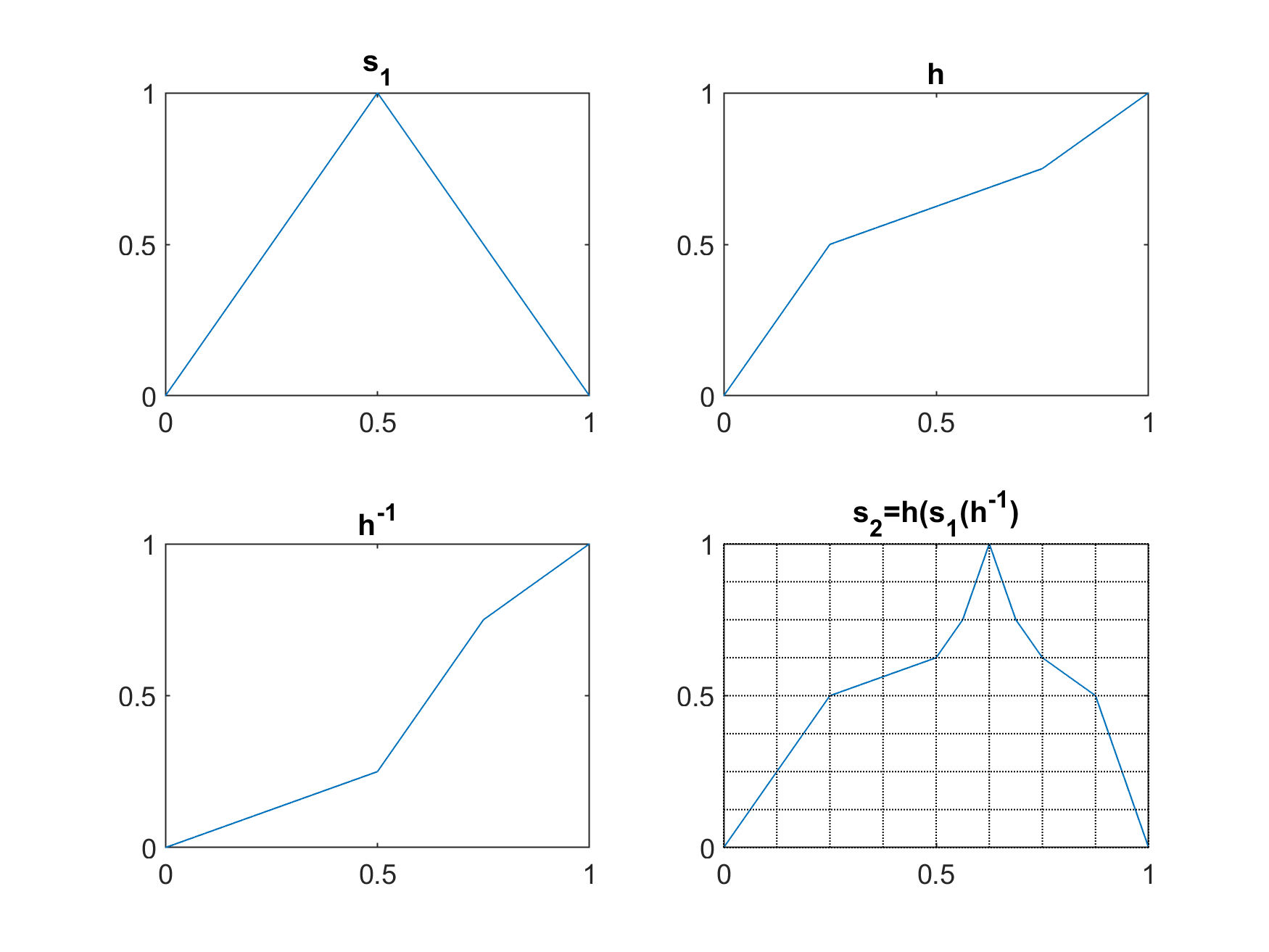}
    \caption{}
\end{subfigure}
\begin{subfigure}{.40\textwidth}
  \centering
  \includegraphics[width=0.5\linewidth]{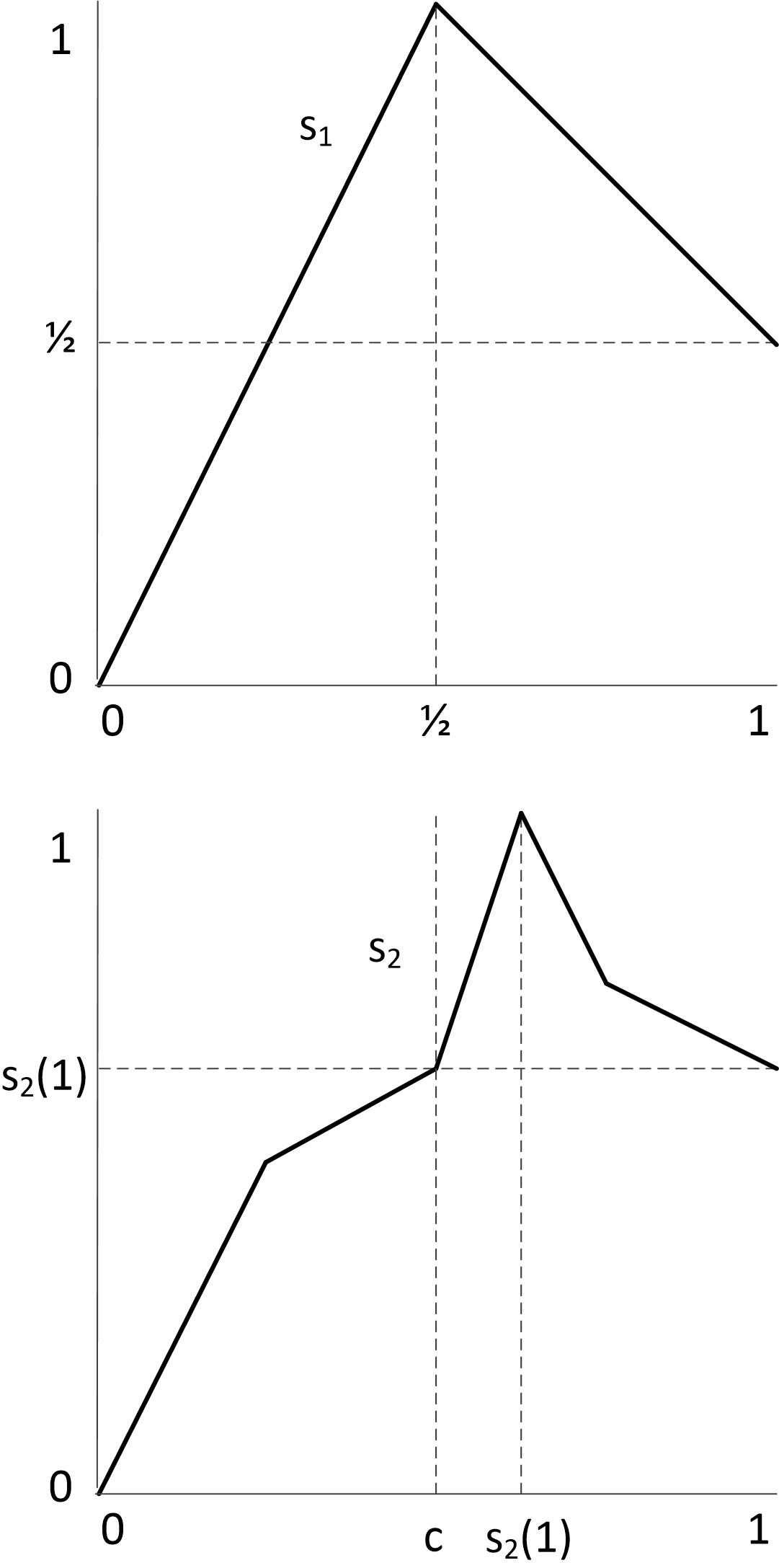}
   \caption{}
\end{subfigure}
\caption{Two examples to illustrate the observations of topological conjugacy and $\lambda$-preservation. (a) An example shows that while $s_1\in PA(\lambda)$, $s_2\not\in PA(\lambda)$. (b) An example of $s_1$ to which no $\lambda$-preserving $s_2$ exists that is topologically conjugate. The same homeomorphism $h$ as in (a) is used to produce $s_2$ in (b) as an illustrative example.}
\label{fig:WeeklyDiscussion_20200525fig-1+Page-1}
\end{figure}

This section characterizes continuous maps $s$ to which a $\lambda$-preserving $t$ exists to be topologically conjugate. To this end, make the following assumption of $s$.
\begin{assumption} 
First, set $\{0=\hat{x}_0<\cdots<\hat{x}_N=1\}$ exists such that for any $i=0, \ldots, N$, $s(\hat{x}_i)=\hat{x}_j$ for some $j$ with $0\le j \le N$. Second, the graph of $s$ is monotone on $[\hat{x}_{i-1}, \hat{x}_i]$ for any $i=1, \ldots, N$.
\label{assumption:familyofcontinuousmaps}
\end{assumption}
Map $s$ is not necessarily affine or expanding on $[\hat{x}_{i-1}, \hat{x}_i]$. Linear or expanding Markov maps are a strict subset of continuous maps that satisfy Assumption~\ref{assumption:familyofcontinuousmaps}. The remainder of this section is to characterize continuous maps $s$ under Assumption~\ref{assumption:familyofcontinuousmaps} for which $t\in PA(\lambda)$ or $t\in \mathbb{G}$ exists such that $t$ and $s$ are topologically conjugate. First consider linear or expanding Markov maps and then extend the results to a mixed case where the maps are linear but not expanding on some intervals and expanding but not linear on other intervals. This mixed case is important for the study of $\lambda$-preserving maps.

For $s$ under Assumption~\ref{assumption:familyofcontinuousmaps}, define index map $s^*$ as in (\ref{eq:s^*definition}). The basic idea is to construct $t$ by ``continuously connecting the dots''. Specifically, let $P=\{0=x_0<\cdots<x_N=1\}$ be a partition of $[0,1]$ such that $t(x_i)=x_j$ and the graph of $t$ be a monotone and piecewise affine segment on $[x_{i-1}, x_i]$. Index map $t^*$ is well defined in (\ref{eq:s^*definition}). The partition $P$ and piecewise affine segments on $\{[x_{i-1}, x_i]\}$ are to be constructed so that $t^*=s^*$ and $t$ preserves $\lambda$.

Define an $N\times N$ matrix $A^*$ from $s^*$ 
\begin{equation}
A^*_{i,j}=\left\{
\begin{array}{cc}
1, &\mbox{if }\min(s^*(i-1),s^*(i))<j\le \max(s^*(i-1),s^*(i)),\\
0, &\mbox{otherwise}.
\end{array}
\right.
\label{eq:A^*froms^*}
\end{equation}
While $A^*$ is defined by index map $s^*$, $s^*$ is uniquely determined by $A^*$. 

Now construct piecewise affine $t$. Let interval $\mathcal{I}_i=[x_{i-1},x_i]$ for $i=1, \ldots, N$. Let $t$ be monotone on each $\mathcal{I}_i$ and be affine on $\mathcal{I}_i \cap t^{-1}(\mathcal{I}_j)$ whenever $A^*_{i,j}=1$. Denote by $a_{i,j}$ the slope of the affine segment. $a_{i,j}\neq0$ is required for $\lambda$-preservation. Because $t$ is monotone on $\mathcal{I}_i$, $a_{i,j}>0$ if $s^*_{i-1}<s^*_i$ and $a_{i,j}<0$ otherwise. See Figure~\ref{fig:WeeklyDiscussion_20200525Page-2} for an illustration. Given $i$, if $a_{i,j}$ is the same for all $j$ whenever $A^*_{i,j}=1$, then $t$ is an affine segment on $\mathcal{I}_i$; if $|a_{i,j}|>1$ for all $j$ whenever $A^*_{i,j}=1$, then $t$ is an expanding monotone piece on $\mathcal{I}_i$. 

\begin{figure}
 \centering
  \includegraphics[width=5cm]{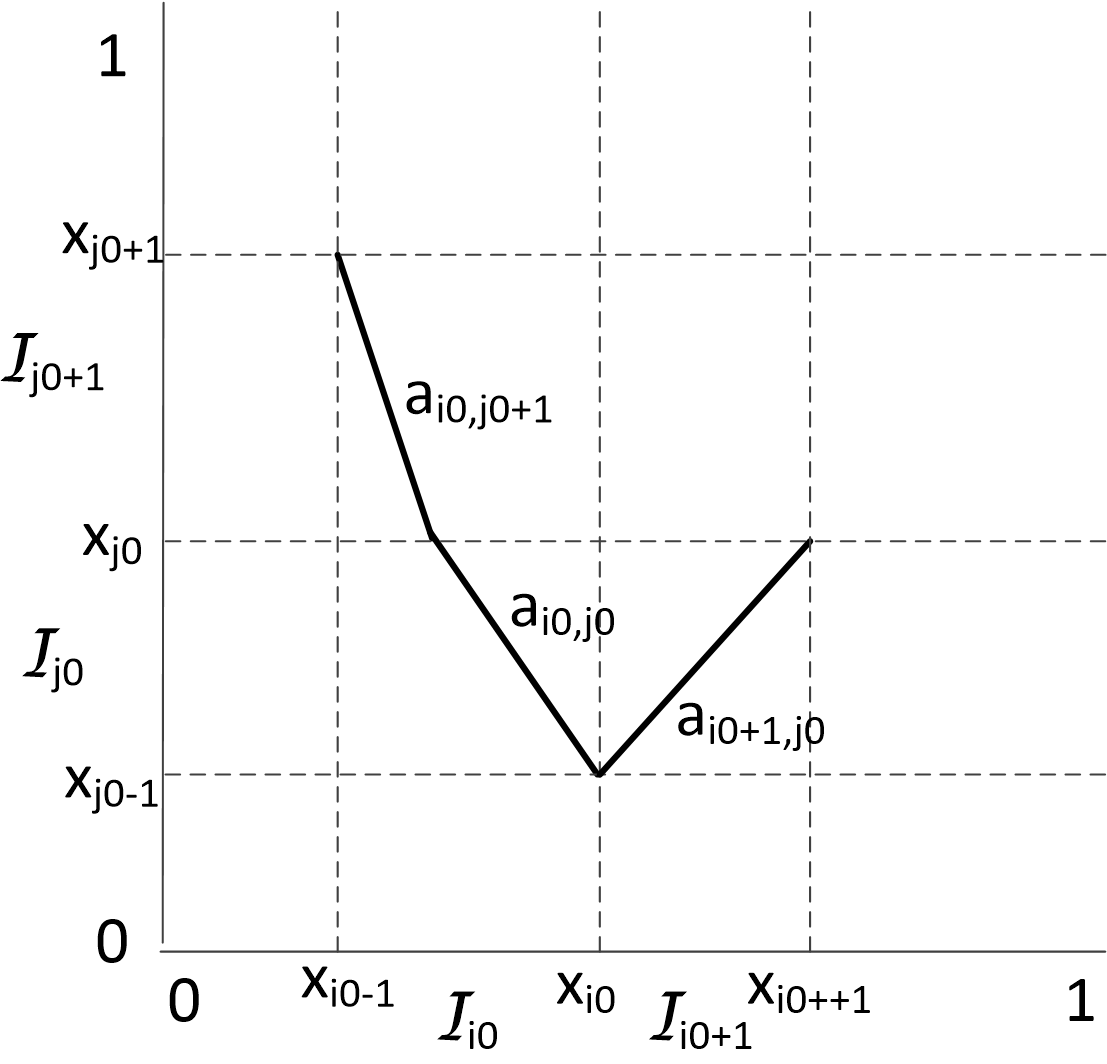}
  \caption{Illustration of piecewise expanding monotone $t$ on $\mathcal{I}_i$. In this example, $s^*_{i_0-1}=j_0+1, s^*_{i_0}=j_0-1, s^*_{i_0+1}=j_0$. $A^*_{i_0,j_0}=A^*_{i_0,j_0+1}=1$ and $A^*_{i_0,j}=0$ for $j\neq j_0, j_0+1$. The graph of $t$ is monotone on $\mathcal{I}_{i_0}$ consisting of two affine segments with slopes $a_{i_0,j_0}$ and $a_{i_0,j_0+1}$, respectively. $A^*_{i_0+1,j_0}=1$ and $A^*_{i_0+1,j}=0$ for $j\neq j_0$. The graph of $t$ is affine on $\mathcal{I}_{i_0+1}$ with slope $a_{i_0+1,j_0}$.}
  \label{fig:WeeklyDiscussion_20200525Page-2}
\end{figure} 

For $t^*=s^*$, $\{\mathcal{I}_i\}$ and $\{a_{i,j}\}$ are determined such that $t$ satisfies
\begin{equation}
t(\mathcal{I}_i)=\bigcup_{j=\min(s^*(i-1),s^*(i))+1}^{\max(s^*(i-1),s^*(i))} \mathcal{I}_j.
\label{eq:mappingstructure_s}
\end{equation}
That is to say that $\mathcal{I}_j\subseteq t(\mathcal{I}_i)$ if and only if  $A^*_{i,j}=1$.

Define non-negative matrix $A$ 
\begin{equation}
A_{i,j}=\left\{
\begin{array}{cc}
|a_{i,j}|^{-1}, &\mbox{if }A^*_{i,j}=1\\
0, &\mbox{otherwise}.
\end{array}
\right.
\label{eq:Aji}
\end{equation}
For $t$ to be Markov and continuous and satisfy (\ref{eq:mappingstructure_s}), $\{|\mathcal{I}_i|\}$ solves the following system of linear equations,
\begin{equation}
\left[
\begin{array}{c}
|\mathcal{I}_1|\\
\vdots\\
|\mathcal{I}_N|
\end{array}
\right]
= A
\left[
\begin{array}{c}
|\mathcal{I}_1|\\
\vdots\\
|\mathcal{I}_N|
\end{array}
\right].
\label{eq:IAI}
\end{equation}
A vector is said \emph{positive} if each element is positive. A solution to (\ref{eq:IAI}) must be a positive vector that sums to $1$ for $P$ to be a valid partition, 
\begin{equation}
\sum_{i=1}^N |\mathcal{I}_i|=1, |\mathcal{I}_i|>0, \mbox{ for $i=1, \ldots, N$}.
\label{eq:IAI1}
\end{equation}
From Lemma~\ref{lemma:basicderivativeinverse}, for $t$ to preserve $\lambda$,
\begin{equation}
\sum_{i=1}^N A_{i,j}=1
\label{eq:aji_t}
\end{equation}
for any $j$. If $\{|\mathcal{I}_1|, \ldots, |\mathcal{I}_N|\}$ and $\{|a_{i,j}|\}$ exist to satisfy (\ref{eq:IAI}), (\ref{eq:IAI1}) and (\ref{eq:aji_t}), then by construction, $t\in PA(\lambda)$ and $t^* = s^*$.  Furthermore, for $t\in \mathbb{G}$, $\{|a_{i,j}|\}$ are in the form of $\pm2^k$ with integer $k$ and $\{\mathcal{I}_i\}$ are dyadic numbers.

One can permute $A$ by reversely mapping indices $\{1, 2, \ldots, N\}\rightarrow \{N, N-1, \ldots, 1\}$ in (\ref{eq:IAI}). The resultant $t$ is such that $t^* = {}^*s$. For the sake of simplicity, ignore this case of permuted $A^*$ and ${}^*s$ in the reminder of this section, because such permutation does not affect the existence of $t$, as will be clear in Lemma~\ref{lemma:MCsolutions}.

From (\ref{eq:aji_t}), $A$ is a \emph{column stochastic matrix} as each entry is non-negative and each column sums to 1. Thus, $1$ is an eigenvalue of $A$. Matrix $A$ defines a Markov chain where $A_{i,j}$ represents the transition probability from state $j$ to $i$. The theory of Markov chains can be used to solve (\ref{eq:IAI}), (\ref{eq:IAI1}) and (\ref{eq:aji_t}). 

Specifically, let $|\mathcal{I}_1|, \ldots, |\mathcal{I}_N|$ represent the nodes of a directed graph and $A^*$ be the adjacency matrix. An arc exists from node $|\mathcal{I}_j|$ to $|\mathcal{I}_i|$ if $A^*_{i,j}=1$. Node $|\mathcal{I}_i|$ is \emph{reachable} from $|\mathcal{I}_j|$ if $(A^{*})^k_{i,j}>0$ for some $k\ge1$. Nodes $|\mathcal{I}_i|$ and $|\mathcal{I}_j|$ are said to \emph{communicate} if $|\mathcal{I}_i|$ is reachable from $|\mathcal{I}_j|$ and $|\mathcal{I}_j|$ is reachable from $|\mathcal{I}_i|$. The set of nodes $\{|\mathcal{I}_1|, \ldots, |\mathcal{I}_N|\}$ can be uniquely decomposed into $K$ disjoint subsets $C_k$, with
\[
\{|\mathcal{I}_1|, \ldots, |\mathcal{I}_N|\}=\bigcup_{k=1}^K C_k,
\]
for some positive integer $K$, such that nodes of each subset communicate and nodes of different subsets do not communicate. A node is \emph{recurrent} if the probability of ever returning to node $|\mathcal{I}_i|$ starting in node $|\mathcal{I}_i|$ is $1$, and is \emph{transient} otherwise. All nodes in a given subset $C_k$ are either recurrent or transient.

\begin{lemma} [\textsl{Sericola, 2013, \cite{sericola2013markov}}]
Let $\{\pi_1, \ldots, \pi_N\}$ be the limiting probability distribution of the Markov chain defined by $A$. Vector $[\pi_1, \ldots, \pi_N]$ is an eigenvector of $A$ corresponding to eigenvalue $1$ and sums to $1$. The solution $\{\pi_1, \ldots, \pi_N\}$ can be categorized into three types depending on $A^*$. 
\begin{itemize}
\item
If $K=1$, then every node of $\{|\mathcal{I}_1|, \ldots, |\mathcal{I}_N|\}$ is reachable from every other node. All nodes are recurrent and $A^*$ is said \emph{irreducible}. By Perron-Frobenius Theorem \cite[Theorem.\ 0.1]{notesonperronfrobenius}, a unique positive solution $\{\pi_1, \ldots, \pi_N\}$ exists. 
\item
If $K>1$ and every subset $C_k$ is recurrent, then $A^*$ can be decomposed into $K$ irreducible subsets. Infinitely many positive solutions $\{\pi_1, \ldots, \pi_N\}$ exist.
\item
If $K>1$ and at least one subset $C_k$ is transient, then no positive solution exists, because the limiting probability of any transient state is $0$, therefore violating (\ref{eq:IAI1}). 
\end{itemize}
\label{lemma:MCsolutions}
\end{lemma}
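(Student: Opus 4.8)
The plan is to derive this as a direct consequence of the Perron--Frobenius theory of non-negative matrices together with the canonical decomposition of a finite Markov chain into communicating classes. By (\ref{eq:aji_t}) the matrix $A$ is column stochastic, so the all-ones row vector satisfies $\mathbf{1}^{\top} A = \mathbf{1}^{\top}$; thus $1$ is an eigenvalue of $A$, every column-stochastic matrix has spectral radius exactly $1$ (since $\rho(A)=\rho(A^{\top})$ equals the common row sum of $A^{\top}$), and any stationary/limiting distribution $\pi=[\pi_1,\ldots,\pi_N]$ is a right eigenvector obeying $A\pi=\pi$ with $\sum_i\pi_i=1$ by the very definition of a probability vector. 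The substance of the lemma is the trichotomy, which I would obtain by reading off the block structure of $A$ from the communicating classes $C_1,\ldots,C_K$.

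First I would treat $K=1$. Here $A^*$ is irreducible, hence $A$ is an irreducible non-negative matrix of spectral radius $1$, and the Perron--Frobenius theorem \cite[Theorem.\ 0.1]{notesonperronfrobenius} gives that the eigenvalue $1$ is simple with a strictly positive eigenvector, unique up to scaling; normalizing to sum $1$ produces the unique positive stationary distribution. Finiteness together with irreducibility forces every state to be recurrent, closing this case.

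Next, for $K>1$ with every class recurrent, I would use the fact that a recurrent class in a finite chain is closed (no arc leaves it), so if all $K$ classes are recurrent there can be no inter-class transition at all; after relabeling $\{1,\ldots,N\}$, $A$ is therefore block diagonal with $K$ irreducible column-stochastic blocks $A^{(1)},\ldots,A^{(K)}$. Each block yields, by the $K=1$ case, a unique stationary distribution $\pi^{(k)}$ supported on $C_k$, and every convex combination $\pi=\sum_{k=1}^{K}\lambda_k\pi^{(k)}$ with $\lambda_k\ge 0$, $\sum_k\lambda_k=1$ is stationary; such a $\pi$ is strictly positive exactly when all $\lambda_k>0$. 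Since $K\ge 2$, this describes a relatively open $(K-1)$-simplex of positive solutions, giving the claimed infinitude. For the final case, $K>1$ with some $C_k$ transient, the key fact is that the stationary probability of any transient state is $0$: mass initially on a transient class is eventually absorbed into the recurrent classes, so the Cesàro (hence any) limit assigns it probability $0$. Every stationary $\pi$ then has $\pi_i=0$ for $i\in C_k$, violating the strict positivity required by (\ref{eq:IAI1}), so no admissible solution exists.

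The hard part will be the middle case rather than the two extremes: one must argue that ``every class recurrent'' forces a genuinely \emph{block-diagonal} (not merely block-triangular) decomposition, and then verify that only the simultaneously-positive weightings $\lambda_k>0$ deliver a vector eligible to serve as a partition in (\ref{eq:IAI1}). By contrast, the irreducible case is a black-box application of Perron--Frobenius, and the transient case reduces to the standard vanishing of stationary mass on transient states, so both of those I expect to be routine.
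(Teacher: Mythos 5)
Your proof is correct and follows essentially the same route the paper intends: the paper states this lemma as a quoted result from Sericola's text, with its inline justifications (Perron--Frobenius for the irreducible $K=1$ case, decomposition into $K$ irreducible closed subsets when all classes are recurrent, and the vanishing of limiting probability on transient states for the third case) being exactly the three steps you carry out. The one detail you add beyond the paper's sketch --- that recurrence of every class forces a genuinely block-diagonal rather than merely block-triangular structure, because recurrent classes of a finite chain are closed, after which convex combinations $\sum_k \lambda_k \pi^{(k)}$ with all $\lambda_k>0$ give the infinitude of positive solutions --- is argued correctly, so nothing further is needed.
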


We will use Lemma~\ref{lemma:MCsolutions} to study the existence of $t \in PA(\lambda)$ that is topologically conjugate to $s$.

First consider the case where for any $j$,
\begin{equation}
\sum_{i=1}^N A^*_{i,j}>1.
\label{eq:sumA^*>1}
\end{equation}
An example is illustrated in Figure~\ref{fig:WeeklyDiscussion_20200504Page-5}(a) and described in Example~\ref{example:tinG}.   
\begin{theorem}
If $A^*$ is irreducible and satisfies (\ref{eq:sumA^*>1}), then for any $\{a_{i,j}\}$ satisfying (\ref{eq:aji_t}), a unique $t$ exists such that $t$ is expanding Markov, $t\in PA(\lambda)$, and $t$ and $s$ are topologically conjugate if $s$ is a linear or expanding Markov map. If $A^*$ can be decomposed into multiple irreducible subsets, then infinitely many such $t$ exist. If $A^*$ can be decomposed into multiple subsets, at least one of which are transient, then no such $t$ exists. 
\label{theorem:pfapp}
\end{theorem}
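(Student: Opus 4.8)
The plan is to reduce the existence and uniqueness of $t$ entirely to the existence and uniqueness of a positive solution of the eigenvalue problem (\ref{eq:IAI})--(\ref{eq:IAI1}), which is precisely what Lemma~\ref{lemma:MCsolutions} governs. First I would observe that by (\ref{eq:aji_t}) the matrix $A$ of (\ref{eq:Aji}) is column stochastic, so it is the transition matrix of the Markov chain described in the text, with directed-graph support identical to that of $A^*$. Consequently $A$ inherits whichever of the three structural cases $A^*$ falls into: a single irreducible class, several recurrent classes, or a decomposition containing a transient class. Applying Lemma~\ref{lemma:MCsolutions} to $A$ then yields, respectively, a unique positive vector $[|\mathcal{I}_1|,\ldots,|\mathcal{I}_N|]$ summing to $1$, infinitely many such vectors, or none at all. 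Each such vector fixes a partition $P$ of $[0,1]$, and this partition is the only remaining freedom in building $t$ once the slopes are chosen.

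Next I would check that, with the slopes $\{a_{i,j}\}$ and the lengths $\{|\mathcal{I}_i|\}$ fixed, the prescription ``$t$ monotone on $\mathcal{I}_i$, affine with slope $a_{i,j}$ on $\mathcal{I}_i\cap t^{-1}(\mathcal{I}_j)$'' actually closes up into a continuous map with $t^*=s^*$. The crucial computation is that the piece $\mathcal{I}_i\cap t^{-1}(\mathcal{I}_j)$ has length $|\mathcal{I}_j|/|a_{i,j}| = A_{i,j}|\mathcal{I}_j|$; summing over $j$ with $A^*_{i,j}=1$ reproduces exactly (\ref{eq:IAI}), so the affine pieces tile $\mathcal{I}_i$ with no gap or overlap and their images assemble into (\ref{eq:mappingstructure_s}). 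The orientation on $\mathcal{I}_i$ is increasing or decreasing according to the sign of $s^*(i)-s^*(i-1)$, and continuity at the breakpoints together with $t(x_i)=x_{s^*(i)}$ then follows, making $t$ Markov with index map $s^*$. That $t\in PA(\lambda)$ is immediate from (\ref{eq:aji_t}) via Lemma~\ref{lemma:basicderivativeinverse}.

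To see that $t$ is expanding I would invoke hypothesis (\ref{eq:sumA^*>1}): every column $j$ of $A^*$ has at least two nonzero entries, so the column-sum identity $\sum_i A_{i,j}=\sum_{i:A^*_{i,j}=1}|a_{i,j}|^{-1}=1$ forces each reciprocal to be strictly below $1$, i.e.\ $|a_{i,j}|>1$. Since there are finitely many slopes, $c:=\min|a_{i,j}|>1$ serves as a uniform expansion constant, so $t$ is expanding Markov. Finally, because $t^*=s^*$ and both $t$ and $s$ are linear or expanding Markov maps, Theorem~\ref{theorem:s*1s*2} yields that $t$ and $s$ are topologically conjugate. Collecting the three structural cases from Lemma~\ref{lemma:MCsolutions} then gives the three assertions: a unique $t$ when $A^*$ is irreducible, infinitely many when $A^*$ splits into several recurrent classes, and no such $t$ when a transient class is present.

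The step I expect to be the main obstacle is the second one: verifying carefully that the eigenvector relation (\ref{eq:IAI}) is not merely necessary but is exactly the self-consistency condition under which the piecewise-affine recipe glues into a well-defined continuous map onto $[0,1]$ with the correct index map. This involves the bookkeeping of orientations on each $\mathcal{I}_i$ and the matching of images and breakpoints across adjacent intervals; the Perron--Frobenius input and the expansion estimate are comparatively routine once this combinatorial gluing is pinned down.
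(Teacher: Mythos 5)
Your proposal is correct and follows essentially the same route as the paper: apply Lemma~\ref{lemma:MCsolutions} to the column-stochastic matrix $A$ to get the unique/infinitely-many/none trichotomy of positive solutions of (\ref{eq:IAI})--(\ref{eq:IAI1}), build $t$ by tiling each $\mathcal{I}_i$ with affine pieces of length $A_{i,j}|\mathcal{I}_j|$ so that $t^*=s^*$ and $t\in PA(\lambda)$, and conclude conjugacy from Theorem~\ref{theorem:s*1s*2}. Your one refinement --- deducing $|a_{i,j}|>1$ automatically from (\ref{eq:sumA^*>1}) and the column-sum identity, rather than merely noting such slopes can be selected as the paper does --- actually matches the theorem's ``for any $\{a_{i,j}\}$'' phrasing more faithfully, but it is a sharpening of the same argument, not a different one.
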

\begin{proof}
From (\ref{eq:sumA^*>1}), given any $j$, the number of $i$ for which $A^*_{i,j}=1$ for $i=1, \ldots, N$ is greater than $1$. Thus it is easy to select $\{|a_{i,j}|\}$, with $|a_{i,j}|>1$ for any $i,j$, to satisfy (\ref{eq:aji_t}). The graph of $t$ on $\mathcal{I}_i$ is thus expanding for any $i$. 

Consider the first case of $A^*$ being irreducible. From Lemma~\ref{lemma:MCsolutions}, a unique positive eigenvector $\boldsymbol{v}$ of $A$ exists corresponding to eigenvalue $1$ with $|\boldsymbol{v}|=1$. Let $[|\mathcal{I}_1|, \ldots, |\mathcal{I}_N|]^T=\boldsymbol{v}$. Let partition set $P=\{0=x_0< \cdots< x_N=1\}$ where $x_i=\sum_{l=1}^i |\mathcal{I}_l|$ for $i=1, \ldots, N$. Let $t(x_i)=x_{s^*_i}$. Interval $\mathcal{I}_i$ is partitioned into $\{\mathcal{I}_{i,j_0}<\cdots<\mathcal{I}_{i,j_1}\}$ where $A^*_{i,j}=1$ for $j=j_0,\ldots, j_1$. 
If $s^*_{i-1}<s^*_i$, then $|\mathcal{I}_{i,j}|=|a_{i,j}|^{-1} |\mathcal{I}_{j}|$ and $t$ is an affine segment with slope $a_{i,j}$; If $s^*_{i-1}>s^*_i$, then $|\mathcal{I}_{i,j}|=|a_{i,j}|^{-1} |\mathcal{I}_{j_1+j_0-j}|$ and $t$ is an affine segment with slope $-a_{i,j}$. Therefore $t$ is completely defined. By (\ref{eq:IAI}), $t$ is continuous at the boundary between the piecewise affine segments on adjacent intervals $\mathcal{I}_{i-1}$ and $\mathcal{I}_i$. By construction, $t$ is expanding Markov, $t^*=s^*$ and $t\in PA(\lambda)$. If $s$ is a linear or expanding Markov map, then by Theorem~\ref{theorem:s*1s*2}, $t$ and $s$ are topologically conjugate.

The other two cases of $A^*$ can be shown analogously.
\end{proof}
Additional conditions are required for $t\in\mathbb{G}$ as stated in the following corollary.
\begin{corollary}
Suppose that $A^*$ is irreducible and satisfies (\ref{eq:sumA^*>1}). For a set of $\{a_{i,j}\}$ satisfying (\ref{eq:aji_t}) and being in the form of $\pm 2^k$ for integer $k$, if (\ref{eq:IAI}) and (\ref{eq:IAI1}) have a dyadic solution, then unique $t$ exists such that $t$ is expanding Markov, $t\in \mathbb{G}$, and $t$ and $s$ are topologically conjugate.
\label{corollary:pfapp}
\end{corollary}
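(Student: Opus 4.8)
The plan is to piggyback on the construction already carried out in the proof of Theorem~\ref{theorem:pfapp} and simply verify that, under the two extra hypotheses, the map $t$ produced there in fact lands in $\mathbb{G}$ rather than merely in $PA(\lambda)$. Recall that $\mathbb{G}\subset PA(\lambda)$ differs from $PA(\lambda)$ only in two respects: every breakpoint must be dyadic, and the absolute value of every slope must be an integer power of $2$. So the whole task reduces to checking these two conditions for the $t$ whose existence, uniqueness, expanding-Markov property, and topological conjugacy to $s$ are already guaranteed by Theorem~\ref{theorem:pfapp} in the irreducible case.

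First I would dispose of the slope condition, which is immediate: the hypothesis stipulates that the chosen $\{a_{i,j}\}$ are of the form $\pm 2^k$, and since $A^*$ satisfies (\ref{eq:sumA^*>1}) the construction selects $|a_{i,j}|>1$, so each slope is $\pm 2^k$ with $k\ge 1$, exactly as required for an expanding element of $\mathbb{G}$. The more substantive point is the dyadicity of the interval lengths. Because $A^*$ is irreducible, $A$ is a column-stochastic matrix whose associated graph is strongly connected, so by the Perron--Frobenius theorem the positive solution of (\ref{eq:IAI}) normalized by (\ref{eq:IAI1}) is \emph{unique}; this is precisely the eigenvector $\boldsymbol{v}$ that Theorem~\ref{theorem:pfapp} uses to set $[\,|\mathcal{I}_1|,\ldots,|\mathcal{I}_N|\,]^T=\boldsymbol{v}$. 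Consequently, the dyadic solution assumed to exist must coincide with $\boldsymbol{v}$, and therefore every $|\mathcal{I}_i|$ is dyadic. Hence each partition point $x_i=\sum_{l=1}^i |\mathcal{I}_l|$ is dyadic as well.

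It then remains to confirm that the \emph{interior} breakpoints of $t$, not just the $x_i$, are dyadic in both coordinates. Within $\mathcal{I}_i$ the subintervals have lengths $|\mathcal{I}_{i,j}|=|a_{i,j}|^{-1}|\mathcal{I}_{j}|=2^{-k}|\mathcal{I}_j|$ (up to the index reversal $j\mapsto j_1+j_0-j$ recorded in Theorem~\ref{theorem:pfapp}), which is a product of a dyadic number by a power of $2$ and hence dyadic; since each interior breakpoint is $x_{i-1}$ plus a finite sum of such lengths, its $x$-coordinate is dyadic. For the $y$-coordinates I would note that by construction $t$ carries each breakpoint into the endpoint set $P=\{x_0,\ldots,x_N\}$, all of whose members have just been shown to be dyadic, so every breakpoint is dyadic as a point of the plane. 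Combining the two verifications gives $t\in\mathbb{G}$, while expanding-Markovness, topological conjugacy to $s$, and uniqueness are inherited verbatim from Theorem~\ref{theorem:pfapp}.

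I expect the only genuinely load-bearing step to be the uniqueness argument in the middle paragraph: the statement ``(\ref{eq:IAI}) and (\ref{eq:IAI1}) have a dyadic solution'' is useful precisely because irreducibility forces that dyadic solution to be \emph{the} Perron--Frobenius eigenvector, so that the dyadic data propagate to all breakpoints. Everything else is routine bookkeeping with dyadic numbers and powers of $2$. It is worth flagging that this is why the corollary is restricted to the irreducible case of Theorem~\ref{theorem:pfapp}: in the reducible-recurrent case the solution space is a positive-dimensional simplex, and one would have to argue separately that a dyadic vertex can be selected, which is not needed here.
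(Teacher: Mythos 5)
Your proposal is correct and follows essentially the same route as the paper, which leaves the corollary as an immediate consequence of the construction in Theorem~\ref{theorem:pfapp}, noting only that $t\in\mathbb{G}$ additionally requires the slopes $\{a_{i,j}\}$ to be of the form $\pm2^k$ and the partition lengths $\{|\mathcal{I}_i|\}$ to be dyadic---exactly the two conditions you verify. Your middle step, that irreducibility of $A^*$ forces the assumed dyadic solution of (\ref{eq:IAI})--(\ref{eq:IAI1}) to coincide with the unique Perron--Frobenius eigenvector used in the construction, so that dyadicity propagates to all breakpoints (with the $y$-coordinates landing in the dyadic set $P$, consistent with Lemma~\ref{lemma:alldyadic}), is precisely the load-bearing point the paper leaves implicit.
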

The following examples show that the choice of $\{a_{i,j}\}$ makes no difference in determining whether $t\in PA(\lambda)$ exists, as expected from Theorem~\ref{theorem:pfapp}, but plays an important role for $t\in\mathbb{G}$.
\begin{example}
Suppose that $N=6$, $s^*(0)=0, s^*(1)=2, s^*(2)=6, s^*(3)=5, s^*(4)=6, s^*(5)=1, s^*(6)=0$. From $s^*$, 
\[
A^*=\left[
\begin{array}{cccccc}
1 & 1 & 0 & 0 & 0 & 0 \\
0 & 0 & 1 & 1 & 1 & 1 \\
0 & 0 & 0 & 0 & 0 & 1 \\
0 & 0 & 0 & 0 & 0 & 1 \\
0 & 1 & 1 & 1 & 1 & 1 \\
1 & 0 & 0 & 0 & 0 & 0 
\end{array}
\right].
\]
$A^*$ is irreducible. Let
\[
A=\left[
\begin{array}{cccccc}
2^{-1} & 2^{-1} & 0 & 0 & 0 & 0 \\
0 & 0 & 2^{-1} & 2^{-1} & 2^{-1} & 2^{-1} \\
0 & 0 & 0 & 0 & 0 & 2^{-2} \\
0 & 0 & 0 & 0 & 0 & 2^{-3} \\
0 & 2^{-1} & 2^{-1} & 2^{-1} & 2^{-1} & 2^{-3} \\
2^{-1} & 0 & 0 & 0 & 0 & 0 
\end{array}
\right].
\]
The solution to (\ref{eq:IAI}) is given by 
\[
\left[
\begin{array}{ccc}
|\mathcal{I}_1|, &\ldots, &|\mathcal{I}_6|
\end{array}
\right]=
\left[
\begin{array}{cccccc}
\frac{1}{4} &\frac{1}{4}& \frac{1}{32} &\frac{1}{64}& \frac{21}{64} &\frac{1}{8}
\end{array}
\right].
\]
In this case, $t\in\mathbb{G}$. Figure~\ref{fig:WeeklyDiscussion_20200504Page-5}(a) plots $t$. However, for a slightly different choice of $A$
\[
A=\left[
\begin{array}{cccccc}
2^{-1} & 2^{-1} & 0 & 0 & 0 & 0 \\
0 & 0 & 2^{-1} & 2^{-1} & 2^{-1} & 2^{-2} \\
0 & 0 & 0 & 0 & 0 & 2^{-1} \\
0 & 0 & 0 & 0 & 0 & 2^{-3} \\
0 & 2^{-1} & 2^{-1} & 2^{-1} & 2^{-1} & 2^{-3} \\
2^{-1} & 0 & 0 & 0 & 0 & 0 
\end{array}
\right].
\]
The solution to (\ref{eq:IAI}) is given by 
\[
\left[
\begin{array}{ccc}
|\mathcal{I}_1|, &\ldots, &|\mathcal{I}_6|
\end{array}
\right]=
\left[
\begin{array}{cccccc}
\frac{4}{17} &\frac{4}{17}& \frac{1}{17} &\frac{1}{68}& \frac{23}{68} &\frac{2}{17}
\end{array}
\right].
\]
In this case, $t\in PA(\lambda)$ but $t\not\in\mathbb{G}$.
\label{example:tinG}
\end{example}
\begin{example}
$A^*$ is not irreducible in the following two cases. First,
\[
A^*=\left[
\begin{array}{cccccc}
0 & 1 & 1 & 0 & 0  \\
0 & 1 & 1 & 0 & 0  \\
1 & 0 & 0 & 0 & 0 \\
1 & 1 & 1 & 1 & 1  \\
1 & 1 & 1 & 1 & 1  
\end{array}
\right].
\]
No positive solution to (\ref{eq:IAI}) exists, because solving (\ref{eq:IAI}) for any $\{a_{i,j}\}$ leads to $|\mathcal{I}_1|=|\mathcal{I}_2|=|\mathcal{I}_3|=0$ because the first three states are transient. Next,
\[
A^*=\left[
\begin{array}{cccc}
1 & 1 & 0 & 0   \\
1 & 1 & 0 & 0   \\
0 & 0 & 1 & 1   \\
0 & 0 & 1 & 1 
\end{array}
\right].
\]
Infinitely many positive solutions to (\ref{eq:IAI}) exist for any $\{a_{i,j}\}$, because the first two states form an irreducible subset and the last two states form another irreducible subset.
\end{example}

Next consider the case where for some $j_0$,
\begin{equation}
\sum_{i=1}^N A^*_{i,j_0}=1.
\label{eq:sumA^*=1}
\end{equation}   
The difference from (\ref{eq:sumA^*>1}) is that in (\ref{eq:sumA^*=1}) a single $i_0$ exists such that $A^*_{i_0,j_0}=1$ and $A^*_{i,j_0}=0$ for all $i\neq i_0$. From (\ref{eq:aji_t}), $|a_{i_0,j_0}|=1$ to be $\lambda$-preserving. If $A^*_{i_0,j'}=1$ for some $j'\neq j_0$, then $t$ on $\mathcal{I}_{i_0}$ is neither an affine segment nor an expanding monotone piece as illustrated in Figure~\ref{fig:WeeklyDiscussion_20200525Page-3}(a), in which case Theorem~\ref{theorem:s*1s*2} cannot be applied. To avoid this problem, the remainder of the section assumes the following.
\begin{assumption}
First, one and only one $j_0$ exists such that a single $i_0$ exists where $A^*_{i_0,j_0}=1$ and $A^*_{i,j_0}=0$ for all $i\neq i_0$. Second, for $i_0$ obtained in part (1), $A^*_{i_0,j}=0$ for any $j\neq j_0$. 
\label{assumption:11}
\end{assumption}
Part (1) of Assumption~\ref{assumption:11} is for simplicity. The results can be easily extended to the case where multiple such $j_0$ exist. Part (2) is needed such that $t$ on any $\mathcal{I}_i$ is either expanding monotone or an affine segment with slope equal to $\pm1$. Specifically, a unique pair of indices $i_0, j_0$ exist such that 
\begin{equation}
\left\{
\begin{array}{cc}
|a_{i_0,j_0}|=1, & \\
|a_{i,j}|>1, &\mbox{whenever $A^*_{i,j}=1$ and if $i\neq i_0$ or $j\neq i_0$}.
\end{array}
\right.
\label{eq:a=1a>1}
\end{equation}
Even with Assumption~\ref{assumption:11}, Theorem~\ref{theorem:s*1s*2} cannot be directly applied because $t$ is not a linear or expanding Markov map: $t$ is not expanding on $\mathcal{I}_{i_0}$ and not necessarily\footnote{For $t$ to be a $\lambda$-preserving linear Markov map, $A_{i,j}$ must be the same for all $j$ given $i$ whenever $A^*_{i,j}=1$ and (\ref{eq:aji_t}) must be satisfied. Whether $\{a_{i,j}\}$ exists to meet both conditions depends on $A^*$. For the first two $A^*$ in Example~\ref{example:4042} $t$ is a linear Markov map, and for the last $A^*$ $t$ is not a linear or expanding Markov map.} linear on $\mathcal{I}_{i}$ for $i\neq i_0$. Next, we apply Definition~\ref{definition:topologicalconjugacy} of topological conjugacy directly to circumvent this technical issue.

\begin{figure}
 \centering
  \includegraphics[width=14cm]{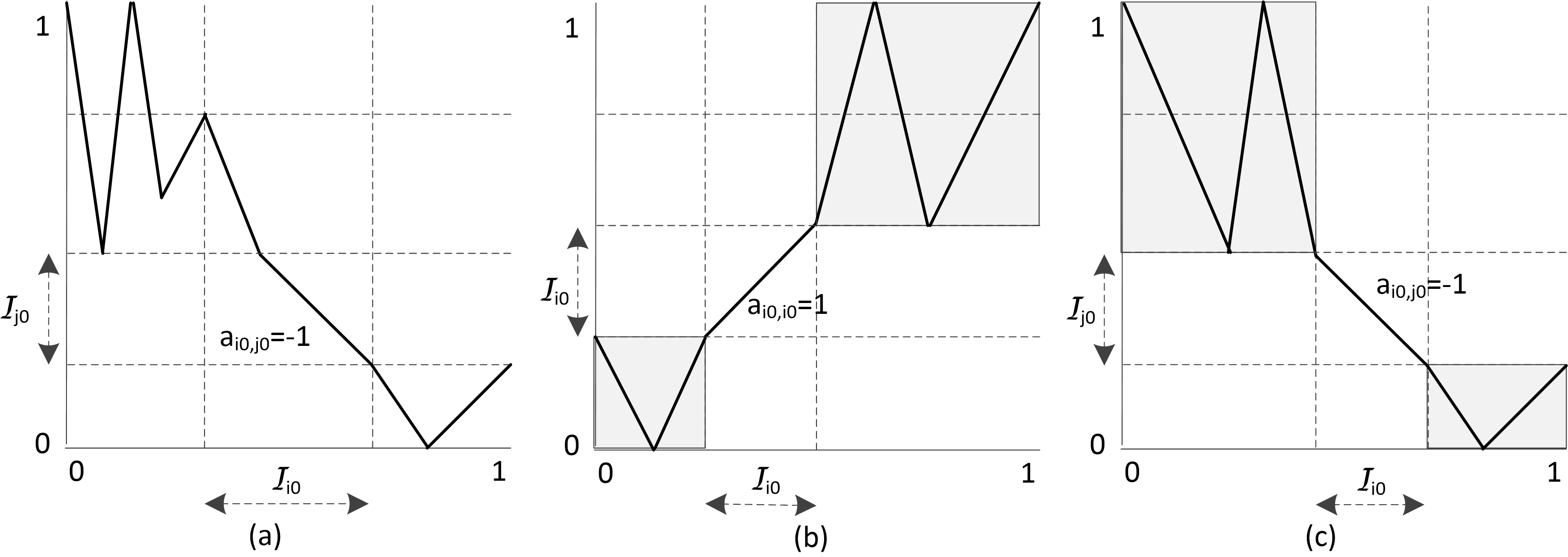}
  \caption{Illustration of the cases described by (\ref{eq:sumA^*=1}). Case (a) is not considered under Assumption~\ref{assumption:11}. Cases (b) and (c) are addressed in Corollary~\ref{corollary:pfapp+1} and Corollary~\ref{corollary:pfapp-1} respectively.}
  \label{fig:WeeklyDiscussion_20200525Page-3}
\end{figure}

First, suppose that $s^*_{i_0-1}<s^*_{i_0}$, as illustrated in Figure~\ref{fig:WeeklyDiscussion_20200525Page-3}(b). Then $a_{i_0,j_0}=1$. In this case, 
\[
t\left(\bigcup_{i=1}^{i_0-1} \mathcal{I}_i\right) = \bigcup_{i=1}^{j_0-1} \mathcal{I}_i \mbox{ and }
t^{-1}\left(\bigcup_{i=1}^{j_0-1} \mathcal{I}_i\right) = \bigcup_{i=1}^{i_0-1} \mathcal{I}_i. 
\]
$j_0$ must be equal to $i_0$, because 
\[
\lambda\left(\bigcup_{i=1}^{j_0-1} \mathcal{I}_i\right)=\sum_{i=1}^{j_0-1} \left|\mathcal{I}_i\right|=\lambda\left(\bigcup_{i=1}^{i_0-1} \mathcal{I}_i\right)=\sum_{i=1}^{i_0-1} \left|\mathcal{I}_i\right|.
\]
Therefore, 
\begin{equation}
\left\{
\begin{array}{cc}
s^*_{i}\le i_0-1, & \mbox{if }i<i_0-1,\\
s^*_{i}=i, & \mbox{if }i=i_0-1, i_0\\
s^*_{i}\ge i_0, & \mbox{if }i>i_0.
\end{array}
\right.
\label{eq:si0-1formula}
\end{equation}

\begin{figure}
 \centering
  \includegraphics[width=17cm]{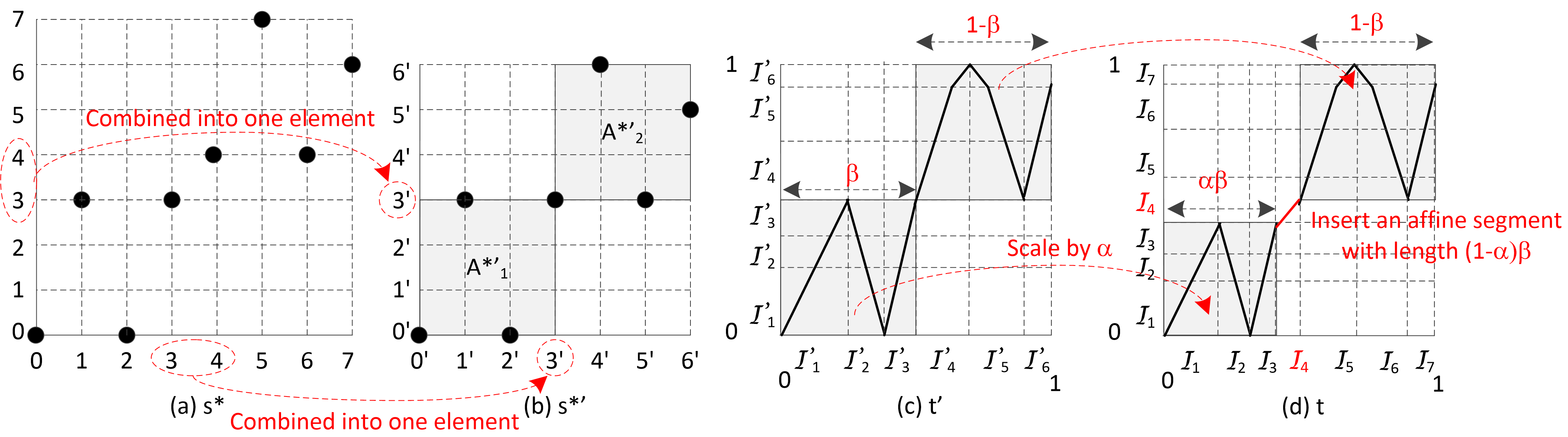}
  \caption{Construction of $t$ in the case of (\ref{eq:sumA^*=1}) with $a_{i_0,j_0}=1$. Here $N=7$ and $i_0=4$.}
  \label{fig:WeeklyDiscussion_20200525Page-5}
\end{figure} 
Figure~\ref{fig:WeeklyDiscussion_20200525Page-5} shows the construction of $t$. Recall that $\{0=\hat{x}_0<\cdots<\hat{x}_N=1\}$ is the partition of $s$. First, revise $s$ to obtain $s'$ by eliminating the portion on $[\hat{x}_{i_0-1},\hat{x}_{i_0}]$, scaling up the portion on $[0, \hat{x}_{i_0-1}]$ by a factor of $\frac{1}{\alpha}$ to fill up $[0,\hat{x}_{i_0}]$ and keeping the portion on $[\hat{x}_{i_0},1]$ unchanged. Specifically,
\begin{equation}
s'(x)=\left\{
\begin{array}{cc}
\frac{1}{\alpha} s\left(\alpha x\right), &\mbox{ if $0\le x<\hat{x}_{i_0}$},\\
s(x), &\mbox{ otherwise}.
\end{array}
\right.
\label{eq:s'definition}
\end{equation}
with 
\begin{equation}
\alpha=\frac{\hat{x}_{i_0-1}}{\hat{x}_{i_0}}.
\label{eq:alphascaling} 
\end{equation}
From (\ref{eq:si0-1formula}), $s(\hat{x}_{i_0-1})= \hat{x}_{i_0-1}$ and $s(\hat{x}_{i_0})= \hat{x}_{i_0}$. In (\ref{eq:s'definition}), $s'\left((\hat{x}_{i_0})^-\right)=\frac{1}{\alpha} s(\hat{x}_{i_0-1})=s(\hat{x}_{i_0})=s'\left((\hat{x}_{i_0})^+\right)$. Thus, $s'$ is continuous and expanding Markov. The change from $s$ to $s'$ in effect combines elements $i_0-1$ and $i_0$ of the $(N+1)$-element set $P^*=\{0, 1, \ldots, N\}$ into a single element to arrive at an $N$-element set ${P^*}'=\{0', 1', \ldots, (N-1)'\}$ where $0$ becomes $0'$, $1$ becomes $1'$, and so on, and $i_0-1$ and $i_0$ are combined to become $(i_0-1)'$, then $i_0+1$ becomes $i'_0$, $i_0+2$ becomes $(i_0+1)'$, and so on, and $N$ becomes $(N-1)'$. Revise the index map $s^*$ to become ${s^*}'$ when $P^*$ becomes ${P^*}'$, as shown in Figure~\ref{fig:WeeklyDiscussion_20200525Page-5}(a) and (b). 

The revised ${A^*}'$ obtained from the revised ${s^*}'$ satisfies (\ref{eq:sumA^*>1}) instead of (\ref{eq:sumA^*=1}). Matrix ${A^*}'$ is a block-diagonal one consisting of ${A^*}'_1$ and ${A^*}'_2$ where ${A^*}'_1$ is a map of $\{\mathcal{I}'_{1}, \ldots, \mathcal{I}'_{i_0-1}\}$ to itself and ${A^*}'_2$ is a map of $\{\mathcal{I}'_{i_0}, \ldots, \mathcal{I}'_{N-1}\}$ to itself, as shown in Figure~\ref{fig:WeeklyDiscussion_20200525Page-5}(c). If ${A^*}'$ can be decomposed into multiple irreducible subsets, then from Theorem~\ref{theorem:pfapp}, infinitely many $t'$ exists such that $t'$ is expanding Markov, $t'\in PA(\lambda)$, and $t'$ and $s'$ are topologically conjugate. Set $\{\mathcal{I}'_{1}, \ldots, \mathcal{I}'_{N-1}\}$ is obtained in the construction of $t'$ according to Theorem~\ref{theorem:pfapp}. Let homeomorphism $h'$ be such that 
\begin{equation}
s'=h'\circ t'\circ h'^{-1}.
\label{eq:s't'} 
\end{equation}
Comparing the scaling (\ref{eq:s'definition}) with (\ref{eq:s't'}), it follows that
\[
[\hat{x}_{i-1}, \hat{x}_i]= \left\{
\begin{array}{cc}
\alpha h'(\mathcal{I}'_i), & \mbox{ for $i=1, \ldots, i_0-1$},\\
h'(\mathcal{I}'_{i-1}), & \mbox{ for $i=i_0+1, \ldots, N$}.
\end{array}
\right.
 \]
In particular, $\hat{x}_{i_0}=h'(\beta)$ where $\beta = \sum_{i=1}^{i_0-1} |\mathcal{I}'_{i}|$.

Finally, revise $t'$ to obtain $t$ by scaling down $\mathcal{I}'_1, \ldots, \mathcal{I}'_{i_0-1}$ by a factor of $\alpha$, keeping $\mathcal{I}'_{i_0}, \ldots, \mathcal{I}'_{N-1}$ unchanged, and inserting a new interval of length $(1-\alpha)\beta$ between $\mathcal{I}'_{i_0-1}$ and $\mathcal{I}'_{i_0}$ to arrive at a set of $N$ intervals $\mathcal{I}_1, \ldots, \mathcal{I}_{N}$ with $\sum_{i=1}^{N} |\mathcal{I}_i|=1$, as shown in Figure~\ref{fig:WeeklyDiscussion_20200525Page-5}(d). Let $t$ be an affine segment with slope $1$ on $\mathcal{I}_{i_0}$ and continuous between $\mathcal{I}_{i_0-1},\mathcal{I}_{i_0}$ and between $\mathcal{I}_{i_0},\mathcal{I}_{i_0+1}$. Revise $h'$ to obtain $h$ correspondingly
\begin{equation}
h(x)=\left\{
\begin{array}{cc}
\alpha h'\left(\frac{1}{\alpha} x\right), &\mbox{ if $0\le x<\alpha\beta$},\\
\hat{x}_{i_0}+\frac{\hat{x}_{i_0}}{\beta}(x-\beta), &\mbox{ if $\alpha\beta\le x<\beta$},\\
h'(x), &\mbox{ otherwise}.
\end{array}
\right.
\label{eq:h'definition}
\end{equation}
Recall that $t$ on $\mathcal{I}_{i_0}$ is affine with slope $1$. If $s$ is affine with slope $1$ on $[\hat{x}_{i_0-1},\hat{x}_{i_0}]$, then by (\ref{eq:s't'}) and (\ref{eq:h'definition}), it follows that
\begin{equation}
s=h\circ t\circ h^{-1}.
\end{equation}
Then
\[
[\hat{x}_{i-1}, \hat{x}_i]= h(\mathcal{I}_i)
\]
for $i=1, \ldots, N$. Moreover, because $t'\in PA(\lambda)$ and $t$ on $\mathcal{I}_{i_0}$ is affine with slope $1$, $t\in PA(\lambda)$. Hence, the following corollary holds.

\begin{corollary}
Suppose that $A^*$ satisfies (\ref{eq:sumA^*=1}) and that Assumption~\ref{assumption:11} holds. Suppose that $j_0=i_0$ and $s^*_{i_0-1}<s^*_{i_0}$.  Suppose that $s$ is affine with slope $1$ on $[\hat{x}_{i_0-1},\hat{x}_{i_0}]$. If the revised ${A^*}'$ can be decomposed into irreducible subsets, then $t$ exists such that $t$ and $s$ are topologically conjugate and $t\in PA(\lambda)$. Furthermore, suppose that $t'$ is constructed from the revised $s'$ in (\ref{eq:s'definition}) and (\ref{eq:s't'}). If $t'\in\mathbb{G}$ and $\alpha$ of (\ref{eq:alphascaling}) is dyadic, then $t\in\mathbb{G}$.
\label{corollary:pfapp+1}
\end{corollary}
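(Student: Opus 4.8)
The plan is to assemble the construction already carried out in the discussion preceding the corollary and then verify the two arithmetic constraints that distinguish $\mathbb{G}$ from $PA(\lambda)$: dyadic breakpoints and slopes of the form $\pm 2^k$. The first assertion — existence of $t\in PA(\lambda)$ topologically conjugate to $s$ — is essentially established in the text already, so I would merely cite that construction. One revises $s$ to the expanding Markov map $s'$ via (\ref{eq:s'definition}) with scaling factor $\alpha$ from (\ref{eq:alphascaling}), notes that the revised matrix ${A^*}'$ is block diagonal, applies Theorem~\ref{theorem:pfapp} under the standing hypothesis that ${A^*}'$ decomposes into irreducible subsets to obtain an expanding Markov $t'\in PA(\lambda)$ with $s'=h'\circ t'\circ h'^{-1}$, and then reverses the revision to produce $t$ and the homeomorphism $h$ of (\ref{eq:h'definition}). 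The hypotheses $j_0=i_0$, $s^*_{i_0-1}<s^*_{i_0}$, and that $s$ is affine with slope $1$ on $[\hat{x}_{i_0-1},\hat{x}_{i_0}]$ are exactly what force $t$ to be affine with slope $1$ on the inserted interval $\mathcal{I}_{i_0}$ and yield $s=h\circ t\circ h^{-1}$; the $\lambda$-preservation of $t$ follows because $t'\in PA(\lambda)$ and the inserted segment has slope $1$.

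For the second assertion I would track how the passage from $t'$ to $t$ affects slopes. The construction does three things: it scales the first block of intervals $\{\mathcal{I}'_1,\dots,\mathcal{I}'_{i_0-1}\}$ by $\alpha$, leaves the second block $\{\mathcal{I}'_{i_0},\dots,\mathcal{I}'_{N-1}\}$ unchanged, and inserts a slope-$1$ segment on $\mathcal{I}_{i_0}$. Since ${A^*}'$ is block diagonal, $t'$ carries each block onto itself; hence on the first block the factor $\alpha$ scales domain and range identically, leaving every slope invariant. Consequently the slopes of $t$ are precisely the slopes of $t'$ on the two blocks together with the single value $1=2^0$ on $\mathcal{I}_{i_0}$. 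Because $t'\in\mathbb{G}$, all of these are of the form $\pm 2^k$, so the slope condition holds.

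For dyadicity I would partition the breakpoints of $t$ into three groups. On the second block the endpoints are inherited verbatim from $t'$ and are therefore dyadic. On the first block each endpoint is $\alpha$ times a dyadic endpoint of $t'$, and since $\alpha$ is assumed dyadic the product is dyadic. The two new endpoints of $\mathcal{I}_{i_0}$ are $\alpha\beta$ and $\beta$, where $\beta=\sum_{i=1}^{i_0-1}|\mathcal{I}'_i|$ is a finite sum of dyadic interval lengths and hence dyadic, so both $\beta$ and $\alpha\beta$ are dyadic. Thus every breakpoint of $t$ is dyadic, and combining this with the slope fact and $t\in PA(\lambda)$ gives $t\in\mathbb{G}$.

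I expect the step most in need of care — though not a deep one — to be the claim that scaling the first block by $\alpha$ leaves slopes invariant. This rests on the block-diagonal structure of ${A^*}'$, which is what forces $t'$ to map the first block onto itself so that its domain and range are scaled by the same factor; without this self-mapping property the horizontal rescaling would distort the derivatives and could destroy the $\pm 2^k$ form. Everything else reduces to bookkeeping on dyadic numbers.
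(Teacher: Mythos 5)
Your proposal is correct and follows essentially the same route as the paper: the first assertion is exactly the construction $s \to s' \to t' \to t$ with the homeomorphism revision (\ref{eq:h'definition}) given in the text preceding the corollary, and your key observation---that the block-diagonal structure of ${A^*}'$ forces $t'$ to map the first block onto itself, so the $\alpha$-rescaling acts identically on domain and range and preserves slopes---is precisely why the paper's construction yields $t\in\mathbb{G}$ when $t'\in\mathbb{G}$ and $\alpha$ is dyadic. Your explicit bookkeeping of the three groups of breakpoints (second block inherited, first block scaled by the dyadic $\alpha$, and the inserted endpoints $\alpha\beta$ and $\beta$) merely fills in a verification the paper leaves implicit, so no gap remains.
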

Clearly $t$ is not unique in the preceding construction because the solution of $\{\mathcal{I}'_1, \ldots, \mathcal{I}'_{N-1}\}$ in Theorem~\ref{theorem:pfapp} are not unique.

Next, suppose that $s^*_{i_0-1}>s^*_{i_0}$, as illustrated in Figure~\ref{fig:WeeklyDiscussion_20200525Page-3}(c). Then $a_{i_0,j_0}=-1$. If $j_0=i_0$, then one can construct $t$ analogously to the preceding case of $a_{i_0,j_0}=1$ except that in the final step when a new interval of length $(1-\alpha)\beta$ is inserted as shown in Figure~\ref{fig:WeeklyDiscussion_20200525Page-5}(d), $t$ is an affine segment with slope $-1$, instead of $1$, on $\mathcal{I}_{i_0}$. 

Now suppose $j_0\neq i_0$. Assume $i_0<j_0$. (The case of $i_0>j_0$ can be addressed analogously.) In this case,
\begin{equation}
t(\mathcal{I}_{i_0})=\mathcal{I}_{j_0}\mbox{ and } t^{-1}(\mathcal{I}_{j_0})=\mathcal{I}_{i_0}.
\label{eq:ti0j0_1}
\end{equation}
\begin{equation}
t\left(\bigcup_{i=1}^{i_0-1} \mathcal{I}_i\right) = \bigcup_{i=j_0+1}^{N} \mathcal{I}_i \mbox{ and } t^{-1}\left(\bigcup_{i=j_0+1}^{N} \mathcal{I}_i\right)=\bigcup_{i=1}^{i_0-1} \mathcal{I}_i.
\label{eq:ti0j0_3}
\end{equation}
$\lambda$-preservation leads to 
\begin{equation}
\left|\mathcal{I}_{j_0}\right|=\left|\mathcal{I}_{i_0}\right|.
\label{eq:ti0j0_2}
\end{equation}
\begin{equation}
\lambda\left(\bigcup_{i=j_0+1}^{N} \mathcal{I}_i\right)=\sum_{i=j_0+1}^{N} \left|\mathcal{I}_i\right|=\lambda\left(\bigcup_{i=1}^{i_0-1} \mathcal{I}_i\right)=\sum_{i=1}^{i_0-1} \left|\mathcal{I}_i\right|.
\label{eq:ti0j0_4}
\end{equation}

\begin{lemma}
Suppose that $A^*$ satisfies (\ref{eq:sumA^*=1}) and that Assumption~\ref{assumption:11} holds. Suppose that $a_{i_0,j_0}=-1$ and $i_0<j_0$. Let $\{0=x_0<\cdots<x_N=1\}$ be a partition of $[0,1]$ and $\mathcal{I}_i=[x_{i-1}, x_i]$ for $i=1, \ldots, N$. Let $t\in C(\lambda)$ be monotone on each $\mathcal{I}_i$ and be affine on $\mathcal{I}_i \cap t^{-1}(\mathcal{I}_j)$ where $A^*_{i,j}=1$, and the slope $a_{i,j}$ of the affine segment satisfies (\ref{eq:a=1a>1}). Then
\begin{equation}
\left|\frac{d (t\circ t)}{dx}\right|>1
\label{eq:tcirct>1}
\end{equation} 
for all $x\in[0,1]$ except for a finite number of points.
\label{lemma:t2_tot}
\end{lemma}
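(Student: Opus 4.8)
The plan is to compute the derivative of the two-step map by the chain rule and then exploit the key slope dichotomy encoded in (\ref{eq:a=1a>1}): the modulus of the slope of $t$ equals $1$ on exactly one monotone piece, namely $\mathcal{I}_{i_0}$, and is strictly larger than $1$ on every other piece. First I would isolate the finite exceptional set $E$ consisting of the breakpoints of $t$ together with their preimages $t^{-1}(\{\text{breakpoints of }t\})$. Since $t$ is piecewise affine with nonzero slopes, each breakpoint has only finitely many preimages, so $E$ is finite. For $x\notin E$ the chain rule applies and gives $(t\circ t)'(x)=t'(t(x))\,t'(x)$, hence $\left|\frac{d(t\circ t)}{dx}\right|=|t'(t(x))|\cdot|t'(x)|$.

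Next I would record the slope facts precisely. By construction and (\ref{eq:a=1a>1}), $|t'(x)|\ge 1$ wherever $t$ is differentiable, with $|t'(x)|=1$ exactly when $x$ lies in the interior of $\mathcal{I}_{i_0}$ (where $t$ is affine of slope $-1$, since $A^*_{i_0,j}=0$ for $j\neq j_0$ forces a single affine piece there), and $|t'(x)|>1$ when $x$ lies in the interior of any other piece. Consequently the product $|t'(t(x))|\cdot|t'(x)|$ is always $\ge 1$, and it can equal $1$ only if simultaneously $x\in\mathcal{I}_{i_0}^{\circ}$ and $t(x)\in\mathcal{I}_{i_0}^{\circ}$.

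The crux is then to rule out this coincidence using the hypothesis $i_0<j_0$. By (\ref{eq:ti0j0_1}), $t(\mathcal{I}_{i_0})=\mathcal{I}_{j_0}$, and since $i_0<j_0$ the intervals $\mathcal{I}_{i_0}$ and $\mathcal{I}_{j_0}$ are distinct with $\mathcal{I}_{i_0}^{\circ}\cap\mathcal{I}_{j_0}^{\circ}=\emptyset$. Thus $x\in\mathcal{I}_{i_0}^{\circ}$ forces $t(x)\in\mathcal{I}_{j_0}^{\circ}$, so $t(x)\notin\mathcal{I}_{i_0}^{\circ}$ and hence $|t'(t(x))|>1$. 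Therefore, at every $x\notin E$, at least one factor strictly exceeds $1$: if $x\notin\mathcal{I}_{i_0}^{\circ}$ then $|t'(x)|>1$, while if $x\in\mathcal{I}_{i_0}^{\circ}$ then $|t'(t(x))|>1$. In either case the product is $>1$, which is exactly (\ref{eq:tcirct>1}) off the finite set $E$.

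I expect the main obstacle to be bookkeeping rather than any deep difficulty. One must verify carefully that the unique slope of modulus $1$ sits on $\mathcal{I}_{i_0}$ (this is the combined content of (\ref{eq:a=1a>1}) and Assumption~\ref{assumption:11}, whose part (2) ensures $t$ on $\mathcal{I}_{i_0}$ is a single affine piece with no expanding subpiece), and one must confirm that the set where the chain rule fails is genuinely finite. The essential structural input is the disjointness $\mathcal{I}_{i_0}^{\circ}\cap\mathcal{I}_{j_0}^{\circ}=\emptyset$ arising from $i_0<j_0$: it is precisely this that prevents the unit-slope piece from being revisited in a single step and so forces the two-step map to be uniformly expanding almost everywhere.
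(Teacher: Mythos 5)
Your proposal is correct and follows essentially the same route as the paper: both arguments rest on the chain rule together with the observation that the unit-modulus slope occurs only on $\mathcal{I}_{i_0}$, while $t(\mathcal{I}_{i_0})=\mathcal{I}_{j_0}$ with $\mathcal{I}_{i_0}^{\circ}\cap\mathcal{I}_{j_0}^{\circ}=\emptyset$ (from $i_0\neq j_0$) guarantees that at least one of the two factors $|t'(x)|$, $|t'(t(x))|$ strictly exceeds $1$. The paper phrases the same content geometrically (the slope $-1$ piece ``flips'' $t$ on $\mathcal{I}_{j_0}$ and on $t^{-1}(\mathcal{I}_{i_0})$ without changing slope moduli), whereas you make the chain rule and the finite exceptional set of breakpoints and their preimages explicit, which is a slightly more careful bookkeeping of the same argument.
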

\begin{proof}
From (\ref{eq:ti0j0_2}) and (\ref{eq:ti0j0_4}), it is easy to show that intervals $\mathcal{I}_{i_0}$ and $\mathcal{I}_{j_0}$ are symmetric with respect to $\frac{1}{2}$. From (\ref{eq:a=1a>1}), $\left|\frac{d t}{dx}\right|=1$ on $\mathcal{I}_{i_0}$ and $\left|\frac{d t}{dx}\right|>1$ on $[0,1]\setminus\mathcal{I}_{i_0}$.

\begin{figure}
 \centering
  \includegraphics[width=14cm]{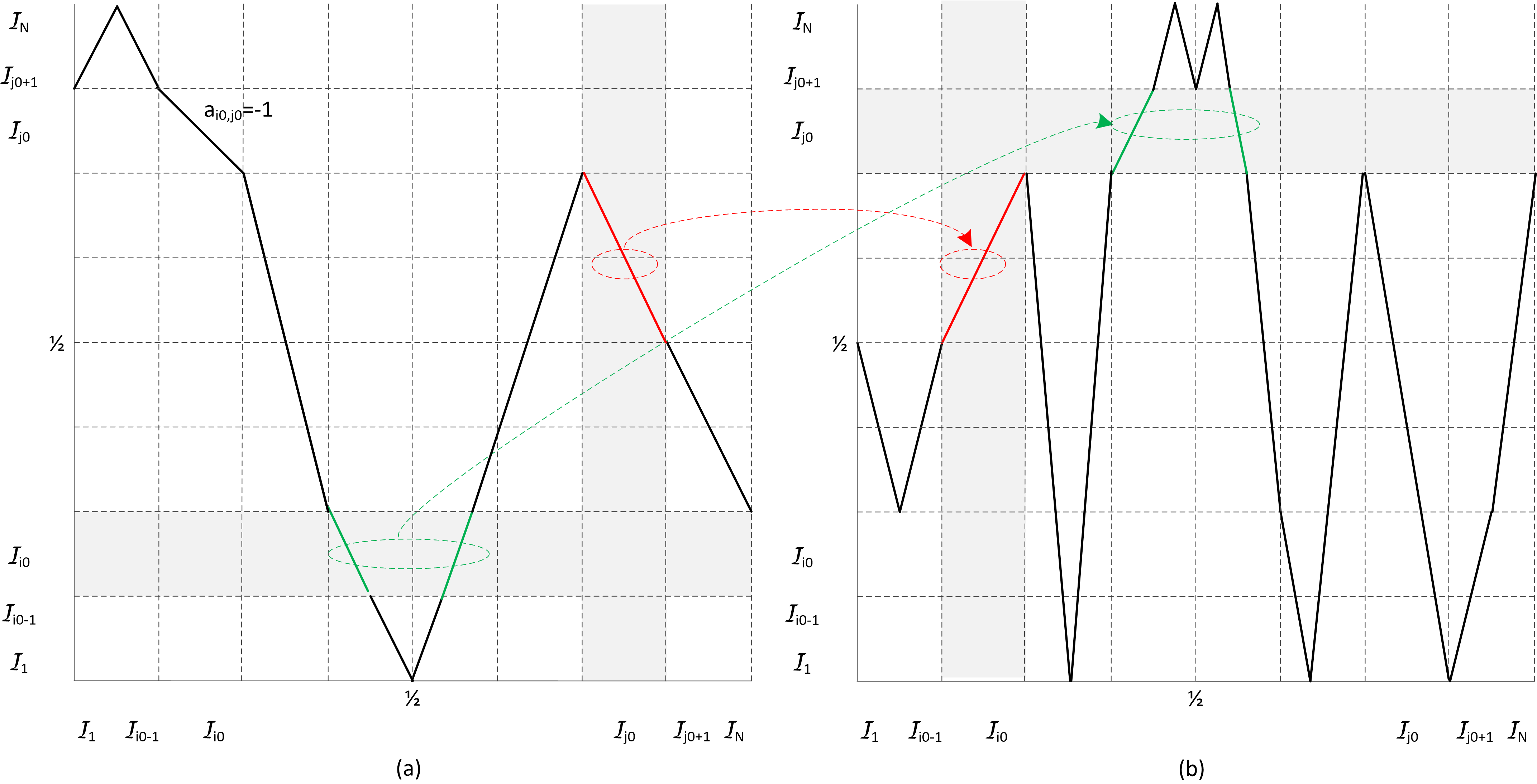}
  \caption{Illustration of $t$ in (a) and $t\circ t$ in (b) where $t$ satisfies (\ref{eq:ti0j0_1}) and (\ref{eq:ti0j0_3}). The graph of $t$ on $\mathcal{I}_{i_0}$ is an affine segment of slope $-1$. $t(\mathcal{I}_{i_0})=\mathcal{I}_{j_0}$. This affine segment transforms portions of $t$ to become portions of $t\circ t$ as highlighted as green and red segments.}
  \label{fig:WeeklyDiscussion_20200525Page-4}
\end{figure} 

The affine segment of $t$ with slope $-1$ on $\mathcal{I}_{i_0}$ affects $t\circ t$ in two ways, as illustrated in Figure~\ref{fig:WeeklyDiscussion_20200525Page-4}. First, $t$ on $\mathcal{I}_{j_0}$ flips horizontally along the $x=\frac{1}{2}$ axis to become $t\circ t$ on $\mathcal{I}_{i_0}$. Second, $t$ on $t^{-1}(\mathcal{I}_{i_0})$ flips vertically along the $y=\frac{1}{2}$ axis to become $t\circ t$ on $(t\circ t)^{-1}(\mathcal{I}_{j_0})$. On other portions of $[0,1]$, $t\circ t$ is obtained by the composition of two segments each with $\left|\frac{d t}{dx}\right|>1$, and thus $\left|\frac{d (t\circ t)}{dx}\right|$ is greater than either of the two. Hence, (\ref{eq:tcirct>1}) holds on $[0,1]$ whenever the derivative is defined.
\end{proof}

Lemma~\ref{lemma:t2_tot} states that $t\circ t$ is expanding Markov although $t$ is not. The proof of the expanding Markov case of Theorem~\ref{theorem:s*1s*2} in \cite[Theorem.\ 2.1]{10.2307/2000600} notes that the expanding property of $t$ is used only to make $\bigcup_n t^{-n} (P)$ dense and the theorem holds for maps for which some power is expanding Markov. Hence, the following corollary holds.
\begin{corollary}
Suppose that $A^*$ satisfies (\ref{eq:sumA^*=1}) and that Assumption~\ref{assumption:11} holds. Suppose that $a_{i_0,j_0}=-1$. If $i_0=j_0$, then the conclusion of Corollary~\ref{corollary:pfapp+1} holds. If $i_0\neq j_0$, then the conclusions of Theorem~\ref{theorem:pfapp} and Corollary~\ref{corollary:pfapp} hold.
\label{corollary:pfapp-1}
\end{corollary}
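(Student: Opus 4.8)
The plan is to split on whether $i_0=j_0$ or $i_0\neq j_0$ and, in each case, to re-use a construction already in hand — the cut-rescale-reinsert argument behind Corollary~\ref{corollary:pfapp+1} for the first case, and the Perron--Frobenius construction of Theorem~\ref{theorem:pfapp} for the second — supplying only the single new ingredient needed to recover topological conjugacy once the slope-$1$ piece destroys expansion.

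For $i_0=j_0$, I would first note that $A^*_{i_0,j_0}=1$ together with part (2) of Assumption~\ref{assumption:11} forces $t(\mathcal{I}_{i_0})=\mathcal{I}_{i_0}$ and $t^{-1}(\mathcal{I}_{i_0})=\mathcal{I}_{i_0}$, so $\mathcal{I}_{i_0}$ is invariant and, because $a_{i_0,j_0}=-1$, the restriction of $t$ to it is the orientation-reversing affine self-map of slope $-1$; the constraint $t(\mathcal{I}_{i_0})=\mathcal{I}_{i_0}$ with $s^*_{i_0-1}>s^*_{i_0}$ gives $s^*_{i_0-1}=i_0$ and $s^*_{i_0}=i_0-1$. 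Collapsing this invariant interval to a fixed point merges the index-set elements $i_0-1$ and $i_0$ exactly as in the slope-$1$ situation — both images $i_0$ and $i_0-1$ lie in the merged pair — so the reduced index map ${s^*}'$ and reduced matrix ${A^*}'$ have the same form as in Corollary~\ref{corollary:pfapp+1}. I would therefore run the analog of the construction (\ref{eq:s'definition})--(\ref{eq:h'definition}), adapting the rescaling only so that the collapsed flip-interval becomes a fixed point of $s'$ (a routine continuity check), and reinserting on $\mathcal{I}_{i_0}$ an affine segment of slope $-1$ rather than $+1$. Since such a segment has $|a_{i_0,j_0}|=1=2^0$, preserves $\lambda$, and has dyadic data, it leaves $t\in PA(\lambda)$ — and, under the dyadic hypotheses, $t\in\mathbb{G}$ — untouched, so the full conclusion of Corollary~\ref{corollary:pfapp+1} carries over.

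For $i_0\neq j_0$, assume $i_0<j_0$ (the reverse is symmetric). I would build $t$ precisely as in the proof of Theorem~\ref{theorem:pfapp}: choose $\{a_{i,j}\}$ satisfying (\ref{eq:a=1a>1}) and the $\lambda$-preservation relation (\ref{eq:aji_t}), form the column-stochastic matrix $A$ of (\ref{eq:Aji}), and solve the eigenvector system (\ref{eq:IAI})--(\ref{eq:IAI1}) by Perron--Frobenius. Relations (\ref{eq:ti0j0_1})--(\ref{eq:ti0j0_4}) then hold automatically, and the existence trichotomy (unique / infinitely many / none) depends only on the irreducible decomposition of $A^*$, so it is unaffected by one slope equalling $1$; this gives $t\in PA(\lambda)$ with $t^*=s^*$, and $t\in\mathbb{G}$ when the slopes are powers of $2$ and (\ref{eq:IAI}) admits a dyadic solution. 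The obstruction is that $t$ is no longer expanding Markov, since $|t'|=1$ on $\mathcal{I}_{i_0}$, so Theorem~\ref{theorem:s*1s*2} cannot be applied to $t$ directly.

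This obstruction is exactly what Lemma~\ref{lemma:t2_tot} removes: it establishes $|(t\circ t)'|>1$ off a finite set, i.e.\ $t\circ t$ is expanding Markov. I would then invoke the observation that the proof of \cite[Theorem.\ 2.1]{10.2307/2000600} uses expansion of $t$ only to make $\bigcup_n t^{-n}(P)$ dense, a property secured as soon as some iterate of $t$ is expanding; hence the conjugacy half of Theorem~\ref{theorem:s*1s*2} still applies, and $t$ is topologically conjugate to $s$ whenever $s$ is linear or expanding Markov, completing the conclusions of Theorem~\ref{theorem:pfapp} and Corollary~\ref{corollary:pfapp}. I expect this last step to be the main obstacle: certifying that the Block--Coven criterion survives the loss of expansion at the single slope-$1$ piece, which is precisely why Lemma~\ref{lemma:t2_tot} — a globally expanding second iterate — is the crux of the argument.
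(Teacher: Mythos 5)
Your proposal is correct and follows essentially the same route as the paper: the identical case split on $i_0=j_0$ versus $i_0\neq j_0$, reusing the collapse-rescale-reinsert construction of Corollary~\ref{corollary:pfapp+1} with a slope $-1$ segment in the first case, and in the second case running the Perron--Frobenius construction of Theorem~\ref{theorem:pfapp}, invoking Lemma~\ref{lemma:t2_tot} to get an expanding second iterate, and appealing to the observation that the Block--Coven proof of Theorem~\ref{theorem:s*1s*2} only needs $\bigcup_n t^{-n}(P)$ dense, hence some power expanding. Your identification of the loss of expansion at the single slope-$\pm1$ piece as the crux, remedied exactly by Lemma~\ref{lemma:t2_tot}, matches the paper's argument precisely.
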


\begin{example}
First, infinitely many $t\in PA(\lambda)$ exist in Corollaries~\ref{corollary:pfapp+1} and  \ref{corollary:pfapp-1}, respectively, for 
\[
A^*=\left[
\begin{array}{ccccc}
0 & 0 & 0 & 1 & 1  \\
0 & 0 & 0 & 1 & 1  \\
0 & 0 & 1 & 0 & 0  \\
1 & 1 & 0 & 0 & 0  \\ 
1 & 1 & 0 & 0 & 0 
\end{array}
\right]
\mbox{ and }
A^*=\left[
\begin{array}{ccccc}
1 & 1 & 0 & 0 & 0  \\
1 & 1 & 0 & 0 & 0  \\
0 & 0 & 1 & 0 & 0  \\
0 & 0 & 0 & 1 & 1  \\
0 & 0 & 0 & 1 & 1  
\end{array}
\right],
\]
where $i_0=j_0=3$. $t\in\mathbb{G}$ exists. Second, a unique $t\in PA(\lambda)$ exists in Corollary~\ref{corollary:pfapp-1} for
\[
A^*=\left[
\begin{array}{cccccc}
0 & 0 & 1 & 1 & 0 & 0 \\
0 & 0 & 1 & 1 & 0 & 0 \\
0 & 1 & 0 & 0 & 0 & 0 \\
0 & 1 & 1 & 1 & 1 & 1 \\
0 & 1 & 1 & 1 & 1 & 1 \\
1 & 0 & 0 & 0 & 0 & 0
\end{array}
\right],
\]
where $i_0=6, j_0=1$. However, in this case, $t\not\in\mathbb{G}$.
\label{example:4042}
\end{example}

\section{Conclusion and Future Study}\label{sec:future}

This paper has introduced a new and interesting monoid, $\lambda$-preserving Thompson's monoid $\mathbb{G}$, modeled on Thompson's group $\mathbb{F}$, and studied a number of properties of $\mathbb{G}$. The main results of this paper improve several results of \cite{2019arXiv190607558B} and demonstrate an interesting interplay between algebraic and dynamical settings.

A few areas are worth further exploring.

First, ergodicity. This paper has studied the properties of TM, LEO and Markov, and characterized periods of periodic points of maps in $\mathbb{G}$. The next step is to systematically study statistical properties of long-term time averages of various functions along trajectories of the dynamical system governed by $g\in\mathbb{G}$.

Second, presentations. Thompson's group $\mathbb{F}$ admits infinite and finite presentations. As seen in this paper, $\lambda$-preserving Thompson's monoid $\mathbb{G}$ is more sophisticated and the notions of equivalence classes and sets of equivalence classes are useful in presenting $\mathbb{G}$. The next step is to construct various presentations of $\mathbb{G}$.

Third, analogue of $\mathbb{G}$ in a high dimensional space. This paper has studied interval maps, which exist in a one-dimensional space. It will be interesting to extend the study to a higher dimensional space and see if different conclusions will be drawn for high dimensional maps as opposed to interval maps.

\section*{Acknowledgments}

I would like to thank my mentor, Professor Sergiy Merenkov, for his continuous and insightful guidance and advice throughout the entire research process. He introduced me to the general fields of Lebesgue measure preserving interval maps and Thompson's groups, provided direction in my research,  informed me of the connection between my work and other results in the literature, and guided me through the writing of this paper. I would like to thank the MIT PRIMES-USA for giving me the opportunity and resources to work on this project and in particular Dr. Tanya Khovanova for proving great advice on enhancing the quality of this paper and Ms. Boya Song for her valuable comments on an earlier version of this paper. 

\bibliographystyle{unsrt}
\bibliography{arXiv_submission}

\end{document}